\documentclass[12pt]{amsart}
\usepackage{amssymb}
\usepackage{hyperref} 
\usepackage{dsfont}				
\usepackage{mathrsfs}

\usepackage[all]{xy}
\textheight=23cm
\textwidth = 5.85 true in 
\topmargin=-0.9cm
\marginparsep=0cm
\oddsidemargin=0.0cm
\evensidemargin=0.0cm
\headheight=13pt
\headsep=0.8cm
\parskip=0pt
\baselineskip=27pt
\hfuzz=5pt
\widowpenalty=10000

\newtheorem{theorem}{Theorem}[section]
\newtheorem{proposition}[theorem]{Proposition}
\newtheorem{coro}[theorem]{Corollary}
\newtheorem{lemma}[theorem]{Lemma}
\newtheorem{rem}[theorem]{Remark}
\newtheorem{definition}[theorem]{Definition}
\newtheorem{example}[theorem]{Example}

\renewcommand{\epsilon}{\varepsilon}
\newcommand{\eps}{\epsilon}

\newcommand\PP{D}

\newcommand\C{\mathbb{C}}
\newcommand\N{\mathbb{N}}
\newcommand\R{\mathbb{R}}
\newcommand\calS{\mathcal{S}}

\newcommand\Z{\mathbb{Z}}

\newcommand{\calM}{\ensuremath{\mathcal{M}}}

\newcommand{\scrC}{\ensuremath{\mathscr{C}}}

\newcommand{\norm}[1]{\left\|#1\right\|}
\newcommand{\abs}[1]{\left|#1\right|}

\newcommand{\Ran}{\textsf{R}}

\DeclareMathOperator{\supp}{supp}
\DeclareMathOperator{\loc}{loc}
\newcommand{\Eins}{\ensuremath{\mathds{1}}}
\DeclareMathOperator{\osc}{Osc}
\DeclareMathOperator*{\esssup}{ess\,sup}

\let\div=\relax
\DeclareMathOperator{\div}{div}

\def\Xint#1{\mathchoice
   {\XXint\displaystyle\textstyle{#1}}%
   {\XXint\textstyle\scriptstyle{#1}}%
   {\XXint\scriptstyle\scriptscriptstyle{#1}}%
   {\XXint\scriptscriptstyle\scriptscriptstyle{#1}}%
   \!\int}
\def\XXint#1#2#3{{\setbox0=\hbox{$#1{#2#3}{\int}$}
     \vcenter{\hbox{$#2#3$}}\kern-.5\wd0}}

\def\aver#1{\Xint-_{#1}}

\def \vsp {\vspace{6pt}}

\newcommand\restr[2]{{
  \left.\kern-\nulldelimiterspace 
  #1 
  \vphantom{\big|} 
  \right|_{#2} 
  }}

\setcounter{section}{0}

\numberwithin{theorem}{section}
\numberwithin{equation}{section}

\title[Sobolev algebras]{Sobolev algebras through heat kernel estimates}

\author{Fr\'ed\'eric Bernicot}
\address{Fr\'ed\'eric Bernicot, CNRS - Universit\'e de Nantes, Laboratoire Jean Leray, 2 rue de la Houssini\`ere, 44322 Nantes cedex 3.  France}
\email{frederic.bernicot@univ-nantes.fr}

\author{Thierry Coulhon}
\address{Thierry Coulhon, Paris Sciences et Lettres Research University, France}
\email{thierry.coulhon@univ-psl.fr}

\author{Dorothee Frey}
\address{Dorothee Frey, Mathematical Sciences Institute, The Australian National University, Canberra ACT 0200, Australia}
\curraddr{Universit\'e Paris-Sud, Laboratoire de Math\'ematiques, UMR 8628 du CNRS, 91405 Orsay, France}
\email{dorothee.frey@univ-nantes.fr}

\thanks{FB's research was supported by the ANR projects AFoMEN no. 2011-JS01-001-01 and HAB no. ANR-12-BS01-0013.\\
TC's research was  done while he was employed by the Australian National University and was supported by the 
Australian Research Council (ARC) grant DP  130101302. \\DF's research was supported by the 
Australian Research Council (ARC) grants DP 110102488 and DP 120103692.}
\date{\today}

\let\oldtocsection=\tocsection
\let\oldtocsubsection=\tocsubsection
\let\oldtocsubsubsection=\tocsubsubsection

\renewcommand{\tocsection}[2]{\hspace{0em}\oldtocsection{#1}{#2}}
\renewcommand{\tocsubsection}[2]{\hspace{1em}\oldtocsubsection{#1}{#2}}
\renewcommand{\tocsubsubsection}[2]{\hspace{2em}\oldtocsubsubsection{#1}{#2}}

\setcounter{tocdepth}{3}  

\begin{document}
\begin{abstract} On a doubling metric measure space $(M,d,\mu)$ endowed with a ``carr\'e du
champ", let  $\mathcal{L}$ be  the associated Markov generator and $\dot L^{p}_\alpha(M,\mathcal{L},\mu)$  the corresponding homogeneous Sobolev space of order $0<\alpha<1$ in $L^p$, $1<p<+\infty$,  with norm $\left\|\mathcal{L}^{\alpha/2}f\right\|_p$.  We give sufficient  conditions on the  heat semigroup $(e^{-t\mathcal{L}})_{t>0}$  for the  spaces $\dot L^{p}_\alpha(M,\mathcal{L},\mu) \cap L^\infty(M,\mu)$ to be  algebras for the pointwise product. Two approaches are developed, one using paraproducts (relying on extrapolation to prove their boundedness) and a second one through geometrical square functionals (relying on sharp estimates involving oscillations). A chain rule and a paralinearisation result are also given. 
In comparison with previous results (\cite{CRT,BBR}), the main improvements consist in the fact that we  neither require any Poincar\'e inequalities nor $L^p$-boundedness of Riesz transforms, but only $L^p$-boundedness of the gradient of the semigroup. As a consequence, in the range $p\in(1,2]$, the Sobolev algebra property is shown  under  Gaussian upper estimates of the heat kernel only. 
\end{abstract}

\maketitle

\begin{quote}
\footnotesize\tableofcontents
\end{quote}

\section{Introduction}

It is well-known that in the Euclidean space $\mathbb{R}^n$ (endowed with its canonical non-negative Laplace operator $\Delta$),  the Bessel-type Sobolev space 
$$L^p_{\alpha}(\mathbb{R}^n)=\left\lbrace f \in L^p; \, \Delta^{\alpha/2}f \in L^p  \right\rbrace,
$$
 is an algebra for the pointwise product for all $1< p<+\infty$ and $\alpha >0$ such that $\alpha p>n$. This result is due to Strichartz in \cite{St}, where the Sobolev norm was shown to be equivalent to the $L^p$-norm of a suitable quadratic functional.

Twenty years after Strichartz's work, Kato and Ponce \cite{KP} gave a stronger result, still in the Euclidean space. They proved that for all $p\in(1,+\infty)$ and  $\alpha >0$, $L^p_{\alpha}(\mathbb{R}^n)\cap L^{\infty}(\mathbb{R}^n)$ is an algebra for the pointwise product. Later on Gulisashvili and Kon \cite{GK}  considered the homogeneous Sobolev spaces $\dot L^p_{\alpha}(\mathbb{R}^n)$ and  proved  the even stronger result that under the same conditions, $\dot L^p_{\alpha}(\mathbb{R}^n)\cap L^{\infty}(\mathbb{R}^n)$ is an algebra for the pointwise product. 
These results come with the associated Leibniz rules.

One way to obtain these properties and more general Leibniz rules  in the Euclidean setting is to use paraproducts (introduced by Bony in \cite{bony} and later used by Coifman and Meyer \cite{cm, Meyer}, see also \cite{taylor}) and the boundedness of these bilinear operators on $L^{\infty}(\mathbb{R}^n)\times \dot L^p_{\alpha}(\mathbb{R}^n)$. This powerful tool allows one to split the pointwise product into two terms, the regularity of which can be easily computed from the regularity of the two factors in the product. Moreover, paraproducts also yield a paralinearisation formula, which allows one to linearise a nonlinearity in Sobolev spaces. 

The main motivation of the inequalities deriving from such Leibniz rules and algebra properties comes from the study of nonlinear PDEs. In particular, to obtain well-posedness results in Sobolev spaces for some semi-linear PDEs, one has to understand how the nonlinearity acts on Sobolev spaces. This topic, the action of a nonlinearity on Sobolev spaces (and more generally on Besov spaces), has given rise to numerous works in the Euclidean setting where the authors attempt to obtain the minimal regularity on a nonlinearity $F$ such that the following property holds
$$ f\in B^{s,p} \Longrightarrow F(f) \in B^{\alpha,p},$$
where $B^{\alpha,p}$ can be  Sobolev or  Besov spaces (see for example \cite{Sickel}, \cite{RS} or  \cite{Bourdaud}).

It is  natural to look for an extension of these results beyond Euclidean geometry, as was pioneered in \cite{bohnke}.  In \cite{CRT}, Coulhon, Russ and Tardivel-Nachef extended the Strichartz approach,
in the case $0<\alpha<1$, to the case of Lie groups with polynomial volume growth and  Riemannian manifolds with non-negative Ricci curvature. The proof works as soon as one has the volume  doubling property as well as a   pointwise Gaussian upper bound for the gradient of the heat kernel. More recently, on a doubling Riemannian manifold equipped with an operator satisfying suitable heat kernel bounds, Badr, Bernicot and Russ \cite{BBR} have shown similar results under  Poincar\'e inequalities and  boundedness of the Riesz transform, but without assuming  pointwise bounds on the gradient of the heat kernel (note that the latter imply the boundedness of the Riesz transform, see \cite{ACDH}).  See also \cite{BS} for further developments and  \cite{GS}, with a quite different approach, for the case of Besov spaces on  Lie groups with polynomial volume growth.

Our aim in the present work is to improve these  results while working in the general setting of a Dirichlet metric measure space. Our standing assumptions will be the volume doubling property and a Gaussian upper estimate for the heat kernel. We show in particular that the algebra property always  holds for $1<p<+\infty$ (which is reminiscent of the  results in \cite{CD1} and \cite{ACDH}) under  $L^q$-bounds on the gradient of the heat semigroup for some $q\in (p, +\infty]$, which is much weaker than what is assumed in \cite{CRT,BBR} (mainly boundedness of Riesz transform and some Poincar\'e inequalities). The precise results are stated in Theorems \ref{thm:summary} and \ref{thm:summary2} below. 

\subsection{The Dirichlet form setting}

Let $M$ be a locally compact separable metrisable space, equipped with a Borel measure $\mu$, finite on compact sets and strictly positive on any non-empty open set. 
For $\Omega$ a measurable subset of $M$, we shall  denote $\mu\left(\Omega\right)$ by $\left|\Omega\right|$.

Let $\mathcal{L}$ be a non-negative self-adjoint operator on $L^2(M,\mu)$ with dense domain ${\mathcal D}\subset L^2(M,\mu)$. Denote by $\mathcal{E}$ the associated quadratic form, that is
$${\mathcal E}(f,g)=\int_Mf\mathcal{L}g\,d\mu,$$  and by  $\mathcal{F}$ its domain, which contains ${\mathcal D}$. 
If $\mathcal{E}$ is a  Dirichlet form (see \cite{FOT} for a definition), it follows (see \cite[Theorem 1.4.2]{FOT}) that the space $L^\infty(M,\mu) \cap {\mathcal F}$ is an algebra and
\begin{equation} \sqrt{{\mathcal E}(fg,fg)} \leq \|f\|_\infty \sqrt{{\mathcal E}(g,g)} + \sqrt{{\mathcal E}(f,f)} \|g\|_\infty, \quad \forall f,g\in L^\infty(M,\mu) \cap {\mathcal F}. \label{eq:coeur} \end{equation}

The operator $\mathcal{L}$ generates a strongly continuous  semigroup  $(e^{-t\mathcal{L}})_{t>0}$ of self-adjoint contractions on $L^2(M,\mu)$. In addition $(e^{-t\mathcal{L}})_{t>0}$  is submarkovian, that is
$0\leq e^{-t\mathcal{L}}f\leq 1$ if  $0\le f\leq 1$. It follows that the semigroup $(e^{-t\mathcal{L}})_{t>0}$ is uniformly  bounded  on $L^p(M,\mu)$ for $p\in[1,+\infty]$ and strongly continuous for $p\in[1,+\infty)$.
Also, $(e^{-t\mathcal{L}})_{t>0}$ is   bounded analytic on $L^p(M,\mu)$ for $1<p<+\infty$  (see \cite{topics}), which means  that  $(t\mathcal{L}e^{-t\mathcal{L}})_{t>0}$ is bounded on $L^p(M,\mu)$ uniformly in $t>0$.
 
\bigskip

Let $\mathcal{C}_0(M)$ denote the space of continuous functions on $M$ which vanish at infinity. For  $1\le p<+\infty$ and  $\alpha>0$, define
 $\dot{L}^p_{\alpha}(M,\mathcal{L},\mu)$ as the completion of
$$
\left\{f\in \mathcal{C}_0(M);\mathcal{L}^{\alpha / 2}f\in L^p(M,\mu)\right\}.
$$
for the norm $\|f\|_{p,\alpha}:=\left\|\mathcal{L}^{\alpha/2}f\right\|_p$.
 The space $\dot{L}^p_{\alpha}(M,\mathcal{L},\mu)$ may not be a Banach space of functions, but
 $\dot{L}^p_{\alpha}(M,\mathcal{L},\mu) \cap L^{\infty}(M,\mu)$, equipped with the 
norm $\left\|\mathcal{L}^{\alpha/2}f\right\|_p+\left\|f\right\|_\infty$, obviously is.

\begin{definition} For $\alpha>0$ and $p\in(1,+\infty)$ we say that property $A(\alpha,p)$ holds if: 
\begin{itemize}
\item the space $\dot{L}^p_{\alpha}(M,\mathcal{L},\mu) \cap L^{\infty}(M,\mu)$ is an algebra for the pointwise product;
\item and the Leibniz rule inequality is valid:
$$ \| fg \|_{p,\alpha} \lesssim \|f\|_{p,\alpha} \|g\|_{\infty} +  \|f\|_{\infty} \|g\|_{p,\alpha}, \quad \forall\,f,g\in \dot{L}^p_{\alpha}(M,\mathcal{L},\mu)\cap L^\infty(M,\mu).$$
\end{itemize}
\end{definition}

One could also consider local versions of $A(\alpha,p)$ as in \cite{CRT} and \cite{BBR}; we leave this to the reader.

In the present paper we restrict ourselves to the range $\alpha\in (0,1)$. 
We shall see below that the case $\alpha=1$ is very much connected to the Riesz transform problem (see \cite{CD1}, \cite{ACDH} and references therein).

Note that, as in the Riesz transform problem, the case $p=2$ is trivial. Indeed, \eqref{eq:coeur} and the identity
$\mathcal{E}(f,f)=\left\|\mathcal{L}^{1/2}f\right\|_2^2$ for $f\in\mathcal{D}$ 
obviously imply $A(1,2)$. 
Now, since  $\mathcal{E}_\alpha(f,g)=\int_M (\mathcal{L}^\alpha f)  \,g \,d\mu$ is also a Dirichlet form for $0<\alpha<1$, it follows that
for the same reason $A(\alpha,2)$  holds for $0<\alpha\le 1$.

Assume from now on that the Dirichlet form $\mathcal{E}$ is  strongly local and regular  (see \cite{FOT, GSC} for precise definitions).
There exists  an energy measure $d\Gamma$, that is a signed measure depending in a bilinear way on $f,g\in{\mathcal F}$ such that
\begin{equation}\label{energy}
\mathcal{E}(f,g)=\int_M d\Gamma(f,g)
\end{equation}
 for all $f,g\in\mathcal{F}$. 
According to Beurling-Deny and Le Jan formula, the energy measure encodes a kind of Leibniz rule, which is (see \cite[Section 3.2]{FOT})
\begin{equation}
 d\Gamma(fg,h) = fd\Gamma(g,h) + g d\Gamma(f,h), \quad \forall f,g,h \in L^\infty \cap {\mathcal F}. \label{eq:leibniz}
\end{equation}

One can define a pseudo-distance $d$ associated with $\mathcal{E}$ by
 \begin{equation}\label{defd}
d(x,y):=\sup\{f(x)-f(y) ; f\in \mathcal{F}\cap \mathcal{C}_0(M) \mbox{ s.t. }d\Gamma(f,f)\le d\mu\}.
\end{equation}
Throughout the whole paper, we assume that the pseudo-distance $d$ separates points, is finite everywhere, continuous and defines the initial topology of $M$, and that $(M,d)$ is complete (see \cite{ST1} and \cite[Section 2.2.3]{GSC} for details).

When we are in the above situation, we shall say that $(M,d,\mu, {\mathcal E})$ is a metric  measure (strongly local and regular)  Dirichlet space. This is slightly abusive, in the sense that in the above presentation $d$ follows from ${\mathcal E}$.

 For all $x \in M$ and all $r>0$, denote by $B(x,r)$ the open ball for the metric $d$ with centre $x$ and radius $r$, and  by $V(x,r)$ its measure $|B(x,r)|$.  For a ball $B$ of radius $r$ and a real $\lambda>0$, denote by $\lambda B$   the ball concentric  with $B$ and with radius $\lambda r$. We shall sometimes denote by $r(B)$ the radius of a ball $B$. We will use $u\lesssim v$ to say that there exists a constant $C$ (independent of the important parameters) such that $u\leq Cv$, and $u\simeq v$ to say that $u\lesssim v$ and $v\lesssim u$. Moreover, for $\Omega\subset M$ a subset of finite and non-vanishing measure and $f\in L^1_{loc}(M,\mu)$, $\aver{\Omega} f \, d\mu=\frac{1}{|\Omega|} \int f \, d\mu$ denotes the average of $f$ on $\Omega$. 

We shall assume  that   $(M,d,\mu)$ satisfies the volume doubling property, that is
  \begin{equation}\label{d}\tag{$V\!D$}
     V(x,2r)\lesssim  V(x,r),\quad \forall~x \in M,~r > 0.
    \end{equation}
As a consequence, there exists  $\nu>0$  such that
     \begin{equation*}\label{dnu}\tag{$V\!D_\nu$}
      V(x,r)\lesssim \left(\frac{r}{s}\right)^{\nu} V(x,s),\quad \forall~r \ge s>0,~ x \in M,
    \end{equation*}
which implies
     \begin{equation*}
      V(x,r)\lesssim \left(\frac{d(x,y)+r}{s}\right)^{\nu} V(y,s),\quad \forall~r \ge s>0,~ x,y \in M.
    \end{equation*}
Another easy consequence of \eqref{d} is that  balls with a non-empty intersection and comparable radii have comparable measures. Finally, \eqref{d} implies that the semigroup $(e^{-t\mathcal{L}})_{t>0}$ has the conservation property (see \cite{Grigo, ST1}), which means that
\begin{equation}\label{cons}
 e^{-t\mathcal{L}}1=1 ,\qquad \forall t>0.
 \end{equation}
Indeed, in a rather subtle way, the above assumptions exclude the case of a non-empty boundary with Dirichlet boundary conditions, see the comments
in \cite[pp. 13--14]{GS}.

We shall say that $(M,d,\mu, {\mathcal E})$ is a doubling metric  measure   Dirichlet space if it is a metric measure space  endowed with a strongly local and regular Dirichlet form  and satisfying \eqref{d}.

\subsection{Heat kernel and regularity estimates}

As in  \cite{CRT} and \cite{BBR}, a major role in our assumptions will be played by heat kernel estimates.

The semigroup $(e^{-t\mathcal{L}})_{t>0}$ may or may not  have a  kernel, that is for all $t>0$ a measurable function $p_t:M\times M\to\R_+$ such that
$$e^{-t\mathcal{L}}f(x)=\int_Mp_t(x,y)f(y)\,d\mu(y), \quad \mbox{a.e. }x\in M.$$  If it does, $p_t$ is called the heat kernel associated with $\mathcal{L}$ (or rather with $(M,d,\mu,\mathcal{E}))$. Then $p_t(x,y)$ is non-negative and symmetric in $x,y$,  since $e^{-t\mathcal{L}}$ is positivity preserving and self-adjoint for all $t>0$.
One may  naturally ask for upper estimates of $p_t$ (see for instance the recent article \cite{BCS} and the many relevant references therein). A typical upper estimate is
  \begin{equation}\tag{$DU\!E$}
 p_{t}(x,y)\lesssim
\frac{1}{\sqrt{V(x,\sqrt{t})V(y,\sqrt{t})}}, \quad \forall~t>0,\,\mbox{a.e. }x,y\in
 M.\label{due}
\end{equation}
This estimate is called on-diagonal because if $p_t$ happens to be continuous then \eqref{due} can be rewritten as
  \begin{equation}
p_{t}(x,x)\lesssim
\frac{1}{V(x,\sqrt{t})}, \quad \forall~t>0,\,\forall\,x\in
 M.
\end{equation}

Under  \eqref{d}, \eqref{due} self-improves into a Gaussian upper estimate (see \cite[Theorem 1.1]{Gr1} for the Riemannian case,   \cite[Section 4.2]{CS} for a metric measure space setting):
\begin{equation}\tag{$U\!E$}
p_{t}(x,y)\lesssim
\frac{1}{V(x,\sqrt{t})}\exp
\left(-\frac{d^{2}(x,y)}{Ct}\right), \quad \forall~t>0,\, \mbox{a.e. }x,y\in
 M.\label{UE}
\end{equation}

To formulate some other assumptions, we will need a notion of pointwise length of the gradient. The Dirichlet form $\mathcal{E}$ admits a ``carr\'e du champ'' (see for instance \cite{GSC} and the references therein) if for all $f,g\in\mathcal{F}$ the energy measure $d\Gamma(f,g)$ is absolutely continuous with respect to $\mu$. Then the  density $\Upsilon(f,g) \in L^1(M,\mu)$ of $d\Gamma(f,g)$ is called the ``carr\'e du champ'' and satisfies the following inequality
 \begin{equation}   |\Upsilon(f,g)|^2 \leq \Upsilon(f,f) \Upsilon(g,g).  \label{eq:carre} \end{equation} In the sequel, when we assume that $(M,d,\mu,\mathcal{E})$ admits
 a ``carr\'e du champ'',  we shall abusively denote $\left[\Upsilon(f,f)\right]^{1/2}$ by $ |\nabla f|$.
 This has the advantage to stick to the more intuitive and classical Riemannian notation, but one should not forget that one works 
 in a much more general setting (see for instance \cite{GSC} for examples), and that one never uses differential calculus in the classical sense.

We will also use estimates on the gradient (or ``carr\'e du champ") of the semigroup, which were introduced in  \cite{ACDH}: for $p\in[1,+\infty]$, consider
\begin{equation} \label{Gp}
\sup_{t>0} \|\sqrt{t}|\nabla e^{-t\mathcal{L}} |\|_{p\to p} <+\infty \tag{$G_p$},
\end{equation}
which is equivalent to the interpolation inequality
\begin{equation} \label{mult}  \||\nabla f|\|_p^2 \lesssim \|\mathcal{L} f\|_p\| f\|_p, \qquad \forall\,f\in {\mathcal D}
\end{equation}
(see \cite[Proposition 3.6]{CS2}). Note that $(G_p)$ always holds  for $1<p\leq 2$.
For more about $(G_p)$, we refer  to \cite{ACDH}, to the introduction of \cite{BCF1},  and to the references therein.
This notion was introduced in \cite{ACDH} to understand the stronger notion of boundedness of the Riesz transform $|\nabla {\mathcal L}^{-1/2}|$ (we refer the reader to \cite{ACDH}  for more details about these two notions and how they are related and to \cite{BF2} for recent results in this area). Given $p\in(1,+\infty)$, one says  the Riesz transform is bounded on $L^p(M,\mu)$ if
\begin{equation} \label{rp}
 \||\nabla f |\|_{p} \lesssim \| \mathcal{L}^{1/2}f \|_{p},  \quad \forall\,f\in {\mathcal D}, \tag{$R_p$}
\end{equation}
and that the reverse Riesz transform is bounded on $L^p(M,\mu)$ if
\begin{equation} \label{rrp}
\| \mathcal{L}^{1/2}f \|_{p}  \lesssim  \||\nabla f |\|_{p},  \quad \forall\,f\in {\mathcal D}. \tag{$RR_p$}
\end{equation}
If both estimates hold true, then
\begin{equation} \label{ep}
 \||\nabla f |\|_{p} \simeq \|  \mathcal{L}^{1/2}f \|_{p},  \quad \forall\,f\in {\mathcal D}. \tag{$E_p$}
\end{equation}

It is then clear, using \eqref{eq:leibniz}  and \eqref{eq:carre}, that  $(E_p)$ implies $A(1,p)$. One of the main objectives of this work is to prove $A(\alpha,p)$ for $0<\alpha<1$ without assuming \eqref{ep} or \eqref{rp}.

We can now formulate the $L^p$ version of the scale-invariant Poincar\'e inequalities, which may or may not be true, depending on $p\in[1,+\infty)$. More precisely, for $p\in[1,+\infty)$, one says that  $(P_p)$ holds if
\begin{equation}\tag{$P_p$}
 \left( \aver{B} \left| f - \aver{B} f \, d\mu \right|^p d\mu \right)^{1/p} \lesssim r \left(\aver{B} |\nabla f|^p \, d\mu \right)^{1/p}, \qquad \forall\, f\in {\mathcal F}, \label{Pp}
\end{equation}
where $B$ ranges over balls in $M$ of radius $r$.
Recall that $(P_p)$ is weaker and weaker as $p$ increases, that is $(P_p)$ implies $(P_q)$ for $p<q<+\infty$.
Also, under \eqref{d}, $(P_2)$ is equivalent to  the Gaussian  lower bound matching \eqref{UE}, see \cite{BCF1} and the references therein.
For more about $(P_p)$, we refer to \cite{HaKo} and to the introduction of \cite{BCF1}.

\subsection{Main results}

The original approach by Strichartz to  the Sobolev algebra property in \cite{St}, and later also used  in  \cite{CRT,BBR},  relies on the    functional
$$S_\alpha f(x)=\left(\int_0^{+\infty}\left[\aver{B(x,r)}|f-f(x)|\,d\mu\right]^2\frac{dr}{r^{2\alpha+1}}\right)^{1/2},$$
which measures the regularity of the function $f$ by averaging its oscillations at all  scales (see Section \ref{sec:osci} for more details). 
If one proves
\begin{equation}
\label{quad} \|S_\alpha f\|_p\simeq \|\mathcal{L}^{\alpha/2}f\|_p,\quad \forall\,f\in\mathcal{F},\tag*{$E(\alpha,p)$}
\end{equation}
then it is easy to see that $A(\alpha,p)$ follows.

In the present paper, we shall rather rely on  the paraproduct approach, using a notion of paraproduct associated with the underlying operator $\mathcal{L}$ and the corresponding semigroup that was recently introduced in \cite{B}, \cite{F},  \cite{BF}. 
This requires slightly weaker assumptions. On the other hand, Strichartz's approach yields a stronger chain rule (requiring less regularity on the nonlinearity). This is why we shall also study property $E(\alpha,p)$ in Section \ref{quadra}. Note also that $E(\alpha,p)$ may be considered as a fractional version of $(E_p)$.

 Let us now recall some tools  that have been studied in \cite{BCF1} (and previously, see references therein), namely an inhomogeneous $L^2$ version of the De Giorgi property, as well as some H\"older regularity estimates for the heat semigroup. 
  
\begin{definition}[$L^2$ De Giorgi property] For $\kappa\in(0,1)$, we say that $({DG}_{2,\kappa})$ holds if the following is satisfied:  for all $r\leq R$, every pair of concentric balls $B_r,B_R$ with  respective radii $r$ and $R$, and for every function $f\in {\mathcal D}$, one has
$$ \left(\aver{B_r} |\nabla f|^2 \, d\mu \right)^{1/2} \lesssim    \left(\frac{R}{r}\right)^\kappa \left[ \left(\aver{B_R} |\nabla f|^2 \, d\mu \right)^{1/2}
+ R  \|\mathcal{L}f\|_{L^\infty(B_R)}\right].$$
We sometimes omit the parameter $\kappa$, and write $({DG}_{2})$ if $({DG}_{2,\kappa})$ is satisfied for some $\kappa \in (0,1)$.
\end{definition}

For more details and background, see \cite{BCF1}. We just point out that $(DG_2)$ is  implied by the Poincar\'e inequality $(P_2)$.

\begin{definition} For $p,q\in[1,+\infty]$ and $\eta\in(0,1]$, we shall say that property \eqref{hp} holds if for every   $0<r\leq \sqrt{t}$, every pair of  concentric balls $B_r$, $B_{\sqrt{t}}$ with respective radii $r$ and $\sqrt{t}$, and every function $f\in L^{p}(M,\mu)$,
\begin{equation} \label{hp}
q\text{-}\osc_{B_r}(e^{-t\mathcal{L}} f):=\left(\aver{B_r} \left| e^{-t\mathcal{L}}f - \aver{B_r}e^{-t\mathcal{L}}f \, d\mu \right|^{q} \, d\mu \right)^{1/q} \lesssim \left( \frac{r}{\sqrt{t}} \right)^\eta  \left|B_{\sqrt{t}}\right|^{-1/p} \|f\|_p
 \tag{$H_{p,q}^\eta$},
\end{equation}
with the obvious modification for $p=\infty$.

We shall say that \eqref{gplocal} is satisfied if, for some $(\gamma_\ell)$ exponentially decreasing coefficients and for all $0<r\leq \sqrt{t}$, every ball $B_r$ of radius, and every function $f\in L^{p}_{\textrm{loc}}(M,\mu)$, 
\begin{equation} \label{gplocal}
q\text{-}\osc_{B_r}(e^{-t\mathcal{L}} f) \lesssim \left( \frac{r}{\sqrt{t}} \right)^\eta  \sum_{\ell \geq 0}  \gamma_{\ell} \left( \aver{2^\ell B_{\sqrt{t}}} |f|^p\, d\mu \right)^{1/p}.
 \tag{$\overline{H}_{p,q}^\eta$}
\end{equation}
\end{definition}

Then  the following  holds.

\begin{proposition} \label{prop:bcf1} Let $(M,d,\mu,\mathcal{E})$ be a metric measure Dirichlet space with a ``carr\'e du champ" satisfying \eqref{d} and \eqref{due}. We have
\begin{itemize}
\item The lower Gaussian estimates for the heat kernel $(LE)$ are equivalent to the existence of some $p\in(1,+\infty)$ and some $\eta>0$ such that $(H_{p,p}^\eta)$ holds; 
\item ($H_{p,p}^\eta$) implies ($H_{p,\infty}^\eta$) and ($\overline{H}_{1,\infty}^\lambda$) for every $\lambda\in[0,\eta)$;
\item Moreover, for every $\lambda\in(0,1]$ the property $\bigcap_{\eta<\lambda} (H_{p,p}^\eta) $ is independent on $p\in[1,+\infty]$ and will be called 
$$ (H^\lambda) := \bigcup_{p\in[1,+\infty]} \bigcap_{\eta<\lambda} (H_{p,p}^\eta)= \bigcap_{\eta<\lambda} \bigcup_{p\in[1,+\infty]} (H_{p,p}^\eta).$$
\end{itemize}
\end{proposition}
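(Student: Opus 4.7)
The first bullet combines classical De Giorgi--Nash--Moser theory with the Grigor'yan--Saloff-Coste characterisation of two-sided Gaussian estimates. Under \eqref{d} and \eqref{due}, the plan is to use that $(LE)$ is equivalent to the parabolic Harnack inequality, and that the latter yields H\"older continuity of caloric functions in the precise form $(H_{p,p}^\eta)$ for some $\eta>0$ and any $p$, via a Moser iteration bounding the oscillation of $e^{-t\mathcal{L}}f$ on a ball $B_r$ of radius $r\le \sqrt{t}$ by its $L^p$ mean on $B_{\sqrt{t}}$, with a gain of $(r/\sqrt{t})^\eta$. For the converse, from $(H_{p,p}^\eta)$ to $(LE)$, I would exploit the conservation property \eqref{cons}: since $\int p_t(x,\cdot)\,d\mu=1$, the H\"older bound applied to indicators of balls forces a near-diagonal lower bound for $p_t$, which is then propagated to the full Gaussian lower bound through a standard chaining/Aronson argument.

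For the second bullet, I would upgrade $q=p$ to $q=\infty$ by using the semigroup factorisation $e^{-t\mathcal{L}}=e^{-(t/2)\mathcal{L}}\circ e^{-(t/2)\mathcal{L}}$: apply $(H_{p,p}^\eta)$ to the outer factor at scale $t/2$, then absorb the remaining factor via the $L^p\to L^\infty$ smoothing of $e^{-(t/2)\mathcal{L}}$ coming from \eqref{UE}, which converts the $L^p$-oscillation into an $L^\infty$-oscillation. For the localised variant $(\overline{H}_{1,\infty}^\lambda)$, I would decompose $f=\sum_{\ell\ge 0} f_\ell$ with $f_\ell$ supported in the $\ell$-th dyadic annulus around $B_{\sqrt{t}}$, apply $(H_{p,\infty}^\eta)$ to the near piece $f_0$, and control $e^{-t\mathcal{L}} f_\ell$ on $B_r$ for $\ell\ge 1$ via the Gaussian off-diagonal decay of $p_t$; the Gaussian factor $\exp(-c\,4^\ell)$ absorbs any polynomial loss in $\ell$ and yields exponentially decreasing coefficients $\gamma_\ell$, at the price of an arbitrarily small loss $\eta-\lambda>0$ in the H\"older exponent.

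For the third bullet, the $p$-independence is an extrapolation statement. Starting from $(H_{p_0,p_0}^\eta)$ the argument above already produces $(H_{p_0,\infty}^\eta)$, and composing once more with the heat semigroup while using its $L^r\to L^{r'}$ smoothing under \eqref{UE} transfers the estimate to any exponent $p\in[1,+\infty]$, at the cost of losing an arbitrarily small $\delta$ in the H\"older exponent; intersecting over $\eta<\lambda$ then absorbs all such losses and yields a $p$-independent property. The step I expect to be the main obstacle is the implication $(H_{p,p}^\eta)\Rightarrow (LE)$ in the first bullet: while the direct direction is a standard consequence of De Giorgi--Moser regularity, the converse genuinely requires both \eqref{cons} and a near-diagonal lower bound followed by a chaining argument, since purely soft off-diagonal manipulations do not suffice to produce a pointwise Gaussian lower bound for $p_t$.
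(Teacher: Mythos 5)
Your outline gets the two peripheral pieces roughly right but misses the key technical lemma of the argument, and the paper's route is different on the first bullet.

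On the first bullet, the paper does not actually prove the equivalence of $(LE)$ with the existence of some $(H_{p,p}^\eta)$ here — it cites \cite[Theorem 3.4]{BCF1}. Your sketch via the parabolic Harnack inequality and the Grigor'yan--Saloff-Coste theory is a reasonable description of what underlies that cited result, but you should be aware that no proof of this is given in the present paper.

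The central difficulty is the upgrade from $L^p$-oscillation to $L^\infty$-oscillation, i.e.\ $(H_{p,p}^\eta) \Rightarrow (H_{p,\infty}^\eta)$, and here your mechanism does not work. The semigroup factorisation $e^{-t\mathcal{L}} = e^{-(t/2)\mathcal{L}}\circ e^{-(t/2)\mathcal{L}}$ combined with the $L^p\to L^\infty$ smoothing of one half-step only allows you to change the exponent controlling the \emph{input} norm in $(H_{p,q}^\eta)$ (the index $p$), not the exponent in which the oscillation on $B_r$ is measured (the index $q$). Applying $(H_{p,p}^\eta)$ to the outer factor at scale $t/2$ with $g = e^{-(t/2)\mathcal{L}}f$ still produces the $p$-oscillation of $e^{-t\mathcal{L}}f$ on $B_r$; smoothing $\|g\|_p\lesssim\|f\|_p$ does not touch the left-hand side. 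If instead you try to write $e^{-t\mathcal{L}}f(x)-e^{-t\mathcal{L}}f(y)=\int (p_{t/2}(x,z)-p_{t/2}(y,z))(g(z)-c)\,d\mu(z)$ and estimate pointwise, you need spatial H\"older regularity of $p_{t/2}$, which is precisely what you are trying to prove (or, for the far annuli, you lose the $(r/\sqrt{t})^\eta$ gain entirely). The paper instead invokes a Meyers/Campanato-type iteration (\cite[Proposition 3.1]{BCF1}) for this improvement: from an $L^p$-oscillation bound of order $(r/\sqrt{t})^\eta$ at all scales $r\le\sqrt{t}$ one deduces, by a telescoping argument over dyadic scales, the corresponding $L^\infty$-oscillation bound. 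This is the ingredient missing from your proposal.

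Your annuli decomposition is essentially the paper's proof of $(H_{p,p}^\eta)\Rightarrow(\overline{H}_{p,p}^\lambda)$ for $\lambda<\eta$ (Proposition A.2(iii)): interpolate on each annulus between the $(H_{p,p}^\eta)$ bound and the raw Gaussian decay $e^{-c4^\ell}$, sum, and accept the arbitrarily small loss $\eta\to\lambda$. You are correct that this loses an $\varepsilon$ and that the losses are absorbed by taking the intersection over $\eta<\lambda$, which is the point of the $(H^\lambda)$ notation. The change of $p$ in the input norm is the step where the semigroup factorisation is genuinely used (Proposition A.2(i), the case $q<p$, where $e^{-t\mathcal{L}}=e^{-(t/2)\mathcal{L}}e^{-(t/2)\mathcal{L}}$ together with $(UE)$ passes from $\calM_p$ to $\calM_1$); your intuition applies correctly there. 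But the logical order is: annuli first ($(H_{p,p}^\eta)\Rightarrow(\overline{H}_{p,p}^\lambda)$), then Meyers ($\Rightarrow(\overline{H}_{p,\infty}^\lambda)$), then semigroup factorisation ($\Rightarrow(\overline{H}_{1,\infty}^\lambda)$, hence $(\overline{H}_{q,q}^\lambda)$ for all $q$), which is different from the order you propose and relies on the Meyers step you omit.
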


We refer to \cite[Theorem 3.4]{BCF1} for the first part and to Appendix \ref{AppA} for the  last  two statements. 

We can now state our main results.

\begin{theorem} \label{thm:summary} Let $(M,d,\mu, {\mathcal E})$  be a doubling metric  measure   Dirichlet space with a ``carr\'e du champ'' satisfying \eqref{dnu} and \eqref{due}. Then
\begin{itemize}
 \item[(a)]   $A(\alpha,p)$ holds for every $p\in(1,2]$ and $\alpha\in(0,1)$, and for every $p\in(2,+\infty)$ and $\alpha\in\left(0, 1-\nu\left(\frac{1}{2} -\frac{1}{p}\right)\right)$; 
 \item[(b)] Under   $(G_{p_0})$ for some $p_0\in(2,+\infty)$, $A(\alpha,p)$ holds for every $p\in(1,p_0]$ and $\alpha\in(0,1)$, and for every $p\in(p_0,+\infty)$ and $\alpha\in\left(0, 1-\nu\left(\frac{1}{p_0} -\frac{1}{p}\right)\right)$;
 \item[(c)] Under   $(G_{p_0})$ and $({DG}_{2,\kappa})$ for some $2<p_0\leq +\infty$ and $\kappa\in(0,1)$, $A(\alpha,p)$ holds for every $p\in(1,p_0]$ and $\alpha\in(0,1)$, and for every $p>p_0$ and $\alpha\in\left(0, 1-\kappa \left(1-\frac{p_0}{p}\right)\right)$;
\item[(d)] Under  $(H^\eta)$ for some $\eta\in(0,1]$, $A(\alpha,p)$ holds for every $\alpha\in(0,\eta)$ and $p\in(1,+\infty)$.
\end{itemize}
\end{theorem}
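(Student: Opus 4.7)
The plan is to use the paraproduct approach advocated in the introduction, with paraproducts built from the heat semigroup $e^{-t\mathcal{L}}$ as developed in \cite{B,F,BF}. The starting point is a fundamental-theorem-of-calculus identity applied to $e^{-t\mathcal{L}}f\cdot e^{-t\mathcal{L}}g$: combining $\partial_t e^{-t\mathcal{L}} = -\mathcal{L}e^{-t\mathcal{L}}$ with the Leibniz rule \eqref{eq:leibniz} (which yields the pointwise identity $\mathcal{L}(uv) = u\mathcal{L}v + v\mathcal{L}u - 2\Upsilon(u,v)$ for $u,v\in L^\infty\cap\mathcal{F}$) and with the conservation \eqref{cons}, one rearranges to a decomposition
$$ fg = \Pi_g f + \Pi_f g + R(f,g), $$
where $\Pi_g f$ is a ``low-high'' paraproduct pairing $e^{-s\mathcal{L}}g$ against a Littlewood-Paley-type piece of $f$, $\Pi_f g$ is its symmetric companion, and $R(f,g)$ is a ``high-high'' remainder of the schematic form $\int_0^\infty e^{-s\mathcal{L}}\Upsilon(e^{-s\mathcal{L}}f,e^{-s\mathcal{L}}g)\,ds$.

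The property $A(\alpha,p)$ then reduces to the two bounds
$$ \|\Pi_g f\|_{p,\alpha} \lesssim \|g\|_\infty\|f\|_{p,\alpha}, \qquad \|R(f,g)\|_{p,\alpha} \lesssim \|f\|_\infty\|g\|_{p,\alpha}+\|g\|_\infty\|f\|_{p,\alpha}. $$
The first is a bilinear Calder\'on-Zygmund-type estimate: apply $\mathcal{L}^{\alpha/2}$, dominate $e^{-s\mathcal{L}}g$ pointwise by $\|g\|_\infty$ using submarkovianity, and reduce to an $L^p$-bound for a Littlewood-Paley square function of $f$ weighted by $s^{\alpha/2}$, which follows from the $L^p$-analyticity of $(e^{-t\mathcal{L}})$. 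The remainder is handled via \eqref{eq:carre} by $|\Upsilon(e^{-s\mathcal{L}}f,e^{-s\mathcal{L}}g)|\leq|\nabla e^{-s\mathcal{L}}f|\cdot|\nabla e^{-s\mathcal{L}}g|$: one factor is placed in $L^\infty$ either via the hypothesis $(G_q)$ or via the oscillation bound \eqref{gplocal}, while the other is controlled in $L^p$ by a square function comparable to $\|\cdot\|_{p,\alpha}$.

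The four cases then follow by specialising the input to the extrapolation step. Case (a) uses only the trivial $(G_q)$ for $q\in(1,2]$; this closes directly for $p\in(1,2]$ and $\alpha\in(0,1)$, while for $p>2$ one composes the $L^2$-estimate with the Sobolev embedding $\dot L^2_\beta\hookrightarrow L^p$ forced by \eqref{dnu} with $\beta=\nu(\tfrac12-\tfrac1p)$, which accounts for the loss $\alpha<1-\nu(\tfrac12-\tfrac1p)$. Case (b) replaces the threshold $2$ by $p_0$ using $(G_{p_0})$. Case (c) invokes the improved H\"older regularity $(\overline{H}_{p_0,\infty}^{1-\kappa-\varepsilon})$ supplied by $(DG_{2,\kappa})$ through Proposition~\ref{prop:bcf1}, which upgrades the Sobolev-embedding exponent to the sharper $\kappa(1-\tfrac{p_0}{p})$ in the loss $1-\kappa(1-\tfrac{p_0}{p})$. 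Case (d) uses $(H^\eta)$ to supply directly the H\"older regularity of $e^{-s\mathcal{L}}$ needed to run the extrapolation up to any $\alpha<\eta$ with no restriction on $p\in(1,+\infty)$.

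The principal obstacle lies in the regime $p>p_0$, where neither $(G_p)$ nor a standard Calder\'on-Zygmund theory is available for $\mathcal{L}^{\alpha/2}\Pi_g$: one must combine the $L^{p_0}$-bound with either a Sobolev embedding or the refined oscillation estimates of Proposition~\ref{prop:bcf1}, and track the loss in $\alpha$ sharply in each of (a)-(c) to hit the stated thresholds. A secondary difficulty is the rigorous justification of the paraproduct decomposition itself on $\dot L^p_\alpha\cap L^\infty$: density arguments using $\mathcal{C}_0(M)\cap\mathcal{D}$ and careful handling of the $t\to+\infty$ limit are needed because $e^{-t\mathcal{L}}$ does not in general vanish on $L^\infty$, and the two paraproducts $\Pi_g f$ and $\Pi_f g$ only combine into the pointwise product $fg$ after this limit is taken.
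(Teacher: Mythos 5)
Your proposed decomposition is genuinely different from the one the paper uses, and this difference is fatal under the hypotheses of Theorem~\ref{thm:summary}. You build a ``high-high'' remainder by Leibniz' rule for $\mathcal{L}$, schematically $R(f,g)\sim\int_0^{\infty}e^{-s\mathcal{L}}\Upsilon(e^{-s\mathcal{L}}f,e^{-s\mathcal{L}}g)\,ds$, and then propose to place one gradient factor in $L^\infty$. But control of $\sqrt{s}\,|\nabla e^{-s\mathcal{L}}f|$ in $L^\infty$ for $f\in L^\infty$ is precisely $(G_\infty)$, which the theorem does \emph{not} assume: statement (a) has only the trivial $(G_2)$, and statements (b) and (c) have $(G_{p_0})$ for finite $p_0$, which gives $L^{p_0}$ and not $L^\infty$ control. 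Nor does the oscillation hypothesis \eqref{gplocal} bound the gradient; it bounds oscillations of $e^{-t\mathcal{L}}f$ itself. So your estimate of $R(f,g)$ cannot be closed under the stated hypotheses --- it is essentially the Strichartz/CRT carr\'e-du-champ scheme, which does require pointwise Gaussian gradient bounds, i.e.\ exactly the assumption this theorem is meant to remove. The paper avoids the problem at the source: its product decomposition is $fg=\Pi_g(f)+\Pi_f(g)+\mathsf{P}_{N(\mathcal{L})}(f)\mathsf{P}_{N(\mathcal{L})}(g)$ with $\Pi_g(f)=\int_0^{\infty}Q_tf\cdot P_tg\,\frac{dt}{t}$ built from high-order $P_t,Q_t$, so there is \emph{no} $\Upsilon$ term at all, and the only place the carr\'e du champ enters is through $L^2$ or $L^{p_0}$ Davies--Gaffney bounds for $\nabla e^{-t\mathcal{L}}$ inside off-diagonal estimates.

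A second gap is the passage to $p>2$. You invoke a Sobolev embedding $\dot{L}^2_{\beta}\hookrightarrow L^p$ with $\beta=\nu(\tfrac12-\tfrac1p)$ ``forced by'' \eqref{dnu}. That embedding requires a \emph{lower} volume bound $V(x,r)\gtrsim r^\nu$, whereas \eqref{dnu} gives only the upper comparison $V(x,r)\lesssim(r/s)^\nu V(x,s)$. So the embedding is not available in this generality, and in any case the paper does not use it: for $p>p_0$ the mechanism is a weighted-extrapolation argument \`a la Auscher--Martell (Proposition~\ref{prop:extra-p>2}) applied to a square-functional reformulation $U$ of $\mathcal{L}^{\alpha/2}\Pi^1_g\mathcal{L}^{-\alpha/2}$, with the threshold $1-\alpha>\nu(\tfrac1{p_0}-\tfrac1p)$ arising from convergence of the $s/t$-integral after interpolating off-diagonal estimates, not from an embedding. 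Similarly, for statement (c), $(DG_{2,\kappa})$ enters by sharpening the off-diagonal bound on the paraproduct kernel (Theorem~\ref{thm:KLpLqIII}), and the final range is obtained by Stein complex interpolation (Theorem~\ref{thm:tent-extrap-large-3}), which is quite different from an improved embedding exponent. For (d) the paper derives a \emph{pointwise} maximal-function bound for the kernel $\tilde K_{\alpha,g}(s,t)$ with decay $(s/t)^{(\lambda-\alpha)/2}$ directly from \eqref{gplocal} (Proposition~\ref{prop:L2Lp-osci}), again with no gradient or $\Upsilon$ involved. Your outline also leaves the case $p\in(1,2)$ of statement (a) to a vague ``directly,'' whereas the paper must run a separate Blunck--Kunstmann-type extrapolation (Proposition~\ref{prop:extrap<2}) because the dual of Carleson duality is not available in that range.
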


Since $(G_{2})$ always holds,  (a) is nothing but  (b) in the case $p_0=2$.
Statement (a) is proven in Theorem \ref{thm:tent-extrap-small} (for $p\leq 2$) and in Theorem \ref{thm:extrap>22} (for $p> 2$), statement (b) in Theorem \ref{gaza} (for $p<p_0$) and Theorem \ref{thm:extrap>2} (for $p\geq p_0$), statement (c)  in Theorem \ref{thm:tent-extrap-large-3},
and statement (d) in Theorem \ref{thm:osci}. Statement (d) had been announced in \cite[p.333]{CRT}.

\begin{rem}
An alternative method of proof for Theorem $\ref{thm:summary}$ $(a)$ - $(b)$ is the following: Instead of using extrapolation methods on Lebesgue spaces (see \cite{BK},\cite{AM},\cite{BZ}) as we do here, it is also possible to use extrapolation methods on tent spaces. This amounts to considering the boundedness of singular integral operators of the form
\[
	T: T^{p,2}(M,\mu) \to T^{p,2}(M,\mu), \qquad
	TF(t,\,.\,) = \int_0^{+\infty} K_{\alpha}(t,s) F(s,\,.\,) \,\frac{ds}{s},
\]
with an operator-valued kernel $K_{\alpha}(t,s)$ as defined in  \eqref{eq:K}. 
 We refer to \cite{AKMP} and \cite{FMP} and the references therein for results of this kind. Combining this with the fact that, under \eqref{due},  the Hardy spaces $H^p_\mathcal{L}(M,\mu)$ associated with $\mathcal{L}$ are equal to $L^p(M,\mu)$, for $p \in (1,+\infty)$ (cf. \cite{AMR} for  Riemannian manifolds; the proof extends to our setting,
 see for instance \cite{Chen2}), one obtains the desired results. 
\end{rem}

\begin{example} Let $n\geq 2$. Consider $M:=\R^n \sharp \R^n$ the connected sum of two copies of  $\R^n$, that is the manifold consisting of two copies of $\R^n \setminus B(0,1)$ with the Euclidean metric, glued smoothly along the unit spheres. Then it is known that \eqref{due} is satisfied and the Riesz transform is bounded on $L^p$ for every $p\in(1,n)$ (and unbounded for $p\geq n$), see \cite{CCH}. It follows from Theorem $\ref{thm:summary}$  that $A(\alpha,p)$ holds for $p\in(1,n)$ with $\alpha\in(0,1)$ and for $p>n$ with $\alpha\in(0,\frac{n}{p})$.
\end{example}

\begin{example} Let $(M,d,\mu)$ be a doubling Riemannian manifold supporting the Poincar\'e inequality $(P_2)$, and $\mathcal{L}=\Delta$ its non-negative Laplace Beltrami operator. It is well-known that \eqref{due} holds (see for instance \cite{SA}). Then one knows from \cite{AC} that $(P_2)$ yields $(R_p)$ hence $(G_p)$  for every $p\in(2,2+\varepsilon)$ for some    $\varepsilon>0$,   and from \cite{BCF1} that  $(P_2)$ yields  $({DG}_{2,\kappa})$  for some    $\kappa\in (0,1)$. So we conclude that $A(\alpha,p)$ holds for $p\in(1,2]$ with $\alpha\in(0,1)$ and for $p>2$ with $\alpha\in\left(0,\frac{2}{p}+\eta\right)$, for some $\eta>0$. 
\end{example}

We now state our results concerning the characterization of  the Sobolev space $\dot L^p_\alpha$ in terms of a quadratic functional.

\begin{theorem} \label{thm:summary2} Let $(M,d,\mu, {\mathcal E})$  be a doubling metric  measure   Dirichlet space with a ``carr\'e du champ'' satisfying \eqref{dnu}.
Then
\begin{itemize}
\item[(e)] Under \eqref{due} and $(H^\eta)$, $E(\alpha,p)$ holds  for every $p\in(1,+\infty)$ and $\alpha\in(0,\eta)$;
\item[(f)]  Under the combination $(G_{p_0})$, $(P_{p_0})$ for some  $p_0>2$, $E(\alpha,p)$ holds for every $p\in(2,p_0)$ and $\alpha\in(0,1)$.
\end{itemize}
\end{theorem}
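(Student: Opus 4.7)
My plan is to prove both directions of $E(\alpha,p)$ separately, using the heat semigroup as a bridge to spectral calculus. Throughout I freely rely on the standard equivalence $\|\mathcal{L}^{\alpha/2}f\|_p\simeq\|\tilde G_\alpha f\|_p$ for the vertical Littlewood-Paley square function
\[
\tilde G_\alpha f(x)=\left(\int_0^\infty\left|\frac{(I-e^{-t\mathcal{L}})f(x)}{t^{\alpha/2}}\right|^2\frac{dt}{t}\right)^{1/2},
\]
which holds by spectral multiplier theory since $\lambda\mapsto\lambda^{-\alpha/2}(1-e^{-\lambda})$ is bounded on $(0,\infty)$ and vanishes at $0$ and $\infty$. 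The easy direction $\|\mathcal{L}^{\alpha/2}f\|_p\lesssim\|S_\alpha f\|_p$ in (e) I would obtain pointwise: using \eqref{cons} to rewrite $(I-e^{-t\mathcal{L}})f(x)=\int_M[f(x)-f(y)]p_t(x,y)\,d\mu(y)$, splitting $M$ into dyadic annuli around $x$ and invoking \eqref{UE} together with doubling gives $|(I-e^{-t\mathcal{L}})f(x)|\lesssim\sum_k\tau_k\aver{B(x,2^k\sqrt t)}|f-f(x)|\,d\mu$ with super-exponentially decaying $\tau_k$; Cauchy-Schwarz in $k$ and the change of variables $r=2^k\sqrt t$ in the $t$-integral then yield $\tilde G_\alpha f(x)\lesssim S_\alpha f(x)$ pointwise.

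For the reverse inequality $\|S_\alpha f\|_p\lesssim\|\mathcal{L}^{\alpha/2}f\|_p$ I would decompose at each scale
\[
f(y)-f(x)=[f-e^{-r^2\mathcal{L}}f](y)+[e^{-r^2\mathcal{L}}f(y)-e^{-r^2\mathcal{L}}f(x)]+[e^{-r^2\mathcal{L}}f(x)-f(x)],
\]
average over $y\in B(x,r)$, and apply Minkowski in $L^2(dr/r^{2\alpha+1})$ to split $S_\alpha f(x)$ into three square functions. The third equals $\tilde G_\alpha f(x)$ pointwise; the first, dominated by the Hardy-Littlewood maximal function applied to $|f-e^{-r^2\mathcal{L}}f|$, reduces to $\tilde G_\alpha f$ by Fefferman-Stein's vector-valued maximal inequality. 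The remaining oscillation piece, involving $\aver{B(x,r)}|e^{-r^2\mathcal{L}}f(y)-e^{-r^2\mathcal{L}}f(x)|\,d\mu(y)$, is controlled using Proposition~\ref{prop:bcf1}: fixing $\eta'\in(\alpha,\eta)$ and some $p$ so that $(H_{p,p}^{\eta'})$ and its local variant $(\overline H_{1,\infty}^{\eta''})$ for $\eta''<\eta'$ hold, the attendant H\"older-type oscillation estimate combined with the summability of $dr/r^{2\alpha+1}$ granted by $\alpha<\eta'$ closes the bound.

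The architecture for part (f) is identical, but the role of $(H^\eta)$ is played by the combination of $(G_{p_0})$ and $(P_{p_0})$ at the level $L^{p_0}$. The Poincar\'e inequality produces $\bigl(\aver{B_r}|e^{-t\mathcal{L}}f-\aver{B_r}e^{-t\mathcal{L}}f|^{p_0}\bigr)^{1/p_0}\lesssim r\bigl(\aver{B_r}|\nabla e^{-t\mathcal{L}}f|^{p_0}\bigr)^{1/p_0}$, and $(G_{p_0})$ together with Davies-Gaffney-type off-diagonal estimates converts the right-hand side into a factor of $t^{-1/2}$ times a local $L^{p_0}$-mean of $f$. Interpolating between the trivial $L^2$-bound and the resulting $L^{p_0}$-estimate delivers boundedness on $L^p$ for $p\in(2,p_0)$ and every $\alpha\in(0,1)$, and the three-term argument of (e) then yields $E(\alpha,p)$.

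The main obstacle is the oscillation square function in the upper bound. Property $(H^\eta)$ supplies the decay $(r/\sqrt t)^\eta$ only when $r\le\sqrt t$, while the scale naturally matched to $e^{-r^2\mathcal{L}}f$ sits exactly at the critical $r=\sqrt t$, where $(H^\eta)$ produces no $r$-decay. To nevertheless get a bound square-integrable against $dr/r^{2\alpha+1}$, one must borrow a factor of $r^\alpha$ from a Littlewood-Paley expansion of $e^{-r^2\mathcal{L}}f$ in terms of heat-time integrals over $[r^2,\infty)$, exploiting the strict inequality $\alpha<\eta'<\eta$ to ensure the resulting constants are finite; this scale bookkeeping, and the decoupling of spatial and spectral averages through the local estimate $(\overline H_{1,\infty}^{\eta''})$, constitute the technical crux of the argument.
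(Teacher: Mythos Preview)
Your approach to (e) is essentially correct and parallels the paper's, though organised differently. The paper dispenses with your three-term split and works directly with the Littlewood--Paley decomposition $f=\sum_n f_n$, $f_n=\int_{2^n}^{2^{n+1}}(t\mathcal{L})e^{-t\mathcal{L}}f\,\tfrac{dt}{t}$, estimating $\rho\text{-}\osc_{B(x,r)}(f_n)$ via $(\overline H^\lambda_{\rho,\rho})$ when $r\lesssim 2^{n/2}$ and trivially otherwise, then summing by Schur's lemma. Your first and third terms are handled cleanly, and you correctly recognise that the middle oscillation term at the critical scale $r=\sqrt t$ forces you into exactly this Littlewood--Paley expansion anyway; so your route is a detour that rejoins the paper's at the crucial step rather than a genuine alternative.

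For (f), however, there is a real gap. Applying $(P_{p_0})$ directly produces $\bigl(\aver{B_r}|\nabla e^{-t\mathcal{L}}f|^{p_0}\bigr)^{1/p_0}$ on the right-hand side, i.e.\ a local $L^{p_0}$ average. After the Littlewood--Paley reduction you need a vector-valued maximal inequality (Fefferman--Stein) to pass from $\mathcal M_q\bigl[\sqrt t\,|\nabla(t\mathcal{L})e^{-t\mathcal{L}}f|\bigr]$ to a vertical square function, and this requires the inner exponent $q<\min(2,p)$. But you are in the range $p\in(2,p_0)$ with $q=p_0>p$, so Fefferman--Stein does not apply. Your proposed ``interpolation between the trivial $L^2$-bound and the resulting $L^{p_0}$-estimate'' does not repair this: the relevant object is a square function of maximal functions, and there is no suitable $L^2$ endpoint of the form you would need. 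The paper closes this in two steps. First, it invokes an improved Poincar\'e-type inequality from \cite{BF2}, valid under $(G_{p_0})+(P_{p_0})$: for $h=(t\mathcal{L})e^{-t\mathcal{L}}f$ and any $\rho\in(1,2)$,
\[
\Bigl(\aver{B_r}\bigl|h-\textstyle\aver{B_r}h\bigr|^{p_0}\,d\mu\Bigr)^{1/p_0}\lesssim r\Bigl(\aver{2B_r}|\nabla h|^\rho\,d\mu\Bigr)^{1/\rho}+r^2\|\mathcal{L}h\|_{L^\infty(4B_r)},
\]
so the gradient average now carries exponent $\rho<2<p$ and Fefferman--Stein applies. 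Second, one must still control $\bigl\|\bigl(\int_0^\infty t^{-\alpha}|\sqrt t\,\nabla(t\mathcal{L})e^{-t\mathcal{L}}f|^2\,\tfrac{dt}{t}\bigr)^{1/2}\bigr\|_p$, that is, the $L^p$-boundedness of a \emph{vertical} square function for $p\in(2,p_0)$; this is itself a nontrivial consequence of $(G_{p_0})+(P_{p_0})$ via Riesz transform bounds (see Proposition~\ref{prop:square-function}(iii) and \cite{BF2}). Both ingredients are absent from your sketch.
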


Statement (e) is proven in Theorem \ref{thm:osci} and statement (f) in Theorem \ref{thm:osci2}.

In  statement (f), one does not need to assume explicitely   \eqref{due} but, according to \cite[Proposition 2.1]{BCF1},  the combination $(G_{p_0})+(P_{p_0})$ for  $p_0>2$  does imply \eqref{due}. 

Note that, similarly to the Riesz transform problem (see \cite{CD1,ACDH}), the case $1<p<2$ is substantially easier in the above results than the case $p>2$ .

\begin{example} Let us mention that our results are not bound to self-adjoint setting. Consider ${\mathbb R}^n$, equipped with its Euclidean structure, and a second order divergence form operator $L=-\div(A \nabla)$, where $A \in L^\infty(\R^n; \mathcal{B}(\C^n))$ and for some $\lambda>0$, $\Re(A(x)) \geq \lambda I > 0$ for a.e. $x \in \R^n$.
Then $L$ is a sectorial operator on $L^2(M,\mu)$, and $-L$ generates an analytic semigroup $(e^{-tL})_{t>0}$ on $L^2(M,\mu)$.
It is known (see \cite{A}) that the semigroup $(e^{-tL})_{t>0}$ and its gradient satisfy $L^2$ Davies-Gaffney estimates. From the solution of the Kato square root problem \cite{AHLMcT}, we know that the Riesz transform $\nabla L^{-1/2}$ is bounded on $L^2(M,\mu)$. Let us assume that $(e^{-tL})_{t>0}$ has a (complex-valued) kernel $p_t$ which satisfies Gaussian estimates, that is, $|p_t|$  satisfies \eqref{UE} (which is for example the case if $A$ has real-valued coefficients, see \cite{AT}). Then there exists $q_+=q_+(L) \in(2,\infty]$ such that for every $p\in(1,q_+)$,  $(G_p)$ and, equivalently, $(R_p)$ holds. See \cite{A}. In dimension $n=1$, it is known that $q_+=\infty$. Moreover, for every $p \in (1,+\infty)$, $(RR_p)$ holds.
The kernel $p_t$ satisfies a H\"older regularity estimate (see \cite{AT}), so property $(H^\eta)$ holds for some $\eta\in(0,1]$.

We leave it to the reader to check that, even if the operator $L$ is not self-adjoint, our proofs still hold in this situation.
We deduce that $A(\alpha,p)$ holds (as well as a chain rule property) for every $p\in(1,q_+]$ and $\alpha\in(0,1)$, and for every $p>q_+$ and $\alpha\in(0,1)$ with $0<\alpha < \kappa+\frac{q_+}{p}(1-\kappa)$ and $\kappa=\max(1-\frac{n}{q_+},\eta)$. Moreover if $p\leq q_+$ or $\alpha<\eta$, then $E(\alpha,p)$ holds.
\end{example}

Section \ref{sec:chainrule} is devoted to the proof of a chain rule inequality (which enables one to control the stability of Sobolev spaces via the composition by a nonlinearity). In particular it is proved (see Corollary \ref{cor:chainrule} and Theorem \ref{thm:chainrule}):

\begin{theorem}[Chain rule] \label{thm:cr}  Let  $(M,d,\mu, {\mathcal E})$  be a doubling metric  measure   Dirichlet space with a ``carr\'e du champ''. 
\begin{itemize}
\item Under the assumptions of $(e)$ and $(f)$ in Theorem $\ref{thm:summary2}$, we have the optimal chain rule: for $F$ a Lipschitz function with $F(0)=0$, the map
$ f\rightarrow F(f)$ is bounded in $\dot L^p_\alpha$ and
$$ \|F(f)\|_{\dot L^p_\alpha} \lesssim \|F\|_{\textrm{Lip}} \|f\|_{\dot L^p_\alpha};$$
\item Under the assumptions of Theorem \ref{thm:summary}, for $F$ a $C^2$ function with $F(0)=0$, the map
$ f\rightarrow F(f)$ is bounded in $\dot L^p_\alpha$.
\end{itemize}
\end{theorem}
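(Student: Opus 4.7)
The plan is to prove the two assertions by exploiting two different descriptions of $\dot L^p_\alpha$: the geometric characterisation via oscillations gives the sharp Lipschitz statement, while a paralinearisation formula built on the semigroup paraproducts handles the $C^2$ case under the weaker hypotheses of Theorem~\ref{thm:summary}.

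For the Lipschitz case, the assumptions of $(e)$ or $(f)$ in Theorem~\ref{thm:summary2} yield the equivalence $E(\alpha,p)$, namely $\|\mathcal{L}^{\alpha/2} f\|_p \simeq \|S_\alpha f\|_p$. The crucial observation is that the oscillation functional interacts trivially well with post-composition: if $F$ is Lipschitz with $F(0)=0$, then for every $x,z \in M$ one has
$$|F(f)(z) - F(f)(x)| \leq \|F\|_{\mathrm{Lip}} |f(z) - f(x)|,$$
so averaging over $B(x,r)$ and integrating against $dr/r^{2\alpha+1}$ gives the pointwise bound $S_\alpha(F(f))(x) \leq \|F\|_{\mathrm{Lip}} S_\alpha f(x)$. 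Taking $L^p$ norms and applying $E(\alpha,p)$ twice yields the desired linear estimate
$$\|F(f)\|_{\dot L^p_\alpha} \lesssim \|S_\alpha(F(f))\|_p \leq \|F\|_{\mathrm{Lip}} \|S_\alpha f\|_p \lesssim \|F\|_{\mathrm{Lip}} \|f\|_{\dot L^p_\alpha}.$$
A mild technical point is that one must justify that $F(f)$ lies in the completion defining $\dot L^p_\alpha$, which follows by approximation together with $F(0)=0$.

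For the $C^2$ case, Theorem~\ref{thm:summary} provides only $A(\alpha,p)$, not the sharper equivalence, so the pointwise oscillation trick is unavailable. Instead, I would use the semigroup paraproducts of \cite{B,F,BF}. The algebra property is already established through a decomposition of the form $fg = \Pi(f,g) + \Pi(g,f) + R(f,g)$ with all three pieces bounded on $L^\infty \times \dot L^p_\alpha$, and the key additional input is the paralinearisation identity
$$F(f) = \Pi\bigl(F'(f), f\bigr) + E(f),$$
whose remainder $E(f)$ inherits strictly better Sobolev regularity from the quadratic Taylor remainder $F(a)-F(b)-F'(b)(a-b)=O(|a-b|^2)$. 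Bounding the paraproduct by $\|F'(f)\|_\infty \|f\|_{\dot L^p_\alpha}$, with $\|F'(f)\|_\infty$ controlled by $F'$ on the compact interval $[-\|f\|_\infty,\|f\|_\infty]$ since $F \in C^2$, and the remainder $E(f)$ by a constant depending only on $\|f\|_\infty$, $\|F'\|$ and $\|F''\|$ on that same interval, times a power of $\|f\|_{\dot L^p_\alpha}$, gives the desired boundedness.

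The main obstacle, and the technical heart of the second part, is the construction and control of the paralinearisation remainder $E(f)$ in our general Dirichlet setting. The natural approach is to differentiate $t \mapsto F(e^{-t\mathcal L} f)$ in $t$, apply Taylor's formula at second order, and reorganise the terms so that the linear part is recognisable as a paraproduct; the quadratic error $|e^{-t\mathcal L} f - f|^2$ must then be handled through the oscillation estimates encoded by $(H^\eta)$, $(G_{p_0})$, or $(DG_{2,\kappa})$ available under the assumptions of Theorem~\ref{thm:summary}, combined with the algebra property $A(\alpha,p)$ itself, which leaves just enough room since the squared error contributes regularity of order approximately $2\alpha$. A secondary issue is that the argument should be run on the regularised $F(e^{-t\mathcal L} f)$ and then passed to the limit $t \to 0$ with uniform bounds, in the same spirit as the extrapolation and truncation techniques used earlier in the paper to establish $A(\alpha,p)$.
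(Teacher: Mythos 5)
Your first bullet is correct and is exactly what the paper does: under $(e)$ or $(f)$ one has $E(\alpha,p)$, the pointwise bound $\rho\text{-}\osc_B(F(f)) \le \|F\|_{\mathrm{Lip}}\, \rho\text{-}\osc_B(f)$ gives $S^\rho_\alpha(F(f)) \le \|F\|_{\mathrm{Lip}} S^\rho_\alpha(f)$, and the equivalence $\|f\|_{\dot L^p_\alpha} \simeq \|S^\rho_\alpha f\|_p$ closes the argument (this is the content of Corollary~\ref{cor:chainrule}).

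Your second bullet, however, takes a detour that does not match the paper and that in fact would not deliver the theorem as stated. You propose to deduce the $C^2$ chain rule from a paralinearisation identity $F(f)=\Pi_{F'(f)}(f)+E(f)$, with the remainder $E(f)$ controlled via a second-order Taylor expansion. But the paralinearisation formula proved in this paper (Theorem~\ref{thm:paralinearisation}) requires $F\in C^3$, the Ahlfors regularity condition \eqref{eq:growth}, and $\alpha p>\nu$ --- strictly stronger hypotheses than those of Theorem~\ref{thm:summary}. The quadratic error in $|e^{-t\mathcal{L}}f - f|^2$ that you want to exploit only gains regularity after a Sobolev embedding, which is precisely why $\alpha p>\nu$ and the volume lower bound enter; without them the remainder does not land in $\dot L^p_{\alpha+\rho}$ and your decomposition does not close.

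The paper's argument for the $C^2$ case (Theorem~\ref{thm:chainrule}) is more direct and does not separate out a paralinearisation remainder at all. Differentiating $t\mapsto F(P_t f)$ gives the exact representation
$$F(f) = -\int_0^{+\infty} Q_t f \cdot F'(P_t f)\,\frac{dt}{t} + F\bigl(\mathsf{P}_{N(\mathcal{L})}f\bigr),$$
with no Taylor expansion. The point is that the entire paraproduct boundedness machinery developed in Sections~\ref{sec:<2}--\ref{sec p>2} uses only three structural features of the weight $H(t,\cdot)=P_t g$: uniform $L^\infty$ boundedness, Davies-Gaffney/off-diagonal estimates for $\nabla H(t,\cdot)$ at scale $\sqrt t$, and the global square-function estimate for $\|\nabla H(t,\cdot)\|_{L^2(dt/t)}$. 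Setting $H(t,\cdot)=F'(P_t f)$, property (a) follows from continuity of $F'$ and $\|P_t f\|_\infty \le \|f\|_\infty$; properties (b) and (c) follow from the gradient chain rule $\nabla H(t,\cdot) = F''(P_t f)\,\nabla P_t f$ together with boundedness of $F''$ on $[-\|f\|_\infty, \|f\|_\infty]$. Thus the boundedness proof for $\Pi_g$ applies verbatim to this quasi-paraproduct, giving $\|F(f)\|_{\dot L^p_\alpha} \le C(F,\|f\|_\infty)\,\|f\|_{\dot L^p_\alpha}$ without any extra geometric hypotheses. This also explains why $F\in C^2$ (rather than $C^1$) is the natural regularity threshold: it is needed precisely to control $\nabla F'(P_t f)$.
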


Similarly, a paralinearisation formula (also called Bony's formula) is also obtained in this setting and we refer the reader to Theorem \ref{thm:paralinearisation} for a precise statement.

\section{Preliminaries, definitions and toolbox}

In this section, $(M,d,\mu, {\mathcal E})$ will  be a doubling metric  measure   Dirichlet space with a ``carr\'e du champ''.

\subsection{Functional calculus}

Since $\mathcal{L}$ is a self-adjoint operator on $L^2(M,\mu)$, it admits a bounded Borel functional calculus on $L^2(M,\mu)$. Under the additional assumption of \eqref{dnu} and \eqref{due}, it is known that $\mathcal{L}$ can be extended to an unbounded operator acting on $L^p(M,\mu)$, for $p \in (1,+\infty)$, with a bounded $H^\infty$ functional calculus on $L^p(M,\mu)$ as shown in \cite[Theorem 3.1]{DR}. It also admits a bounded H\"ormander-type functional calculus on $L^p(M,\mu)$, see  \cite{DR} and \cite[Theorem 3.1]{DOS}.  We refer to \cite{Mc,ADM} and references in \cite{ADM} for more details on functional calculus. In the sequel, we will mostly make use of $H^\infty$ functional calculus rather than H\"ormander-type functional calculus. 

Moreover, gathering Theorem 3.1, Remark 1 p.451 and (1.8) from  \cite{DOS}, one obtains the following estimate on imaginary powers of the operator $\mathcal{L}$ (see also \cite{SW}).

\begin{proposition}\label{prop:imaginary} Under \eqref{dnu} and \eqref{due}, for every $p\in(1,+\infty)$ and $s>\nu/2$, one has
$$  \| \mathcal{L}^{i\beta} \|_{p \to p} \lesssim (1+|\beta|)^{s},$$
 for  $\beta\in\R$.
\end{proposition}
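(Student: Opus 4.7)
The plan is to reduce the estimate to a H\"ormander-type spectral multiplier theorem for $\mathcal{L}$ and then compute the relevant Sobolev norm of the function $F(\lambda)=\lambda^{i\beta}$ by direct differentiation. Under \eqref{dnu} and \eqref{due}, the spectral multiplier theorem of Duong--Ouhabaz--Sikora (precisely the combination of Theorem 3.1, Remark 1 and (1.8) in \cite{DOS}) asserts that, having fixed a non-trivial cutoff $\eta\in C_c^\infty(0,+\infty)$ and any $q\in[2,+\infty]$, for every $s>\nu/2$ and every bounded Borel function $F\colon[0,+\infty)\to\C$ the operator $F(\mathcal{L})$ extends to a bounded operator on $L^p(M,\mu)$ for every $p\in(1,+\infty)$, with
$$ \|F(\mathcal{L})\|_{p\to p} \lesssim \|F\|_\infty + \sup_{t>0}\, \bigl\|\eta(\cdot)\,F(t\,\cdot)\bigr\|_{W^{s,q}(\R)}. $$
I would apply this with the choice $F(\lambda)=\lambda^{i\beta}$, which is bounded of modulus one on $(0,+\infty)$.

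The next step is to bound the right-hand side uniformly in $t>0$ by a power of $1+|\beta|$. Since $(t\lambda)^{i\beta}=t^{i\beta}\lambda^{i\beta}$ and $|t^{i\beta}|=1$, the supremum over $t$ collapses, and it suffices to prove
$$ \bigl\|\eta(\lambda)\,\lambda^{i\beta}\bigr\|_{W^{s,q}(\R)} \lesssim (1+|\beta|)^{s}. $$
For an integer $k\in\N$, on the compact subset $\supp\eta\subset(0,+\infty)$ the $j$-th derivative of $\lambda^{i\beta}$ equals $(i\beta)(i\beta-1)\cdots(i\beta-j+1)\lambda^{i\beta-j}$, pointwise bounded by $C_j(1+|\beta|)^j$; applying Leibniz to the product $\eta(\lambda)\lambda^{i\beta}$ therefore yields $\|\eta\,\lambda^{i\beta}\|_{W^{k,q}}\lesssim (1+|\beta|)^k$. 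For the non-integer case $s=k+\theta$ with $\theta\in(0,1)$, complex interpolation between the $W^{k,q}$ and $W^{k+1,q}$ bounds gives the claimed $(1+|\beta|)^{s}$.

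Combining the two steps concludes the proof for any $s>\nu/2$. The serious content sits inside the Duong--Ouhabaz--Sikora theorem, which we take as a black box; what remains is essentially bookkeeping. I would expect the main minor obstacle to be the clean justification of the fractional step, which can be bypassed either by complex interpolation as above or, alternatively, by using a Bessel-potential characterisation of $W^{s,q}$ together with an explicit computation on the Mellin side (where $\lambda\mapsto\lambda^{i\beta}$ becomes a pure translation).
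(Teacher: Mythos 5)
Your proposal is correct and follows essentially the same route as the paper, which simply cites the combination of Theorem~3.1, Remark~1 and~(1.8) in \cite{DOS} (together with \cite{SW}) without spelling out the computation. What you have done is carry out explicitly the bookkeeping that the citation packages up: the scale-invariance $(t\lambda)^{i\beta}=t^{i\beta}\lambda^{i\beta}$ collapsing the supremum in $t$, the Leibniz computation for integer $k$, and the interpolation step for fractional $s$, all of which are sound.
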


\subsection{Operator estimates}

The building blocks of our analysis will be the following operators derived from the semigroup $(e^{-t\mathcal{L}})_{t>0}$.

\begin{definition} \label{def:Qt-Pt}
Let $N > 0$, and set $c_N=\int_0^{+\infty} s^{N} e^{-s} \,\frac{ds}{s}$.
 For $t>0$, 
define
\begin{equation} \label{def:Qt}
	Q_t^{(N)}:=c_N^{-1}(t\mathcal{L})^{N} e^{-t\mathcal{L}}
\end{equation}
and 
\begin{equation} \label{def:Pt}
	P_t^{(N)}:=\phi_N(t\mathcal{L}),
\end{equation}
with $\phi_N(x):= c_N^{-1}\int_x^{+\infty} s^{N} e^{-s} \,\frac{ds}{s}$,  $x\ge 0$.
\end{definition}

\begin{rem} \label{Pt-rem}
Let $p \in (1,\infty)$ and $N>0$.
\begin{enumerate}
\item[(i)]
As a consequence of the bounded functional calculus for ${\mathcal L}$ in $L^p(M,\mu)$, the operators $P_t^{(N)}$ and $Q_t^{(N)}$ are bounded in $L^p(M,\mu)$, uniformly in $t>0$.
\item[(ii)] 
Note that $P_t^{(1)}=e^{-t\mathcal{L}}$ and $Q_t^{(1)}=t\mathcal{L}e^{-t\mathcal{L}}$. The two families of operators $(P_t^{(N)})_{t>0}$ and $(Q_t^{(N)})_{t>0}$ are related by $$ t\partial_t P_t^{(N)} =  t\mathcal{L}\phi'_N(t\mathcal{L})= - Q_t^{(N)}. $$
Since $P_t^{(N)}f\to f$ as $t\to 0^+$ in $L^p(M,\mu)$ (see the proof of Proposition \ref{prop:reproducing} below), it follows that
\begin{equation}\label{along}
P_t^{(N)} = \textrm{Id}+ \int_0^t Q_s^{(N)} \, \frac{ds}{s}.
\end{equation}
\item[(iii)]
One can write  $P_t^{(N)}= R_t^{(N)} e^{-t/2{\mathcal L}}$, with 
\begin{equation} 
R_t^{(N)}:=c_N^{-1}\int_t^{+\infty} (s{\mathcal L})^{N} e^{-(s-t/2){\mathcal L}} \,\frac{ds}{s}. \label{eq:Ro} 
\end{equation}
By functional calculus, $R_t^{(N)}$ is again a bounded operator in $L^p(M,\mu)$, uniformly in $t>0$.
\item[(iv)]
If $N$ is an integer, then $Q_t^{(N)}=(-1)^{N}c_N^{-1} t^{N} \partial_t^{N} e^{-t\mathcal{L}}$, and $P_t^{(N)}=p(t\mathcal{L})e^{-t\mathcal{L}}$, $p$ being a polynomial of degree $N-1$ with 
$p(0)=1$.
\end{enumerate}
\end{rem}

\begin{definition}
Let $p,q \in [1,\infty]$ with $p\leq q$, and let $r>0$. A linear operator $T$  acting on $L^p(M,\mu)$ is said to have $L^p$-$L^q$ off-diagonal bounds of order $N>0$ at scale $r$, if there exists $C_N > 0$ such that for every pair of balls $B_1,B_2$ of radius $r$ and every $f \in L^p(M,\mu)$ supported in $B_1$, we have
$$ \left(\aver{B_2}|T f|^q\,d\mu \right)^{1/q} \leq C_N \left(1+ \frac{d^2(B_1,B_2)}{r^2}\right)^{-N} \left(\aver{B_1}|f|^p\,d\mu\right)^{1/p}.$$
\end{definition}

Let us recall that we may compose off-diagonal estimates:

\begin{lemma} \label{lem:comp-OD}
Let $p,q,r\in [1,\infty]$ with $p\leq q\leq r$. Let $S$ (resp. $T$) be two linear operators satisfying $L^p$-$L^q$ (resp. $L^q$-$L^r$) off-diagonal estimates of order $N_1>\frac{\nu}{2}$ (resp. $N_2>\frac{\nu}{2}$) at scale $\sqrt{s}$ (resp. $\sqrt{t}$).
If $s=t$, then $TS$ satisfies $L^p$-$L^r$ off-diagonal estimates of order $N:=\min(N_1,N_2)>0$ at scale $\sqrt{s}=\sqrt{t}$.
If $p=q=r$ with $N>\nu$ (and $s\neq t$), then $TS$ satisfies $L^p$-$L^r$ off-diagonal estimates of order $N-\frac{\nu}{2}$ at scale $\max(\sqrt{s},\sqrt{t})$.
\end{lemma}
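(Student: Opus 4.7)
The plan is to dispatch the two regimes separately, in each case inserting a partition of space at the relevant scale and composing the two off-diagonal bounds. For $s=t$, fix balls $B_1,B_2$ of radius $\sqrt{s}$ and $f$ supported on $B_1$. Choose a maximal $\sqrt{s}$-separated net $(x_j)_j$ in $M$; by \eqref{d} the balls $D_j:=B(x_j,\sqrt{s})$ form a cover of $M$ with bounded overlap and admit a disjoint refinement $(D_j^\ast)_j$. Starting from
$$ \Big(\aver{B_2} |TSf|^r \, d\mu\Big)^{1/r} \le \sum_j \Big(\aver{B_2} \big|T(\Eins_{D_j^\ast} Sf)\big|^r \, d\mu\Big)^{1/r},$$
apply the $L^q$-$L^r$ off-diagonal bound of $T$ to each summand, and then the $L^p$-$L^q$ bound of $S$ to the $L^q$-average of $Sf$ on $D_j$ (viewed with source $B_1$). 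Up to the $L^p$-average of $|f|$ on $B_1$, the problem reduces to bounding
$$\sum_j \Big(1 + \tfrac{d^2(B_1, D_j)}{s}\Big)^{-N_1} \Big(1 + \tfrac{d^2(D_j, B_2)}{s}\Big)^{-N_2}.$$
A triangle-inequality argument shows that for each $j$ one of the two distances is at least a fixed fraction of $d(B_1,B_2)$, so one factor yields $(1+d^2(B_1,B_2)/s)^{-\min(N_1, N_2)}$, while the remaining single-parameter sum $\sum_j (1+d^2(x_j,\cdot)/s)^{-N_i}$ converges by \eqref{dnu}, thanks to $N_1, N_2 > \nu/2$.

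For $p=q=r$ with $s\neq t$, assume without loss of generality $s \leq t$. The strategy is to upgrade the off-diagonal bound of $S$ from scale $\sqrt{s}$ to the larger scale $\sqrt{t}$, with a loss of $\nu/2$ in the order, and then invoke the previous case at the common scale $\sqrt{t}$. For the upgrade, given balls of radius $\sqrt{t}$, cover them by $\lesssim (t/s)^{\nu/2}$ sub-balls of radius $\sqrt{s}$ and apply the hypothesized estimate on each sub-pair. Since $s \leq t$, the decay factor $(1+d^2/s)^{-N_1}$ is no smaller than $(1+d^2/t)^{-N_1}$, while the combinatorial count $(t/s)^{\nu/2}$ together with the volume comparisons of \eqref{dnu} accounts for precisely a $\nu/2$ loss in the decay order. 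The assumption $N > \nu$ keeps the upgraded order $N_1 - \nu/2$ above $\nu/2$, and the first part then yields combined off-diagonal bounds of order $\min(N_1-\nu/2, N_2)$, which dominates $N - \nu/2$ in every regime. The case $s > t$ is symmetric, upgrading $T$ instead.

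The main delicacy lies in the change-of-scale upgrade: one must verify that the combinatorial count of sub-balls, the volume ratios from \eqref{dnu}, and the comparison of decay factors combine to an order loss of exactly $\nu/2$ and no more. A secondary point is the near-diagonal regime $d(B_1,B_2)\lesssim\sqrt{t}$, which has to be handled separately by uniform $L^p$-boundedness, itself a consequence of the off-diagonal hypotheses via a covering of the source ball.
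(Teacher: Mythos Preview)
Your proposal is correct and follows essentially the same approach as the paper: insert a bounded-overlap cover at the relevant scale, use a triangle-inequality splitting of the double sum for $s=t$, and for $s\neq t$ upgrade the operator at the smaller scale to the larger one (losing $\nu/2$) before invoking the equal-scale case, handling the near-diagonal regime separately via $L^p$-boundedness. One minor slip: since $s\le t$ one has $(1+d^2/s)^{-N_1}\le (1+d^2/t)^{-N_1}$, so the scale-$s$ factor is \emph{no larger} (not ``no smaller''); it is precisely this extra smallness at scale $s$, combined with $N>\nu$, that absorbs the $(t/s)^{\nu/2}$ combinatorics---exactly the delicacy you flag in your closing paragraph.
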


\begin{proof} 
If $s=t$, consider balls $B_1,B_2$ of radius $\sqrt{s}$ and $(B^j)_j$ a collection of balls of radius $\sqrt{s}$ which covers the whole space and satisfies a bounded overlap property. Then we have for every $f \in L^p$ supported on $B_1$
\begin{align*}
\left(\aver{B_2} |TSf|^{r} \, d\mu \right)^{1/r} & \lesssim \sum_j  \left( 1+ \frac{d^2(B_2,B^j)}{s}\right)^{-N_2} \left(\aver{B_2} |Sf|^{q} \, d\mu \right)^{1/q} \\
& \lesssim \sum_j  \left( 1+ \frac{d^2(B_2,B^j)}{s}\right)^{-N_2} \left( 1+ \frac{d^2(B_1,B^j)}{s}\right)^{-N_1}
 \left(\aver{B_1} |f|^{p} \, d\mu \right)^{1/p} \\
 & \lesssim \left( 1+ \frac{d^2(B_2,B^j)}{s}\right)^{-N}  \left(\aver{B_1} |f|^{p} \, d\mu \right)^{1/p},
\end{align*}
where we used that $N>\nu/2$ to sum over the covering as detailed in \cite[Lemma 3.6]{FK}.

Let us now consider the case $p=q=r$. Consider the case $s\geq t$ (the other one can be treated similarly). We are first going to check that $T$ satisfies $L^p$-$L^p$ off-diagonal estimates at the largest scale $\sqrt{s}$. Since $N_2>\frac{\nu}{2}$, by decomposing the whole space with a bounded covering at scale $\sqrt{s}$, we deduce that $T$ is $L^p$-bounded.  So the on-diagonal case of the off-diagonal estimates for $T$ directly hold. Then fix two balls $B_1,B_2$ of radius $\sqrt{s}$ with $d(B_1,B_2)\geq \sqrt{s}$ and $f\in L^p$ supported on $B_1$. Consider $(B_2^j)_j$  (resp. $(B_1^j)_j$) a bounded covering of $B_2$ (resp. $B_1$) with balls of radius $\sqrt{t}$. Then for every index $j$, we have
\begin{align*}
\left(\aver{B_2^j} |Tf|^{p} \, d\mu \right)^{1/p} & \lesssim \sum_k  \left( 1+ \frac{d^2(B_2^j,B_1^k)}{t}\right)^{-N_2} \left(\aver{B_1^k} |f|^{p} \, d\mu \right)^{1/p} \\
& \lesssim  \left( 1+ \frac{d^2(B_2,B_1)}{t}\right)^{-N_2+\nu/2 }  \left(\frac{s}{t}\right)^{\nu/(2p)} \left(\aver{B_1} |f|^{p} \, d\mu \right)^{1/p},
\end{align*}
where we used the doubling property.
Then by summing over $j$, we get
\begin{align*}
\left(\aver{B_2} |Tf|^{p} \, d\mu \right)^{1/p} & \lesssim \left( |B_2|^{-1} \sum_j |B_2^j| \left(\aver{B_2^j} |Tf|^{p} \, d\mu \right)\right)^{1/p} \\
 & \lesssim  \left( 1+ \frac{d^2(B_2,B_1)}{t}\right)^{-N_2+\nu/2 }  \left(\frac{s}{t}\right)^{\nu/(2p)} \left(\aver{B_1} |f|^{p} \, d\mu \right)^{1/p}.
\end{align*}
Since $d(B_1,B_2)\geq \sqrt{s}\geq \sqrt{t}$ and $N_2\geq \nu$ we have
\begin{align*}
\left(\aver{B_2} |Tf|^{p} \, d\mu \right)^{1/p} & \lesssim  \left( 1+ \frac{d^2(B_2,B_1)}{s}\right)^{-N_2+\nu/2 } \left(\aver{B_1} |f|^{p} \, d\mu \right)^{1/p},
\end{align*}
which concludes the proof of the fact that $T$ admits $L^p$-$L^p$ off-diagonal estimates at the larger scale $\sqrt{s}$.
Then we may repeat the first statement of the Lemma and conclude that $TS$ admits $L^p$-$L^p$ off-diagonal estimates at the scale $\sqrt{s}$.
\end{proof}

\begin{lemma} \label{prop:kernel-est}
Assume  \eqref{due}. Let  $N> 0$. For every $t>0$,  $Q_t^{(N)}$  is an integral operator with kernel $k_t^{(N)}$ such that for all  $t>0$, all $\theta\in[0,1]$ and a.e. $x,y \in M$,
\begin{equation} \label{kernel-est}
	\abs{k_t^{(N)}(x,y)}\lesssim \frac{1}{V(x,\sqrt{t})^\theta V(y,\sqrt{t})^{1-\theta} } \left(1+\frac{d^2(x,y)}{t}\right)^{-N}.
\end{equation}
Consequently, for every $p,q\in[1,{+\infty}]$ with $p\leq q$, $Q_t^{(N)}$ satisfies $L^p$-$L^q$ off-diagonal bounds of order $N$ at scale $\sqrt{t}$.\\
Let $N>\frac{\nu}{2}$. For every $t>0$, $P_t^{(N)}$ is an integral operator with kernel $\tilde k_t^{(N)}$ such that for all $t>0$, all $\theta\in[0,1]$ and a.e. $x,y\in M$,
\begin{equation*} 
	\abs{\tilde k_t^{(N)}(x,y)} \lesssim \frac{1}{V(x,\sqrt{t})^\theta V(y,\sqrt{t})^{1-\theta} } \left(1+\frac{d^2(x,y)}{t}\right)^{-N}.
\end{equation*}
Consequently, for every $p,q\in[1,{+\infty}]$ with $p\leq q$, $P_t^{(N)}$ satisfies $L^p$-$L^q$ off-diagonal bounds of order $N$ at scale $\sqrt{t}$.
\end{lemma}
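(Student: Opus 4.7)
The plan is two steps: prove the estimate for $Q_t^{(N)}$ first, then transfer it to $P_t^{(N)}$ via the integral representation $P_t^{(N)}=\int_t^{+\infty}Q_r^{(N)}\,dr/r$ from Remark \ref{Pt-rem}(ii).

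For $Q_t^{(N)}$ with integer $N=k$, write $Q_t^{(k)}=c_k^{-1}(-t)^k\partial_t^k e^{-t\mathcal L}$. The semigroup is bounded analytic on the right half-plane, so Cauchy's formula on a disc of radius $t/2$ around $t$, combined with the Gaussian upper estimate \eqref{UE} (which \eqref{due} implies under \eqref{d}), yields
\[
|t^k\partial_t^k p_t(x,y)|\lesssim\frac{1}{V(x,\sqrt t)}\exp\!\Bigl(-\frac{d^2(x,y)}{Ct}\Bigr),
\]
and the Gaussian factor is traded for polynomial decay of any order $M$ via $e^{-u}\lesssim_M(1+u)^{-M}$. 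Symmetry of the kernel (from self-adjointness) together with \eqref{dnu} gives the $\theta$-interpolated form. For non-integer $N$, pick an integer $k>N$ and use the Balakrishnan formula $\mathcal L^{N-k}=\Gamma(k-N)^{-1}\int_0^{+\infty}s^{k-N-1}e^{-s\mathcal L}\,ds$ to write
\[
Q_t^{(N)}=\frac{c_N^{-1}c_k\,t^N}{\Gamma(k-N)}\int_0^{+\infty}s^{k-N-1}(s+t)^{-k}\,Q_{s+t}^{(k)}\,ds.
\]
Inserting the integer-case bound on $Q_{s+t}^{(k)}$ (with large polynomial order $M$), rescaling $u=(s+t)/t$, and splitting the $u$-integral at $u\simeq 1+d^2(x,y)/t$ then yields polynomial decay of order exactly $N$; the integrability of the weight $(u-1)^{k-N-1}u^{-k}$ at $u=1$ and at $u=+\infty$ requires precisely $k>N>0$.

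For $P_t^{(N)}$ with $N>\nu/2$, the identity $\tilde k_t^{(N)}(x,y)=\int_t^{+\infty}k_r^{(N)}(x,y)\,dr/r$ reduces matters to estimating
\[
\int_t^{+\infty}\frac{1}{V(x,\sqrt r)^\theta\,V(y,\sqrt r)^{1-\theta}}\Bigl(1+\frac{d^2(x,y)}{r}\Bigr)^{-M}\frac{dr}{r}.
\]
Substituting $r=tu$ and using \eqref{dnu} to control the volume growth, together with splitting at $u\simeq 1+d^2(x,y)/t$, the condition $N>\nu/2$ is exactly what is needed to make the integral converge with the correct dependence on $d^2(x,y)/t$ and recover decay of order $N$. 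Finally, the $L^p$-$L^q$ off-diagonal estimates at scale $\sqrt t$ for both $Q_t^{(N)}$ and $P_t^{(N)}$ follow from the pointwise kernel bounds: for balls $B_1,B_2$ of radius $\sqrt t$, apply Hölder's inequality to $|Tf(x)|\leq\int_{B_1}|k(x,y)||f(y)|\,d\mu(y)$ and invoke \eqref{dnu}.

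The main obstacle is the non-integer case for $Q_t^{(N)}$: the subordination integral must be estimated with enough care to preserve polynomial decay of the exact order $N$, rather than degrading to the weaker order $\nu/2$ that a naive bound would produce. Once this is in place, the kernel estimate for $P_t^{(N)}$ and the two off-diagonal statements are relatively routine.
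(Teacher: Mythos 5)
Your treatment of $Q_t^{(N)}$ is essentially the paper's: integer case via $Q_t^{(N)}=(-1)^N c_N^{-1}t^N\partial_t^N e^{-t\mathcal L}$ and Gaussian bounds for time derivatives of the heat kernel, non-integer case via the Balakrishnan/subordination formula $Q_t^{(N)}=c\int_0^\infty(s\mathcal L)^K e^{-(s+t)\mathcal L}(t/s)^N\,ds/s$ (paper) which is the same as your representation, and then passing from Gaussian to polynomial decay. The split point differs ($s\simeq t$ in the paper versus $u\simeq 1+d^2/t$ for you) but both yield order $N$.

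The $P_t^{(N)}$ part is where the argument breaks. You write $\tilde k_t^{(N)}(x,y)=\int_t^{+\infty}k_r^{(N)}(x,y)\,dr/r$, but Remark~\ref{Pt-rem}(ii) actually gives $P_t^{(N)}=\mathrm{Id}+\int_0^t Q_s^{(N)}\,ds/s$; integrating from $t$ to $+\infty$ instead produces $P_t^{(N)}-\mathsf P_{N(\mathcal L)}$, which already drops a term, but the deeper issue is that the resulting integral does not converge. Under \eqref{dnu} the volume $V(x,\sqrt r)$ is only known to be nondecreasing; there is no reverse doubling, so it can grow arbitrarily slowly (or stay bounded when $|M|<\infty$). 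Hence $\int_t^{+\infty}\frac{1}{V(x,\sqrt r)^\theta V(y,\sqrt r)^{1-\theta}}(1+d^2/r)^{-M}\,\frac{dr}{r}$ diverges in general, and in any case $(1+d^2/r)^{-M}\to 1$ as $r\to+\infty$, so one recovers no spatial decay whatsoever. The condition $N>\nu/2$ cannot rescue this: it has nothing to do with the behaviour of the integrand at $r=+\infty$.

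The correct use of $N>\nu/2$ is on the other side. For $x\neq y$ (in fact the paper restricts to $d(x,y)\geq\sqrt t$, treating the diagonal regime $d(x,y)\lesssim\sqrt t$ by the $L^2$ representation $P_t^{(N)}=e^{-t\mathcal L/4}R_t^{(N)}e^{-t\mathcal L/4}$), one uses $\tilde k_t^{(N)}(x,y)=-\int_0^t k_s^{(N)}(x,y)\,ds/s$ and pays the price of switching the volume from scale $\sqrt s$ to $\sqrt t$: since $s\le t$, doubling gives $V(x,\sqrt s)\gtrsim(s/t)^{\nu/2}V(x,\sqrt t)$, which introduces a factor $(t/s)^{\nu/2}$ in the integrand. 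That factor would make $\int_0^t(t/s)^{\nu/2}\,ds/s$ diverge at $s=0$, and it is precisely the decay factor $(1+d^2/s)^{-N}\lesssim(s/d^2)^N$ together with $N>\nu/2$ that makes $\int_0^t(t/s)^{\nu/2}(s/t)^N\,ds/s$ converge and produce the desired $(1+d^2/t)^{-N}$ bound. You should reverse the direction of the $s$-integration and add the separate diagonal argument.
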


\begin{rem} \label{rem:kr} 
Let $N>\frac{\nu}{2}$. The operator $R_t^{(N)}$ introduced in Remark  \ref{Pt-rem} is an integral operator as well, with its kernel $r_t^{(N)}$ satisfying \eqref{kernel-est}. Moreover, for all $p \in [1,+\infty]$, $R_t^{(N)}$ has $L^p$-$L^p$ off-diagonal bounds of order $N$. 
\end{rem}

\begin{proof}  
Observe first that by \eqref{dnu}, one has
 for $\theta\in[0,1]$ and every $x,y\in M$ 
\begin{equation} \frac{1}{V(x,\sqrt{t})} e^{-c\frac{d^2(x,y)}{t}} \lesssim \frac{1}{V(x,\sqrt{t})^\theta V(y,\sqrt{t})^{1-\theta}} e^{-\frac{c}{2}\frac{d^2(x,y)}{t}}.\label{eq:gauss} \end{equation}
As we already said, if $N$ is an integer, then $Q_t^{(N)}=(-1)^{N}c_N^{-1} t^{N} \partial_t^{N} e^{-t\mathcal{L}}$. 
By \cite[Corollary 2.7]{ST2}, its kernel admits Gaussian bounds  and therefore in particular \eqref{kernel-est}. In the general case, consider an integer $K>N$. Then 
$$  Q_t^{(N)}  =c_N^{-1}t^{N}\mathcal{L}^K \mathcal{L}^{N-K}e^{-t\mathcal{L}},$$
and by the integral representation
$\mathcal{L}^{N-K}= c\int_0^{+\infty} s^{K-N} e^{-s\mathcal{L}}  \frac{ds}{s}$
for some constant $c>0$, one may write
$$  Q_t^{(N)}  = c'\int_0^{+\infty} (s\mathcal{L})^K e^{-(s+t)\mathcal{L}} \left(\frac{t}{s}\right)^{N} \frac{ds}{s}.$$
Gaussian upper estimates for  $\left(t\mathcal{L}\right)^K e^{-t\mathcal{L}}$ and \eqref{d} then yield a bound of the form \eqref{eq:gauss} for $\left((t+s)\mathcal{L}\right)^K e^{-(s+t)\mathcal{L}}$  at the scale $\max(\sqrt{s},\sqrt{t})$, hence
\begin{align*}
 \abs{k_t^{(N)}(x,y)} & \lesssim \frac{1}{V(x,\sqrt{t})^\theta V(y,\sqrt{t})^{1-\theta}} \left[\int_0^t  \left(\frac{s}{t}\right)^K  e^{-\frac{c}{2}\frac{d^2(x,y)}{t}} \left(\frac{t}{s}\right)^{N} \,\frac{ds}{s} + \int_t^{+\infty}   e^{-\frac{c}{2}\frac{d^2(x,y)}{s}} \left(\frac{t}{s}\right)^{N} \,\frac{ds}{s} \right] \\
& \lesssim \frac{1}{V(x,\sqrt{t})^\theta V(y,\sqrt{t})^{1-\theta}} \left[e^{-\frac{c}{2}\frac{d^2(x,y)}{t}} \int_0^t  \left(\frac{s}{t}\right)^{K-N} \, \frac{ds}{s}+  \int_t^{+\infty}    e^{-\frac{c}{2}\frac{d^2(x,y)}{s}} \left(\frac{t}{s}\right)^{N} \,\frac{ds}{s}\right].
\end{align*}
Thus
\begin{align*}
 \abs{k_t^{(N)}(x,y)} & \lesssim \frac{1}{V(x,\sqrt{t})^\theta V(y,\sqrt{t})^{1-\theta}} \left[e^{-\frac{c}{2}\frac{d^2(x,y)}{t}} + \left(1+\frac{d^2(x,y)}{t}\right)^{-N}\right],
\end{align*}
which concludes the proof of \eqref{kernel-est} for $k_t^{(N)}$. Integrating over the bound in \eqref{kernel-est} then gives the second claim for $Q_t^{(N)}$. 

In order to obtain the assertions on $P_t^{(N)}$, we use Remark \ref{Pt-rem} (iii), which yields $P_t^{(N)}=e^{-t{\mathcal L}/4}R_t^{(N)} e^{-t{\mathcal L}/4}$ and so for every $x,y\in M$
\begin{align*}
\abs{\tilde k_t^{(N)}(x,y)} & \lesssim \int \left| p_{t/4}(x,z) \right| \left| R_t^{(N)}[p_{t/4}(y,\cdot)](z) \right| \, d\mu(z) \\
 & \lesssim \left(\int \left| p_{t/4}(x,z) \right|^2 \, d\mu(z)\right)^ {1/2} \left( \int \left| R_t^{(N)}[p_{t/4}(y,\cdot)](z) \right|^2 \, d\mu(z) \right)^{1/2} \\
 & \lesssim V(x, \sqrt{t})^{-1/2} \|R_t^{(N)}\|_{2\to 2}   V(y, \sqrt{t})^{-1/2},
\end{align*} 
where we used \eqref{due} to estimate the $L^2$ norm of the heat semigroup. Consequently, since $R_t^{(N)}$ is bounded in $L^2(M,\mu)$ uniformly in $t>0$, we obtain that
$$ \abs{\tilde k_t^{(N)}(x,y)} \lesssim V(x, \sqrt{t})^{-1/2}  V(y, \sqrt{t})^{-1/2}.$$
For the diagonal part, when $d(x,y) \lesssim \sqrt{t}$,  we have by doubling  $V(x, \sqrt{t}) \simeq  V(y, \sqrt{t})$ and so the previous estimate implies the desired inequality.

For the off-diagonl part, when $d(x,y)\geq \sqrt{t}$, we use the representation \eqref{along} and integrate the previous estimate on $k_t^{(N)}$ (the kernel of $Q_t^{(N)}$) in time. This gives
\begin{align*}
\abs{\tilde k_t^{(N)}(x,y)} & \leq \int_0^t \abs{k_s^{(N)}(x,y)} \, \frac{ds}{s} \\
 & \lesssim \int_0^t \frac{1}{V(x,\sqrt{s})^\theta V(y,\sqrt{s})^{1-\theta}} \left(1+\frac{d^2(x,y)}{s}\right)^{-N} \, \frac{ds}{s} \\
 & \lesssim \frac{1}{V(x,\sqrt{t})^\theta V(y,\sqrt{t})^{1-\theta}} \int_0^t  \left(\frac{t}{s}\right)^{\nu/2} \left(1+\frac{d^2(x,y)}{s}\right)^{-N} \, \frac{ds}{s} \\
 & \lesssim \frac{1}{V(x,\sqrt{t})^\theta V(y,\sqrt{t})^{1-\theta}}  \left(1+\frac{d^2(x,y)}{t}\right)^{-N}, 
\end{align*}
where we have used \eqref{d} and $N>\nu/2$. 

The second statement for $P_t^{(N)}$ follows by combining the previous estimate with the global $L^p$ boundedness of $P_t^{(N)}$.

\end{proof}

\begin{proposition}[Davies-Gaffney estimates] \label{prop:Davies-Gaffney} Let $N \in \N$. There exists a constant $c>0$ such that for all Borel sets $E,F \subset M$ and every $t>0$
\begin{align*}\label{DG1}
 \| P_t^{(N)}\|_{L^2(E) \to L^2(F)} + \|\sqrt{t} |\nabla P_t^{(N)}| \|_{L^2(E) \to L^2(F)} \lesssim  e^{- c\frac{d^2(E,F)}{t}},\\
  \| Q_t^{(N)}\|_{L^2(E) \to L^2(F)} + \|\sqrt{t} |\nabla Q_t^{(N)}| \|_{L^2(E) \to L^2(F)} \lesssim  e^{- c\frac{d^2(E,F)}{t}}.
 \end{align*}
If $N>\nu/2$ is not an integer, then for all balls $B_1,B_2$ of radius $\sqrt{t}$
\begin{equation*}\label{DG2} \|\sqrt{t} |\nabla P_t^{(N)}| \|_{L^2(B_1) \to L^2(B_2)} +  \|\sqrt{t} |\nabla Q_t^{(N)}| \|_{L^2(B_1) \to L^2(B_2)} \lesssim  \left(1+ \frac{d^2(B_1,B_2)}{t}\right)^{-N}.\end{equation*}
\end{proposition}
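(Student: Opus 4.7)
The plan is to bootstrap from the classical Davies-Gaffney estimate for the heat semigroup, $\|e^{-t\mathcal{L}}\|_{L^2(E) \to L^2(F)} \lesssim \exp(-d^2(E,F)/(ct))$, which holds in any strongly local regular Dirichlet form by Davies' exponential perturbation method: one conjugates $\mathcal{L}$ by $e^{\alpha \rho}$ for a bounded Lipschitz $\rho$ with $|\nabla \rho| \leq 1$, uses strong locality via \eqref{eq:leibniz} and \eqref{eq:carre} to obtain $\|e^{-t \mathcal{L}_\alpha}\|_{2 \to 2} \leq e^{\alpha^2 t}$, takes $\rho = d(\cdot, E) \wedge R$, and optimizes in $\alpha$. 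For the gradient version I would couple this with the energy identity $\|\sqrt{t}\,|\nabla e^{-t\mathcal{L}/2} f|\|_2^2 \leq \tfrac{t}{2} \langle \mathcal{L} e^{-t \mathcal{L}} f, f \rangle$ and a Caccioppoli cut-off argument: with $\eta$ Lipschitz, $\eta \equiv 1$ on $F$, $\eta \equiv 0$ on $E$, $|\nabla \eta| \leq 2/d(E,F)$, and $u_s = e^{-s \mathcal{L}} f$, the identities \eqref{eq:leibniz}--\eqref{eq:carre} produce a local energy inequality
\[
\int \eta^2 \, d\Gamma(u_s, u_s) \lesssim -\partial_s \! \int \eta^2 u_s^2 \, d\mu + \int |\nabla \eta|^2 u_s^2 \, d\mu,
\]
which integrated from $0$ to $t$ yields $\|\sqrt{t}\,|\nabla e^{-t\mathcal{L}}|\|_{L^2(E) \to L^2(F)} \lesssim \exp(-d^2(E,F)/(ct))$.

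To pass from $e^{-t \mathcal{L}}$ to $P_t^{(N)}$ and $Q_t^{(N)}$ for integer $N$, I would use Remark~\ref{Pt-rem}(iv): $Q_t^{(N)} = (-1)^N c_N^{-1} (t \partial_t)^N e^{-t \mathcal{L}}$ and $P_t^{(N)} = p(t \mathcal{L}) e^{-t \mathcal{L}}$ with $p$ a polynomial of degree $N-1$, $p(0)=1$. Representing the $N$-th time derivative via Cauchy's formula on a circle of radius $t/2$ around $t$ in the sector of analyticity,
\[
t^N \partial_t^N e^{-t \mathcal{L}} = \frac{N!}{2 \pi i} \oint \frac{t^N}{(w-t)^{N+1}} e^{-w \mathcal{L}} \, dw,
\]
one inherits the Gaussian decay (both for the operator and its gradient) from that of $e^{-w \mathcal{L}}$ with $\Re w \simeq |w| \simeq t$. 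More cleanly, one can re-run the Caccioppoli argument above directly with $u = Q_t^{(N)} f$ or $u = P_t^{(N)} f$, using $\partial_t P_t^{(N)} = -t^{-1} Q_t^{(N)}$ to integrate the local energy identity in time, the required $L^2$--$L^2$ Davies-Gaffney on the right-hand side being provided by the same Cauchy representation.

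For the non-integer case $N > \nu/2$ in the second assertion, Gaussian decay of the gradient is not expected and the polynomial bound comes from the integral representation used in the proof of Lemma~\ref{prop:kernel-est},
\[
Q_t^{(N)} = c' \int_0^{+\infty} (s \mathcal{L})^K e^{-(s+t) \mathcal{L}} \left(\frac{t}{s}\right)^N \frac{ds}{s},
\]
with integer $K > N$, and analogously for $P_t^{(N)}$ via Remark~\ref{Pt-rem}(iii). I would apply the gradient Davies-Gaffney bound from the previous paragraph to the integer-order operator $(s \mathcal{L})^K e^{-(s+t) \mathcal{L}}$ at scale $\sqrt{s+t}$, multiply by $\sqrt{t/(s+t)} \leq 1$, and split at $s=t$: for $s \leq t$, $\sqrt{s+t} \simeq \sqrt{t}$ so the exponential $e^{-c d^2(B_1, B_2)/(s+t)}$ already dominates $(1 + d^2(B_1, B_2)/t)^{-N}$, and the $s$-integral $\int_0^t (s/t)^{K-N}\, ds/s$ converges by the choice $K>N$; for $s > t$, the elementary computation $\int_t^{+\infty} (t/s)^{N+1/2} e^{-c d^2(B_1,B_2)/s} \, ds/s \lesssim (1 + d^2(B_1,B_2)/t)^{-N}$ (change of variables $u = c d^2(B_1,B_2)/s$) produces the advertised polynomial decay.

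The main obstacle will be the localized Caccioppoli gradient estimate of the first paragraph: although classical in Riemannian contexts, its execution in the abstract Dirichlet form setting requires careful handling of \eqref{eq:leibniz} and \eqref{eq:carre}, and one must verify via \eqref{defd} that bounded Lipschitz cutoffs with $|\nabla \eta| \leq 2/d(E,F)$ are indeed available. Once the gradient Davies-Gaffney for $e^{-t\mathcal{L}}$ is secured, both the Cauchy-formula transfer to $P_t^{(N)}, Q_t^{(N)}$ (integer $N$) and the subsequent polynomial bound for non-integer $N$ are comparatively routine.
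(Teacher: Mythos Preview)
Your proposal is correct. For integer $N$ you argue via analyticity (Cauchy's formula on a contour around $t$), which is exactly what the paper means by ``a consequence of the analyticity of $(e^{-t\mathcal{L}})_{t>0}$ in $L^2(M,\mu)$''; your Caccioppoli sketch for the base case $e^{-t\mathcal{L}}$ is the content of the references the paper cites.

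For the non-integer case you take a genuinely different route. You go back to the integral representation $Q_t^{(N)} = c'\int_0^{+\infty}(s\mathcal{L})^K e^{-(s+t)\mathcal{L}}(t/s)^N\,\frac{ds}{s}$, apply the gradient Davies--Gaffney bound from the integer case to the integrand at scale $\sqrt{s+t}$, and integrate, splitting at $s=t$. This works, and your change-of-variables computation for $s>t$ even delivers decay of order $N+\tfrac12$, better than claimed. The paper instead exploits the factorization $\sqrt{t}\,\nabla Q_t^{(N)} = 2^N \sqrt{t}\,\nabla e^{-t/2\,\mathcal{L}}\, Q_{t/2}^{(N)}$ (and $\sqrt{t}\,\nabla P_t^{(N)} = \sqrt{t}\,\nabla e^{-t/2\,\mathcal{L}}\, R_t^{(N)}$), then simply composes the $L^2$--$L^2$ off-diagonal estimates already established: arbitrary order for $\sqrt{t}\,\nabla e^{-t/2\,\mathcal{L}}$ from the integer part, order $N$ for $Q_{t/2}^{(N)}$ from Lemma~\ref{prop:kernel-est}, via Lemma~\ref{lem:comp-OD}. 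This is shorter because it recycles the integral computation you redo by hand---that computation is precisely what Lemma~\ref{prop:kernel-est} packages. Your direct approach has the minor virtue of being self-contained and not needing the composition lemma, but the paper's factorization trick is the one to remember: it separates the ``gradient'' from the ``fractional power'' and treats each at its natural scale.
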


\begin{proof} The  first estimate  is classical for $P_t^{(1)}=e^{-t\mathcal{L}}$ (see for instance \cite{ST2}, except  for the term with the gradient, which was introduced in
\cite[Section 3.1]{ACDH} in the Riemannian setting. For an adaptation to the present setting, see \cite[Section 2]{BCF1}). The generalisation to $P_t^{(N)}$ and $Q_t^{(N)}$ with arbitrary $N \in \N^*$ is a consequence of the analyticity of $(e^{-t\mathcal{L}})_{t>0}$ in $L^2(M,\mu)$, and the particular form of $P_t^{(N)}$, see Remark \ref{Pt-rem}. 
Now for the second estimate. Lemma \ref{prop:kernel-est} yields that $P_t^{(N)}$ and $Q_t^{(N)}$ satisfy $L^2$-$L^2$ off-diagonal estimates of order $N$. 
Since $\sqrt{t} \nabla Q_t^{(N)} = 2^N\sqrt{t} \nabla e^{-t/2 \mathcal{L}} Q_{t/2}^{(N)}$, and $\sqrt{t}\nabla e^{-t\mathcal{L}}$ satisfies $L^2$-$L^2$ off-diagonal estimates of any order, we may compose these off-diagonal estimates and Lemma \ref{lem:comp-OD} implies the desired result for $\nabla Q_t^{(N)}$. For $\nabla P_t^{(N)}$, we use the representation $\sqrt{t}\nabla P_t^{(N)}=\sqrt{t}\nabla e^{-t/2\mathcal{L}} R_t^{(N)}$ of Remark \ref{Pt-rem}, together with Remark \ref{rem:kr}.
\end{proof}

\begin{lemma}[Off-diagonal estimates] \label{lem:off} 
Assume  \eqref{due}. Let $N\geq 1$ be an integer and consider the operators $P_t^{(N)}$, $Q_t^{(N)}$ as defined in \eqref{def:Qt} and \eqref{def:Pt}. For every $t>0$, every ball $B$ of radius $r$ and every $p\in[1,{+\infty}]$, we have
\begin{itemize}
\item if $r\leq \sqrt{t}$ with $\tilde B:=\frac{\sqrt{t}}{r}B$ the dilated ball,
$$ \left( \aver{B} |P_t^{(N)}f|^p + |Q_t^{(N)}f|^p \, d\mu\right)^{1/ p} \lesssim  \sum_{\ell \geq 0} \gamma(\ell) \aver{2^\ell \tilde B} |f| \, d\mu,$$
\item if $r\geq \sqrt{t}$,
$$ \left( \aver{B} |P_t^{(N)}f|^p + |Q_t^{(N)}f|^p \, d\mu\right)^{1/ p} \lesssim  \sum_{\ell \geq 0} \gamma(\ell) \left(\aver{2^\ell B} |f|^p \, d\mu\right)^{1/ p},$$
\item more generally, if $r\geq \sqrt{t}$ with $p_0,p_1\in[1,{+\infty}]$ satisfying $p_1\geq p_0$ then
$$ \left( \aver{B} |P_t^{(N)}f|^{p_1} + |Q_t^{(N)}f|^{p_1} \, d\mu\right)^{1/ p_1} \lesssim  \left(\frac{r}{\sqrt{t}}\right)^{\nu(\frac{1}{p_0}-\frac{1}{p_1})} \sum_{\ell \geq 0} \gamma(\ell) \left(\aver{2^\ell B} |f|^{p_0} \, d\mu\right)^{1/ p_0},$$
\end{itemize}
where $\gamma(\ell)$ are exponentially decreasing coefficients.\\
For $N>0$ not an integer, $p \in [1,\infty]$, $t>0$ and $B$ a ball of radius $\sqrt{t}$, we have
\begin{align*}
	\|Q_t^{(N)}f\|_{L^\infty(B)} \lesssim \sum_{\ell \geq 0} 2^{-2\ell(N-\frac{\nu}{2})} \left(\aver{2^\ell B} |f|^{p} \, d\mu\right)^{1/ p}.
\end{align*}
\end{lemma}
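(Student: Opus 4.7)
The four inequalities all follow from the kernel bounds of Lemma~\ref{prop:kernel-est}, combined with annular decompositions, H\"older's inequality, and doubling. The plan is organised around the dichotomy between integer and non-integer $N$, and between the regimes $r\leq\sqrt{t}$ and $r\geq\sqrt{t}$. The first observation is that when $N\geq 1$ is an integer, the kernels of $P_t^{(N)}$ and $Q_t^{(N)}$ enjoy \emph{Gaussian} upper bounds, not merely the polynomial decay of Lemma~\ref{prop:kernel-est}: this stems from $Q_t^{(N)}=c_N^{-1}(-t)^N\partial_t^N e^{-t\mathcal{L}}$, the analyticity of the semigroup in $L^2$, and the Gaussian heat-kernel bound \eqref{UE} (cf.~\cite[Cor.~2.7]{ST2}); $P_t^{(N)}=p(t\mathcal{L})e^{-t\mathcal{L}}$ is handled similarly.

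For the first estimate ($r\leq\sqrt{t}$, $N$ integer), I would fix $x\in B\subset\tilde B$ and split $M$ into $\tilde B$ and the annuli $C_\ell\tilde B=2^\ell\tilde B\setminus 2^{\ell-1}\tilde B$. Taking $\theta=1$ in the Gaussian kernel bound and using $V(x,\sqrt{t})\simeq|\tilde B|$ by doubling, the integral of the kernel against $|f|$ over $C_\ell\tilde B$ is bounded by $2^{\ell\nu}e^{-c4^\ell}\aver{2^\ell\tilde B}|f|\,d\mu$, the Gaussian factor absorbing the volume growth $|2^\ell\tilde B|/|\tilde B|\lesssim 2^{\ell\nu}$. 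This yields the desired pointwise bound (and hence the $L^p$-average bound) with exponentially decreasing coefficients.

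For the second and third estimates ($r\geq\sqrt{t}$, $N$ integer), the plan is to cover $B$ by a bounded-overlap family $\{B_j\}$ of balls of radius $\sqrt{t}$, apply the previous step to each (noting $\tilde B_j=B_j$), and then sum via $\int_B|T_tf|^{p_1}\lesssim\sum_j|B_j|\aver{B_j}|T_tf|^{p_1}$ together with $\sum_j|B_j|\simeq|B|$. The dilates $2^kB_j$ are to be grouped into dilates $2^mB$ via $m(k)=\max\{0,\lceil\log_2(2^k\sqrt{t}/r)\rceil\}$; combining the doubling-generated volume ratio with the exponential decay in $k$ yields the second assertion directly. For the sharper $L^{p_0}\to L^{p_1}$ version, the H\"older step performed inside each annular integral produces a loss $(r/\sqrt{t})^{\nu/p_0}$ in the rerouting, while the passage from an $L^\infty$-type bound on $B_j$ to the $L^{p_1}$-average on $B$ gains $(r/\sqrt{t})^{-\nu/p_1}$; the net $(r/\sqrt{t})^{\nu(1/p_0-1/p_1)}$ is the advertised factor, and the careful bookkeeping of these two doubling exponents is the main technical step.

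For the fourth estimate (non-integer $N>\nu/2$), only the polynomial kernel decay from Lemma~\ref{prop:kernel-est} is available, so one must argue directly. Fix $x\in B$ of radius $\sqrt{t}$ and decompose $M$ into annuli $C_\ell B$. On $C_\ell B$ the $\theta=1$ kernel bound gives $|k_t^{(N)}(x,y)|\lesssim V(x,\sqrt{t})^{-1}2^{-2\ell N}$, whence H\"older with exponents $(p,p')$ yields
\[
\int_{C_\ell B}|k_t^{(N)}(x,y)|\,|f(y)|\,d\mu\lesssim\frac{|2^\ell B|}{V(x,\sqrt{t})}\,2^{-2\ell N}\Bigl(\aver{2^\ell B}|f|^p\,d\mu\Bigr)^{1/p}\lesssim 2^{-2\ell(N-\nu/2)}\Bigl(\aver{2^\ell B}|f|^p\,d\mu\Bigr)^{1/p},
\]
using $|2^\ell B|/V(x,\sqrt{t})\lesssim 2^{\ell\nu}$ from doubling. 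Summation over $\ell\geq 0$ delivers the $L^\infty$-bound on $B$. The only genuine difficulty lies in the bookkeeping of the third step; all the remaining steps are direct applications of the kernel estimates.
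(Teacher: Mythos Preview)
Your treatment of the first, second, and fourth estimates matches the paper's proof: pointwise Gaussian kernel bounds and annular decomposition for the first; covering $B$ by balls of radius $\sqrt{t}$ and using $L^p$--$L^p$ off-diagonal estimates at scale $\sqrt{t}$ for the second; the polynomial kernel bound from Lemma~\ref{prop:kernel-est} for the fourth.

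The third estimate is where your sketch has a genuine gap. You claim that ``the passage from an $L^\infty$-type bound on $B_j$ to the $L^{p_1}$-average on $B$ gains $(r/\sqrt{t})^{-\nu/p_1}$''. But this is not true: if $\|T_tf\|_{L^\infty(B_j)}\leq C$ uniformly over the covering $\{B_j\}$, then $\bigl(\aver{B}|T_tf|^{p_1}\bigr)^{1/p_1}\leq C$ as well, with no gain whatsoever. Your rerouting step correctly produces the loss $(r/\sqrt{t})^{\nu/p_0}$, which is the right answer only when $p_1=\infty$; for finite $p_1$ you are off by exactly the factor $(r/\sqrt{t})^{\nu/p_1}$, and nothing in a pointwise-to-average passage recovers it. The point is that the improvement for finite $p_1$ comes from the \emph{cancellation} between different $x\in B$ seeing different parts of $f$, which a uniform sup bound throws away.

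The paper handles this part differently and more cleanly: it interpolates (Riesz--Thorin, annulus by annulus) between the diagonal $L^q$--$L^q$ estimate of the second part and the $L^1$--$L^\infty$ estimate coming directly from the pointwise Gaussian kernel bound. Writing $\theta=\tfrac{1}{p_0}-\tfrac{1}{p_1}$ and choosing $q$ so that $\tfrac{1}{p_0}=\tfrac{1-\theta}{q}+\theta$ and $\tfrac{1}{p_1}=\tfrac{1-\theta}{q}$, the interpolated norm is $\gamma(\ell)^{1-\theta}\bigl[(r/\sqrt{t})^{\nu}\gamma(\ell)\bigr]^{\theta}=(r/\sqrt{t})^{\nu(1/p_0-1/p_1)}\gamma(\ell)$, which is exactly the advertised factor. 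Your direct approach could in principle be salvaged by working with the $L^{p_0}$--$L^{p_1}$ off-diagonal estimates from Lemma~\ref{prop:kernel-est} at scale $\sqrt{t}$ and carefully summing over a double covering, but that is considerably more involved than what you wrote, and the interpolation route is both shorter and more transparent.
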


\begin{proof} For the first part, we use (since $B\subset \tilde B$)
 \begin{align*} 
\left( \aver{B} |P_t^{(N)}f|^p + |Q_t^{(N)}f|^p \, d\mu\right)^{1/ p}  & \leq \|P_t^{(N)}f\|_{L^\infty(B)} + \|Q_t^{(N)}f\|_{L^\infty(B)} \\
 & \leq  \|P_t^{(N)}f\|_{L^\infty(\tilde B)} + \|Q_t^{(N)}f\|_{L^\infty(\tilde B)}
 \end{align*}
and then the proof follows from the pointwise Gaussian estimates of the kernel for both operators $P_t^{(N)}$ and $Q_t^{(N)}$, see \cite[Corollary 2.7]{ST2}). \\
For the second part, the ball $B$ may be covered by a collection of balls of radius $\sqrt{t}$, with a bounded overlap property. Then by using the $L^p$ off-diagonal estimates at the scale $\sqrt{t}$ for operators $P_t^{(N)}$ and $Q_t^{(N)}$, we obtain the stated inequality by summing over this covering.
The third part can be proved by interpolating between the second part and the $L^1$-$L^\infty$ estimates (which corresponds to the case $p_0=1$ and $p_1=\infty$) which comes from \eqref{due} with doubling.\\
The last statement is a consequence of the kernel estimates for $Q_t^{(N)}$ shown in Lemma \ref{prop:kernel-est}. 
\end{proof}

\subsection{Quadratic functionals}

Combining Corollary 1 with Lemma 2 from \cite{ST1} yields the following statement, which does not even require \eqref{d}.

\begin{proposition} \label{prop:kernel} For every $p\in(1,{+\infty})$, consider a function $f\in L^p(M,\mu) \cap {\mathcal D}$ solution of $\mathcal{L}f=0$ on $M$. We have
\begin{itemize}
\item if $|M|={+\infty}$ then $f=0$;
\item if  $|M|<{+\infty}$ then $f$ is constant.
\end{itemize}
In other words, if we denote
$N_p(\mathcal{L}):=\left\{f\in L^p \cap {\mathcal D},\ \mathcal{L}f=0\right\}$,
then $N_p(\mathcal{L})=\{0\}$ or $N_p(\mathcal{L})\simeq \R$ and in particular, it does not depend on $p$ and so will be sometimes denoted $N(\mathcal{L})$.

\end{proposition}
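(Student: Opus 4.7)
The plan is to reduce the statement to the fact that the energy measure of $f$ vanishes, and then to invoke a classical result in the theory of strongly local Dirichlet forms characterising functions with zero energy measure. Since $f\in \mathcal{D}\subset L^2(M,\mu)$ and $\mathcal{L}f=0$, self-adjointness together with the definition \eqref{energy} of the energy measure yields
\[
0 = \langle f,\mathcal{L}f\rangle_{L^2} = \mathcal{E}(f,f) = \int_M d\Gamma(f,f).
\]
The non-negativity of $d\Gamma(f,f)$ forces $d\Gamma(f,f)\equiv 0$ as a Borel measure on $M$. This first step uses only the fact that $f\in \mathcal{D}$ (so $f\in L^2$ automatically) and $\mathcal{L}f=0$, and in particular does not require \eqref{d}.

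The second step is to deduce that any $f\in\mathcal{F}$ with $d\Gamma(f,f)=0$ must be $\mu$-a.e.\ equal to a constant; this is precisely the content of Lemma~2 of \cite{ST1} (which gives that $f$ is locally constant in a Dirichlet-form sense whenever $d\Gamma(f,f)=0$) combined with Corollary~1 of \cite{ST1} (which upgrades ``locally constant'' to ``globally constant'' under the standing assumptions that $d$ is finite everywhere, separates points, and induces the original topology of $M$). Intuitively, if $M$ split into two $d$-disconnected pieces then $d$ would have to be infinite between them, contradicting the finiteness assumption, so the locally constant conclusion propagates to $f\equiv c$ globally.

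Finally, from $f\equiv c$ and $f\in L^p(M,\mu)$ with $1<p<+\infty$ we read off $|c|^p |M|<+\infty$: if $|M|=+\infty$ this forces $c=0$ and hence $N_p(\mathcal{L})=\{0\}$, while if $|M|<+\infty$ every constant belongs to $L^p$, so $N_p(\mathcal{L})\simeq \mathbb{R}$. In either case the space is independent of $p$, justifying the notation $N(\mathcal{L})$. The genuinely delicate step is the second one: the passage from a vanishing energy measure to pointwise constancy rests on the subtle interplay between the Dirichlet form $\mathcal{E}$ and its intrinsic pseudo-distance $d$, and reproving it from scratch (rather than quoting \cite{ST1}) would be the main work of the proof.
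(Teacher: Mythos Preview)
Your proposal is correct and follows essentially the same route as the paper: the paper's entire proof is the sentence ``Combining Corollary 1 with Lemma 2 from \cite{ST1} yields the following statement, which does not even require \eqref{d}'', and you have simply unpacked this citation by first observing $\mathcal{E}(f,f)=\langle f,\mathcal{L}f\rangle=0$, then invoking the two Sturm results to pass from vanishing energy to global constancy, and finally reading off the dichotomy from $L^p$-integrability. One very minor remark: since $\mathcal{D}\subset L^2$, the constant $c$ already satisfies $|c|^2|M|<+\infty$, so the $L^p$ hypothesis is not strictly needed in the last step (and indeed the paper's phrasing emphasises that the result does not depend on $p$).
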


Note that, under \eqref{d}, $|M|<{+\infty}$ if and only if $M$ is bounded.

\begin{proposition}[Calder\'on reproducing formula] \label{prop:reproducing} Let $p \in (1,{+\infty})$.  Let $N>0$,  and consider the operators $P_t^{(N)}$, $Q_t^{(N)}$ as defined in \eqref{def:Qt} and \eqref{def:Pt}. Under \eqref{d} and \eqref{due}, we have the decomposition $L^p(M,\mu) = \overline{\Ran_p(\mathcal{L})} \oplus N_p(\mathcal{L})$. Moreover, for every $f\in L^p(M,\mu)$,
\begin{align} \label{limzero}
\lim_{t\to {0^+}} P_t^{(N)}f & =  f \quad & \textrm{in $L^p(M,\mu)$},\\ \label{liminfty}
\lim_{t\to {+\infty}} P_t^{(N)}f & =  \mathsf{P}_{N_p(\mathcal{L})}f \quad & \textrm{in $L^p(M,\mu)$},
\end{align}
and
for every $f\in\overline{\Ran_p(\mathcal{L})}$,
\begin{equation} \label{calde}
 f = \int_0^{+\infty} Q_t^{(N)}f \frac{ d t}{t} \quad \text{in}\ L^p(M,\mu).
\end{equation}
For every $f\in\overline{\Ran_2(\mathcal{L})}$, one has
\begin{equation}\label{ortho}
 \|f\|_{2}^2 \simeq \int_0^{+\infty} \| Q_t^{(N)} f\|_2^2\ \frac{ d t}{t}.
\end{equation}
\end{proposition}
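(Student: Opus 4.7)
My plan is to prove the four assertions in the natural order: first the splitting of $L^p$, then the two boundary limits \eqref{limzero}--\eqref{liminfty}, then the Calder\'on identity \eqref{calde} as a consequence of those limits, and finally the $L^2$ square function \eqref{ortho} as a standalone spectral computation.

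For the decomposition I would invoke the bounded $H^\infty$ functional calculus on $L^p(M,\mu)$, $p\in(1,+\infty)$, recalled at the beginning of this section: by the classical theorem of McIntosh this yields the topological direct sum $L^p(M,\mu)=\overline{\Ran_p(\mathcal{L})}\oplus N_p(\mathcal{L})$ with bounded projection $\mathsf{P}_{N_p(\mathcal{L})}$. Proposition \ref{prop:kernel} describes $N_p(\mathcal{L})$ explicitly as either $\{0\}$ (if $|M|=+\infty$) or the line of constants (if $|M|<+\infty$), so $\mathsf{P}_{N_p(\mathcal{L})}$ is in any case trivially bounded on every $L^p$.

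For the boundary limits I would use the standard ``uniform bound + dense subspace'' recipe. The family $(P_t^{(N)})_{t>0}=(\phi_N(t\mathcal{L}))_{t>0}$ is uniformly $L^p$-bounded since $\phi_N\in H^\infty$ of a suitable sector. On $N_p(\mathcal{L})$, $P_t^{(N)}f=\phi_N(0)f=f$, so both limits are trivial there. On the complement it suffices to test at $h=\mathcal{L}g$ with $g$ in the $L^p$-domain of $\mathcal{L}$; I would factor
\begin{equation*}
(\phi_N(t\mathcal{L})-1)h=t\,\eta(t\mathcal{L})\,\mathcal{L}g,\qquad \phi_N(t\mathcal{L})h=t^{-1}\,\xi(t\mathcal{L})\,g,
\end{equation*}
where $\eta(x):=(\phi_N(x)-1)/x$ and $\xi(x):=x\phi_N(x)$ are both bounded on $[0,+\infty)$ (since $\phi_N$ is essentially an incomplete $\Gamma$-function, $\eta$ is continuous at $0$ and $O(1/x)$ at infinity, while $\xi$ vanishes at $0$ and decays like $x^Ne^{-x}$ at infinity). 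The $H^\infty$-calculus then bounds $\eta(t\mathcal{L})$ and $\xi(t\mathcal{L})$ uniformly in $t$, giving $\|(P_t^{(N)}-I)h\|_p\lesssim t\,\|\mathcal{L}g\|_p\to 0$ as $t\to 0^+$ and $\|P_t^{(N)}h\|_p\lesssim t^{-1}\,\|g\|_p\to 0$ as $t\to+\infty$; density of $\mathcal{L}(\mathcal{D}_p(\mathcal{L}))$ in $\overline{\Ran_p(\mathcal{L})}$ plus uniform boundedness then propagates the limits to the whole closed range.

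The Calder\'on formula is then immediate: functional calculus together with $\phi_N'(x)=-c_N^{-1}x^{N-1}e^{-x}$ gives $\partial_t P_t^{(N)}=-t^{-1}Q_t^{(N)}$ strongly on $L^p$, so integration on $[s,t]$ yields $P_s^{(N)}-P_t^{(N)}=\int_s^t Q_u^{(N)}\,\frac{du}{u}$, and passing to the limit $s\to 0^+$, $t\to+\infty$ on $f\in\overline{\Ran_p(\mathcal{L})}$ produces \eqref{calde} as an improper $L^p$-integral. For \eqref{ortho}, I would apply the spectral theorem plus Tonelli directly: for $f\in\overline{\Ran_2(\mathcal{L})}$ the spectral measure does not charge $\{0\}$, and after the substitution $s=t\lambda$ the inner time integral $c_N^{-2}\int_0^{+\infty}(t\lambda)^{2N}e^{-2t\lambda}\,\frac{dt}{t}$ becomes the $\lambda$-independent constant $c_N^{-2}\,2^{-2N}\,\Gamma(2N)$, which then multiplies $\|f\|_2^2$. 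The only genuine obstacle is the bookkeeping with the $H^\infty$-calculus in the second step to get uniform control of $\eta(t\mathcal{L})$ and $\xi(t\mathcal{L})$; everything else is formal semigroup manipulation.
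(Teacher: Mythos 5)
Your overall strategy is the same as the paper's: the bounded $H^\infty$ functional calculus on $L^p$ (from \cite{DR}) yields sectoriality and hence the nullspace/range decomposition via \cite[Theorem 3.8]{CDMY}, the boundary limits and the Calder\'on formula follow by the Convergence Lemma pattern (uniform bound plus density), and the $L^2$ equivalence \eqref{ortho} is a spectral/Fubini computation. The paper cites the Convergence Lemma abstractly; you unpack it by hand, and it is in that unpacking that two genuine errors occur.

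First, the factorization you write for the $t\to 0^+$ limit is wrong. With $h=\mathcal{L}g$ and $\eta(x)=(\phi_N(x)-1)/x$, functional calculus gives
$t\,\eta(t\mathcal{L})\,\mathcal{L}g=\eta(t\mathcal{L})\,(t\mathcal{L})\,g=(\phi_N(t\mathcal{L})-1)\,g$,
which is \emph{not} $(\phi_N(t\mathcal{L})-1)\,\mathcal{L}g=(\phi_N(t\mathcal{L})-1)h$. The point is that $\Ran_p(\mathcal{L})$ is the right dense subspace for the $t\to+\infty$ limit (your second identity $\phi_N(t\mathcal{L})h=t^{-1}\xi(t\mathcal{L})g$ is correct and gives the decay $t^{-1}\|g\|_p$), but for the $t\to 0^+$ limit you need vectors in the \emph{domain} of $\mathcal{L}$. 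The clean repair is to test on $h\in\calS^p=\mathcal{D}_p(\mathcal{L})\cap\Ran_p(\mathcal{L})$ (the paper introduces exactly this set, which is dense in $\overline{\Ran_p(\mathcal{L})}$): then $(\phi_N(t\mathcal{L})-1)h=t\,\eta(t\mathcal{L})\,\mathcal{L}h\to 0$ and $\phi_N(t\mathcal{L})h=t^{-1}\xi(t\mathcal{L})\,\mathcal{L}^{-1}h\to0$ as you intended.

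Second, the assertion that $\eta(x)=(\phi_N(x)-1)/x$ is ``continuous at $0$'' fails when $N<1$. Near the origin, $\phi_N(x)-1=-c_N^{-1}\int_0^x u^{N-1}e^{-u}\,du\sim -\frac{x^N}{N c_N}$, so $\eta(x)\sim -\frac{x^{N-1}}{N c_N}$ is unbounded for $N<1$. Since the proposition permits any $N>0$, you must instead factor $\phi_N(x)-1=x^{\sigma}\tilde\eta(x)$ with $\sigma=\min(N,1)$ and $\tilde\eta$ bounded, replacing $\mathcal{L}h$ by $\mathcal{L}^{\sigma}h$ in the dense-subspace estimate. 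The remaining ingredients --- the decomposition, the $t\to+\infty$ limit, integration of $t\partial_t P_t^{(N)}=-Q_t^{(N)}$ to get \eqref{calde}, and the spectral-theorem evaluation giving $\int_0^\infty\|Q_t^{(N)}f\|_2^2\,\frac{dt}{t}=c_N^{-2}2^{-2N}\Gamma(2N)\|f\|_2^2$ for $f\in\overline{\Ran_2(\mathcal{L})}$ --- are correct and match the paper's line of argument.
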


\begin{proof}
Under \eqref{d} and \eqref{due},  $\mathcal{L}$ has a bounded $H^\infty$ functional calculus in $L^p(M,\mu)$ according to \cite[Theorem 3.1]{DR}. 
Since this in particular implies sectoriality of  $\mathcal{L}$ in $L^p(M,\mu)$, \cite[Theorem 3.8]{CDMY} yields the decomposition of $L^p(M,\mu)$ into nullspace and range of $\mathcal{L}$. 
Using this decomposition, and noting that $P_t^{(N)}f=f$ for every $f \in N_p(\mathcal{L})$ and all $t>0$, 
 the Convergence Lemma (see e.g. \cite[Theorem D]{ADM} or \cite[Lemma 9.13]{KW}) implies for every $f\in L^p(M,\mu)$
\begin{align*}
f  & = \lim_{t\to 0} P_t^{(N)}f = \lim_{t\to 0} P_t^{(N)}f - \lim_{t\to \infty} P_t^{(N)}f + \mathsf{P}_{N(\mathcal{L})}f \\
 & = \int_0^{+\infty} Q_t^{(N)}f \, \frac{dt}{t} + \mathsf{P}_{N(\mathcal{L})}f,
\end{align*}
where the limit is taken in $L^p(M,\mu)$. The last equivalence then follows from the self-adjointness of $Q_t^{(N)}$ and Fubini, as for $f\in\overline{\Ran_2(\mathcal{L})}$
\begin{align*}
	\int_0^{+\infty} \| Q_t^{(N)} f\|_2^2 \, \frac{dt}{t}
	=  \langle \int_0^{+\infty}(Q_t^{(N)})^2 f\,\frac{dt}{t},f \rangle \simeq \|f\|_{2}^2.
\end{align*}
\end{proof}

\begin{definition} \label{def:calS} For $p\in(1,{+\infty})$, we define the set of test functions 
\begin{align*}
\calS^p & =\calS^p(M,\mathcal{L}):= {\mathcal D}_p(\mathcal{L}) \cap \Ran_p(\mathcal{L}) \\
 & = \{f\in L^p,\ \exists g,h\in L^p ,\ f=\mathcal{L}g \textrm{ and } h=\mathcal{L}f \},
\end{align*}
and
$$ \calS = \cup_{p\in(1,{+\infty})} \calS^p.$$
\end{definition}
For every $p\in(1,{+\infty})$ and $\alpha\in(0,1)$, under \eqref{due} the set $(\calS^p + N(\mathcal{L})) \subset  \dot L^p_\alpha$ is dense into $ \dot L^p_\alpha$, due to the previous Calder\'on reproducing formula (see also \cite[Theorem 15.8]{KW}). Indeed, for $f\in \dot L^p_\alpha$, Proposition \ref{prop:reproducing} yields that for $N\geq 1>\alpha$
$$ f_\epsilon := \int_{\epsilon}^{\epsilon^{-1}} Q_t^{(N)}f \, \frac{dt}{t} + \mathsf{P}_{N(\mathcal{L})}f $$
is convergent to $f$ in $\dot L^p_\alpha$ and for every $\epsilon>0$, we easily see that $f_\epsilon\in \calS^p + N(\mathcal{L})$.\\

We state some results on square functions that we will need in the following.

\begin{proposition} \label{prop:square-function}
Let $N>0$,  and consider the operators $P_t^{(N)}$, $Q_t^{(N)}$ as defined in \eqref{def:Qt} and \eqref{def:Pt}. Assume  \eqref{due}. \\
(i) Let $p\in(1,{+\infty})$, and let $\alpha>0$. The horizontal square functions, defined by 
$$g_N(f):= \left( \int_0^{+\infty} \left| Q_t^{(N)}f \right|^2 \,\frac{dt}{t}\right)^{{1/2}},  \qquad f \in L^p(M,\mu), $$
and
$$\tilde{g}_{N,\alpha}(f):= \left( \int_0^{+\infty} \left| (t\mathcal{L})^\alpha P_t^{(N)}f \right|^2 \,\frac{dt}{t}\right)^{{1/2}},  \qquad f \in L^p(M,\mu), $$
are bounded on $L^p(M,\mu)$.\\
(ii)  Let $p \in (1,2]$. 
The vertical square functions, defined by 
\begin{equation}	G_N f:=\left(\int_0^{+\infty} \abs{\sqrt{t}\nabla P_t^{(N)} f}^2 \,\frac{dt}{t}\right)^{1/2}, \qquad f \in L^p(M,\mu), \label{eq:GN} 
\end{equation}
and 
\begin{equation}	\tilde{G}_N f:=\left(\int_0^{+\infty} \abs{\sqrt{t}\nabla Q_t^{(N)} f}^2 \,\frac{dt}{t}\right)^{1/2}, \qquad f \in L^p(M,\mu), \label{eq:GN2} 
\end{equation}
are bounded on $L^p(M,\mu)$. \\
(iii)  Assume in addition $(G_{p_0})$ and $(P_{p_0})$ for some $p_0\in(2,{+\infty})$. Then $G_N$ is bounded on $L^p(M,\mu)$ for every $p \in (1,p_0]$. \\
(iv)  Let $p \in (1,{+\infty})$. The conical square function, defined by 
$$\mathcal{G}_N f(x):= \left( \int_{\Gamma(x)} \left| Q_t^{(N)}f(y) \right|^2 \, \frac{dt d\mu(y)}{tV(y,\sqrt{t})}\right)^{{1/2}}, \qquad f \in L^p(M,\mu),$$
is bounded on $L^p(M,\mu)$.
Here, $\Gamma(x)$ denotes the parabolic cone
$$ \Gamma(x):=\{(y,t)\in M\times (0,{+\infty}),\ d(x,y) \leq \sqrt{t} \}.$$
\end{proposition}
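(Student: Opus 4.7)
The plan for part (i) is to realise $Q_t^{(N)}=\psi(t\mathcal{L})$ with $\psi(z)=c_N^{-1}z^N e^{-z}$, and $(t\mathcal{L})^\alpha P_t^{(N)}=\tilde\psi(t\mathcal{L})$ with $\tilde\psi(z)=z^\alpha \phi_N(z)$; both functions are non-degenerate auxiliary functions in $H^\infty(\Sigma_\theta)$ for some $\theta\in(0,\pi/2)$ (they decay polynomially at $0$ and exponentially at $\infty$). Since $\mathcal{L}$ admits a bounded $H^\infty$ calculus on $L^p(M,\mu)$ under \eqref{dnu}--\eqref{due} (by \cite[Theorem 3.1]{DR}, as recalled above), McIntosh's equivalence between bounded $H^\infty$ calculus and square function estimates (see \cite{CDMY, ADM}) delivers $\|g_N f\|_p+\|\tilde g_{N,\alpha}f\|_p \lesssim \|f\|_p$ for $f\in\overline{\Ran_p(\mathcal{L})}$; since these operators annihilate $N_p(\mathcal{L})$, Proposition \ref{prop:reproducing} extends the bound to all of $L^p(M,\mu)$. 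The $L^2$ bound in (ii) reduces to (i) by writing $\||\nabla P_t^{(N)} f|\|_2^2=\mathcal{E}(P_t^{(N)}f,P_t^{(N)}f)=t^{-1}\|\sqrt{t\mathcal{L}}P_t^{(N)}f\|_2^2$, so that $G_N$ becomes a horizontal square function associated to $z\mapsto z^{1/2}\phi_N(z)$; the argument for $\tilde G_N$ is identical with $z\mapsto z^{N+1/2}e^{-z}$.

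For the extension in (ii) to $p\in(1,2)$, I would view $G_N$ as a sublinear operator with Hilbert-space valued kernel, and apply the Blunck--Kunstmann Calder\'on--Zygmund scheme \cite{BK}: the $L^2$ off-diagonal estimates for $\sqrt{t}\nabla P_t^{(N)}$ given by Proposition \ref{prop:Davies-Gaffney}, together with the approximation of identity $(e^{-t\mathcal{L}})_{t>0}$, yield a weak-type $(1,1)$ estimate for $G_N$; Marcinkiewicz interpolation with the $L^2$ bound then delivers the full range $p\in(1,2]$. For (iii), the $L^2$ result must be transported to $p\in(2,p_0]$ under $(G_{p_0})+(P_{p_0})$; the plan is to run the extrapolation/good-$\lambda$ machinery of \cite{ACDH}: $(G_{p_0})$ supplies $L^2$-$L^{p_0}$ off-diagonal bounds for $\sqrt{t}\nabla P_t^{(N)}$, while $(P_{p_0})$ controls the oscillations of $\nabla P_t^{(N)} f$ at each scale, and their combination lifts the $L^2$ estimate to $L^p$ for every $p\in(2,p_0]$. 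Combined with (ii), this gives the announced range.

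Finally, (iv) at $p=2$ is immediate from Fubini and \eqref{ortho}:
\[ \|\mathcal{G}_N f\|_2^2 = \int_0^{+\infty}\int_M |Q_t^{(N)} f(y)|^2 \, \frac{d\mu(y)\,dt}{t} = \int_0^{+\infty}\|Q_t^{(N)} f\|_2^2\,\frac{dt}{t}\simeq \|f\|_2^2. \]
For $p\in(1,+\infty)$, the plan is to invoke the identification of the $\mathcal{L}$-adapted Hardy space $H^p_\mathcal{L}(M,\mu)$ with $L^p(M,\mu)$ under \eqref{due} (see the remark following Theorem \ref{thm:summary}, and \cite{AMR,Chen2}); since $\mathcal{G}_N$ is, up to normalisation, the defining conical square function of $H^p_\mathcal{L}$, this identification is precisely the desired norm equivalence $\|\mathcal{G}_N f\|_p\simeq \|f\|_p$. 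The main obstacle in this program is (iii): transporting the $L^2$ square function bound to $p>2$ genuinely requires the Poincar\'e inequality, since pure functional calculus no longer suffices once the gradient is present and one is beyond $L^2$; the remaining parts follow from standard $H^\infty$ calculus, Davies--Gaffney estimates, and the Hardy space theory adapted to $\mathcal{L}$.
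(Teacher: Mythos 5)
Your proposal is mathematically sound and recovers the statement, but in places it takes a genuinely different route from the paper, which is worth flagging.

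For (i), (ii) at $p=2$, and (iv) at $p=2$ you do essentially what the paper does: horizontal square functions from the bounded $H^\infty$ calculus (the paper cites \cite{meda} and \cite[Theorem 6.6]{CDMY}), the $L^2$ vertical bound from integration by parts reducing $G_N$ to a horizontal square function via $\mathcal{E}(P_t^{(N)}f,P_t^{(N)}f)=\|\sqrt{\mathcal{L}}P_t^{(N)}f\|_2^2$, and Fubini plus \eqref{ortho} for $\mathcal{G}_N$ on $L^2$.

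For (ii) with $p\in(1,2)$, the paper goes through the $L^2$-valued Riesz transform bound $(\overline{R_p})$ established in Proposition \ref{prop:Lp<2} (the Coulhon--Duong argument \cite{CD1} run with a vector-valued Calder\'on--Zygmund decomposition), and then transfers to $G_N$ and $\tilde G_N$. You instead apply the Blunck--Kunstmann scheme directly to the Hilbert-valued square function $G_N$. Both arguments use the same underlying technology ($L^2$ Davies--Gaffney estimates plus a CZ decomposition and weak $(1,1)$), so this is a minor variant: your route bypasses the Riesz transform intermediary, the paper's route goes through it because $(\overline{R_p})$ is needed elsewhere anyway.

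For (iii), the paper simply cites \cite[Theorem 3.6]{BF2}, which packages the statement that $(G_{p_0})+(P_{p_0})$ implies Riesz transform boundedness on $L^p$ for $p\in(2,p_0]$; you propose to run the \cite{ACDH} extrapolation/good-$\lambda$ machinery directly. Again, this is the same mechanism at a lower level of encapsulation — your version is longer to carry out but self-contained, while the paper leans on a ready-made result.

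For (iv) with general $p$, the paper's argument is genuinely different from yours: for $p\in(1,2]$ it cites \cite[Lemma 5.2, Theorem 8.5]{AMR}, and for $p\in[2,\infty)$ (the only range actually used later) it reduces the conical square function to the horizontal one by a Carleson-measure duality argument (Lemma \ref{lem:nontang-Carleson}) and interpolation with the $L^2$ case, as in \cite[Section 3.2]{AHM}. You instead invoke the full identification $H^p_{\mathcal{L}}\simeq L^p$ under \eqref{due}. Your route works, but it imports a substantially heavier black box (the whole Hardy-space theory adapted to $\mathcal{L}$), and in fact the $p\leq 2$ half of that identification is itself proved by arguments of the AMR type, so you are not gaining anything over the paper's citations; the paper's $p\geq 2$ argument via Carleson duality is more elementary and more in line with what the rest of the paper actually uses. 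Both are correct; the trade-off is generality versus self-containment.
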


\begin{proof}
For the result on the horizontal square function $g_N$, see \cite{meda} and references therein. The result on $\tilde{g}_{N,\alpha}$ with $N$ an integer also follows from \cite{meda}. For arbitrary $N>0$, see e.g. \cite[Theorem 6.6]{CDMY}.
 
The result on vertical square functions in $L^2(M,\mu)$ is a consequence of integration by parts and \eqref{ortho}. 
For $p\neq 2$, we refer to \cite[Theorem 3.6]{BF2}, where indeed the combination $(G_{p_0})$ and $(P_{p_0})$ is shown to imply the boundedness of the Riesz transform in $L^p$ for every $p\in(1,p_0]$ (which is stronger than the boundedness of $G_N$).

For results on conical square functions of this kind, we refer to \cite[Lemma 5.2, Theorem 8.5]{AMR} for the case $p \in (1,2]$. In the present paper we only use the case $p \in [2,{+\infty})$ which is easier and can be proven as in \cite[Section 3.2]{AHM}, that is, by using Lemma  \ref{lem:nontang-Carleson} below and interpolating with $L^2$, where one can reduce the problem to the horizontal one.
\end{proof}

In fact, the Poincar\'e inequality $(P_{p_0})$ is not necessary in (iii) if one allows a loss on the Lebesgue exponent.

\begin{proposition} \label{prop:K} Let $N>0$,  and consider the operators $P_t^{(N)}$, $Q_t^{(N)}$ as defined in \eqref{def:Qt} and \eqref{def:Pt}. Assume  \eqref{due} and $(G_{p_0})$ for some $p_0\in(2,{+\infty}]$. Then for every $p\in(2,p_0)$ and every $f\in L^p(M,\mu)$,
$$ \| G_N f\|_p \lesssim  \| \tilde{G}_N f\|_p
\lesssim \|f\|_p.$$
\end{proposition}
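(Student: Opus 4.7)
For the first inequality $\|G_N f\|_p \lesssim \|\tilde G_N f\|_p$, the plan is to derive a pointwise bound $G_N f(x) \lesssim \tilde G_N f(x)$ and then integrate. Remark~\ref{Pt-rem}(ii) gives $t\partial_t P_t^{(N)} = -Q_t^{(N)}$, and Proposition~\ref{prop:reproducing} gives $P_t^{(N)} f \to \mathsf{P}_{N(\mathcal{L})} f$ as $t \to +\infty$; integrating in $t$ produces
\[
	P_t^{(N)} f = \mathsf{P}_{N(\mathcal{L})} f + \int_t^{+\infty} Q_s^{(N)} f \,\frac{ds}{s}.
\]
By Proposition~\ref{prop:kernel} the nullspace $N(\mathcal{L})$ is at most the constants, whose energy density vanishes, so $\nabla P_t^{(N)} f = \int_t^{+\infty} \nabla Q_s^{(N)} f \,\frac{ds}{s}$ (interpreted in $L^2$ on compacts). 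The pointwise Minkowski inequality combined with the one-sided Hardy inequality $\int_0^{+\infty} \bigl(\int_t^{+\infty} g(s)\, \frac{ds}{s}\bigr)^{2} dt \lesssim \int_0^{+\infty} g(t)^2 \, dt$ (applied to $g(s) = |\nabla Q_s^{(N)} f(x)|$) then yields $G_N f(x)^2 \lesssim \tilde G_N f(x)^2$, whence the first inequality in every $L^p$.

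The harder statement is $\|\tilde G_N f\|_p \lesssim \|f\|_p$ for $p \in (2, p_0)$. First I would treat the case $p=2$: integration by parts and the identity $t\mathcal{L}Q_t^{(N)} = (c_{N+1}/c_N)\, Q_t^{(N+1)}$ yield
\[
	\|\tilde G_N f\|_2^2 = \int_0^{+\infty} t\, \mathcal{E}(Q_t^{(N)} f, Q_t^{(N)} f)\,\frac{dt}{t} = \frac{c_{N+1}}{c_N} \int_0^{+\infty} \langle Q_t^{(N+1)} f, Q_t^{(N)} f \rangle\, \frac{dt}{t},
\]
so Cauchy-Schwarz in $t$ together with the horizontal square-function bound~\eqref{ortho} (equivalently Proposition~\ref{prop:square-function}(i)) gives $\|\tilde G_N f\|_2 \lesssim \|f\|_2$.

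To upgrade from $L^2$ to $L^p$ for $p \in (2,p_0)$, I would view $T\colon f \mapsto (\sqrt{t}\nabla Q_t^{(N)} f)_{t>0}$ as a Hilbert-valued singular integral $L^p(M,\mu) \to L^p(M; L^2(dt/t))$ and invoke a Blunck-Kunstmann / Auscher-Martell style extrapolation theorem. Its two hypotheses are $L^2$-boundedness of $T$ (just established) together with $L^2$-$L^{p_0}$ off-diagonal bounds of sufficient polynomial order at scale $\sqrt{t}$ for each slice $\sqrt{t}\nabla Q_t^{(N)}$. For the latter I use the factorisation
\[
	\sqrt{t}\nabla Q_t^{(N)} = 2^N \bigl(\sqrt{t}\nabla e^{-t\mathcal{L}/2}\bigr) \circ Q_{t/2}^{(N)}.
\]
Under $(G_{p_0})$, the first factor has $L^{p_0}$-$L^{p_0}$ off-diagonal bounds of arbitrary polynomial order at scale $\sqrt{t}$ (a classical consequence of $(G_{p_0})$, cf.\ \cite{ACDH}), while Proposition~\ref{prop:kernel-est} supplies $L^2$-$L^{p_0}$ off-diagonal bounds of any order for the second factor. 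Composing via Lemma~\ref{lem:comp-OD} produces the required $L^2$-$L^{p_0}$ off-diagonal estimates for $\sqrt{t}\nabla Q_t^{(N)}$. The main obstacle is ensuring that the Hilbert-valued version of the extrapolation theorem (well documented in the tent-space literature, cf.\ \cite{AMR, FMP, BZ}) applies in our doubling Dirichlet setting with these particular off-diagonal orders; this is by now routine once the scalar case is in hand, and delivers the sought $L^p$ bound on the whole interval $(2, p_0)$.
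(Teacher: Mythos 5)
Your proof of the first inequality $\|G_N f\|_p \lesssim \|\tilde G_N f\|_p$ is essentially the paper's argument: both start from the reproducing formula $P_t^{(N)}f = \mathsf{P}_{N(\mathcal{L})}f + \int_t^{+\infty} Q_s^{(N)}f\,\frac{ds}{s}$, discard the nullspace part, and apply a Hardy-type inequality pointwise. (The paper keeps the extra factor $(t/s)^{1/2}$ inside the Hardy integrand; you use the unweighted dual Hardy inequality instead; both versions are valid at $p=2$.) Your $L^2$ computation for $\tilde G_N$ via integration by parts and $t\mathcal{L}Q_t^{(N)} = \frac{c_{N+1}}{c_N}Q_t^{(N+1)}$ is also correct, and agrees with what Proposition~\ref{prop:square-function}(ii) records.

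The gap is in the extrapolation step to $p\in(2,p_0)$. You state that a Blunck--Kunstmann / Auscher--Martell extrapolation theorem applies given only (i) $L^2$-boundedness of $T\colon f\mapsto(\sqrt{t}\nabla Q_t^{(N)}f)_t$ and (ii) $L^2$-$L^{p_0}$ off-diagonal bounds for each slice $\sqrt{t}\nabla Q_t^{(N)}$. That is the correct hypothesis set for extrapolation \emph{below} $L^2$ (see Proposition~\ref{prop:extrap<2} in this paper), but it is not sufficient for $p>2$. The $p>2$ theorems (Proposition~\ref{prop:extra-p>2} and \cite[Theorem 3.13]{AM}) additionally require a local "reverse H\"older''-type estimate for the operator after composition with the approximate identity, namely an inequality of the form $\bigl(\aver{B}|T(P_{r^2}^{(N)}f)|^{\bar p}\,d\mu\bigr)^{1/\bar p}\lesssim \bigl(\inf_{x\in B}\mathcal{M}[|S(f)|^{p_2}]\bigr)^{1/p_2}$ for some auxiliary $L^p$-bounded $S$. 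Establishing such an estimate for the vertical square function is precisely the delicate content of the $p>2$ theory (witness that Proposition~\ref{prop:square-function}(iii) needs the additional Poincar\'e input $(P_{p_0})$ to handle $G_N$ on the closed range), so the phrase "this is by now routine once the scalar case is in hand'' hides exactly the step that has to be done, and citing \cite{AMR,FMP,BZ} does not close it, since those references either treat conical/tent-space variants or extrapolation below $L^2$.

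The paper's route is cleaner: it does not extrapolate the square function directly. Instead, interpolating $(G_{p_0})$ with Davies--Gaffney gives $L^p$-$L^p$ off-diagonal bounds for $\sqrt{t}\nabla e^{-t\mathcal{L}}$, and combining with \eqref{UE} gives $L^1$-$L^p$ off-diagonal bounds; Kunstmann's criterion \cite[Theorem 2.2]{K} then yields $R_2$-boundedness of the family $(\sqrt{t}\nabla e^{-t\mathcal{L}})_{t>0}$ on $L^p$ for every $p\in(2,p_0)$. Once you have $R_2$-boundedness, the factorization $\sqrt{t}\nabla Q_t^{(N)} = 2^N(\sqrt{t}\nabla e^{-t\mathcal{L}/2})\circ Q_{t/2}^{(N)}$ (the same one you noticed) allows you to pull the $R_2$-bounded family out of the square function and reduce to the horizontal square function $g_N$, which is bounded on $L^p$ by $H^\infty$ functional calculus. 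This $R$-boundedness shortcut is what saves you from having to set up a genuine $p>2$ extrapolation argument, and it is the idea you are missing.
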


\begin{proof} By writing
$$ P_t^{(N)}f = \int_t^{+\infty} Q_s^{(N)}f \, \frac{ds}{s} + \mathsf{P}_{N_p(\mathcal{L})}f,$$
one obtains
$$  \abs{\sqrt{t}\nabla P_t^{(N)} f} \leq \int_t^{+\infty} \left(\frac{t}{s}\right)^{1/2} \abs{\sqrt{s}\nabla Q_s^{(N)} f} \, \frac{ds}{s}.$$
Then Hardy's inequality implies the pointwise inequality
$$ G_N f \lesssim \left(\int_0^{+\infty} \abs{\sqrt{t}\nabla Q_t^{(N)} f}^2 \,\frac{dt}{t}\right)^{1/2},$$
which gives the first desired estimate.

Interpolating $(G_{p_0})$ with the $L^2$ Davies-Gaffney estimates stated in Proposition \ref{prop:Davies-Gaffney} yields, for $p\in(2,p_0)$, that there exists constants such that for every $t>0$ and every pair of balls $B_1,B_2$ of radius $\sqrt{t}$,
$$	 \| |\nabla e^{-t\mathcal{L}} | \|_{L^p(B_1) \to L^p(B_2)}  \lesssim e^{-c \frac{d^2(B_1,B_2)}{t}}. $$
By combining this with \eqref{due}, which self-improves in \eqref{UE}, we deduce that
$$	 \| |\nabla e^{-t\mathcal{L}} | \|_{L^1(B_1) \to L^p(B_2)}  \lesssim |B_1|^{\frac{1}{p}-1}e^{-c \frac{d^2(B_1,B_2)}{t}}. $$

In particular, from \cite[Theorem 2.2]{K} we deduce that the family $(\sqrt{t}\nabla e^{-t\mathcal{L}})_{t>0}$ is $R_2$-bounded in $L^p$, for every $p\in(2,p_0)$.
Since $Q_t^{(N)}= 2^N e^{-t\mathcal{L}/2} Q_{t/2}^{(N)}$, and using the $L^p$ boundedness of the horizontal square function $g_N$, this yields 
\begin{align*}
\left\| \left(\int_0^{+\infty} \abs{\sqrt{t}\nabla Q_t^{(N)} f}^2 \,\frac{dt}{t}\right)^{1/2} \right\|_p & \lesssim \left\| \left(\int_0^{+\infty} \abs{\sqrt{t}\nabla e^{-t\mathcal{L}/2} Q_{t/2}^{(N)} f}^2 \,\frac{dt}{t}\right)^{1/2} \right\|_p \\
& \lesssim \left\| \left(\int_0^{+\infty} \abs{Q_{t/2}^{(N)} f}^2 \,\frac{dt}{t}\right)^{1/2} \right\|_p \\
& \lesssim \|f\|_p,
\end{align*}
which concludes the proof.
\end{proof}

We shall also need the following orthogonality lemma, for instance in the proof of Lemma  \ref{lemme}.

\begin{lemma}\label{lem:orthogonality} Let $N>0$. Consider $Q_t^{(N)}$  and $\tilde Q_t:= (t\mathcal{L})^{N / 2} e^{-\frac{t}{2}\mathcal{L}}$ so that $Q_t^{(N)}=\tilde Q_t^2$. Assume  \eqref{due}. 
Then for every $p\in(1,{+\infty})$ one has
$$ \left\| \int_0^{+\infty} Q_t^{(N)}F_t \, \frac{dt}{t} \right\|_{p} \lesssim \left\| \left( \int_0^{+\infty} |\tilde Q_tF_t|^2 \, \frac{dt}{t}\right)^{1/ 2} \right\|_{p},$$
where $F_t(x);=F(t,x)$,  $F: (0,{+\infty})\times M\to\R$ being a measurable function such that the RHS has a meaning and is finite.
\end{lemma}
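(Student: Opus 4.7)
The plan is to exploit the factorization $Q_t^{(N)}=\tilde Q_t^2$ with the self-adjointness of $\tilde Q_t$ (which follows from the self-adjointness of $\mathcal{L}$), combine it with a duality argument, and conclude by invoking the $L^{p'}$ boundedness of a horizontal square function from Proposition \ref{prop:square-function}(i).

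\textbf{Step 1 (duality).} By the $L^p$--$L^{p'}$ duality with $p':=p/(p-1)$, it suffices to show that for every $g\in L^{p'}(M,\mu)$ with $\|g\|_{p'}\le 1$,
\[
\left|\left\langle \int_0^{+\infty} Q_t^{(N)}F_t\,\frac{dt}{t},\, g\right\rangle\right|
\lesssim \left\|\left(\int_0^{+\infty}|\tilde Q_tF_t|^2\,\frac{dt}{t}\right)^{1/2}\right\|_p.
\]
By Fubini (which is legitimate once the right-hand side above is assumed finite, otherwise the inequality is trivial) and self-adjointness of $\tilde Q_t$, the pairing rewrites as
\[
\int_0^{+\infty}\!\int_M \tilde Q_tF_t(x)\,\tilde Q_tg(x)\,d\mu(x)\,\frac{dt}{t}.
\]

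\textbf{Step 2 (Cauchy--Schwarz and H\"older).} Apply the Cauchy--Schwarz inequality in the variable $t$, pointwise in $x$, followed by H\"older's inequality in $x$:
\[
\left|\int_0^{+\infty}\!\int_M \tilde Q_tF_t\cdot\tilde Q_tg\,d\mu\,\frac{dt}{t}\right|
\le \left\|\left(\int_0^{+\infty}|\tilde Q_tF_t|^2\,\frac{dt}{t}\right)^{1/2}\right\|_p
\left\|\left(\int_0^{+\infty}|\tilde Q_tg|^2\,\frac{dt}{t}\right)^{1/2}\right\|_{p'}.
\]

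\textbf{Step 3 (square function estimate for the dual factor).} After the change of variable $t\mapsto 2t$, the operator $\tilde Q_t = (t\mathcal{L})^{N/2}e^{-t\mathcal{L}/2}$ is a constant multiple of $Q^{(N/2)}_{t/2}$. Hence the second factor equals, up to a harmless rescaling, the horizontal square function $g_{N/2}(g)$ associated with $Q^{(N/2)}$. By Proposition \ref{prop:square-function}(i), since $N/2>0$, this square function is bounded on $L^{p'}(M,\mu)$ for every $p'\in(1,+\infty)$. Therefore
\[
\left\|\left(\int_0^{+\infty}|\tilde Q_tg|^2\,\frac{dt}{t}\right)^{1/2}\right\|_{p'}\lesssim \|g\|_{p'}\le 1,
\]
and combining the three steps yields the claim.

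\textbf{Obstacle.} The argument is essentially a textbook duality + almost-orthogonality computation; the only mild points of care are the applicability of Fubini and the reduction in Step 3 identifying $\tilde Q_t$ with a standard horizontal-square-function kernel so that Proposition \ref{prop:square-function}(i) can be applied for a general (not necessarily integer) parameter $N>0$. No additional assumption beyond \eqref{due} (required for Proposition \ref{prop:square-function}(i)) is needed.
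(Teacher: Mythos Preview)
Your proof is correct and follows essentially the same approach as the paper: duality, the factorization $Q_t^{(N)}=\tilde Q_t^2$ together with self-adjointness of $\tilde Q_t$, Cauchy--Schwarz in $t$ followed by H\"older in $x$, and finally the identification $\tilde Q_t=2^{N/2}Q_{t/2}^{(N/2)}$ combined with the $L^{p'}$ boundedness of the horizontal square function $g_{N/2}$ from Proposition~\ref{prop:square-function}(i).
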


\begin{proof} Let $g\in L^{p'}(M,\mu)$. Then, by Fubini, Cauchy-Schwarz and H\"older,
\begin{align*}
\left|  \langle \int_0^{+\infty} Q_t^{(N)}F_t \, \frac{dt}{t},g\rangle \right| & = \left| \int_0^{+\infty} \langle \tilde Q_tF_t , \tilde Q_tg\rangle \,\frac{dt}{t} \right| \\
 & \leq \left\| \left( \int_0^{+\infty} |\tilde Q_tF_t|^2 \, \frac{dt}{t}\right)^{1/ 2} \right\|_{p} \left\| \left( \int_0^{+\infty} |\tilde Q_tg|^2 \, \frac{dt}{t}\right)^{1/ 2} \right\|_{{p'}} \\
 & \lesssim  \left\| \left( \int_0^{+\infty} |\tilde Q_tF_t|^2 \, \frac{dt}{t}\right)^{1/ 2} \right\|_{p} \|g\|_{{p'}},
 \end{align*}
 where in the last inequality we have used the fact that $\tilde Q_t=2^{N/2} Q_{t/2}^{(N/2)}$ and the second assertion in Proposition \ref{prop:square-function}.
\end{proof}

We will also need the Fefferman-Stein inequalities for the Hardy-Littlewood maximal operator (see \cite{feffstein} for the discrete version and \cite[Proposition 4.5.11]{Grafakos1} for the transfer method from discrete to continuous versions):
\begin{proposition} \label{prop:FS} Let $1<p<+\infty$ and $1\le q<\min (p,2)$. Then the $L^q$-Hardy-Littlewood maximal function satisfies the following discrete $L^2$-valued inequalities
$$ \left\| \left( \sum_{n \in \Z} |\calM[|f_n|^q]|^{2\over q} \right)^{1/2} \right\|_p \lesssim \left\| \left( \sum_{n \in \Z} | f_n |^2 \right)^{1/2} \right\|_p,$$
for $(f_n)_n \in L^p(M, \ell^2(\Z))$,
and the continuous version
$$ \left\| \left( \int_0^{+\infty} |\calM [|F_t|^q] |^{2q} \, \frac{dt}{t} \right)^{1/2} \right\|_p \lesssim \left\| \left( \int_0^{+\infty} | F_t |^2 \, \frac{dt}{t} \right)^{1/2} \right\|_p,$$
 for $(F_t)_t \in L^p\left(M, L^2[(0,{+\infty}); \frac{dt}{t}]\right)$.
\end{proposition}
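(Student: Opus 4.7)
The plan is to reduce everything to the classical $q=1$ Fefferman--Stein vector-valued maximal inequality from \cite{feffstein}, and then pass from the discrete to the continuous statement by the transference argument of \cite[Proposition 4.5.11]{Grafakos1}, exactly as the paper indicates by its citations. Since in our setting $(M,d,\mu)$ is a space of homogeneous type, the scalar Hardy--Littlewood maximal operator $\calM$ is of weak type $(1,1)$ and bounded on $L^r(M,\mu)$ for every $r>1$, which is all that the classical Fefferman--Stein proof (via the Calder\'on--Zygmund decomposition and interpolation) requires.

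The first step is to establish the discrete statement. When $q=1$, the inequality
\[
  \left\| \Bigl( \sum_{n\in\Z} |\calM f_n|^{2} \Bigr)^{1/2} \right\|_p \lesssim \left\| \Bigl( \sum_{n\in\Z} |f_n|^{2} \Bigr)^{1/2} \right\|_p, \qquad 1<p<+\infty,
\]
is the classical Fefferman--Stein inequality in the doubling setting. For general $1<q<\min(p,2)$, set $g_n := |f_n|^q$. The assumptions $q<p$ and $q<2$ force $p/q>1$ and $2/q>1$, hence the classical case applied with exponents $(p/q,2/q)$ and with $\calM$ replaced by $\calM$ acting on $g_n$ yields
\[
  \left\| \Bigl( \sum_{n\in\Z} |\calM[|f_n|^q]|^{2/q} \Bigr)^{q/2} \right\|_{p/q} \lesssim \left\| \Bigl( \sum_{n\in\Z} |g_n|^{2/q} \Bigr)^{q/2} \right\|_{p/q} = \left\| \Bigl( \sum_{n\in\Z} |f_n|^{2} \Bigr)^{q/2} \right\|_{p/q}.
\]
Raising both sides to the power $1/q$ gives the discrete inequality in the form stated.

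The second step is the transference to a continuous parameter $t\in(0,+\infty)$ with measure $\frac{dt}{t}$. Following \cite[Proposition 4.5.11]{Grafakos1}, one partitions $(0,+\infty)$ into the countable family of dyadic intervals $I_n=(2^n,2^{n+1}]$ (each of logarithmic length comparable to $1$), averages $F_t$ on each $I_n$ to form a discrete sequence $\tilde f_n(x)=\bigl(\int_{I_n}|F_t(x)|^2\frac{dt}{t}\bigr)^{1/2}$, and applies the discrete inequality to a suitable dyadic approximation. A standard density/monotone convergence argument, together with the continuity in $t$ of $F_t$ that one can reduce to (simple functions in $t$ being dense), converts the discrete $\ell^2$ into the continuous $L^2((0,+\infty);dt/t)$ norm, yielding the continuous statement.

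The only delicate point is the transference step, since the Hardy--Littlewood maximal function is nonlinear and does not commute with integration over $t$. This is handled as in \cite{Grafakos1} by first proving the inequality for finitely supported step functions in $t$ (for which the discrete result applies verbatim after normalising the measures of the $I_n$), and then invoking monotone convergence on both sides. Everything else is a routine reduction, and no property of $(M,d,\mu)$ beyond the doubling condition \eqref{d} (already standing) is needed.
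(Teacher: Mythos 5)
Your proposal is correct and follows precisely the route the paper indicates by its two citations: reduce the $L^q$-maximal function to the classical $q=1$ Fefferman--Stein inequality (valid on any doubling metric measure space) by the substitution $g_n=|f_n|^q$ with exponents $(p/q,2/q)$, which is legitimate exactly because $q<\min(p,2)$ forces $p/q>1$ and $2/q>1$, and then obtain the continuous version from the discrete one by the transference argument of \cite[Proposition 4.5.11]{Grafakos1}. The paper gives no further detail beyond these two references, so your elaboration is a faithful and accurate expansion of the intended argument; note only that the exponent ``$2q$'' in the paper's display of the continuous statement is a typo for $2/q$, which your reduction correctly produces.
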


\subsection{Carleson duality}

For every $x \in M$, denote by $\Gamma(x)$ the parabolic cone of aperture $1$ with vertex $x$, i.e.
\[
		\Gamma(x):=\{(y,t) \in M \times (0,{+\infty}) \,:\, d(y,x) < \sqrt{t}\}.
\]
For every measurable function $F$ on $M \times (0,{+\infty})$ and an exponent $p\in(1,{+\infty})$, 
the $L^p$-Carleson function $\scrC_p (F)$ is defined by
\[
 	\scrC_p (F)(x):= \sup_{B \ni x} \left(\aver{B} \left( \int_0^{r(B)} \abs{F(y,t)}^2 \, \frac{dt}{t}\right)^{p/2}\, d\mu(y) \right)^{1/p}, \qquad x \in M,
\]
where the supremum is taken over  all balls $B$ in $M$ that contain $x$. 
Let us point out that the case $p=2$ corresponds to the classical maximal function over Carleson boxes. 
For every measurable function $F: M \times (0,\infty) \to \C$, we denote by $N_\ast(F)$ its non-tangential maximal function, which is defined as
$$N_\ast(F)(x):= \sup_{(y,t)\in \Gamma(x)} \abs{F(y,t)}, \qquad x \in M.$$

We will need the following Carleson duality (see \cite{CMS} for the original proof in the Euclidean setting and $p=2$).

\begin{theorem} \label{CarlesonDuality} Let $(M,d,\mu)$  be a doubling metric  measure  space.
Suppose $p\in[2,{+\infty})$. For every $\epsilon>0$ (with $\epsilon=0$ if $p=2$), there exists a constant $C>0$ such that for all measurable functions $F, G: M \times (0,\infty) \to \C$,
\begin{align*}
	\left(\int_M \left(\int_0^{+\infty} \abs{F(x,t)}^2 \abs{G(x,t)}^2 \,\frac{dt}{t}\right)^{p/2}\, d\mu(x) \right)^{1/p}
	&\leq C \norm{N_\ast (F)}_{p}\norm{\scrC_{p+\epsilon} (G)}_{\infty}.
\end{align*}
\end{theorem}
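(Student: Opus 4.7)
The plan is to treat $d\sigma(y,t) := |G(y,t)|^2 \,\frac{dt\, d\mu(y)}{t}$ as a Carleson measure on $M \times (0,\infty)$ and to exploit the pointwise relation $|F(y,t)| \le N_\ast F(x)$ whenever $x \in B(y,\sqrt{t})$, i.e.\ whenever $(y,t) \in \Gamma(x)$. The case $p=2$ is essentially the classical Carleson embedding theorem of Coifman--Meyer--Stein, adapted to the doubling metric setting. I would use the layer-cake formula
\[
	\int_{M \times (0,\infty)} |F|^2 \, d\sigma = \int_0^{\infty} \sigma\bigl(\{(y,t): |F(y,t)|^2 > \lambda\}\bigr)\, d\lambda,
\]
and observe that $\{|F|^2 > \lambda\}$ is contained in the tent $\widehat{T}(O_\lambda) := \{(y,t): B(y,\sqrt{t}) \subset O_\lambda\}$ over the open set $O_\lambda := \{N_\ast F > \sqrt{\lambda}\} \subset M$, since $|F(y,t)| > \sqrt{\lambda}$ forces $N_\ast F(x) > \sqrt{\lambda}$ for every $x \in B(y,\sqrt{t})$. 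A Vitali-type covering of $O_\lambda$ by balls $(B_j)$ combined with the Carleson estimate $\sigma(T(B_j)) \lesssim |B_j|\,\|\scrC_2 G\|_\infty^2$ then yields $\sigma(\widehat{T}(O_\lambda)) \lesssim |O_\lambda|\,\|\scrC_2 G\|_\infty^2$, and integrating in $\lambda$ produces the bound $\|N_\ast F\|_2^2 \|\scrC_2 G\|_\infty^2$.

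For $p>2$ the strategy is to dualize. Writing $H(x) := \bigl(\int_0^{\infty} |F(x,t)|^2 |G(x,t)|^2 \,dt/t\bigr)^{1/2}$, I would exploit
\[
	\|H\|_p^2 = \|H^2\|_{L^{p/2}} = \sup \Bigl\{ \int_M h(x)\, H(x)^2 \, d\mu(x) \,:\, h \ge 0, \ \|h\|_{L^{(p/2)'}(M)} \le 1 \Bigr\},
\]
and for each admissible $h$ decompose the integral via a Whitney-type partition of the level sets $O_k := \{N_\ast F > 2^k\}$. On each Carleson box $T(B)$ arising from this decomposition, Hölder's inequality in the $y$ variable will decouple the weight $h$ from the vertical square function of $G$, in the form
\[
	\int_{T(B)} h\,|G|^2 \,\tfrac{dt\, d\mu}{t} \le \|h\|_{L^{s}(B)} \Bigl\| \Bigl(\int_0^{r(B)} |G|^2 \,\tfrac{dt}{t}\Bigr)^{1/2}\Bigr\|_{L^{2s'}(B)}^2,
\]
where $s$ is chosen so that $2s' = p + \epsilon$. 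It is precisely this step that forces the use of the stronger Carleson norm $\|\scrC_{p+\epsilon}G\|_\infty$ instead of $\|\scrC_p G\|_\infty$: the room $\epsilon>0$ is what permits the Hölder decoupling, and as $\epsilon \to 0$ the exponent $s$ approaches $(p/2)'$ and the argument degenerates.

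The main technical obstacle will be to recombine, over both the Whitney balls and the dyadic levels $k$, the resulting local estimates into a global bound by $\|N_\ast F\|_p^2 \|\scrC_{p+\epsilon}G\|_\infty^2$. I would handle this by combining the geometric decay coming from the lacunary structure of the level sets $O_k$ (each nested inside the previous up to a factor) with Hardy--Littlewood maximal control of the weight $h$, in the spirit of the Fefferman--Stein vector-valued inequalities recalled in Proposition~\ref{prop:FS}. Taking the supremum over admissible $h$ and tracking the constant through the above chain of inequalities will then close the estimate and yield the stated duality.
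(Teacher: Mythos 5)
Your overall plan is sound and, for $p>2$, genuinely different from the paper's: you dualise and use a Whitney decomposition of the level sets of $N_\ast F$, whereas the paper works directly with a Christ dyadic system, defines stopping times $\tau_k(x)$ recording when $F$ exceeds $2^k$ locally, partitions $(0,\infty)$ into the intervals $J_k(x)=[\tau_k(x),\tau_{k+1}(x))$, bounds $|F|$ by $2^{k+1}$ on $J_k$, and then estimates $\|(\int_{J_k}|G|^2\,dt/t)^{1/2}\|_p$ by covering the level sets of $\tau_k$ with dyadic balls and invoking the Carleson bound on each box. No duality appears in the paper's argument. Your $p=2$ case is the classical tent-space Carleson embedding (note, though, that one needs a \emph{Whitney} covering of $O_\lambda$, not a Vitali one, so that $r(B_j)\simeq\dist(B_j,O_\lambda^c)$ and the interior tent is swallowed by the union of the boxes $T(CB_j)$).

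There are, however, two issues with the $p>2$ part of your proposal. First, your diagnosis of where $\epsilon$ matters is misplaced. Tracking the exponents in your H\"older step with $s'=q/2$ ($q=p+\epsilon$): the power of $|B_j|$ that appears is
$\left(\tfrac1s-\tfrac{1}{(p/2)'}\right)+\tfrac{2}{q} = \left(1-\tfrac2q\right)-\left(1-\tfrac2p\right)+\tfrac2q=\tfrac2p$,
\emph{independently of} $\epsilon$, and the constraint $s\le(p/2)'$ reads $\epsilon\ge 0$, so $\epsilon=0$ is admissible. The decoupling does not degenerate as $\epsilon\to 0$. Second, and this is the real gap, your recombination step is not actually worked out, and Fefferman--Stein will not close it. Summing your local bounds over the Whitney balls (H\"older over $j$, with bounded overlap) and integrating in $\lambda$ gives
$\int_M h\, H^2\,d\mu \ \lesssim\ \|h\|_{(p/2)'}\,\|\scrC_{q}G\|_\infty^2\ \int_0^\infty |O_\lambda|^{2/p}\,d\lambda\ \simeq\ \|h\|_{(p/2)'}\,\|\scrC_{q}G\|_\infty^2\,\|N_\ast F\|_{L^{p,2}}^2,$
i.e.\ a \emph{Lorentz} norm on the right, exactly as in the paper. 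Since $L^{p,2}\subsetneq L^p$ for $p>2$, this bound is weaker than the target, and you cannot simply replace $\|N_\ast F\|_{L^{p,2}}$ by $\|N_\ast F\|_{L^p}$. The correct final step is the one the paper uses: the estimate holds with $\scrC_q$ for all $q$ in a small interval around $p$, and since $\|\scrC_q G\|_\infty\le\|\scrC_{p+\epsilon}G\|_\infty$ there, one upgrades via \emph{real interpolation} between neighbouring exponents. That is the true role of $\epsilon$, and without it you are stuck with a Lorentz-space conclusion at $p=2$ excepted (where $L^{2,2}=L^2$, consistent with the $\epsilon=0$ allowance in the statement).
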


The original proof  for $p\neq 2$ was developed in a Banach space valued setting in \cite[Section 8]{HMP}, see also \cite{KH}. We give a proof in the scalar-valued setting.

\begin{proof} We first recall the existence of a dyadic system, see \cite{Christ}: there exists a family of points $(x^k_\alpha)_{\alpha\in I(k)} \subset M$ with the property
\begin{equation*}
  M=\bigcup_{\alpha\in I(k)} B(x^k_\alpha,2^k) \quad\text{(bounded overlap)}\quad\forall k\in\Z.
\end{equation*}
For every $x\in M$ and $k\in \Z$, we define $\Delta_k(x)$ the set of indices $\alpha$ such that $x\in B(x^k_\alpha,2^k)$.
Without loss of generality, assume that $N_\ast(F)<{+\infty}$ almost everywhere. Denote for $k \in \N$ and almost every $x\in M$
$$ \tau_k(x):=\sup \{t\,:\, \sup_{\ell: 2^ \ell \leq t^{1/2}} \ \sup_{\genfrac{}{}{0pt}{}{y \in B(x^\ell_\alpha,2^\ell)}{ \alpha\in \Delta_\ell(x)}} \ \sup_{2^ {2\ell}\leq s \leq 2^ {2(\ell+1)}} |F(y,s)| > 2^k\},
$$
and set
$$ J_k(x)=[\tau_k(x),\tau_{k+1}(x)).$$
Since for almost every $x$, $\tau_k(x)$ tends to $0$ for $k\to -\infty$, and $\tau_k(x)$ tends to ${+\infty}$ for $k\to {+\infty}$, we deduce that
$$ (0,{+\infty})=\bigcup J_k(x).$$ 
We therefore have for almost every $x \in M$, 
\begin{align*}
	&\left(\int_0^{+\infty} |F(x,t)|^2 |G(x,t)|^2 \,\frac{dt}{t}\right)^{1/2}
	\leq \left(\sum_{k=-\infty}^{+\infty} \int_{J_k(x)}|F(x,t)|^2 |G(x,t)|^2 \,\frac{dt}{t}\right)^{1/2}\\
	& \qquad \lesssim \left(\sum_{k=-\infty}^{+\infty} 2^{2(k+1)} \int_{J_k(x)} |G(x,t)|^2 \,\frac{dt}{t}\right)^{1/2}.
\end{align*}
For fixed $k \in \Z$, define
\begin{align*}
	A_l:=\{x \in M\,;\, 2^{2l} \leq \tau_k(x) <2^{2(l+1)}\}.
\end{align*}
Then $M=\bigcup_{l=-\infty}^{+\infty} A_l$, and
\begin{align*}
	&\left\|\left(\int_{J_k(x)} |G(x,t)|^2 \,\frac{dt}{t}\right)^{1/2}\right\|_p
	 \leq \left(\sum_l \int_{A_l} \left(\int_0^{2^{2(l+1)}} |G(x,t)|^2\,\frac{dt}{t}\right)^{p/2} \,d\mu(x)\right)^{1/p}.
\end{align*}
By definition of $A_l$, it is clear that if $x\in A_l$ then for $\alpha\in \Delta_l(x)$, $B(x^l_\alpha,2^l)$ is also included into $A_l$, which means that $A_l$ can be covered by a union of balls $(B(x^l_\alpha,2^l))_{\alpha\subset U}$ for  a subset $U \subset I(l)$, with a finite overlap.
We thus have
\begin{align*}
\int_{A_l} \left(\int_0^{2^{2(l+1)}} |G(x,t)|^2\,\frac{dt}{t}\right)^{p/2} \,d\mu(x) 
&  \leq \sum_{\alpha\in U} \int_{B(x^l_\alpha,2^l)} \left(\int_0^{2^{2(l+1)}} |G(x,t)|^2\,\frac{dt}{t}\right)^{p/2} \,d\mu(x) \\
& \lesssim \norm{\scrC_{p} (G)}_{\infty}^p \sum_{\alpha\in U} |B(x^l_\alpha)| \\
& \lesssim \norm{\scrC_{p} (G)}_{\infty}^p |A_l|.
\end{align*}
As a consequence, we deduce that
\begin{align*}
	\left\|\left(\int_{J_k(x)} |G(x,t)|^2 \,\frac{dt}{t}\right)^{1/2}\right\|_p & 
	 \lesssim \norm{\scrC_{p} (G)}_{\infty} \left(\sum_l |A_l| \right)^{1/p} \\
	 & \lesssim \norm{\scrC_{p} (G)}_{\infty} |\{x\in M\,:\, \tau_k(x)<{+\infty}\}|^{1/p} \\
	 	 & \lesssim \norm{\scrC_{p} (G)}_{\infty} |\{x\in M\,:\, N_\ast(F)(x) >2^k\}|^{1/p}.
\end{align*}
Using the assumption $p\geq 2$, we conclude that
\begin{align*}
	& \left\|\left(\int_0^{+\infty} |F(x,t)|^2 |G(x,t)|^2 \,\frac{dt}{t}\right)^{1/2}\right\|_p\\
	& \qquad \lesssim \norm{\scrC_{p} (G)}_{\infty} \left(\sum_{k=-\infty}^{+\infty} 2^{2k} |\{x\in M,\ N_\ast(F) >2^k\}|^{1/p} \right)^{1/2} \\
	& \qquad \lesssim \norm{\scrC_{p} (G)}_{\infty}  \|N_\ast(F)\|_{L^{p,2}},
\end{align*}
where $L^{p,2}(M,\mu)$ is the classical Lorentz space.
Now since there is a small interval $(p-\epsilon,p+\epsilon)$ in which we can apply this previous inequality: indeed for every $q\in(p-\epsilon,p+\epsilon)$
$$ \norm{\scrC_{q} (G)}_{\infty}  \leq \norm{\scrC_{p+\epsilon} (G)}_{\infty} ,$$
we then conclude by real interpolation.
\end{proof}

\section{Paraproducts}\label{para}

We define paraproducts associated with the operator $\mathcal{L}$. Some versions of such paraproducts have already been introduced and studied in \cite{B, F, BS, BF}.  We are going to use here a slightly different version that is more adapted to our purpose.

From now on, let  $\PP$ be a large enough integer ($\PP\geq 4(1+\nu)$ for example should be sufficient for this section,  where $\nu$ is as in \eqref{dnu}; the choice of $\PP$ may depend on other parameters as well in the following, but this is of no real importance), and denote $P_t=P_t^{(\PP)}$ and $Q_t=Q_t^{(\PP)}$ from Definition \ref{def:Qt-Pt}. 
For $g \in L^\infty(M,\mu)$, define the paraproduct $\Pi_g^{(\PP)}$  on $\calS$ by 
\begin{align} \label{def:paraproduct}
	\Pi_g^{(\PP)}(f)=\Pi_g (f) := \int_0^{+\infty} Q_t f \cdot P_t g \,\frac{dt}{t}, \qquad f \in \calS.
\end{align}
For every $p\in(1,{+\infty})$ and $f\in \calS^p$, the integral is absolutely convergent in $L^p(M,\mu)$:  for $f\in {\mathcal D}_p(\mathcal{L}) \cap \Ran_p(\mathcal{L})$, we have $Q_t f = 2^\PP Q_{t/2}^{(\PP-1)} Q_{t/2}^{(1)}f$ which yields
$$ \|Q_tf\|_{p}\lesssim \|Q_{t/2}^{(1)}f\|_{p} \lesssim (t+t^{-1})^{-1}  \|f\|_{{\mathcal D}_p(\mathcal{L}) \cap \Ran_p(\mathcal{L})}.$$
Combining this estimate with the uniform boundedness of $(P_t)_{t>0}$ in $L^\infty(M,\mu)$ gives the absolute convergence.

\begin{lemma}[Product decomposition] 
Let $p\in(1,{+\infty})$ and $\alpha\in(0,1)$. For every $f,g \in (\calS^p + N(\mathcal{L})) \cap L^\infty(M,\mu)$, we have the product decomposition
\begin{align} \label{eq:paraproduit-decomposition}
	f \cdot g & = \Pi_g(f) + \Pi_f(g) +\mathsf{P}_{N(\mathcal{L})}(f) \mathsf{P}_{N(\mathcal{L})}(g) \qquad \text{in} \ L^p(M,\mu).
\end{align}
Note also that $\Pi_g(f) = \Pi_g(f- \mathsf{P}_{N(\mathcal{L})}(f))$. 
\end{lemma}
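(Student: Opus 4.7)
The plan is to apply the fundamental theorem of calculus to the function $t \mapsto P_t f \cdot P_t g$ and then let the endpoints tend to $0$ and $+\infty$, using Proposition \ref{prop:reproducing} to identify the boundary terms.

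First, by Remark \ref{Pt-rem}(ii) one has $t\partial_t P_t = -Q_t$, so the Leibniz rule for the pointwise product gives
\begin{equation*}
	\partial_t \bigl(P_t f \cdot P_t g\bigr) = -\frac{1}{t}\bigl( Q_t f \cdot P_t g + P_t f \cdot Q_t g \bigr).
\end{equation*}
Integrating from $\epsilon$ to $T$ (with $0<\epsilon<T<+\infty$) yields the identity
\begin{equation*}
	P_\epsilon f \cdot P_\epsilon g - P_T f \cdot P_T g = \int_\epsilon^T \bigl( Q_t f \cdot P_t g + P_t f \cdot Q_t g \bigr)\,\frac{dt}{t}
\end{equation*}
in $L^p(M,\mu)$. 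The next step is to pass to the limit $\epsilon \to 0^+$ and $T \to +\infty$ in $L^p(M,\mu)$.

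For the left-hand side, write $\widetilde f := f - \mathsf{P}_{N(\mathcal{L})}f \in \calS^p$ and $\widetilde g := g - \mathsf{P}_{N(\mathcal{L})}g \in \calS^p$, both of which lie in $L^p \cap L^\infty$. Since the semigroup is uniformly bounded on $L^\infty$, the operators $P_t$ are uniformly bounded on $L^\infty$, and the decomposition
\begin{equation*}
	\|P_\epsilon f \cdot P_\epsilon g - fg\|_p \le \|P_\epsilon f - f\|_p \,\|P_\epsilon g\|_\infty + \|f\|_\infty\, \|P_\epsilon g - g\|_p
\end{equation*}
together with \eqref{limzero} of Proposition \ref{prop:reproducing} shows that $P_\epsilon f \cdot P_\epsilon g \to fg$ in $L^p$. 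An analogous argument using \eqref{liminfty} gives $P_T f \cdot P_T g \to \mathsf{P}_{N(\mathcal{L})}f \cdot \mathsf{P}_{N(\mathcal{L})}g$ in $L^p$ (note that $\mathsf{P}_{N(\mathcal{L})}f$ is either $0$ or a constant bounded by $\|f\|_\infty$, hence lies in $L^\infty$).

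For the right-hand side, the absolute convergence in $L^p$ of the paraproduct integrals, already noted after \eqref{def:paraproduct} for inputs in $\calS^p$, allows one to identify the limit with $\Pi_g(\widetilde f) + \Pi_f(\widetilde g)$. Combining both limits gives
\begin{equation*}
	fg - \mathsf{P}_{N(\mathcal{L})}f \cdot \mathsf{P}_{N(\mathcal{L})}g = \Pi_g(\widetilde f) + \Pi_f(\widetilde g).
\end{equation*}
To complete the proof, it only remains to check that $\Pi_g(f) = \Pi_g(\widetilde f)$ (and similarly with $f$ and $g$ exchanged). This follows from the fact that $Q_t$ kills $N(\mathcal{L})$: if $h \in N(\mathcal{L})$ then $\mathcal{L}h=0$, and since $\PP\ge 1$ is an integer, $Q_t h = c_\PP^{-1}(t\mathcal{L})^\PP e^{-t\mathcal{L}}h = 0$, so that $\Pi_g(\mathsf{P}_{N(\mathcal{L})}f) = 0$.

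The only delicate point is the justification of the two limiting procedures; the $L^p \times L^\infty$ splitting above, combined with Proposition \ref{prop:reproducing}, handles this cleanly provided one first projects away the nullspace component in the paraproduct argument to stay inside $\calS^p$.
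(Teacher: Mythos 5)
Your proof is correct and follows essentially the same approach as the paper: both differentiate $t \mapsto P_t f \cdot P_t g$ via the relation $t\partial_t P_t = -Q_t$, integrate, and identify the boundary terms as $t\to 0^+$ and $t\to +\infty$ using the same $L^p \times L^\infty$ splitting and Proposition~\ref{prop:reproducing}. Your write-up is slightly more explicit about the $\epsilon$-to-$T$ truncation, the justification for passing to the limit via absolute convergence on $\calS^p$, and the verification of the final note $\Pi_g(f) = \Pi_g(f - \mathsf{P}_{N(\mathcal{L})}f)$, all of which the paper leaves implicit.
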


\begin{proof} 
By  writing
\begin{align*}
f\cdot g - P_t f \cdot P_t g = (f-P_tf) \cdot g + P_tf \cdot (g-P_t g)
\end{align*}
it follows from \eqref{limzero} and \eqref{liminfty} that in the $L^p$ sense
\begin{align*}
 f \cdot g &= \lim_{t\to 0} \, P_t f \cdot P_t g,\\
 \mathsf{P}_{N(\mathcal{L})} f \cdot \mathsf{P}_{N(\mathcal{L})}g &= \lim_{t\to {+\infty}} \, P_t f \cdot P_t g.
\end{align*}
By definition of $P_t$ and $Q_t$, and using the fact that $t\partial_t P_t=-Q_t$, we then have
\begin{align*}
	f \cdot g & = \lim_{t\to 0} \, (P_t f \cdot P_t g) -  \lim_{t\to {+\infty}} \, (P_t f \cdot P_t g)  +\mathsf{P}_{N(\mathcal{L})}f \cdot \mathsf{P}_{N(\mathcal{L})}g \\
	& = -\int_0^{+\infty} \partial_t \left(P_t f \cdot  P_t g\right) \,dt  + \mathsf{P}_{N(\mathcal{L})}f \cdot \mathsf{P}_{N(\mathcal{L})}g  \\
	& = \Pi_g(f) +\Pi_f(g) + \mathsf{P}_{N(\mathcal{L})}f \cdot \mathsf{P}_{N(\mathcal{L})}g.
\end{align*}
\end{proof}

\begin{coro} \label{cor:A} 
From the nature of $N(\mathcal{L})$ (see Proposition $\ref{prop:kernel}$), the function $\mathsf{P}_{N(\mathcal{L})}(f) \cdot \mathsf{P}_{N(\mathcal{L})}(g)$ (is equal to $0$ or is a constant function) always belongs to $N(\mathcal{L})$. So if the bilinear map $(f,g) \to \Pi_g(f)$ is bounded from $(\calS_p, \| \ \|_{\dot L^p_\alpha}) \times L^\infty$ to $\dot L^p_\alpha$, then by Definition $\ref{def:calS}$ and density, $\Pi_g$ admits a continuous extension on $\dot L^p_\alpha$ and the previous product decomposition yields
 $A(\alpha,p)$.
\end{coro}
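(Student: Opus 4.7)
The proof will follow the outline given in the corollary statement and divide into three compact steps: dispose of the null-space term, extend $\Pi_g$ by density, and pass to the limit in the Leibniz inequality.

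First I would observe that by Proposition \ref{prop:kernel} the projection $\mathsf{P}_{N(\mathcal{L})}f\cdot\mathsf{P}_{N(\mathcal{L})}g$ is either zero or a product of two constants; in either case it lies in $N(\mathcal{L})$ and is annihilated by $\mathcal{L}^{\alpha/2}$, so it contributes nothing to the $\dot L^p_\alpha$ seminorm. In the same spirit, $Q_t h=0$ for $h\in N(\mathcal{L})$ because $\mathcal{L}h=0$, so $\Pi_g$ vanishes on $N(\mathcal{L})$. Thus the assumed boundedness $\|\Pi_g(f)\|_{\dot L^p_\alpha}\lesssim \|g\|_\infty\|f\|_{\dot L^p_\alpha}$ valid for $f\in\calS^p$ extends verbatim to all $f\in\calS^p+N(\mathcal{L})$.

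Next I would invoke the density of $\calS^p+N(\mathcal{L})$ in $\dot L^p_\alpha$ recorded just after Definition \ref{def:calS} to extend $\Pi_g$ uniquely to a bounded operator $\Pi_g:\dot L^p_\alpha\to\dot L^p_\alpha$ with the same norm bound. For $f,g\in(\calS^p+N(\mathcal{L}))\cap L^\infty$ the product decomposition \eqref{eq:paraproduit-decomposition}, combined with the above vanishing, gives directly
\[
	\|fg\|_{\dot L^p_\alpha} \leq \|\Pi_g(f)\|_{\dot L^p_\alpha} + \|\Pi_f(g)\|_{\dot L^p_\alpha} \lesssim \|f\|_{\dot L^p_\alpha}\|g\|_\infty + \|f\|_\infty\|g\|_{\dot L^p_\alpha}.
\]

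Finally I would extend this inequality to all $f,g\in\dot L^p_\alpha\cap L^\infty$ by approximating with $f_n:=P_{1/n}^{(\PP)}f$ and $g_n:=P_{1/n}^{(\PP)}g$; these lie in $\mathcal{D}_p(\mathcal{L})$, are uniformly bounded in $L^\infty$ by $\|f\|_\infty$ and $\|g\|_\infty$ (by $L^\infty$-contractivity of the semigroup), and converge to $f,g$ in $\dot L^p_\alpha$ by \eqref{limzero} applied to $\mathcal{L}^{\alpha/2}f$ and $\mathcal{L}^{\alpha/2}g$. Applying the Leibniz inequality to $f_n,g_n$ and passing to the limit via weak compactness of $(\mathcal{L}^{\alpha/2}(f_ng_n))_n$ in $L^p$ yields $A(\alpha,p)$. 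The main technical obstacle is ensuring that the approximants really lie in $\calS^p+N(\mathcal{L})$ rather than merely in its closure: strictly, one may have to apply a further smoothing (e.g. $\mathcal{L}^k e^{-\epsilon\mathcal{L}}f_n$ for small $k$) to land in $\Ran_p(\mathcal{L})$. Modulo this routine maneuver, the corollary is a formal consequence of the paraproduct decomposition and the vanishing of the constant term.
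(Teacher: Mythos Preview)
Your proof is correct and follows precisely the approach the paper intends: the paper states this corollary without a separate proof, treating it as a direct consequence of the product decomposition \eqref{eq:paraproduit-decomposition}, the triviality of $N(\mathcal{L})$ from Proposition~\ref{prop:kernel}, and the density of $\calS^p+N(\mathcal{L})$ in $\dot L^p_\alpha$ recorded after Definition~\ref{def:calS}. Your three steps faithfully expand this sketch, and you correctly flag the one genuine technicality (landing the approximants in $\calS^p+N(\mathcal{L})\cap L^\infty$); the paper's own approximants $f_\epsilon=\int_\epsilon^{1/\epsilon}Q_t^{(N)}f\,\frac{dt}{t}+\mathsf{P}_{N(\mathcal{L})}f$ resolve this directly, since for $f\in L^\infty$ the truncated integral is in $L^\infty$ and manifestly in $\Ran_p(\mathcal{L})\cap\mathcal{D}_p(\mathcal{L})$.
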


Let $\alpha \in (0,1)$ and $g \in L^\infty(M,\mu)$ be fixed. The boundedness of $\Pi_g$ in $\dot L^p_\alpha$ is equivalent to the $L^p$-boundedness of the operator $\mathcal{L}^{{\alpha/2}} \Pi_g( \mathcal{L}^{-{\alpha/2}}\cdot )$.
 Using the definition of the paraproduct, Definition \ref{def:paraproduct}, and the reproducing formula, one may write
$$	\mathcal{L}^{{\alpha/2}} \Pi_g( \mathcal{L}^{-{\alpha/2}} f ) =  \int_0^{+\infty}\int_0^{+\infty} K_{\alpha,g}(s,t) [f] \,\frac{ds}{s}\frac{dt}{t},$$
where
the operator-valued kernel  $K_{\alpha,g} (s,t)$ is given by
\begin{equation} K_{\alpha,g}(s,t)(.):=Q_s \mathcal{L}^{{\alpha/2}} (Q_t \mathcal{L}^{-{\alpha/2}} (\, . \,) \cdot P_t g), \label{eq:K} \end{equation}
and $P_t$ and $Q_t$ are defined in Section \ref{para}. 

We split the paraproduct into the two terms $\Pi_g = \Pi^1_g + \Pi^2_g$,
with
\begin{align*} \Pi_g^1(f) & := \int_0^{+\infty}  (I-P_t) \left[Q_t f \cdot P_t g \right] \, \frac{dt}{t} \\
 & =  \int_0^{+\infty} \int_0^t  Q_s \left[Q_t f \cdot P_t g \right] \, \frac{ds}{s} \frac{dt}{t},
 \end{align*}
 and
\begin{align*}
 \Pi_g^2(f) & := \int_0^{+\infty}  P_t \left[Q_t f \cdot P_t g \right] \, \frac{dt}{t}\\
 & =  \int_0^{+\infty} \int_t^{+\infty}  Q_s \left[Q_t f \cdot P_t g \right] \, \frac{ds}{s} \frac{dt}{t}.
 \end{align*}

An important fact for our study is that under \eqref{due} the second term  $\Pi_g^2$ is bounded on every Sobolev space $\dot L^p_\alpha(M,\mathcal{L},\mu)$ with $\alpha\in(0,1)$ and $p\in(1,{+\infty})$.

\begin{proposition} \label{prop:para2} Let $(M,d,\mu, {\mathcal E})$  be a doubling metric  measure   Dirichlet space satisfying \eqref{due}.
Let $\alpha\in(0,1)$, $p\in(1,{+\infty})$ and $g\in L^\infty(M,\mu)$. Then $\Pi_g^2$ is bounded on $\dot L^p_\alpha(M,\mathcal{L},\mu)$ with
$$ \| \Pi_g^2 (f) \|_{\dot L^p_\alpha} \lesssim \|f\|_{\dot L^p_\alpha} \|g\|_\infty.$$
\end{proposition}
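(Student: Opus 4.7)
The plan is to obtain the bound $\|\mathcal{L}^{\alpha/2}\Pi_g^2(f)\|_p \lesssim \|g\|_\infty \|\mathcal{L}^{\alpha/2}f\|_p$ by duality, exploiting the outer $P_t$ in the definition of $\Pi_g^2$ to produce two Littlewood--Paley square functions, each of which is controlled by Proposition \ref{prop:square-function} (i). By density (cf.\ the comment after Definition \ref{def:calS}), it suffices to treat $f\in \calS^p$.

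First I would fix $h\in L^{p'}(M,\mu)$ and write, using self-adjointness and the commutation of $\mathcal{L}^{\alpha/2}$ with $P_t=\phi_D(t\mathcal{L})$ by the bounded $H^\infty$ calculus,
\begin{align*}
	\langle \mathcal{L}^{\alpha/2}\Pi_g^2(f), h\rangle
	= \int_0^{+\infty}\!\! \int_M (Q_tf)(x)(P_tg)(x)\,(\mathcal{L}^{\alpha/2}P_t h)(x)\,d\mu(x)\,\frac{dt}{t}.
\end{align*}
Since $\|P_tg\|_\infty \leq \|g\|_\infty$ by the submarkovian property, applying Cauchy--Schwarz in the $t$ variable and then H\"older in $x$ gives
\begin{align*}
	|\langle \mathcal{L}^{\alpha/2}\Pi_g^2(f), h\rangle|
	\leq \|g\|_\infty
	\Bigl\|\Bigl(\int_0^{+\infty} t^{-\alpha}|Q_tf|^2\tfrac{dt}{t}\Bigr)^{1/2}\Bigr\|_p
	\Bigl\|\Bigl(\int_0^{+\infty}|(t\mathcal{L})^{\alpha/2}P_t h|^2\tfrac{dt}{t}\Bigr)^{1/2}\Bigr\|_{p'}.
\end{align*}

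The key algebraic identity is
\[
	t^{-\alpha/2}Q_t f = t^{-\alpha/2}c_D^{-1}(t\mathcal{L})^D e^{-t\mathcal{L}}f
	= \tfrac{c_{D-\alpha/2}}{c_D}\,Q_t^{(D-\alpha/2)}\mathcal{L}^{\alpha/2}f,
\]
which turns the first bracket into (a multiple of) the horizontal square function $g_{D-\alpha/2}(\mathcal{L}^{\alpha/2}f)$ from Proposition \ref{prop:square-function} (i), hence bounded on $L^p$ by $\|\mathcal{L}^{\alpha/2}f\|_p$. The second bracket is exactly $\tilde g_{D,\alpha/2}(h)$, also bounded on $L^{p'}$ by Proposition \ref{prop:square-function} (i). Combining the two estimates yields
\[
	|\langle \mathcal{L}^{\alpha/2}\Pi_g^2(f), h\rangle| \lesssim \|g\|_\infty \|\mathcal{L}^{\alpha/2}f\|_p \|h\|_{p'},
\]
and the conclusion follows by duality.

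The only delicate step is the algebraic rewriting of $t^{-\alpha/2}Q_t$ as a $Q_t^{(N)}$ acting on $\mathcal{L}^{\alpha/2}f$; this relies crucially on the fact that $\mathcal{L}$ has a bounded $H^\infty$ calculus on $L^p(M,\mu)$ (which holds under \eqref{d} and \eqref{due}, see \cite{DR}) so that the fractional powers compose as expected on $\calS^p$. The outer $P_t$ in $\Pi_g^2$ (as opposed to $\Pi_g^1$) is essential: it is what permits passing $\mathcal{L}^{\alpha/2}$ onto $h$ via duality and avoids any cancellation issue on the ``$g$-side'' of the paraproduct. No assumption beyond \eqref{due} is needed because both square functions involved are purely horizontal.
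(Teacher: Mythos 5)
Your proposal is correct and follows essentially the same route as the paper: dualize, move $\mathcal{L}^{\alpha/2}$ onto $P_t h$, use $\|P_t g\|_\infty \leq \|g\|_\infty$, apply Cauchy--Schwarz in $t$ and H\"older in $x$, and invoke the $L^p$/$L^{p'}$ boundedness of the two horizontal square functions $g_{\PP-\alpha/2}$ and $\tilde g_{\PP,\alpha/2}$ from Proposition \ref{prop:square-function}\,(i). The only cosmetic difference is that the paper substitutes $f \mapsto \mathcal{L}^{-\alpha/2}f$ at the outset so that the first bracket reads $g_{\PP-\alpha/2}(f)$, whereas you keep $f$ and absorb the factor into $g_{\PP-\alpha/2}(\mathcal{L}^{\alpha/2}f)$; these are the same statement.
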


\begin{proof} The $\dot L^p_\alpha$-boundedness of $\Pi_g^2$ is equivalent to the $L^p$-boundedness of $\mathcal{L}^{{\alpha/2}} \Pi_g^2( \mathcal{L}^{-{\alpha/2}}\cdot )$. Let $f \in L^p(M,\mu)$ and $h \in L^{p'}(M,\mu)$. Then
\begin{align*}
\left| \langle  \mathcal{L}^{\alpha / 2} \Pi_g^2( \mathcal{L}^{-\alpha/2} f ), h \rangle \right|&=\left| \langle  \mathcal{L}^{\alpha / 2} \int_0^{+\infty}  P_t \left[Q_t \mathcal{L}^{-\alpha/2} f \cdot P_t g \right] \, \frac{dt}{t}, h \rangle \right|\\
&=\left| \int_0^{+\infty}  \langle  \mathcal{L}^{\alpha / 2} P_t \left[Q_t \mathcal{L}^{-\alpha/2} f \cdot P_t g \right], h \rangle \, \frac{dt}{t} \right|\\  
&= \left| \int_0^{+\infty}  \langle   (t\mathcal{L})^{-\alpha/2} Q_tf\cdot P_t g,  (t\mathcal{L})^{\alpha / 2} P_t h \rangle \, \frac{dt}{t} \right| \\
&= \left| \int_0^{+\infty}  \int_M   (t\mathcal{L})^{-\alpha/2} Q_tf(x)\cdot P_t g(x) \cdot (t\mathcal{L})^{\alpha / 2} P_t h(x) \,d\mu(x)\, \frac{dt}{t} \right|\\
&\le \|g\|_\infty\left| \int_0^{+\infty}  \int_M   (t\mathcal{L})^{-\alpha/2} Q_tf(x)\cdot  (t\mathcal{L})^{\alpha / 2} P_t h(x) \,d\mu(x)\, \frac{dt}{t} \right|,
\end{align*}
where we have used the uniform boundedness of $P_t$ on $L^\infty(M,\mu)$. Now, by Fubini and Cauchy-Schwarz, 
\begin{align*}
&\left| \langle  \mathcal{L}^{\alpha / 2} \Pi_g^2( \mathcal{L}^{-\alpha/2} f ), h \rangle \right| \\
&\le \|g\|_\infty\  \int_M\int_0^{+\infty}   |(t\mathcal{L})^{-\alpha/2} Q_tf(x)|\cdot  |(t\mathcal{L})^{\alpha / 2} P_t h(x)|\, \frac{dt}{t} \,d\mu(x) \\
&\le \|g\|_\infty\  \int_M\left(\int_0^{+\infty}   |(t\mathcal{L})^{-\alpha/2} Q_tf(x)|^2\, \frac{dt}{t}\right)^{1/2}  \left(\int_0^{+\infty}|(t\mathcal{L})^{\alpha / 2} P_t h(x)|^2\, \frac{dt}{t}\right)^{1/2} \,d\mu(x) \\
& =c\|g\|_\infty \langle g_{\PP-\frac{\alpha}{2}}(f), \tilde g_{D,\frac{\alpha}{2}}(h)\rangle,
\end{align*}
for some $c>0$, where  $g_{\PP-\frac{\alpha}{2}}$ and $\tilde g_{D,\frac{\alpha}{2}}$ are the horizontal square functions from Proposition \ref{prop:square-function}.
 Proposition \ref{prop:square-function} yields that both $g_{\PP-\frac{\alpha}{2}}$ and $\tilde  g_{D,\frac{\alpha}{2}}$ are bounded on $L^p(M,\mu)$ for every $p\in(1,{+\infty})$.

By H\"older's inequality, we then conclude that
\begin{align*}
\left| \langle  \mathcal{L}^{\alpha / 2} \Pi_g^2( \mathcal{L}^{-\alpha/2} f ), h \rangle \right| \lesssim \|g\|_\infty  \|f\|_p \|h\|_{p'},
\end{align*}
which by duality gives the $L^p$-boundedness of $\mathcal{L}^{{\alpha/2}} \Pi_g^2( \mathcal{L}^{-{\alpha/2}}\cdot )$.
\end{proof}

So from now on, to study the $\dot L^p_\alpha$-boundedness of the paraproduct $\Pi_g$, we only have to focus on the first part of the paraproduct and prove the $L^p$-boundedness of
$$	\mathcal{L}^{{\alpha/2}} \Pi^1_g( \mathcal{L}^{-{\alpha/2}} f ) = c_\PP^2 \int_0^{+\infty}\left(\int_0^t K_{\alpha,g}(s,t) [f] \,\frac{ds}{s}\right)\frac{dt}{t}.$$
That means that we may restrict our attention to the study of the operator-valued kernel $K_\alpha(s,t)$ in the range $s\leq t$,
which requires extra assumptions in order to get suitable bounds.

\section{Boundedness of the paraproducts for $2\leq p < p_0$ under $(G_{p_0})$}

Let us introduce an  $L^2$-valued version of  $(R_p)$,  which we will denote by $(\overline{R_p})$:
for every  measurable function $(F_t)_{t>0}$  with values in  $L^2(M,\mu)$,
$$ \left\| \left( \int_0^{+\infty} | {\mathcal R} F_t |^2  \, \frac{dt}{t} \right)^{1/2} \right\|_{p} \lesssim \left\| \left( \int_0^{+\infty} | F_t |^2  \, \frac{dt}{t} \right)^{1/2} \right\|_{p},$$
where ${\mathcal R}:=|\nabla \mathcal{L}^{-1/2}|$ is the Riesz transform.
By applying $(\overline{R_p})$ to $F_t=\sqrt{t\mathcal{L}}P_t^{(N)}f$, for $f\in L^2(M,\mu)$, one sees that, for any $p\in(1,+\infty)$, $(\overline{R_p})$ implies the $L^p$-boundedness of 
the vertical square function $G_N$ for any $N>0$. 
In turn, the $L^p$ boundedness of $G_N$ implies $(G_q)$, for $2<q<p$ (see \cite[step 7 of Theorem 6.1]{A}).
On the other hand, applying $(\overline{R_p})$ to $F_t=f\Eins_{[1,2]}(t)$, for $f\in C_0(M)$  yields $(R_p)$. In the Riemannian context, where $\mathcal{L}$ is given by the Laplace-Beltrami operator and $\nabla$ is the Riemannian gradient,  ${\mathcal R}$ derives from the linear operator $\nabla \mathcal{L}^{-1/2}$. Therefore for any $p\in(1,+\infty)$,  $(R_{p})$ implies back $(\overline{R_p})$ by a general and well-known argument, see for instance \cite[Thm 4.5.11]{Grafakos1}.

However, in our  Dirichlet form setting, the Riesz transform is defined as a sublinear operator (since we only have a notion of length of the gradient), so  it is not clear that $(R_{p})$ implies $(\overline{R_p})$ in this generality. 

We first remark that the $L^p$-boundedness of the Riesz transform for $p\in(1,2]$ (obtained in \cite{CD1}) can be extended to a vector-valued setting:

\begin{proposition}\label{prop:Lp<2}  Let $(M,d,\mu, {\mathcal E})$  be a doubling metric  measure   Dirichlet space with a ``carr\'e du champ'' satisfying \eqref{due}. Then  $(\overline{R_{p}})$ holds for every $p\in (1,2]$. 
\end{proposition}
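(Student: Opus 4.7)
The plan is to prove the endpoint $p=2$ directly and then descend to $p\in(1,2)$ via a vector-valued weak type $(1,1)$ estimate and Marcinkiewicz interpolation. At $p=2$, Fubini and the carr\'e-du-champ identity $\||\nabla g|\|_2^2 = \mathcal{E}(g,g)$, applied fiberwise in $t$ to $g = \mathcal{L}^{-1/2} F_t$, yield
\[
\Big\|\Big(\int_0^{+\infty}|\nabla \mathcal{L}^{-1/2} F_t|^2 \tfrac{dt}{t}\Big)^{1/2}\Big\|_2^2
= \int_0^{+\infty}\||\nabla \mathcal{L}^{-1/2} F_t|\|_2^2 \tfrac{dt}{t}
= \int_0^{+\infty} \langle \mathcal{L}^{1/2}F_t, \mathcal{L}^{-1/2}F_t\rangle \tfrac{dt}{t}
\le \int_0^{+\infty}\|F_t\|_2^2 \tfrac{dt}{t},
\]
(using that the nullspace part of $F_t$ is annihilated by $\mathcal{L}^{-1/2}$ composed with $\nabla$), so $(\overline{R_2})$ holds with constant $1$.

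For $p\in(1,2)$, I would transpose the scalar Coulhon--Duong proof \cite{CD1} to the Hilbert-valued setting, with fiber space $\mathscr{H} := L^2((0,+\infty), \tfrac{dt}{t})$. First, apply a Calder\'on--Zygmund decomposition at height $\lambda > 0$ to the scalar $L^1$ function $x\mapsto \|F_\cdot(x)\|_{\mathscr{H}}$, producing a good part $G$ with $\|G\|_{L^2(M;\mathscr{H})}^2 \lesssim \lambda \|(F_t)\|_{L^1(M;\mathscr{H})}$ and a collection of $\mathscr{H}$-valued bad pieces $\sum_j b_j$, each supported in a ball $B_j$ of radius $r_j$, with $\sum_j |B_j| \lesssim \lambda^{-1}\|(F_t)\|_{L^1(M;\mathscr{H})}$. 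The good part is handled by the $p=2$ estimate above. For the bad part, write $\mathcal{L}^{-1/2} = c\int_0^{+\infty} e^{-s\mathcal{L}} \tfrac{ds}{\sqrt s}$ and split each piece as $e^{-s\mathcal{L}} b_j = e^{-s\mathcal{L}}(I - e^{-r_j^2\mathcal{L}})^M b_j + e^{-s\mathcal{L}}[I - (I - e^{-r_j^2\mathcal{L}})^M]b_j$, as in the scalar proof. The key inputs are the $L^2$-$L^2$ Davies--Gaffney estimates for the linear operators $\sqrt{s}\,\nabla e^{-s\mathcal{L}}$ (Proposition \ref{prop:Davies-Gaffney}), valid under \eqref{due}, which transfer to $\mathscr{H}$-valued inputs by Minkowski's integral inequality in the $t$-variable. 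Summing the resulting local estimates over $j$ produces the weak-type $(1,1)$ bound, and Marcinkiewicz interpolation with $p=2$ gives $(\overline{R_p})$ for all $p\in(1,2)$.

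The main obstacle is conceptual rather than computational. In the abstract Dirichlet-form setting the ``gradient'' exists only as a sublinear pointwise length $|\nabla f| = \Upsilon(f,f)^{1/2}$, so there is no linear $\mathscr{H}$-valued Riesz operator to which an abstract vector-valued Calder\'on--Zygmund theorem could be applied as a black box. However, each step of \cite{CD1} either (i) concerns the linear semigroup $e^{-s\mathcal{L}}$, where the passage to $\mathscr{H}$-valued inputs is a one-line application of Minkowski in $t$, or (ii) uses only the scalar $\mathscr{H}$-norm of the resulting fiber function. Consequently, the scalar argument of \cite{CD1} passes through verbatim once $|\cdot|$ on fiber values is replaced everywhere by $\|\cdot\|_{\mathscr{H}}$.
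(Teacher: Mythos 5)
Your proposal is correct and follows essentially the same route as the paper: the paper's proof is exactly a pointer to the Coulhon--Duong argument of \cite{CD1} rerun with a vector-valued Calder\'on--Zygmund decomposition (citing \cite{RRT}) over the fiber space $L^2((0,+\infty),\frac{dt}{t})$, which is precisely what you carry out, with the helpful additions of the explicit $p=2$ endpoint identity and the observation that the sublinearity of $|\nabla\mathcal{L}^{-1/2}|$ is harmless because only the linear operators $e^{-s\mathcal{L}}$ need to be applied fiberwise (via Minkowski in $t$).
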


\begin{proof} We refer the reader to \cite{CD1} for the proof in the scalar case, showing $(R_p)$ for every $p\in(1,2]$ by using a Calder\'on-Zygmund decomposition. By repeating this proof with a vector-valued Calder\'on-Zygmund decomposition (see \cite{RRT}), it then yields that the Riesz transform ${\mathcal R}:=|\nabla \mathcal{L}^{-1/2}|$ is an operator bounded on $L^p(M,L^2[(0,{+\infty});\,\frac{dt}{t}])$ (which is $(\overline{R_{p}})$) for every $p\in(1,2)$.
\end{proof}

 Let us then observe that $(\overline{R_p})$ can be dualised.

\begin{lemma} \label{lemma:RRquadratic} Let $(M,d,\mu, {\mathcal E})$  be a doubling metric  measure   Dirichlet space with a ``carr\'e du champ'' satisfying \eqref{due}. Let $p\in(1,+\infty)$. Assume that  $(\overline{R_p})$ holds.  Then the following $L^2$-valued $(RR_{p'})$ inequality, which we denote by $(\overline{RR_{p'}})$,  is valid:
for every $F:M\times(0,+\infty)\to \R$ such that $F_t=F(.,t)\in \mathcal{D}$ for $t>0$,
$$ \left\| \left(\int_0^{+\infty}|\mathcal{L}^{1/2} F(.,t)|^2 \, \frac{dt}{t}\right)^{1/2} \right\|_{{p'}} \lesssim \left\| \left(\int_0^{+\infty}|\nabla F(.,t)|^2 \, \frac{dt}{t}\right)^{1/2} \right\|_{{p'}}.$$
In particular, $(\overline{RR_{q}})$ holds for every $q\in(2,{+\infty})$.
\end{lemma}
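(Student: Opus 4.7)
The plan is to dualise the sublinear inequality $(\overline{R_p})$ via the carré du champ and the self-adjointness of $\mathcal{L}$. Because the Riesz transform is only defined as a sublinear operator in our setting, one cannot invoke ``the'' linear adjoint; instead one exploits the fact that the Dirichlet form is polar to $\mathcal{L}$ and that the energy measure satisfies the Cauchy--Schwarz type inequality \eqref{eq:carre}.

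First, by the Lebesgue--Bochner duality $L^{p'}(M; L^2(\mathbb{R}_+, dt/t))^\ast = L^{p}(M; L^2(\mathbb{R}_+, dt/t))$, the left-hand side of $(\overline{RR_{p'}})$ equals the supremum of
$$
   \int_0^{+\infty}\!\!\int_M \mathcal{L}^{1/2}F_t(x)\,G_t(x)\,d\mu(x)\,\frac{dt}{t}
$$
over functions $G$ in a dense subclass of $L^p(M;L^2(dt/t))$ with $\bigl\|(\int_0^{+\infty}|G_t|^2\,dt/t)^{1/2}\bigr\|_p\le 1$. Using density (test functions in $\calS$, or values in $\mathcal{D}(\mathcal{L}^{1/2})\cap\Ran(\mathcal{L}^{1/2})$) one may assume $\mathcal{L}^{-1/2}G_t\in\mathcal{F}$ with good enough properties to carry out the following manipulations; the resulting estimate then extends by density.

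Second, by self-adjointness of $\mathcal{L}^{1/2}$ and the definition \eqref{energy} of the energy measure,
$$
   \int_M \mathcal{L}^{1/2}F_t\,G_t\,d\mu
   = \int_M F_t\,\mathcal{L}^{1/2}G_t\,d\mu
   = \mathcal{E}(F_t,\mathcal{L}^{-1/2}G_t)
   = \int_M \Upsilon(F_t,\mathcal{L}^{-1/2}G_t)\,d\mu.
$$
Applying the pointwise inequality $|\Upsilon(F_t,\mathcal{L}^{-1/2}G_t)|\le |\nabla F_t|\,|\nabla \mathcal{L}^{-1/2}G_t| = |\nabla F_t|\,({\mathcal R}G_t)$ from \eqref{eq:carre}, and then Cauchy--Schwarz in the $dt/t$ variable followed by Hölder in $x$, one gets
$$
   \Bigl|\int_0^{+\infty}\!\!\int_M \mathcal{L}^{1/2}F_t\,G_t\,d\mu\,\frac{dt}{t}\Bigr|
   \le \Bigl\|\Bigl(\!\int_0^{+\infty}|\nabla F_t|^2\frac{dt}{t}\Bigr)^{1/2}\Bigr\|_{p'}
       \Bigl\|\Bigl(\!\int_0^{+\infty}|{\mathcal R}G_t|^2\frac{dt}{t}\Bigr)^{1/2}\Bigr\|_{p}.
$$
The hypothesis $(\overline{R_p})$ controls the last factor by $\bigl\|(\int|G_t|^2dt/t)^{1/2}\bigr\|_p\le 1$, and taking the supremum over $G$ yields $(\overline{RR_{p'}})$.

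Finally, the ``in particular'' statement is immediate: Proposition \ref{prop:Lp<2} provides $(\overline{R_p})$ for every $p\in(1,2]$, and the main part of the lemma applied with $p=q'\in(1,2)$ gives $(\overline{RR_{q}})$ for every $q\in(2,+\infty)$. The one delicate point I expect is ensuring that the integration by parts $\langle \mathcal{L}^{1/2}F_t,G_t\rangle = \mathcal{E}(F_t,\mathcal{L}^{-1/2}G_t)$ is justified on a dense class large enough for the duality argument, together with making sense of $\mathcal{L}^{-1/2}G_t$; this is purely a density/domain issue and should be handled by restricting first to $G_t\in \calS$ as in Definition \ref{def:calS}.
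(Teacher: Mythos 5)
Your argument is exactly the one the paper uses: dualise against $G\in L^p(M;L^2(dt/t))$, pass $\mathcal{L}^{1/2}$ to $G$ and rewrite $\langle \mathcal{L}F_t,\mathcal{L}^{-1/2}G_t\rangle$ via the energy measure, then apply the pointwise Cauchy–Schwarz \eqref{eq:carre}, Cauchy–Schwarz in $t$, Hölder in $x$, and $(\overline{R_p})$; the ``in particular'' part via Proposition \ref{prop:Lp<2} is also what the paper does. Your extra remarks on density/domain justification spell out something the paper leaves implicit, but the approach is the same.
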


\begin{proof} For every $G:M\times(0,+\infty)\to \R$, we have, denoting $G(.,t)$ by $G_t$, 
\begin{align*}
 \int_0^{+\infty} \langle \mathcal{L}^{1/ 2}F_t , G_t \rangle \, \frac{dt}{t} & = \int_0^{+\infty} \langle  \mathcal{L}F_t , \mathcal{L}^{-{1/2}} G_t \rangle \, \frac{dt}{t} \\
  & = \int_0^{+\infty} \langle  \nabla F_t , \nabla \mathcal{L}^{-{1/2}} G_t \rangle \, \frac{dt}{t}\\
 & \leq \int_M \left(\int_0^{+\infty}  |\nabla F_t|^2 \, \frac{dt}{t}\right)^{1/ 2} \left( \int_0^{+\infty} |{\mathcal R} (G_t)|^2 \frac{dt}{t}\right)^{1/ 2} \, d\mu \\
 & \lesssim \left\| \left(\int_0^{+\infty}  |\nabla F_t|^2 \, \frac{dt}{t}\right)^{1/ 2}\right\|_{p } \left\| \left( \int_0^{+\infty} |{\mathcal R}G_t|^2 \, \frac{dt}{t}\right)^{1/ 2} \right\|_{{p'}}.
\end{align*}
By $(\overline{R_p})$, we get
$$ \int_0^{+\infty} \langle \mathcal{L}^{1/ 2}F_t , G_t \rangle \, \frac{dt}{t}   \lesssim \left\| \left(\int_0^{+\infty}  |\nabla F_t|^2 \, \frac{dt}{t}\right)^{1/ 2}\right\|_{p'} \left\| \left( \int_0^{+\infty} |G_t|^2 \, \frac{dt}{t}\right)^{1/ 2} \right\|_{p}.$$
Taking the supremum over all functions $G\in L^{p}(M,\mu;L^2((0,{+\infty}),\frac{dt}{t}))$ with norm $1$ yields the result. The last assertion follows as a combination of the above with Proposition $\ref{prop:Lp<2}$.
\end{proof}

Our main result of this section is the following:

\begin{theorem}\label{gaza} 
Let  $(M,d,\mu, {\mathcal E})$  be a doubling metric  measure   Dirichlet space with a ``carr\'e du champ'' satisfying \eqref{due}. 
Let $\alpha\in(0,1)$. 
\begin{enumerate}
\item[(i)] There exists $\PP_0:=\PP_0(\nu)$ such that for $\PP\geq \PP_0$, the paraproduct $(g,f)\mapsto \Pi_g (f)$ is bounded from $L^\infty(M,\mu) \times  \dot{L}^2_{\alpha}(M,\mathcal{L},\mu)$ to $\dot{L}^2_{\alpha}(M,\mathcal{L},\mu)$, that is
\begin{equation} \label{paral2}
\| \Pi_g(f) \|_{2,\alpha} \lesssim \|f\|_{2,\alpha} \|g\|_{\infty}.
\end{equation}
Moreover, $A(\alpha,2)$ holds.
\item[(ii)]  Assume in addition $(G_{p_0})$ for some $p_0\in(2,{+\infty}]$, and let $p\in [2,p_0)$. Then there exists $\PP_0:=\PP_0(\nu,p)$ such that for $\PP\geq \PP_0$, the paraproduct $(g,f)\mapsto \Pi_g (f)$ is bounded from $L^\infty(M,\mu) \times  \dot{L}^p_{\alpha}(M,\mathcal{L},\mu)$ to $\dot{L}^p_{\alpha}(M,\mathcal{L},\mu)$, that is
\begin{equation} \label{paralp}
 \| \Pi_g(f) \|_{p,\alpha} \lesssim \|f\|_{p,\alpha} \|g\|_{{\infty}}.
 \end{equation} 
Moreover, $A(\alpha,p)$ holds. 
\end{enumerate}
\end{theorem}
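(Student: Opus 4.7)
Since Proposition~\ref{prop:para2} already yields the $\dot L^p_\alpha$-boundedness of $\Pi_g^2$ for every $p\in(1,+\infty)$ and $\alpha\in(0,1)$, the plan is to focus on $\Pi_g^1$ and exploit orthogonality. Writing $Q_t\mathcal{L}^{-\alpha/2}=t^{\alpha/2}\bar Q_t$ with $\bar Q_t\propto Q_t^{(D-\alpha/2)}$ and $\mathcal{L}^{\alpha/2}(I-P_t)=\int_0^t s^{-\alpha/2}\tilde Q_s\,\tfrac{ds}{s}$ with $\tilde Q_s\propto Q_s^{(D+\alpha/2)}$, and interchanging the $s$ and $t$ integrals via Fubini, one obtains
$$\mathcal{L}^{\alpha/2}\Pi_g^1(\mathcal{L}^{-\alpha/2}u)=\int_0^\infty \tilde Q_s H_s\,\frac{ds}{s},\qquad H_s:=\int_s^\infty (t/s)^{\alpha/2}\,\bar Q_t u\cdot P_t g\,\frac{dt}{t}.$$
Lemma~\ref{lem:orthogonality} (applied with $N=D+\alpha/2$) then reduces the $L^p$-norm to $\bigl\|\bigl(\int_0^\infty |\hat Q_s H_s|^2\,\tfrac{ds}{s}\bigr)^{1/2}\bigr\|_p$ for $\hat Q_s\propto(s\mathcal{L})^{(D+\alpha/2)/2}e^{-s\mathcal{L}/2}$.

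The heart of the argument is to extract the gain $s/t$ via the carr\'e-du-champ Leibniz identity $\mathcal{L}(fg)=f\mathcal{L}g+g\mathcal{L}f-2\Upsilon(f,g)$. Factoring a power of $s\mathcal{L}$ out of $\hat Q_s$ and applying Leibniz to $s\mathcal{L}[\bar Q_t u\cdot P_t g]$ produces three terms, each carrying a prefactor $s/t$: two horizontal terms whose moduli are bounded by $|Q_t^{(D-\alpha/2+1)} u|\cdot\|g\|_\infty$ and $|\bar Q_t u|\cdot\|g\|_\infty$ respectively (using the pointwise Gaussian kernel bound $\|(t\mathcal{L})P_t g\|_\infty\lesssim\|g\|_\infty$), and the $\Upsilon$-term of modulus at most $\sqrt t|\nabla\bar Q_t u|\cdot\sqrt t|\nabla P_t g|$. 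Combined with the original weight $(t/s)^{\alpha/2}$, each of the three terms acquires the net factor $(s/t)^{1-\alpha/2}$, which is integrable in $s$ near $0$ since $\alpha<1$.

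Using the kernel estimates of Lemma~\ref{prop:kernel-est} to dominate $|\hat Q_s[V](x)|\lesssim\mathcal{M}|V|(x)$, Cauchy--Schwarz in $s$ against the integrable weight $(s/t)^{1-\alpha/2}$, and the vector-valued Fefferman--Stein inequality (Proposition~\ref{prop:FS}), the problem reduces to bounding $\bigl\|\bigl(\int_0^\infty |V_i(t)|^2\,\tfrac{dt}{t}\bigr)^{1/2}\bigr\|_p$ for $i=1,2,3$. The two horizontal terms yield $\|g\|_\infty\|u\|_p$ via the $L^p$-boundedness of the horizontal square function (Proposition~\ref{prop:square-function}(i)). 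The $\Upsilon$-term is the main obstacle: since $g\in L^\infty$, $\sqrt t|\nabla P_t g|$ is not pointwise controlled but has an $L^p$ Carleson-measure norm bounded by $\|g\|_\infty$ (via a Caccioppoli-type estimate for the heat flow, upgraded in integrability for $p>2$ by means of $(G_{p_0})$), while $\sqrt t|\nabla\bar Q_t u|$ is controlled by the vertical square function $G_N$ (Proposition~\ref{prop:square-function}(ii) for $p\in(1,2]$ and Proposition~\ref{prop:K} for $p\in(2,p_0)$), whose non-tangential maximal is in turn bounded in $L^p$ by $\|u\|_p$. Pairing these two through the Carleson tent-space duality of Theorem~\ref{CarlesonDuality} closes the estimate, and Corollary~\ref{cor:A} then yields $A(\alpha,p)$. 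Choosing $D$ large enough (depending on $\nu$ and $p$) compensates the $O(1/D)$ losses incurred at each off-diagonal step.
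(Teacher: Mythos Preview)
Your overall architecture—reducing to $\Pi_g^1$, writing it as $\int_0^\infty \tilde Q_s H_s\,\frac{ds}{s}$, applying Lemma~\ref{lem:orthogonality}, and then extracting a factor $(s/t)^{1-\alpha/2}$ by ``differentiating'' the product—is very close in spirit to the paper. The paper also ends with two horizontal square-function terms plus a ``gradient'' term handled by Carleson duality (Theorem~\ref{CarlesonDuality} and Lemma~\ref{lem:nontang-Carleson}). The difference is precisely in \emph{how} the differentiation is carried out, and this is where your proof breaks.

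You factor out a full power $s\mathcal{L}$ from $\hat Q_s$ and use the second-order Leibniz identity $\mathcal{L}(fg)=f\mathcal{L}g+g\mathcal{L}f-2\Upsilon(f,g)$. The two ``horizontal'' terms are indeed fine. But the $\Upsilon$-term carries gradients on \emph{both} factors: after Hardy and Fefferman--Stein you are left with
\[
\Bigl\|\Bigl(\int_0^\infty \bigl(\sqrt t\,|\nabla\bar Q_t u|\bigr)^2\,\bigl(\sqrt t\,|\nabla P_t g|\bigr)^2\,\frac{dt}{t}\Bigr)^{1/2}\Bigr\|_p.
\]
Theorem~\ref{CarlesonDuality} requires one factor in $N_\ast$ and the other in $\scrC_{p+\varepsilon}$. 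The Carleson factor $\sqrt t\,|\nabla P_t g|$ is controlled by $\|g\|_\infty$ via Lemma~\ref{lem:nontang-Carleson}(b), but the remaining factor forces you to bound $\|N_\ast(\sqrt t\,|\nabla\bar Q_t u|)\|_p$. This is \emph{not} available under $(G_{p_0})$: a bound on the non-tangential maximal function of the gradient amounts to an $L^1$--$L^\infty$ estimate for $\sqrt t\,\nabla e^{-t\mathcal{L}}$, i.e.\ essentially pointwise Gaussian bounds on $\nabla p_t$. Your sentence ``$\sqrt t|\nabla\bar Q_t u|$ is controlled by the vertical square function $G_N$, whose non-tangential maximal is in turn bounded in $L^p$'' conflates the square-function norm (which \emph{is} bounded via Proposition~\ref{prop:square-function}(ii) and Proposition~\ref{prop:K}) with the non-tangential maximal norm (which is not).

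The paper circumvents exactly this obstruction by factoring out only a \emph{half} power of $\mathcal{L}$ and invoking the vector-valued reverse Riesz inequality $(\overline{RR_p})$ of Lemma~\ref{lemma:RRquadratic} (valid for every $p\ge 2$ by duality from Proposition~\ref{prop:Lp<2}) to convert $\mathcal{L}^{1/2}$ into $\nabla$. The first-order Leibniz rule $|\nabla(fg)|\le |f|\,|\nabla g|+|g|\,|\nabla f|$ then produces only two terms, each with the gradient on a \emph{single} factor. In the Carleson step the $N_\ast$-factor is $s^{-\alpha/2}Q_s f$ with \emph{no} gradient, so Lemma~\ref{lem:nontang-Carleson}(a) gives $N_\ast(s^{-\alpha/2}Q_s f)\lesssim \calM f$ directly from kernel bounds. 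This use of $(\overline{RR_p})$ is the missing idea in your argument.
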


$A(\alpha,2)$  and $A(\alpha,p)$ follow directly from the product decomposition \eqref{eq:paraproduit-decomposition} and \eqref{paral2} and \eqref{paralp}, respectively. See Corollary \ref{cor:A}. 
$A(\alpha,2)$  was already known as emphasised in the introduction. However, the more precise estimate \eqref{paral2}
will be used in Sections \ref{sec:<2}, \ref{sec p>2}, and \ref{sec:osci}.

\begin{proof}[Proof of Theorem $\ref{gaza}$]
By Proposition \ref{prop:reproducing} and Lemma \ref{lem:orthogonality} (for $F_t$ independent of $t$) we can write
\begin{align*}\norm{\mathcal{L}^{\alpha/2} \Pi_g(f)}_{p}&=c \left\|\int_0^{+\infty} Q_t\mathcal{L}^{\alpha/2} \Pi_g(f) \, \frac{dt}{t} \right\|_p\\
&\lesssim \left\|\left(\int_0^{+\infty} |\tilde Q_t\mathcal{L}^{\alpha/2} \Pi_g(f)|^2 \, \frac{dt}{t}\right)^{1/2} \right\|_p\end{align*}
where $\tilde Q_t$ is as in Lemma \ref{lem:orthogonality}.

Then by the definition of the paraproduct
\begin{align*}
	\norm{\mathcal{L}^{\alpha/2} \Pi_g(f)}_{p}
	& \lesssim \left\|\left(\int_0^{+\infty} \left|\tilde Q_t\mathcal{L}^{\alpha/2} \int_0^{+\infty} Q_s f \cdot P_s g \,\frac{ds}{s}\right|^2 \, \frac{dt}{t}\right)^{1/2} \right\|_p\\
		& \leq I_1+I_2,
\end{align*}
where $$I_1:=\left\|\left(\int_0^{+\infty} \left|\tilde Q_t\mathcal{L}^{\alpha/2} \int_0^tQ_s f \cdot P_s g \,\frac{ds}{s}\right|^2 \, \frac{dt}{t}\right)^{1/2} \right\|_p$$
and $$I_2:=\left\|\left(\int_0^{+\infty} \left|\tilde Q_t\mathcal{L}^{\alpha/2} \int_t^{+\infty} Q_s f \cdot P_s g \,\frac{ds}{s}\right|^2 \, \frac{dt}{t}\right)^{1/2} \right\|_p.$$

For $I_1$, we use that, thanks  to \eqref{due},  $\tilde Q_t(t\mathcal{L})^{\alpha / 2}$ is bounded by the Hardy-Littlewood maximal function which satisfies a Fefferman-Stein inequality (see Proposition \ref{prop:FS}), therefore
\begin{align*}I_1 &\lesssim\left\|\left(\int_0^{+\infty} \left| t^{-\alpha/2} \int_0^tQ_s f \cdot P_s g \,\frac{ds}{s}\right|^2 \, \frac{dt}{t}\right)^{1/2} \right\|_p\\&\lesssim\left\|\left(\int_0^{+\infty} \left( t^{-\alpha/2} \int_0^t|Q_s f \cdot P_s g| \,\frac{ds}{s}\right)^2 \, \frac{dt}{t}\right)^{1/2} \right\|_p.\end{align*}
Since by Hardy's inequality we have the pointwise inequality
$$ \left(\int_0^{+\infty} \left( t^{-\alpha/2} \int_0^t|Q_s f \cdot P_s g| \,\frac{ds}{s}\right)^2 \, \frac{dt}{t}\right)^{1/2} \lesssim \left(\int_0^{+\infty} \left( s^{-\alpha/2} |Q_s f \cdot P_s g|\right)^2 \,\frac{ds}{s}\right)^{1/2},$$ 
 we deduce that
\begin{align*}
	I_1 
		&\lesssim\left\|\left(\int_0^{+\infty} \left( s^{-\alpha/2} |Q_s f \cdot P_s g|\right)^2 \,\frac{ds}{s}\right)^{1/2}\right\|_p\\
		&=\left\|\left(\int_0^{+\infty}|Q_s(s\mathcal{L})^{-\alpha/2}\mathcal{L}^{\alpha/2} f \cdot P_s g|^2 \,\frac{ds}{s}\right)^{1/2}\right\|_p.
	\end{align*}
Since $P_s$ is uniformly bounded  on $L^\infty$,
\begin{align*}I_1 &\lesssim\left\|\left(\int_0^{+\infty}|Q_s(s\mathcal{L})^{-\alpha/2}\mathcal{L}^{\alpha/2} f |^2 \,\frac{ds}{s}\right)^{1/2}\right\|_p\|g\|_{\infty}\\
&=\left\|\left(\int_0^{+\infty}|Q_s^{(D-\frac{\alpha}{2})}\mathcal{L}^{\alpha/2} f |^2 \,\frac{ds}{s}\right)^{1/2}\right\|_p\|g\|_{\infty},\end{align*}
 and by the second assertion in Proposition \ref{prop:square-function}, 
\begin{align*}
	I_1 \lesssim \|\mathcal{L}^{\alpha / 2}f\|_{p} \|g\|_{\infty}.
\end{align*}	

As for $I_2$, write
$$I_2=\left\|\left(\int_0^{+\infty} t^{1-\alpha}\left|\tilde Q_t (t\mathcal{L})^{-\frac{1-\alpha}{2}}  \int_t^{+\infty} \mathcal{L}^{1/ 2}\left(Q_s f \cdot P_s g \right)\,\frac{ds}{s}\right|^2 \, \frac{dt}{t}\right)^{1/2} \right\|_p.$$

The Fefferman-Stein inequality for $\tilde Q_t (t\mathcal{L})^{-\frac{1-\alpha}{2}}$ and Hardy's inequality again yield
\begin{align*}
	I_2
	&\lesssim\left\|\left(\int_0^{+\infty} t^{1-\alpha}\left( \int_t^{+\infty} \left|\mathcal{L}^{1/ 2}\left(Q_s f \cdot P_s g\right)\right| \,\frac{ds}{s}\right)^2 \, \frac{dt}{t}\right)^{1/2} \right\|_p\\
	&\lesssim\left\|\left(\int_0^{+\infty} s^{1-\alpha} \left|\mathcal{L}^{1/ 2}\left(Q_s f \cdot P_s g\right)\right|^2 \,\frac{ds}{s}  \right)^{1/2} \right\|_p.	
\end{align*}
Then by  Lemma \ref{lemma:RRquadratic} and $p\in(2,{+\infty})$, $(\overline{RR_{p}})$ holds so for $F_s=s^{\frac{1-\alpha}{2}} \left(Q_s f \cdot P_s g\right)$, one obtains
\begin{align*}
	I_2&\lesssim\left\|\left(\int_0^{+\infty} s^{1-\alpha} \left|\nabla\left(Q_s f \cdot P_s g\right)\right|^2 \,\frac{ds}{s}  \right)^{1/2} \right\|_p\\
	& =\left\| \left\|  s^{\frac{1-\alpha}{2}} |\nabla (Q_s f \cdot P_s g)| \right\|_{L^2(\frac{ds}{s})} \right\|_{p}.
\end{align*}
This splits into two terms $I_{2,1}$ and $I_{2,2}$, according to whether the gradient acts on $Q_s$ or $P_s$.
For the first term, using the uniform boundedness of $P_sg$ on $L^\infty$ and, in the last step, the boundedness of $\tilde{G}^{(D+\frac{1-\alpha}{2})}$ on $L^p$ stated in Proposition \ref{prop:square-function} (ii) and Proposition \ref{prop:K},
one obtains
\begin{align*}
	I_{2,1}& = \left\| \left\|  s^{1/ 2} |\nabla Q_s (s\mathcal{L})^{-\alpha/2} \mathcal{L}^{\alpha / 2}f | \cdot |P_s g| \right\|_{L^2(\frac{ds}{s})} \right\|_{p} \\
	& = \left\| \left\|  s^{1/ 2} |\nabla Q_s^{(D-\frac{\alpha}{2})}  \mathcal{L}^{\alpha / 2}f |\cdot |P_s g |\right\|_{L^2(\frac{ds}{s})} \right\|_{p} \\
	 & \lesssim \left\| \left\|  s^{1/ 2} |\nabla Q_s^{(D-\frac{\alpha}{2})} \mathcal{L}^{\alpha / 2} f| \right\|_{L^2(\frac{ds}{s})} \right\|_{p} \|g\|_{\infty}\\
 & = \left\|   \tilde{G}^{(D+\frac{1-\alpha}{2})}\mathcal{L}^{\alpha/2} f  \right\|_{p} \|g\|_{\infty} 
 \lesssim \| \mathcal{L}^{\alpha/2} f \|_{p} \|g\|_{\infty}.
\end{align*}
As for $I_{2,2}$, using the Carleson duality stated in Theorem \ref{CarlesonDuality}, we have for every $\eps>0$ (with $\eps=0$ if $p=2$)
\begin{align*}	 \left\| \left(\int_0^{+\infty} s^{1-\alpha}  |Q_s f|^2 |\nabla P_s g|^2 \,\frac{ds}{s}\right)^{1/2} \right\|_p &= \left\| \left(\int_0^{+\infty} |s^{-\alpha/2}Q_s   f|^2 \cdot |\sqrt{s} \nabla P_s g|^2 \,\frac{ds}{s}\right)^{1/2}\right\|_p\\
	& \lesssim 	\norm{N_\ast(s^{-\alpha/2}Q_s f)}_{p} \norm{\scrC_{p+\epsilon}( s^{1/ 2} \nabla P_s g)}_{\infty}.
\end{align*}
We apply Lemma \ref{lem:nontang-Carleson} below (choosing $q=p+\eps<p_0$) to show that the last expression can be bounded by a constant times $\norm{\mathcal{L}^{\alpha/2} f}_p \norm{g}_\infty$. 
Finally, we have shown that
$$ 	\norm{\mathcal{L}^{\alpha/2} \Pi_g(f)}_{p} \lesssim \| \mathcal{L}^{\alpha/2} f \|_{p} \|g\|_{\infty}.$$
\end{proof}

It remains to show the following.

\begin{lemma} \label{lem:nontang-Carleson}
Let  $(M,d,\mu, {\mathcal E})$  be a doubling metric  measure   Dirichlet space with a ``carr\'e du champ'' satisfying \eqref{due}. 
Let $\alpha \in (0,1)$.
\begin{enumerate}
\item
Let $p \in (1,+\infty)$. Then 
\[
	\|N_\ast((s\mathcal{L})^{-{\alpha/2}}Q_s f)\|_p \lesssim  \|f\|_p
\]
for all $f \in L^p(M,\mu)$.
\item Let $q \in [2,+\infty)$. If $q>2$, assume in addition $(G_{p_0})$ for some $p_0>q$. Then 
\[
	\|\scrC_{q} (\sqrt{s}|\nabla P_s g|)\|_{\infty} \lesssim \|g\|_\infty
\]
for all $g \in L^\infty(M,\mu)$.
\end{enumerate}
\end{lemma}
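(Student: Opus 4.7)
The plan splits into two nearly independent arguments, one for each part of the lemma.

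For part (1), I would use that, up to a multiplicative constant, $(s\mathcal{L})^{-\alpha/2}Q_s = Q_s^{(\PP-\alpha/2)}$ by functional calculus. Provided $\PP$ is chosen sufficiently large (namely $\PP - \alpha/2 > \nu/2$, which is already built into our standing choice $\PP \geq 4(1+\nu)$), Lemma \ref{prop:kernel-est} supplies a pointwise Gaussian-type kernel bound for $Q_s^{(\PP-\alpha/2)}$. A dyadic annular decomposition against this bound yields, for every $y\in M$,
\[
|Q_s^{(\PP-\alpha/2)}f(y)| \lesssim \sum_{k\geq 0} 2^{-2k(\PP-\alpha/2)}\aver{B(y,2^k\sqrt{s})}|f|\,d\mu.
\]
When $(y,s)\in\Gamma(x)$ we have $d(x,y)<\sqrt{s}$, so $B(y,2^k\sqrt{s})\subset B(x,2^{k+1}\sqrt{s})$, and \eqref{dnu} turns each annular average into a factor $2^{k\nu}\calM f(x)$. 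The resulting series converges by our choice of $\PP$, giving the pointwise domination $N_\ast((s\mathcal{L})^{-\alpha/2}Q_sf)(x)\lesssim \calM f(x)$, and $L^p$-boundedness of $\calM$ finishes part (1).

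For part (2), fix a ball $B$ of radius $r$ containing $x$, and decompose $g = g_0 + g_1$ with $g_0 := g\Eins_{4B}$ and $g_1 := g\Eins_{(4B)^c}$. For the local piece, I would invoke $L^q$-boundedness of the vertical square function $G_\PP$: Proposition \ref{prop:square-function}(ii) when $q=2$, and Proposition \ref{prop:K} when $q\in(2,p_0)$ under $(G_{p_0})$. This directly gives
\[
\int_B\!\Big(\int_0^{r^2}|\sqrt{s}\nabla P_s g_0|^2\tfrac{ds}{s}\Big)^{q/2}\!d\mu \leq \|G_\PP g_0\|_q^q \lesssim \|g_0\|_q^q \leq |4B|\,\|g\|_\infty^q \lesssim |B|\,\|g\|_\infty^q.
\]
For the far piece, split further $g_1 = \sum_{k\geq 1} g\Eins_{C_k}$ with $C_k := 2^{k+2}B\setminus 2^{k+1}B$. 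Since $q\geq 2$, Minkowski reduces matters to estimating each $\|\sqrt{s}\nabla P_s(g\Eins_{C_k})\|_{L^q(B)}$. The crucial ingredient is $L^q$ off-diagonal bounds at scale $\sqrt{s}$ for the operator $\sqrt{s}\nabla P_s^{(\PP)}$: these are obtained exactly as in the proof of Proposition \ref{prop:K}, by interpolating $(G_{p_0})$ with the $L^2$ Davies-Gaffney estimate from Proposition \ref{prop:Davies-Gaffney}, and then passing from $e^{-s\mathcal{L}}$ to $P_s^{(\PP)}$ via Remark \ref{Pt-rem}(iii) and Lemma \ref{lem:comp-OD}. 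Covering both $B$ and $C_k$ by balls of radius $\sqrt{s}$ with bounded overlap and summing the off-diagonal tails yields a bound of the form
\[
\|\sqrt{s}\nabla P_s(g\Eins_{C_k})\|_{L^q(B)} \lesssim (r/\sqrt{s})^{\nu/q}\,2^{k\nu/q}\,e^{-c\,4^k r^2/s}\,|B|^{1/q}\,\|g\|_\infty.
\]
Squaring, integrating $ds/s$ on $(0,r^2)$ via the substitution $u = r^2/s$, and summing over $k\geq 1$, the exponential factors dominate the polynomial losses and produce a total bound $\lesssim |B|^{1/q}\|g\|_\infty$, as desired.

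The only genuine difficulty is part (2), and within it, the extraction of $L^q$-$L^q$ off-diagonal bounds at the \emph{fine} scale $\sqrt{s}$ (not the coarser scale $r$) for $\sqrt{s}\nabla P_s^{(\PP)}$ from the assumption $(G_{p_0})$ alone; everything afterwards is a fairly standard annular-decomposition Carleson calculation. Once this off-diagonal estimate is in place, the two pieces combine to give $\scrC_q(\sqrt{s}|\nabla P_s g|)(x)\lesssim \|g\|_\infty$ uniformly in $x$.
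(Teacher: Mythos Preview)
Your proposal is correct and follows essentially the same approach as the paper: part (1) via the kernel bound for $Q_s^{(\PP-\alpha/2)}$ and pointwise domination by $\calM f$, part (2) via the local/far decomposition with the $L^q$ vertical square function on the local piece and $L^q$ off-diagonal estimates (from interpolating $(G_{p_0})$ with Davies--Gaffney) on the far annuli. The only cosmetic difference is that the paper applies the Davies--Gaffney-type off-diagonal bound directly between the sets $B$ and $S_j(B)$ (yielding a polynomial factor $(1+(2^jr)^2/s)^{-N}$), whereas you first pass to coverings by balls of radius $\sqrt{s}$ and obtain an exponential factor; both lead to the same convergent Carleson estimate after integrating $ds/s$ and summing in $k$.
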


\begin{proof}
(a) According to Lemma \ref{prop:kernel-est}, the kernel $k_s$ of the  operator $(s\mathcal{L})^{-{\alpha/2}}Q_s=Q_s^{(D-\frac{\alpha}{2})}$ satisfies estimates of the form \eqref{kernel-est} of order $\PP-\frac{\alpha}{2}$. Thus, for $x \in M$,
\begin{align*}
	N_\ast ((s\mathcal{L})^{-{\alpha/2}}Q_s f)(x)
		&=\sup_{(y,s) \in \Gamma(x)} \abs{(s\mathcal{L})^{-{\alpha/2}}Q_s f(y)}\\
		&\leq \sup_{(y,s) \in \Gamma(x)} \int_M \abs{k_s(y,z)}\abs{f(z)}\,d\mu(z)\\
		& \lesssim \sup_{(y,s) \in \Gamma(x)} \sum_{j=0}^{+\infty} \frac{1}{V(y,\sqrt{s})} \left(\frac{s}{(2^{j-1}\sqrt{s})^2}\right)^{\PP-\frac{\alpha}{2}} \int_{S_j\left(B(y,\sqrt{s})\right)} \abs{f(z)} \,d\mu(z)\\
		& \lesssim \sup_{(y,s) \in \Gamma(x)} \sum_{j=0}^{+\infty} \frac{2^{j\nu}}{V(y,2^j\sqrt{s})} \left(\frac{s}{(2^{j-1}\sqrt{s})^2}\right)^{\PP-\frac{\alpha}{2}} \int_{B(y,2^j\sqrt{s})} \abs{f(z)} \,d\mu(z)\\
		& \lesssim \sup_{(y,s) \in \Gamma(x)} \sup_{j \in \N} \frac{1}{V(y,2^j\sqrt{s})} \int_{B(y,2^j\sqrt{s})} \abs{f(z)} \,d\mu(z)
		\lesssim \mathcal{M}f(x),
\end{align*}
where $S_j\left(B\right)=2^jB\setminus 2^{j-1}B$ if $j\ge 1$ and $S_0(B)=B$. Here we have used \eqref{dnu} and $\PP-\frac{\alpha}{2}>\frac{\nu}{2}$.
The assertion in (a) follows from the boundedness of the uncentred Hardy-Littlewood maximal operator $\mathcal{M}$ on $L^p(M,\mu)$. \\
(b) Fix a ball $B \subseteq M$. We have to estimate
$$A(g):=\left( \aver{B} \left(\int_0^{r^2(B)} \abs{\sqrt{s}\nabla P_s g(x)}^2 \,\frac{ds}{s}\right)^{q/2} \, d\mu(x) \right)^{1/q}.$$
To this aim, we split
$$ g = g \Eins_{4B} + \sum_{j\geq 3} g\Eins_{S_j(B)}.$$
First using the $L^{q}$-boundedness of the square function $G_{\PP+(1-\alpha)/2}$ stated in Proposition \ref{prop:K}, we have
\begin{align*}
A(g \Eins_{4B}) & \leq \left( \aver{B} (G_{\PP+(1-\alpha)/2}(g \Eins_{4B}))^{q} \, d\mu \right)^{1/q} \\
 & \lesssim |B|^{-1/q} \norm{ G_{\PP+(1-\alpha)/2}(g \Eins_{4B})}_{q} \\
& \lesssim |B|^{-1/q} \norm{g}_{L^{q}(4B)} \lesssim \norm{g}_{\infty}.
\end{align*}

On the other hand, interpolating $(G_{p_0})$ with the Davies-Gaffney estimates from Proposition \ref{prop:Davies-Gaffney} yields $L^{q}$ off-diagonal estimates, 
therefore for $j \geq 3$ and every integer $N\geq 1$
\begin{align*}
	\left(\aver{B} \abs{\sqrt{s}\nabla P_s \Eins_{S_j(B)} g(x)}^{q}\,d\mu(x)\right)^{1/q}
	& \lesssim \left(1+\frac{(2^j r(B))^2}{s}\right)^{-N} |B|^{-1/q} \norm{g}_{L^{q}(2^jB)}\\
	& \lesssim 2^{-2jN}\left(\frac{s}{r^2(B)}\right)^N  2^{j\nu/q} \norm{g}_\infty,
\end{align*}
for $s\le r^2(B)$. Hence, choosing $N>\nu/4$, for $q\geq 2$
\begin{align*}
A(\Eins_{S_j(B)}  g) &\leq \left(\int_0^{r^2(B)} \left(\aver{B} \abs{\sqrt{s}\nabla P_s\Eins_{S_j(B)}  g(x)}^{q} \, d\mu(x)\right)^{2/q} \, \frac{ds}{s}\right)^{1/2}\\
	&\lesssim   2^{-(2N-\frac{\nu}{q})j} \left(\int_0^{r^2(B)} \left(\frac{s}{r^2(B)}\right)^{2N} \,\frac{ds}{s}\right)^{1/2} \norm{g}_\infty \\
	&	\lesssim 2^{-(2N-\frac{\nu}{q})j} \norm{g}_\infty.
\end{align*}
Gathering the above estimates, uniformly with respect to any ball $B$, we have
\begin{align*}
	\left( \aver{B} \left(\int_0^{r^2(B)} \abs{\sqrt{s}\nabla P_s g(x)}^2 \,\frac{ds}{s}\right)^{q/2} \, d\mu(x) \right)^{1/q} \lesssim \norm{g}_\infty,
\end{align*}
which yields the claim.
\end{proof}

\section{Off-diagonal estimates on the kernel of paraproducts} \label{sec:off}

We recall that $K_{\alpha,g}$ denotes the operator-valued kernel of the paraproduct, and that this kernel depends on a parameter $\PP$, see \eqref{def:paraproduct} and \eqref{eq:K}.\\

In order to derive off-diagonal estimates on the kernel $K_{\alpha,g}$, we are going to assume $L^{p_2}$-$L^{p_2}$ off-diagonal estimates on the gradient of the semigroup for some $p_2\in (2,+\infty)$: for every pair of balls $B_1,B_2$ of radius $\sqrt{t}$ and every $f \in L^{p_2}(M,\mu)$ with $\supp f \subseteq B_2$,
\begin{equation}
	\left(\aver{B_1} | \sqrt{t} \nabla e^{-t\mathcal{L}} f | ^{p_2} 
	\, d\mu \right)^{1/ {p_2}} \lesssim e^{-c \frac{d^2(B_1,B_2)}{t}} \left(\aver{B_2} | f | ^{p_2} \, d\mu \right)^{1/ {p_2}}.
\label{eq:gradient}
\end{equation}
Note that this estimate can be obtained by interpolating between $(G_p)$ for $p>p_2$ and  the Davies-Gaffney estimate from Proposition
\ref{prop:Davies-Gaffney}.

\begin{theorem} \label{thm:KLpLqII} Let $(M,d,\mu, {\mathcal E})$  be a doubling metric  measure   Dirichlet space with a ``carr\'e du champ'' satisfying \eqref{due}. Let $1\leq p_1 \leq 2 \leq p_2< +\infty$, $\alpha\in(0,1)$ and $g\in L^\infty(M,\mu)$. 
Assume $\eqref{eq:gradient}$.
Then for $s\leq t$, the kernel $K_{\alpha,g}$ satisfies the following $L^{p_1}$-$L^{p_2}$ off-diagonal estimates: given $\tilde{N}>\frac{\nu}{2}$, there exists $\PP_0=\PP_0(\tilde{N},\nu)>0$ such that for every integer $\PP\geq \PP_0$ we have  
$$	\left(\aver{B_1} | K_{\alpha,g}(s,t)  h|^{p_2} \, d\mu \right)^{1/p_2} \lesssim \left(\frac{s}{t}\right)^{\frac{1-\alpha}{2}}  \left(1+\frac{d^2(B_1,B_2)}{t}\right)^{-\tilde{N}} \left( \aver{B_2} |h|^{p_1} \, d\mu \right)^{1/p_1} \|g\|_{\infty}$$
for all balls $B_1,B_2$ of radius $\sqrt{t}$.
\end{theorem}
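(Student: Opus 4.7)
The plan is to reduce the estimate to a bound on $Q_s^{(N)}[u\cdot v]$ for well-chosen $u,v$, and then exploit the Dirichlet-form/Leibniz structure to extract the crucial smallness $(s/t)^{(1-\alpha)/2}$. Using $Q_r^{(N)} = c(r\mathcal{L})^N e^{-r\mathcal{L}}$, I would first rewrite
\begin{equation*}
K_{\alpha,g}(s,t)[h] \;=\; c_1\left(\frac{s}{t}\right)^{-\alpha/2} Q_s^{(N)}\bigl[u\cdot v\bigr], \qquad N:=\PP+\tfrac{\alpha}{2},\ u:=Q_t^{(\PP-\alpha/2)}h,\ v:=P_t^{(\PP)}g,
\end{equation*}
using $Q_t\mathcal{L}^{-\alpha/2} = c\,t^{\alpha/2}Q_t^{(\PP-\alpha/2)}$ and $Q_s\mathcal{L}^{\alpha/2} = c\,s^{-\alpha/2}Q_s^{(\PP+\alpha/2)}$. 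The task then reduces to showing that $Q_s^{(N)}[uv]$ obeys the desired $L^{p_1}$--$L^{p_2}$ off-diagonal estimate at scale $\sqrt{t}$ with gain $(s/t)^{1/2}$ and with $\|v\|_\infty\le\|g\|_\infty$.

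I would next split $Q_s^{(N)}(uv) = v\cdot Q_s^{(N)}u + \bigl(Q_s^{(N)}(uv)-v\cdot Q_s^{(N)}u\bigr)$. For the \emph{diagonal piece}, since $Q_s^{(N)}$ and $Q_t^{(\PP-\alpha/2)}$ are both functions of $\mathcal{L}$, they commute and
\begin{equation*}
Q_s^{(N)} Q_t^{(\PP-\alpha/2)} \;=\; c'\,\frac{s^N\,t^{\PP-\alpha/2}}{(s+t)^{N+\PP-\alpha/2}}\,Q_{s+t}^{(N+\PP-\alpha/2)};
\end{equation*}
for $s\le t$ this produces a factor $\simeq (s/t)^N$, much stronger than the required $(s/t)^{1/2}$. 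Combined with $\|v\|_\infty\lesssim\|g\|_\infty$ (semigroup contractivity on $L^\infty$) and the $L^{p_1}$--$L^{p_2}$ off-diagonal bound at scale $\sqrt{t}$ of order $N+\PP-\alpha/2$ from Lemma \ref{prop:kernel-est} (valid as soon as $\PP\gtrsim \tilde{N}$), this piece is handled.

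For the \emph{commutator piece} the Dirichlet form enters through the Leibniz identity $\mathcal{L}(uv)=u\mathcal{L}v+v\mathcal{L}u-2\Upsilon(u,v)$. Iterating this inside $(s\mathcal{L})^N e^{-s\mathcal{L}}$ (for integer $N$, handling the fractional part $\alpha/2$ via the integral representation $\mathcal{L}^{-\epsilon}=c\!\int_0^\infty r^\epsilon e^{-r\mathcal{L}}\,dr/r$ already used in the proof of Lemma \ref{prop:kernel-est}) decomposes the commutator into a finite sum of model terms of two kinds: ``pure'' terms $Q_s^{(N_1)}\!\bigl[\mathcal{L}^a u\cdot \mathcal{L}^b v\bigr]$ with $a+b\ge 1$, and ``$\Upsilon$-type'' terms $Q_s^{(N_1)}\!\bigl[\Upsilon(\mathcal{L}^a u,\mathcal{L}^b v)\bigr]$ (possibly with further $s\mathcal{L}$ factors on the outside). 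Rewriting $s^a\mathcal{L}^a u = c(s/t)^a Q_t^{(a+\PP-\alpha/2)}h$ and $s^b\mathcal{L}^b v = c(s/t)^b(t\mathcal{L})^bP_t^{(\PP)}g$, a pure term gains a factor $(s/t)^{a+b}\le s/t$ and is treated exactly as the diagonal piece. For a $\Upsilon$-type term the key is the identity
\begin{equation*}
s\,|\Upsilon(\mathcal{L}^a u,\mathcal{L}^b v)| \;\le\; \left(\tfrac{s}{t}\right)^{1/2}\sqrt{s}\,|\nabla \mathcal{L}^a u|\cdot \sqrt{t}\,|\nabla \mathcal{L}^b v|,
\end{equation*}
which furnishes the required $(s/t)^{1/2}$ gain. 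One then invokes Lemma \ref{prop:kernel-est} for the off-diagonal decay of $Q_s^{(N_1)}$ at scale $\sqrt{s}$, a dyadic-annular summation of \eqref{eq:gradient} yielding $\|\sqrt{t}\,\nabla P_t^{(\PP)}g\|_{L^{p_2}(B)}\lesssim\|g\|_\infty$ on balls $B$ of radius $\sqrt{t}$, and Proposition \ref{prop:Davies-Gaffney} together with Lemma \ref{prop:kernel-est} for $L^{p_1}$--$L^{p_2}$ off-diagonal bounds on $\sqrt{t}\,\nabla Q_t^{(\cdot)}$. Finally, Lemma \ref{lem:comp-OD} composes the bounds at scales $\sqrt{s}$ and $\sqrt{t}$ into a single off-diagonal bound at scale $\sqrt{t}$ of the requested order $\tilde{N}$.

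The main obstacle I anticipate is the careful bookkeeping of this iterated Leibniz expansion for non-integer $N=\PP+\alpha/2$, especially controlling how many ``spare'' $\mathcal{L}$-derivatives remain on the $u$- and $v$-factors after the expansion --- these are what ultimately generate the off-diagonal decay of order $\tilde{N}$ via Lemma \ref{prop:kernel-est}. Together with the $\nu/2$ loss incurred at each composition step (Lemma \ref{lem:comp-OD}), this is precisely what determines the threshold $\PP_0=\PP_0(\tilde{N},\nu)$, which grows essentially linearly in $\tilde{N}$ and in $\nu$.
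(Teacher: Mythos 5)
Your proposal takes a genuinely different route from the paper's.  The paper rewrites $\tilde K_{\alpha,g}(s,t)h =\left(\tfrac{s}{t}\right)^{\frac{1-\alpha}{2}}(t\mathcal{L})^{1/2}\tilde Q_s(s\mathcal{L})^{-\frac{1-\alpha}{2}}\bigl[\tilde Q_t(t\mathcal{L})^{-\alpha/2}h\cdot P_tg\bigr]$, pulling out the full $(s/t)^{(1-\alpha)/2}$ factor by pure functional calculus, and then converts the remaining $\sqrt{t}\,\sqrt{\mathcal{L}}\,Q_{s/2}^{(\tilde D)}[uv]$ to gradient form by Lemma~\ref{lemf}.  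That lemma is a \emph{local} reverse-Riesz bound proved by duality: $\langle\sqrt{\mathcal{L}}Q_{r^2}^{(N)}f,g\rangle=\langle f,\mathcal{L}\mathcal{L}^{-1/2}Q_{r^2}^{(N)}g\rangle$, together with \eqref{energy} and \eqref{eq:carre}, turns $\sqrt{\mathcal{L}}$ acting on the product $uv$ into $\nabla(uv)$, and then one simply applies the gradient Leibniz rule $\nabla(uv)=u\nabla v+v\nabla u$ plus off-diagonal estimates, with no commutator computation and no carr\'e-du-champ bilinear term ever appearing.  You instead split $Q_s^{(N)}(uv)=v\,Q_s^{(N)}u+[Q_s^{(N)},M_v]u$ and propose to expand the commutator through the $\mathcal{L}$-Leibniz identity $\mathcal{L}(uv)=u\mathcal{L}v+v\mathcal{L}u-2\Upsilon(u,v)$.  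Your diagonal piece is correctly dispatched, and your extraction of $(s/t)^{1/2}$ from $\Upsilon$ via Cauchy--Schwarz (writing $s\lvert\Upsilon(\cdot,\cdot)\rvert\le(s/t)^{1/2}\sqrt{s}\lvert\nabla\cdot\rvert\,\sqrt{t}\lvert\nabla\cdot\rvert$) is sound as a final step and uses the same gradient off-diagonal estimates that the paper does.

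The genuine gap is the phrase ``iterating this inside $(s\mathcal{L})^Ne^{-s\mathcal{L}}$\dots decomposes the commutator into a finite sum of model terms.''  The Leibniz identity applies to $\mathcal{L}$ acting on an honest product of two functions; it does not directly apply inside the exponential, because $e^{-s\mathcal{L}}(uv)$ is \emph{not} a product.  What you need is a Duhamel-type commutator formula such as
\begin{equation*}
e^{-s\mathcal{L}}(uv)-v\,e^{-s\mathcal{L}}u=-\int_0^s e^{-(s-\sigma)\mathcal{L}}\Bigl[\bigl(e^{-\sigma\mathcal{L}}u\bigr)\mathcal{L}v-2\Upsilon\bigl(v,e^{-\sigma\mathcal{L}}u\bigr)\Bigr]\,d\sigma ,
\end{equation*}
iterated further for the factor $(s\mathcal{L})^K$ and convolved with the representation for the fractional part of $N$.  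This produces iterated \emph{integrals} over auxiliary parameters with the semigroup at intermediate scales, not the ``finite sum'' of clean model terms $Q_s^{(N_1)}[\mathcal{L}^au\cdot\mathcal{L}^bv]$ and $Q_s^{(N_1)}[\Upsilon(\mathcal{L}^au,\mathcal{L}^bv)]$ you describe, and the resulting bookkeeping of off-diagonal orders, of the $L^{p_1}$--$L^{p_2}$ exponents, and of the $\sqrt{s}$- vs.\ $\sqrt{t}$-scale compositions is substantial and is not carried out.  As written, the proof does not close.  The paper's Lemma~\ref{lemf} sidesteps all of this by putting $\sqrt{\mathcal{L}}$ on the \emph{test function} via duality rather than on the product, and is the key technical device you would be reinventing, with considerably more pain, through the Duhamel route.
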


One can obtain a more precise result if one assumes in addition a De Giorgi property.

\begin{theorem} \label{thm:KLpLqIII} Let $(M,d,\mu, {\mathcal E})$  be a doubling metric  measure   Dirichlet space with a ``carr\'e du champ'' satisfying \eqref{due}. Let $1\leq p_1 \leq 2 \leq p_2< +\infty$, $\alpha\in(0,1)$ and $g\in L^\infty(M,\mu)$. Assume $\eqref{eq:gradient}$
and that $({DG}_{2,\kappa})$ holds for some $\kappa\in(0,1)$. Then for $s\leq t$, the kernel $K_{\alpha,g}$ satisfies the following $L^{p_1}$-$L^{\infty}$ off-diagonal estimates: given $\kappa'\in(\kappa,1)$ and $\tilde{N}>\frac{\nu}{2}$, there exists $\PP_0=\PP_0(\tilde{N},\nu,p_2,\kappa)>0$ such that for every integer $\PP\geq \PP_0$ we have  
$$	\|K_{\alpha,g}(s,t)  h\|_{L^\infty(B_1)} \lesssim \left(\frac{s}{t}\right)^{\frac{1-\alpha}{2}}  \left(\frac{t}{s}\right)^{\frac{\kappa'}{2}} \left(1+\frac{d^2(B_1,B_2)}{t}\right)^{-\tilde{N}} \left( \aver{B_2} |h|^{p_1} \, d\mu \right)^{1/p_1} \|g\|_{\infty}$$
for all balls $B_1$ and $B_2$ of radius $\sqrt{t}$.
\end{theorem}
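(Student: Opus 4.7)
The plan is to start from the $L^{p_1}$-$L^{p_2}$ off-diagonal estimate of Theorem \ref{thm:KLpLqII} and to upgrade the $L^{p_2}$ norm on the target side into an $L^\infty$ norm, at the price of precisely $(t/s)^{\kappa'/2}$, a loss which will be produced by $({DG}_{2,\kappa})$. Set $u := K_{\alpha,g}(s,t)(h) = Q_s^{(\PP)} G$ where $G := \mathcal{L}^{\alpha/2}(Q_t \mathcal{L}^{-\alpha/2}(h)\cdot P_t g)$, and observe that $\mathcal{L} u = c\, s^{-1} Q_s^{(\PP+1)} G$ for some combinatorial constant $c$. Choosing $\PP$ large enough and invoking Theorem \ref{thm:KLpLqII} with $\PP$ and $\PP+1$ in turn, but with a slightly larger decay exponent $\tilde N'>\tilde N$ (at the price of a larger $\PP_0$), and then using the pointwise Gaussian kernel bounds of Lemma \ref{prop:kernel-est} to upgrade the $L^{p_2}$ bound on $\mathcal{L} u$ at scale $\sqrt{t}$ into an $L^\infty$ bound on $2B_1$, one gets
\begin{equation*}
\left(\aver{B_1} |u|^{p_2} d\mu\right)^{1/p_2} + s\|\mathcal{L} u\|_{L^\infty(2B_1)} \lesssim \left(\frac{s}{t}\right)^{\frac{1-\alpha}{2}} \left(1+\frac{d^2(B_1,B_2)}{t}\right)^{-\tilde N'} \left(\aver{B_2} |h|^{p_1} d\mu\right)^{1/p_1} \|g\|_{\infty}.
\end{equation*}

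The heart of the proof will then be a Morrey-type embedding derived from $({DG}_{2,\kappa})$: for every $\kappa' \in (\kappa,1)$ and every pair of concentric balls $B_r \subseteq B_R$, any sufficiently regular function $v$ satisfies
\begin{equation*}
\|v\|_{L^\infty(B_r)} \lesssim \left(\frac{R}{r}\right)^{\kappa'} \left[ \left(\aver{B_R} |v|^{p_2} d\mu\right)^{1/p_2} + R^2 \|\mathcal{L} v\|_{L^\infty(B_R)}\right].
\end{equation*}
I would prove this by a dyadic iteration of $({DG}_{2,\kappa})$ between $r$ and $R$, combined with a Caccioppoli-type inequality turning $\|\nabla v\|_{L^2(B_R)}$ into control by $\|v\|_{L^{p_2}(B_R)}$ plus $R^2\|\mathcal{L} v\|_{L^\infty(B_R)}$, and a pointwise recovery argument transforming the resulting Campanato-type seminorm back into an $L^\infty$ bound. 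The Poincaré inequality usually invoked for this last step is replaced here by an oscillation estimate on the semigroup of the type recorded in Proposition \ref{prop:bcf1}, which itself follows from the combination of \eqref{due} with $({DG}_{2,\kappa})$ (as in \cite{BCF1}).

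To conclude, I would apply the Morrey embedding with $r = \sqrt{s}$ and $R \simeq \sqrt{t}$, after covering $B_1$ by a bounded family of balls of radius $\sqrt{s}$, and plug in the previous bounds for $v = u$ and $\mathcal{L} v = \mathcal{L} u$. The ratio $(R/r)^{\kappa'} = (t/s)^{\kappa'/2}$ then appears as the loss from the embedding, the factor $(s/t)^{(1-\alpha)/2}$ is carried over from Theorem \ref{thm:KLpLqII}, and the polynomial losses in $t/s$ coming from the covering and from a minor dilation of the target ball are absorbed into the gap $\tilde N' - \tilde N > 0$, leaving exactly the desired decay $(1+d^2(B_1,B_2)/t)^{-\tilde N}$.

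The main obstacle is the Morrey embedding above. Since $({DG}_{2,\kappa})$ is purely a gradient statement with no $v$-term, turning it into a pointwise $L^\infty$ estimate requires a Caccioppoli inequality (available via self-adjointness of $\mathcal{L}$ and a suitable cut-off in the carré du champ) together with a dyadic Campanato-type recovery, and one must track the dependence on $R/r$ carefully to get the sharp exponent $\kappa'$ rather than something worse like $1-\kappa$. A minor additional difficulty is choosing $\PP_0 = \PP_0(\tilde N,\nu,p_2,\kappa)$ large enough so that the off-diagonal decay for $Q_s^{(\PP+1)}$ acting on $G$, combined with the polynomial losses absorbed in the embedding step, still produces the order $\tilde N$ claimed in the conclusion.
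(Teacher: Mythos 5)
Your plan is appealing but has a genuine gap at its centre: the Morrey-type embedding
\[
\|v\|_{L^\infty(B_r)} \lesssim \left(\tfrac{R}{r}\right)^{\kappa'}\left[\left(\aver{B_R}|v|^{p_2}\,d\mu\right)^{1/p_2} + R^2\|\mathcal{L}v\|_{L^\infty(B_R)}\right]
\]
does not follow from $({DG}_{2,\kappa})$ together with \eqref{due}. The De Giorgi property controls only $\nabla v$, and passing from a Morrey bound on the gradient to an $L^\infty$ (or Campanato/oscillation) bound on $v$ itself requires a Poincar\'e-type inequality, which is deliberately \emph{not} assumed here. Your suggestion to replace Poincar\'e by an oscillation estimate $(H^\eta)$ on the semigroup ``which itself follows from \eqref{due} and $({DG}_{2,\kappa})$'' is not substantiated by anything in the paper or in \cite{BCF1}: Proposition \ref{prop:bcf1} shows $(H^\eta)$ is equivalent to lower Gaussian bounds $(LE)$, and under \eqref{d}, $(UE)+(LE)$ is equivalent to $(P_2)$, so your claimed implication would force $(P_2)$ out of $(UE)+(DG_2)$ — but the paper explicitly notes that $(P_2)$ implies $(DG_2)$, not conversely, so the implication is the wrong way round.

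Even setting the embedding aside, the $\mathcal{L}u$ term in your iteration is mis-scaled. You correctly observe that $s\mathcal{L}u = c\,Q_s^{(\PP+1)}G$ is the natural quantity to control, and indeed $s\|\mathcal{L}u\|_{L^\infty}$ can be bounded (after upgrading via the $L^{p_2}$-$L^\infty$ smoothing of $e^{-s\mathcal{L}/2}$, at the price of a polynomial factor $(t/s)^{\nu/(2p_2)}$ from the mismatch between the scale $\sqrt{s}$ of $Q_s$ and the scale $\sqrt{t}$ of Theorem \ref{thm:KLpLqII}). But the Morrey embedding needs $R^2\|\mathcal{L}u\|_{L^\infty}=t\|\mathcal{L}u\|_{L^\infty}$, which is larger by a factor $t/s$. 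Chasing this through gives a total loss of order $(t/s)^{1+\nu/(2p_2)+\kappa'/2}$, far worse than the claimed $(t/s)^{\kappa'/2}$, and the $\kappa'$ in the statement plays no structural role in your derivation — it is imposed on the Morrey embedding by hand.

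The paper takes a genuinely different and more economical route. Rather than estimating $\|u\|_\infty$ by local elliptic regularity, it factorizes $K_{\alpha,g}(s,t)=\tilde Q_s\,\tilde K_{\alpha,g}(s,t)\,\tilde Q_t$ and observes that the outer $\tilde Q_s$ already produces $L^{p_2}\to L^\infty$ off-diagonal decay at scale $\sqrt{s}$ for free (Lemma \ref{prop:kernel-est}); so all that is needed is an $L^{p_2}\to L^{p_2}$ estimate for the middle factor $\tilde K_{\alpha,g}$, from balls of radius $\sqrt{s}$ to balls of radius $\sqrt{t}$, with the $(s/t)^{(1-\alpha)/2}(t/s)^{\kappa'/2}$ gain/loss. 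To produce that, the half-power of $\mathcal{L}$ sitting in front of the product is converted into a \emph{gradient} via duality and a localised reverse Riesz estimate (Lemma \ref{lemf}, which uses only $(G_q)$ for $q\le 2$ and \eqref{due}). Only then is $(DG_{2,\kappa})$ invoked — and only on the gradient of the regularised functions $\tilde Q_t(t\mathcal{L})^{-\alpha/2}h$ and $P_t g$ — precisely to enlarge the averaging scale from $\sqrt{s}$ to $\sqrt{t}$, whence the factor $(t/s)^{\kappa'/2}$. This uses $(DG_{2,\kappa})$ exactly for what it is (a Morrey bound on gradients), never in reverse, and so avoids the Poincar\'e-type input your approach would require.
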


The rest of this section is devoted to the proof of Theorems $\ref{thm:KLpLqII}$ and $\ref{thm:KLpLqIII}$. We will need two lemmas.

The first one is a localised version of the fact that, for $p>2$,   $(RR_p)$ holds under \eqref{due} (see \cite{CD1}). 

\begin{lemma} \label{lemf}  Let $(M,d,\mu, {\mathcal E})$  be a doubling metric  measure   Dirichlet space with a ``carr\'e du champ'' satisfying \eqref{due}. Fix $p\in [2,+\infty)$ and $N>\frac{\nu+1}{2}$. Then, for all $r>0$, every  ball $B_{r}$ of radius $r$, every bounded covering $(B^i_{r})_i$  of  $M$ by balls of radius $r$, and $f\in\mathcal{F}$,
\begin{equation*}
\left(\aver{B_r}|\sqrt{\mathcal{L}} Q_{r^2}^{(N)} f|^p\,d\mu\right)^{1/p}  \lesssim \sum_{i} \left(1+ \frac{d^2(B_r,B^i_{r})}{r^2}\right)^{{-(N-\frac{\nu+1}{2})}} \left(\aver{B^i_r}|\nabla f|^2\,d\mu\right)^{1/2}.
\end{equation*}
\end{lemma}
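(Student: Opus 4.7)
The plan is to dualize and integrate by parts. For a test function $\varphi$ supported in $B_r$ with $\|\varphi\|_{L^{p'}(B_r)}\le 1$, I will estimate $\int_M \sqrt{\mathcal{L}} Q_{r^2}^{(N)} f\cdot \varphi\,d\mu$ and recover $\|\sqrt{\mathcal{L}} Q_{r^2}^{(N)} f\|_{L^p(B_r)}$ by taking the supremum. Splitting $\mathcal{L}^{N+1/2} e^{-r^2\mathcal{L}} = \mathcal{L}\cdot \mathcal{L}^{N-1/2} e^{-r^2\mathcal{L}}$ and moving the outer $\mathcal{L}$ onto $\varphi$ via the Dirichlet form (using that $\mathcal{L}^{N-1/2} e^{-r^2\mathcal{L}}$ is self-adjoint and commutes with $\mathcal{L}$), then re-expressing $r^{2N-1}\mathcal{L}^{N-1/2} e^{-r^2\mathcal{L}}$ as a constant multiple of $Q_{r^2}^{(N-1/2)}$, one obtains the identity
\begin{equation*}
\int_M \sqrt{\mathcal{L}} Q_{r^2}^{(N)} f \cdot \varphi\,d\mu = C\, r\, \mathcal{E}\bigl(f,\,Q_{r^2}^{(N-1/2)}\varphi\bigr).
\end{equation*}
The carré du champ bound $|\Upsilon(f,g)|\le |\nabla f|\,|\nabla g|$ and Cauchy-Schwarz over the covering then give
\begin{equation*}
\Bigl|\int_M \sqrt{\mathcal{L}} Q_{r^2}^{(N)} f\cdot \varphi\,d\mu\Bigr| \lesssim \sum_i \|\nabla f\|_{L^2(B_r^i)}\cdot \|r\nabla Q_{r^2}^{(N-1/2)}\varphi\|_{L^2(B_r^i)}.
\end{equation*}

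The core of the proof is then the off-diagonal estimate (for $\varphi$ supported in $B_r$)
\begin{equation*}
\|r\nabla Q_{r^2}^{(N-1/2)}\varphi\|_{L^2(B_r^i)} \lesssim |B_r|^{1/p-1}\,|B_r^i|^{1/2}\,D_i^{-(N-1/2)}\,\|\varphi\|_{L^{p'}(B_r)},
\end{equation*}
where $D_i := 1+d^2(B_r,B_r^i)/r^2$. To prove it, I factor $Q_{r^2}^{(N-1/2)} = C'\, e^{-r^2\mathcal{L}/2}\circ R$ with $R := (r^2\mathcal{L})^{N-1/2}e^{-r^2\mathcal{L}/2}$. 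Applying Lemma \ref{prop:kernel-est} to $R$ with $\theta=0$ gives the pointwise kernel bound $|R(x,y)|\lesssim |B(y,r)|^{-1}(1+d^2(x,y)/r^2)^{-(N-1/2)}$; combined with Hölder's inequality in $y\in B_r$, this yields $\|R\varphi\|_{L^2(B_r^j)}\lesssim |B_r^j|^{1/2}\,|B_r|^{1/p-1}\,D_j^{-(N-1/2)}\,\|\varphi\|_{L^{p'}(B_r)}$. Then composition with the Davies-Gaffney decay $\|r\nabla e^{-r^2\mathcal{L}/2}\|_{L^2(B_r^j)\to L^2(B_r^i)}\lesssim e^{-cd^2(B_r^j,B_r^i)/r^2}$ from Proposition \ref{prop:Davies-Gaffney} reduces matters to estimating $\sum_j e^{-cd^2(B_r^j,B_r^i)/r^2}\,|B_r^j|^{1/2}D_j^{-(N-1/2)}$. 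The polynomial volume growth $|B_r^j|^{1/2}\lesssim D_{ji}^{\nu/4}|B_r^i|^{1/2}$ from doubling is absorbed by the Gaussian factor, and the dominant contribution (from $B_r^j$ with $d(B_r^j,B_r^i)\le d(B_r,B_r^i)/2$) produces the target bound $|B_r^i|^{1/2}D_i^{-(N-1/2)}$.

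Plugging this back and using $\|\nabla f\|_{L^2(B_r^i)}=|B_r^i|^{1/2}\bigl(\aver{B_r^i}|\nabla f|^2\,d\mu\bigr)^{1/2}$, one finds
\begin{equation*}
\|\sqrt{\mathcal{L}} Q_{r^2}^{(N)} f\|_{L^p(B_r)} \lesssim |B_r|^{1/p-1}\sum_i |B_r^i|\,\bigl(\aver{B_r^i}|\nabla f|^2\,d\mu\bigr)^{1/2}\,D_i^{-(N-1/2)}.
\end{equation*}
The doubling inequality $|B_r^i|\le C\,D_i^{\nu/2}|B_r|$ together with $N-1/2-\nu/2 = N-(\nu+1)/2$ gives the claim after dividing by $|B_r|^{1/p}$. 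The main obstacle is the off-diagonal estimate in the middle step: all the volume bookkeeping has to be done with the specific choice $\theta=0$ in Lemma \ref{prop:kernel-est}, and the polynomial loss $D_{ji}^{\nu/4}$ from doubling $|B_r^j|$ against $|B_r^i|$ must be absorbed by the Gaussian decay from Davies-Gaffney in order to reach exactly the target exponent $N-(\nu+1)/2$ predicted by the hypothesis $N>(\nu+1)/2$.
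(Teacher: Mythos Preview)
Your proof is correct and follows essentially the same approach as the paper: dualize, use self-adjointness and the energy form to pass from $\sqrt{\mathcal{L}}$ to $|\nabla f|$, factor $\nabla\mathcal{L}^{-1/2}Q_{r^2}^{(N)}=r\nabla e^{-r^2\mathcal{L}/2}\circ (r^2\mathcal{L})^{N-1/2}e^{-r^2\mathcal{L}/2}$, and combine Davies--Gaffney estimates for the gradient factor with the kernel bounds of Lemma~\ref{prop:kernel-est} for the second factor, finishing with the doubling bookkeeping $|B_r^i|\lesssim D_i^{\nu/2}|B_r|$. The only cosmetic difference is where the $L^{p'}\to L^2$ jump is placed: the paper puts it on the gradient factor (interpolating $(G_q)$ for $q<p'\le 2$ with Davies--Gaffney and then invoking Lemma~\ref{lem:comp-OD}), whereas you put it on $R$ via the pointwise kernel estimate with $\theta=0$ and then compose by hand with the $L^2$--$L^2$ Davies--Gaffney decay; your route avoids appealing to $(G_q)$ for $q\le 2$ but is otherwise equivalent.
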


\begin{proof}
Let $g \in L^{p'}(M,\mu)$ be supported on $B_r$. 
By duality, we have
$$ \langle  \sqrt{\mathcal{L}} Q_{r^2}^{(N)} f , g\rangle  = \langle   f , \mathcal{L} \mathcal{L}^{-{1/2}}  Q_{r^2}^{(N)} g\rangle.$$
By \eqref{energy} and \eqref{eq:carre}, it follows that
\begin{align}\label{dual}
 \left|\langle  \sqrt{\mathcal{L}} Q_{r^2}^{(N)} f , g\rangle\right| & \leq \int | \nabla f|  |\nabla \mathcal{L}^{-{1/2}} Q_{r^2}^{(N)} g|\,  d\mu \\
 & \leq \sum_{i}  \||\nabla f|\|_{L^2(B^i_r)} \||\nabla \mathcal{L}^{-{1/2}} Q_{r^2}^{(N)} g|\|_{L^{2}(B^i_r)}.\nonumber
\end{align}
Write $\nabla \mathcal{L}^{-{1/2}} Q_{r^2}^{(N)}=r\nabla e^{-r^2\mathcal{L}/2}(r^2\mathcal{L})^{N-\frac{1}{2}}e^{-r^2\mathcal{L}/2}$. 
By interpolating $(G_{q})$ for $1<q<p'$, which holds since $p'\leq 2$, with $L^2$ Davies-Gaffney estimates from Proposition \ref{prop:Davies-Gaffney}, we know that $r\nabla e^{-r^2\mathcal{L}/2}$ satisfies $L^{p'}$-$L^{2}$ off-diagonal estimates of exponential order.
Now $$(r^2\mathcal{L})^{N-\frac{1}{2}}e^{-r^2\mathcal{L}/2}=2^{N-\frac{1}{2}} Q_{r^2/2}^{(N-\frac{1}{2})},$$ hence by Lemma \ref{prop:kernel-est}, this operator satisfies $L^{p'}$-$L^{p'}$ off-diagonal estimates of order $N-\frac{1}{2}$. 
By Lemma \ref{lem:comp-OD} and using $N>\frac{\nu+1}{2}$, one obtains 
\begin{equation} \left(\aver{B_r^i}|\nabla \mathcal{L}^{-{1/2}} Q_{r^2}^{(N)} g|^{2}\,d\mu\right)^{1/2}
\lesssim \left(1+ \frac{d(B_r,B_r^i)}{r}\right)^{-2N+1} \left(\aver{B_r}|g|^{p'}\,d\mu\right)^{1/p'}.\label{eq:od2}
\end{equation} 
The claim now follows from \eqref{d} and \eqref{dual}.
\end{proof}

\begin{proof}[Proof of Theorems $\ref{thm:KLpLqII}$ and $\ref{thm:KLpLqIII}$]
Let us start with Theorem \ref{thm:KLpLqIII}, which is slightly more difficult.
First note that it suffices to prove the desired estimate for a ball $B_1$ of radius $\sqrt{s}$, since if $B_1$ is of radius $\sqrt{t}$ then for every ball $\tilde B_1$ of radius $\sqrt{s}$ contained in $B_1$, we have
$$\left(1+\frac{d^2(B_1,B_2)}{t}\right) \simeq \left(1+\frac{d^2(\tilde B_1,B_2)}{t}\right).$$
So consider $B_1$ a ball of radius $\sqrt{s}$ and $B_2$ a ball of radius $\sqrt{t}$. By choosing $\tilde Q_s$ such that $\tilde Q_s^2= Q_s$,  it follows that
$$ K_{\alpha,g}(s,t)  h = \tilde Q_s \tilde K_{\alpha,g}(s,t) \tilde Q_t  h$$
where $\tilde K_{\alpha,g}=\tilde Q_s \mathcal{L}^{{\alpha/2}} (\tilde Q_t \mathcal{L}^{-{\alpha/2}} (\, . \,) \cdot P_t g)$ is of the exact same nature as $K_{\alpha,g}$ (with the intrinsic constant $\PP$ being replaced by $\PP/2$).
Since $\tilde Q_s$ (resp. $\tilde Q_t$) satisfies $L^{p_2}-L^\infty$ (resp. $L^{p_1}$-$L^{p_2}$) off-diagonal estimates at scale $\sqrt{s}$ (resp. $\sqrt{t}$) at order $\PP/2$, by the composition of off-diagonal estimates (see Lemma \ref{lem:comp-OD}), the expected result will follow from the following $L^{p_2}$-$L^{p_2}$ off-diagonal estimates: 
\begin{align}	
& \left( \aver{B_1} |\tilde K_{\alpha,g}(s,t)  h|^{p_2} \, d\mu \right)^{1/p_2}  \nonumber \\
& \qquad \lesssim \left(\frac{s}{t}\right)^{\frac{1-\alpha}{2}}  \left(\frac{t}{s}\right)^{\frac{\kappa}{2}} \left(1+\frac{d^2(B_1,B_2)}{t}\right)^{-\tilde{N}} \left( \aver{B_2} |h|^{p_2} \, d\mu \right)^{1/p_2} \|g\|_{\infty} \label{eq:aa} \end{align}
for all balls $B_1$ and $B_2$ of respective radii $\sqrt{s}$ and $\sqrt{t}$ and every function $h$ supported on $B_2$.

So it remains us to check \eqref{eq:aa}. Fix such balls $B_1,B_2$ and function $h$ supported on $B_2$. By definition
\begin{align*}
	\tilde K_{\alpha,g}(s,t) h = \left(\frac{s}{t}\right)^{\frac{1-\alpha}{2}} (t\mathcal{L})^{1/ 2} \tilde Q_s (s\mathcal{L})^{-\frac{1-\alpha}{2}}   (\tilde Q_t (t\mathcal{L})^{-\alpha/2}  h \cdot P_t g).
\end{align*}
Therefore, with Lemma \ref{lemf} (for $p=p_2\geq 2$ and $N=\tilde \PP:=\frac{\PP}{2}-\frac{1-\alpha}{2}>\frac{\nu+1}{2}$), one has 
\begin{align}
	& \left(\aver{B_1} | \tilde K_{\alpha,g}(s,t) h|^{p_2} \, d\mu \right)^{1/p_2} \nonumber \\
	&	\quad \lesssim    \left(\frac{s}{t}\right)^{\frac{1-\alpha}{2}} \sum_{i} \left(1+ \frac{d^2(B_1,\tilde B_i)}{s}\right)^{{-(\tilde \PP -\frac{\nu+1}{2})}} \left(\aver{\tilde B_i}|\sqrt{t} \nabla  (\tilde Q_t (t\mathcal{L})^{-\alpha/2}  h \cdot P_t g)|^{2}\,d\mu\right)^{1/2}, \label{eq:ois}
\end{align}
where $(\tilde B_i)_i$ is a bounded covering of the whole space with balls of radius $\sqrt{s}$.
Then by distributing the gradient,  two terms appear. First using the property $({DG}_{2,\kappa})$, it follows for every ball $\tilde B_i$ that 
\begin{align*} 
 & \left(\aver{\tilde B_i} |\sqrt{t} \nabla  (\tilde Q_t (t\mathcal{L})^{-\alpha/2} h) |^{2} \, d\mu \right)^{1/2} \\
 & \qquad  \lesssim  \left(\frac{t}{s}\right)^{\frac{\kappa'}{2}}  \left(\aver{\bar B_i} |\sqrt{t} \nabla  (\tilde Q_t (t\mathcal{L})^{-\alpha/2} h) |^{2} \, d\mu \right)^{1/2} + \left(\frac{t}{s}\right)^{\frac{\kappa'}{2}} \norm{\tilde Q_t (t\mathcal{L})^{1-\alpha/2} h}_{L^\infty(\bar B_i)},
\end{align*}
where $\bar B_i=\frac{\sqrt{t}}{\sqrt{s}} \tilde B_i$ is the dilated ball of radius $\sqrt{t}$.
Then by writing $\sqrt{t} \nabla  \tilde Q_t (t\mathcal{L})^{-\alpha/2} = 4^{(\PP-\alpha)/2} \sqrt{t} \nabla e^{-\frac{t}{4}{\mathcal L}} Q_{t/4}^{(\PP/2-\alpha/2)}$, since $\sqrt{t} \nabla e^{-\frac{t}{4}{\mathcal L}}$ satisfies $L^{2}$-$L^{2}$ off-diagonal estimates at scale $\sqrt{t}$ at any order and $Q_{t/4}^{(\PP/2-\alpha/2)}$ satisfies $L^{p_1}$-$L^{2}$ off-diagonal estimates at scale $\sqrt{t}$ at order $(\PP-\alpha)/2$, we deduce by Lemma \ref{lem:comp-OD} that $\sqrt{t} \nabla  \tilde Q_t (t\mathcal{L})^{-\alpha/2}$ also satisfies $L^{p_1}$-$L^{2}$ off-diagonal estimates at scale $\sqrt{t}$ at order $(\PP-\alpha)/2$. Moreover $\tilde Q_t (t\mathcal{L})^{1-\alpha/2}$ satisfies $L^{p_1}$-$L^\infty$ off-diagonal estimates at the scale $\sqrt{t}$ of order $\PP/2+1-\alpha/2\geq \PP-\alpha/2$. So we obtain
\begin{align*} \left(\aver{\tilde B_i} |\sqrt{t} \nabla \tilde Q_t (t\mathcal{L})^{-\alpha/2} h |^{2} \, d\mu \right)^{1/2}\lesssim & \left(\frac{t}{s}\right)^{\frac{\kappa'}{2}} \left(1+\frac{d^2(B_2,\bar B_i)}{t}\right)^{-(\PP-\alpha)/2}  \left(\aver{B_2} | h |^{p_1} \, d\mu \right)^{1/p_1}.
\end{align*}
Similarly, one has
\begin{align*} 
\left(\aver{\tilde B_i} |\sqrt{t} \nabla P_t g  |^{2} \, d\mu \right)^{1/2}\lesssim & \left(\frac{t}{s}\right)^{\frac{\kappa'}{2}} \left(1+\frac{d^2(B_2,\bar B_i)}{t}\right)^{-\PP}  \left(\aver{B_2} | g |^{p_1} \, d\mu \right)^{1/p_1} \\
& \lesssim \left(\frac{t}{s}\right)^{\frac{\kappa'}{2}} \|g\|_\infty.
\end{align*}
So coming back to \eqref{eq:ois}, we obtain that for a large enough parameter $\PP$, it follows
 \begin{align*}
	& \left(\aver{B_1} | \tilde K_{\alpha,g}(s,t)  h|^{p_2} \, d\mu \right)^{1/p_2} \\
	& \quad 	\lesssim    \left(\frac{s}{t}\right)^{\frac{1-\alpha}{2}}\left(\frac{t}{s}\right)^{\frac{\kappa'}{2}} \sum_{i} \left(1+ \frac{d^2(B_1,\tilde B_i)}{s}\right)^{{-(\tilde \PP-\frac{\nu+1}{2})}} \left(1+\frac{d^2(B_2,\bar B_i)}{t}\right)^{-(\PP-\alpha)/2}  \left(\aver{B_2} | h |^{p_1} \, d\mu \right)^{1/p_1} \|g\|_\infty.	
\end{align*}
Since $\bar B_i$ is the dilated ball of radius $\sqrt{t}$ from $\tilde B_i$, we then deduce that
$$ \left(1+ \frac{d^2(B_2, \bar B_i)}{t}\right) \simeq \left(1+ \frac{d^2(B_2,\tilde B_i)}{t}\right) $$
and so since $s\leq t$
$$ \left(1+ \frac{d^2(B_1,B_2)}{t}\right) \lesssim \left(1+ \frac{d^2(B_2, \bar B_i)}{t}\right) \left(1+ \frac{d^2(B_1,\tilde B_i)}{s}\right).$$
Hence as soon as $\PP$ is large enough so that $$C:=C(\PP)=\min\{\tilde \PP - \frac{\nu+1}{2}, (\PP-\alpha)/2\}-(\nu+1)>0,$$ we have 
 \begin{align*}
	& \left(\aver{B_1} | \tilde K_{\alpha,g}(s,t)  h|^{p_2} \, d\mu \right)^{1/p_2} \\
	\lesssim& 	    \left(\frac{s}{t}\right)^{\frac{1-\alpha}{2}}\left(\frac{t}{s}\right)^{\frac{\kappa'}{2}} \left(1+ \frac{d^2(B_1,B_2)}{t}\right)^{-C}	\left(\sum_{i} \left(1+ \frac{d^2(B_1,\tilde B_i)}{s}\right)^{-(\nu+1)}\right)  \\
	&\times\left(\aver{B_2} | h |^{p_1} \, d\mu \right)^{1/p_1} \|g\|_\infty \\
 \lesssim	&     \left(\frac{s}{t}\right)^{\frac{1-\alpha}{2}}\left(\frac{t}{s}\right)^{\frac{\kappa'}{2}} \left(1+ \frac{d^2(B_1,B_2)}{t}\right)^{-C} \left(\aver{B_2} | h |^{p_1} \, d\mu \right)^{1/p_1} \|g\|_\infty,
\end{align*}
where we used that $(B_i)$ is a bounded covering at scale $\sqrt{s}$ (which is also the radius of $B_1$) to bound the sum over the covering.
Since $C=C(\PP)$ can be taken as large as we want according to a large parameter $\PP$, we deduce the statement \eqref{eq:aa}, which as we already have seen, concludes the proof of Theorem \ref{thm:KLpLqIII}.

For Theorem \ref{thm:KLpLqII}, the situation is simpler because we already have the exponent $p_2$ on the left hand side, and balls and operators can be considered at scale $\sqrt{t}$. Indeed, by summing the estimates of Lemma \ref{lemf} along a covering of balls of radius $\sqrt{s}$, we get for $s\leq t$ and $B_1,B_2$ balls of radius $\sqrt{t}$
\begin{align*}
\left(\aver{B_1}|\sqrt{\mathcal{L}} Q_{s}^{(N)} f|^p\,d\mu\right)^{1/p} &  \lesssim  \left(1+ \frac{d^2(B_1,B_2)}{s}\right)^{{-(N-\frac{2\nu+1}{2})}} \left(\aver{B_2}|\nabla f|^p\,d\mu\right)^{1/p} \\
& \lesssim  \left(1+ \frac{d^2(B_1,B_2)}{t}\right)^{{-(N-\frac{2\nu+1}{2})}} \left(\aver{B_2}|\nabla f|^p\,d\mu\right)^{1/p}.
\end{align*}
We then conclude as previously, using the Leibniz rule on the gradient.
The result then follows by composing $L^{p_2}$ off-diagonal estimates at the scale $\sqrt{t}$, see Lemma \ref{lem:comp-OD}.
\end{proof}

\section{The case $1<p<2$} \label{sec:<2}

This section is devoted to the study of $A(\alpha,p)$ with $1<p<2$. Our main result is the following.

\begin{theorem} \label{main} Let $(M,d,\mu, {\mathcal E})$  be a doubling metric  measure   Dirichlet space with a ``carr\'e du champ'' satisfying \eqref{due}. Then property $A(\alpha,p)$ holds for every $p\in(1,2)$ and every $\alpha\in(0,1)$.
\end{theorem}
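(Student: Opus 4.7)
By Corollary \ref{cor:A}, the task reduces, for each $p\in(1,2)$, $\alpha\in(0,1)$ and $g\in L^\infty(M,\mu)$, to proving the boundedness of $\Pi_g$ on $\dot L^p_\alpha$ with operator norm $\lesssim\|g\|_\infty$. Since Proposition \ref{prop:para2} already handles $\Pi_g^2$, it remains to bound on $L^p(M,\mu)$ the linear operator
\[
T_g := \mathcal{L}^{\alpha/2}\Pi_g^1\mathcal{L}^{-\alpha/2} \;=\; c_\PP^{2}\int_0^{+\infty}\!\!\int_0^t K_{\alpha,g}(s,t)\,\frac{ds}{s}\frac{dt}{t},
\]
which is known to be bounded on $L^2(M,\mu)$, with the correct dependence on $\|g\|_\infty$, by Theorem \ref{gaza}(i).

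The plan is to downward-extrapolate from $L^2$ to $L^p$, $1<p<2$, in the spirit of Blunck-Kunstmann and Auscher-Martell for operators without a Calder\'on-Zygmund kernel. The required off-diagonal input comes from Theorem \ref{thm:KLpLqII} applied with $p_1=1$ and $p_2=2$: at $p_2=2$ the hypothesis \eqref{eq:gradient} reduces to the always-available $L^2$-Davies-Gaffney estimate of Proposition \ref{prop:Davies-Gaffney}, and so, after choosing $\PP$ large enough (depending on $\nu$ and $\alpha$), one obtains for all $s\le t$, all balls $B_1,B_2$ of radius $\sqrt t$ and all $h$ supported in $B_2$,
\[
\left(\aver{B_1}|K_{\alpha,g}(s,t)h|^2\,d\mu\right)^{\!1/2} \;\lesssim\; \Big(\frac{s}{t}\Big)^{\!\frac{1-\alpha}{2}}\!\Big(1+\frac{d^2(B_1,B_2)}{t}\Big)^{\!-\tilde N}\aver{B_2}|h|\,d\mu\cdot\|g\|_\infty,
\]
with $\tilde N$ as large as we wish. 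Integrating in $s\in(0,t]$, the factor $(s/t)^{(1-\alpha)/2}$ makes the $s$-integral converge and yields the corresponding $L^1$-$L^2$ off-diagonal bound, uniformly in $t$, for the single-scale operator $\mathcal T_g^{(t)}:=\int_0^t K_{\alpha,g}(s,t)\frac{ds}{s}$ whose time integral against $\frac{dt}{t}$ equals $T_g$.

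With these bounds in hand, the extrapolation scheme would proceed with the approximation family $A_r := I - (I - e^{-r^2\mathcal{L}})^m$ for $m$ a large integer. For each ball $B$ of radius $r$ one has to check two local estimates: first, that $T_g(I-A_{r^2})f$ is controlled in local $L^2$-mean on $B$ by averages of $|f|$ on dyadic enlargements of $B$, with decay summable against the doubling factor (using the off-diagonal bounds for $\mathcal T_g^{(t)}$ combined with Lemma \ref{lem:comp-OD}); and second, that $T_gA_{r^2}f$ is controlled in local $L^2$-mean on $B$ by averages of $|T_gf|$ on enlargements of $B$ (using the $L^2$-boundedness of $T_g$ and the off-diagonal bounds for powers of $e^{-r^2\mathcal{L}}$ from Lemma \ref{prop:kernel-est}). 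These are exactly the hypotheses of the Auscher-Martell theorem, and they would deliver strong-type $(p,p)$ for every $p\in(1,2)$. Corollary \ref{cor:A} then closes the argument.

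The main technical hurdle lies in the double time integration defining $T_g$: one must choose $\PP$ (hence $\tilde N$) large enough that, after integrating against $\frac{ds}{s}\frac{dt}{t}$ and summing the doubling weights $2^{\nu j}$ over dyadic annuli, the combined decay $(s/t)^{(1-\alpha)/2}(1+d^2/t)^{-\tilde N}$ still yields a summable off-diagonal tail. In practice this forces a case analysis along the $t$-axis: in the range $t\le r^2$ the piece is local in nature and is absorbed by a Hardy-Littlewood maximal function of $f$, whereas in the range $t\ge r^2$ the extra smallness $(s/t)^{(1-\alpha)/2}$ must be combined with the polynomial decay in $d^2/t$ to dispose of the contributions far from $B$.
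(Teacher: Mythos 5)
Your overall strategy is the same as the paper's: reduce to $\Pi_g^1$ via Corollary \ref{cor:A} and Proposition \ref{prop:para2}, use the $L^2$-bound from Theorem \ref{gaza}, extract off-diagonal estimates for the operator-valued kernel from Theorem \ref{thm:KLpLqII} with $p_2=2$ (where \eqref{eq:gradient} is the free $L^2$ Davies--Gaffney estimate), and then run a non-integral-kernel extrapolation downwards in $p$. This matches Theorem \ref{thm:tent-extrap-small} and Proposition \ref{prop:off1}.

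However, you have misidentified the extrapolation machinery at the last step, and the resulting hypothesis you propose to verify is the wrong one. You invoke ``the Auscher--Martell theorem'' and list two local conditions: (1) off-diagonal control of $T_g(I-A_{r^2})f$ by dyadic averages of $|f|$, and (2) control of the local $L^2$-mean of $T_gA_{r^2}f$ by averages of $|T_gf|$. The second condition is the hypothesis for \emph{upward} extrapolation $p>p_2$ (it is exactly \eqref{eq:amontrer3} in the paper's Proposition \ref{prop:extra-p>2}); it plays no role when one extrapolates from $L^2$ down to $L^p$, $1<p<2$. In that regime the correct framework is Blunck--Kunstmann / Bernicot--Zhao weak $(1,1)$-type extrapolation (Proposition \ref{prop:extrap<2} in the paper), where the second hypothesis is a Calder\'on--Zygmund regularity condition on the approximation family $P_t^{(N)}$ itself (not on $T_g P_t^{(N)}$), which is automatic from the kernel bounds in Lemma \ref{prop:kernel-est} under \eqref{due}. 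Verifying your stated second condition would be both unnecessary and problematic: since $T_g$ is not local, there is no easy way to pass $A_{r^2}$ through $T_g$ and arrive at averages of $|T_gf|$, and nothing of this kind is established or needed for the $p<2$ case.

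Once this is corrected, your first condition — the off-diagonal $L^2$-$L^2$ estimate for $T_g$ composed with $Q_{r^2}^{(N)}$ (equivalently $I - A_{r^2}$), obtained by integrating the bounds from Theorem \ref{thm:KLpLqII} over $s\le t$ and splitting along $t\lessgtr r^2$ — together with the $L^2$-boundedness from Theorem \ref{gaza}(i) is all that the extrapolation requires, and the argument closes exactly as in the paper.
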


According to the product decomposition formula \eqref{eq:paraproduit-decomposition} and Corollary $\ref{cor:A}$, Theorem \ref{main} is  a consequence of the following.

\begin{theorem} \label{thm:tent-extrap-small} Let $(M,d,\mu, {\mathcal E})$  be a doubling metric  measure   Dirichlet space with a ``carr\'e du champ'' satisfying \eqref{due}. Let $p \in (1,2)$ and  $\alpha\in (0,1)$. 
There exists $\PP_0=\PP_0(\nu)>0$ such that for every integer $\PP\geq \PP_0$, the paraproduct $(g,f)\mapsto \Pi_g^{(\PP)} (f)$ defined in \eqref{def:paraproduct} is bounded from $L^\infty(M,\mu) \times  \dot{L}^p_{\alpha}(M,\mathcal{L},\mu)$ to $\dot{L}^p_{\alpha}(M,\mathcal{L},\mu)$. We have
\[
	\norm{\Pi_g^{(\PP)}(f)}_{p,\alpha} \lesssim \norm{f}_{p,\alpha} \norm{g}_{\infty},
\]
and $A(\alpha,p)$ holds.
\end{theorem}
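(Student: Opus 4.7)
The plan is to extrapolate from the $L^2$-boundedness of $\Pi_g^{(\PP)}$ given by Theorem \ref{gaza}(i) down to $L^p$-boundedness for $p\in(1,2)$, using the off-diagonal bounds on the associated operator-valued kernel. Since the $L^p$-boundedness of $\Pi_g^{(\PP)}$ on $\dot L^p_\alpha$ is equivalent to the $L^p$-boundedness of $T:=\mathcal{L}^{\alpha/2}\Pi_g^{(\PP)}\mathcal{L}^{-\alpha/2}$, and since Proposition \ref{prop:para2} already controls $\Pi_g^2$ on every $\dot L^p_\alpha$ with $p\in(1,+\infty)$, it suffices to bound $T^1:=\mathcal{L}^{\alpha/2}\Pi_g^1\mathcal{L}^{-\alpha/2}$, represented by the operator-valued kernel $K_{\alpha,g}(s,t)$ integrated over the lower-triangle $\{0<s\le t\}$. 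Once this is done, $A(\alpha,p)$ follows from the product decomposition \eqref{eq:paraproduit-decomposition} and Corollary \ref{cor:A}.

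The crucial observation is that the hypothesis \eqref{eq:gradient} of Theorem \ref{thm:KLpLqII} is automatic when $p_2=2$: it reduces to the $L^2$ Davies-Gaffney bound for $\sqrt{t}\nabla e^{-t\mathcal{L}}$ established in Proposition \ref{prop:Davies-Gaffney}. Therefore, for every $p_1\in(1,p)$ and every $\tilde N>\nu/2$, choosing $\PP$ large enough yields the $L^{p_1}$-$L^2$ off-diagonal bounds
\begin{equation*}
\left(\aver{B_1}|K_{\alpha,g}(s,t)h|^2\,d\mu\right)^{1/2}\lesssim \left(\frac{s}{t}\right)^{\frac{1-\alpha}{2}}\left(1+\frac{d^2(B_1,B_2)}{t}\right)^{-\tilde N}\left(\aver{B_2}|h|^{p_1}\,d\mu\right)^{1/p_1}\|g\|_\infty
\end{equation*}
for all balls $B_1,B_2$ of radius $\sqrt{t}$ and all $s\le t$. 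Because $\alpha<1$, the factor $(s/t)^{(1-\alpha)/2}$ is integrable in $ds/s$ over $(0,t)$; integrating in $s$ produces a single-scale operator-valued kernel $\widetilde K_t:=\int_0^t K_{\alpha,g}(s,t)\,\frac{ds}{s}$ that inherits the same $L^{p_1}$-$L^2$ off-diagonal decay at scale $\sqrt{t}$.

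At this point $T^1=\int_0^{+\infty}\widetilde K_t(\cdot)\frac{dt}{t}$ presents itself as a singular-integral-type operator that is bounded on $L^2$ (combining Theorem \ref{gaza}(i) with the $p=2$ case of Proposition \ref{prop:para2}) and whose operator-valued kernel satisfies $L^{p_1}$-$L^2$ off-diagonal estimates at every scale, with arbitrarily strong polynomial decay. This is precisely the configuration to which the Blunck-Kunstmann/Auscher-Martell extrapolation scheme applies (see \cite{BK,AM,BZ}, and the tent-space variant indicated in Remark 1.5), yielding a weak-$(p_1,p_1)$ bound for $T^1$; Marcinkiewicz interpolation with the $L^2$ estimate then gives strong $(p,p)$ for $p\in(p_1,2)$. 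Since $p_1\in(1,p)$ is arbitrary, the bound holds for every $p\in(1,2)$.

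The main obstacle is to reconcile the single-scale kernel $\widetilde K_t$ with the Calderón-Zygmund framework behind the Blunck-Kunstmann extrapolation. Concretely, one must insert a natural approximation of the identity at the scale of each Whitney ball (a typical choice is $I-e^{-r^2\mathcal{L}}$ on a ball of radius $r$) to cancel the principal diagonal contribution, then control the remainder using the off-diagonal decay: the exponent $\tilde N>\nu/2$ combined with the doubling property \eqref{dnu} makes the annular sums converge, while the $(s/t)^{(1-\alpha)/2}$ factor ensures uniform-in-$t$ control before integration over $\frac{dt}{t}$. Checking that $\PP$ can be chosen large enough to absorb all polynomial losses in these sums, and that the composition with the auxiliary approximation of the identity preserves the structure of the off-diagonal estimates (via Lemma \ref{lem:comp-OD}), constitutes the bulk of the technical work.
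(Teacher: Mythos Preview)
Your proposal is correct and follows essentially the same route as the paper: reduce to $\Pi_g^1$ via Proposition~\ref{prop:para2}, use the $L^2$-boundedness from Theorem~\ref{gaza}(i), invoke Theorem~\ref{thm:KLpLqII} with $p_2=2$ (which is available precisely because \eqref{eq:gradient} at $p_2=2$ is the Davies--Gaffney estimate of Proposition~\ref{prop:Davies-Gaffney}), and then apply the Blunck--Kunstmann/Auscher--Martell extrapolation. The paper packages the verification of the extrapolation hypothesis as a separate result (Proposition~\ref{prop:off1}), where the off-diagonal bound for $TQ_{r^2}^{(N)}$ is checked by a clean two-case split according to whether $r^2\le t$ or $r^2\ge t$, exploiting the algebraic identities $Q_tQ_{r^2}^{(N)}=(r^2/t)^N Q_t^{(\PP+N)}e^{-r^2\mathcal{L}}$ and $Q_tQ_{r^2}^{(N)}=(t/r^2)^\PP Q_{r^2}^{(\PP+N)}e^{-t\mathcal{L}}$; this is exactly the ``bulk of the technical work'' you allude to in your final paragraph.
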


Let $\alpha \in (0,1)$ and $g \in L^\infty(M,\mu)$, let $s,t>0$. Recall the operator $K_{\alpha,g}(s,t)$ defined in \eqref{eq:K} by
\begin{equation*} 
K_{\alpha,g}(s,t):=Q_s \mathcal{L}^{\alpha/2} (Q_t \mathcal{L}^{-\alpha/2} (\, . \,) \cdot P_t g), 
\end{equation*}
so that
\[
	\mathcal{L}^{\alpha/2}\Pi_g( \mathcal{L}^{-\alpha/2} f) 
	= \int_0^{+\infty} Q_s \mathcal{L}^{\alpha/2} \Pi_g( \mathcal{L}^{-\alpha/2} f) \,\frac{ds}{s}
			=  \int_0^{+\infty} \int_0^{+\infty} K_{\alpha,g} (s,t) f \,\frac{dt}{t}\frac{ds}{s},
\]
and 
\[
	\mathcal{L}^{\alpha/2}\Pi_g^1( \mathcal{L}^{-\alpha/2} f) 
	= \int_0^{+\infty} Q_s \mathcal{L}^{\alpha/2} \Pi_g^1( \mathcal{L}^{-\alpha/2} f) \,\frac{ds}{s}
			=  \int_0^{+\infty} \int_0^t K_{\alpha,g} (s,t) f \,\frac{ds}{s}\frac{dt}{t}.
\]
We refer the reader to Section \ref{para} for the definition of $\Pi_g^1$, which is the remaining part of the paraproduct that we have to study (see Proposition \ref{prop:para2}).

In the sequel, we describe how the off-diagonal estimates of the kernel $K_{\alpha,g}$ as obtained in Section \ref{sec:off} can be used to obtain boundedness of the paraproducts by means of an extrapolation method.

We recall the extrapolation tool for $p\in(1,2)$.

\begin{proposition} \label{prop:extrap<2} Let $T$ be a bounded linear operator on $L^2(M,\mu)$. Assume that $T$ satisfies the following off-diagonal estimates: there exist integers $N>\frac{\nu}{2}$ and  $\tilde{N}>\frac{\nu}{2}$ such that for every $t>0$ and every pair of balls $B_1,B_2$ of radius $r=\sqrt{t}$
\begin{equation} \label{eq:amontrer}
 \left\| T Q_t^{(N)} \right\|_{L^2(B_1) \to L^2(B_2)} \lesssim \left(1+\frac{d(B_1,B_2)}{r}\right)^{-\tilde{N}}.
  \end{equation}
Then for every $p\in(1,2)$, $T$ is bounded on $L^p(M,\mu)$. 
\end{proposition}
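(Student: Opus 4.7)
The result is a standard Blunck-Kunstmann / Auscher-Martell style extrapolation. By Marcinkiewicz interpolation with the $L^2$-boundedness of $T$, it suffices to establish the weak-type $(p,p)$ bound for every $p\in(1,2)$. My plan is to apply the Blunck-Kunstmann extrapolation theorem with the approximating family $A_r:=P_{r^2}^{(N)}$, which requires verifying two hypotheses in our Dirichlet-form setting: that $A_r$ satisfies $L^p\to L^2$ off-diagonal estimates at scale $r$, and that $T(I-A_r)$ does as well, both with decay summable against the doubling exponent $\nu$.

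The first hypothesis follows at once from the pointwise Gaussian-type kernel bounds of Lemma \ref{prop:kernel-est}: since $N>\nu/2$, the operator $P_{r^2}^{(N)}$ has an integral kernel with sufficient off-diagonal decay, so $L^p\to L^2$ off-diagonal estimates at scale $r$ follow by integration. For the second hypothesis, I would use Remark \ref{Pt-rem}(ii) to write
\[
 T(I-A_r)=\int_0^{r^2}TQ_s^{(N)}\,\frac{ds}{s},
\]
and transform the $L^2\to L^2$ hypothesis \eqref{eq:amontrer} into an $L^p\to L^2$ off-diagonal estimate on $TQ_s^{(N)}$ by factoring $Q_s^{(N)}=c\,Q_{s/2}^{(N)}\,e^{-s\mathcal{L}/2}$. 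Indeed, $TQ_{s/2}^{(N)}$ has $L^2\to L^2$ off-diagonal decay of order $\tilde N$ at scale $\sqrt s$ by hypothesis, while $e^{-s\mathcal{L}/2}$ has $L^p\to L^2$ Gaussian off-diagonal decay at the same scale thanks to \eqref{UE} and doubling. Composing via Lemma \ref{lem:comp-OD} yields $L^p\to L^2$ off-diagonal decay on $TQ_s^{(N)}$ at scale $\sqrt s$. A second application of Lemma \ref{lem:comp-OD} transfers the estimate to the larger scale $r\geq\sqrt s$, and integration in $s\in(0,r^2)$ together with summation along the annular decomposition $C_j(B)=2^{j+1}B\setminus 2^jB$ delivers the required off-diagonal estimate for $T(I-A_r)$. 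The Blunck-Kunstmann machinery then yields weak-type $(p,p)$ for $p\in(1,2)$, and interpolation with the $L^2$-boundedness completes the proof.

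The main obstacle is the scale bookkeeping: the hypothesis \eqref{eq:amontrer} lives at the intrinsic scale $\sqrt s$, whereas the Calder\'on-Zygmund framework (and hence the extrapolation theorem) demands off-diagonal estimates at the fixed ball-radius scale $r$, and the argument must be uniform over $s\in(0,r^2)$. Handling this requires choosing $\tilde N$ large enough relative to $\nu$ (as already assumed), so that all decay rates produced by the compositions in Lemma \ref{lem:comp-OD} remain strictly greater than $\nu/2$ and the geometric series over annuli converge absolutely; the final estimate is uniform in $s$ because the factor $(\sqrt s/r)^{?}$ picked up from scale-changing via doubling is compensated by the $L^p\to L^2$ kernel gain of $e^{-s\mathcal{L}/2}$.
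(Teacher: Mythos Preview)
Your overall strategy matches the paper's: both invoke the Blunck--Kunstmann / Bernicot--Zhao extrapolation (the paper cites \cite[Theorem~5.11]{BZ}) with approximating family $A_r=P_{r^2}^{(N)}$, verify the off-diagonal bounds for $A_r$ via Lemma~\ref{prop:kernel-est}, and reduce matters to off-diagonal estimates for $T(I-A_r)$.

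The gap is in this last step. Writing $T(I-A_r)=\int_0^{r^2}TQ_s^{(N)}\,\frac{ds}{s}$ and transferring the scale-$\sqrt{s}$ hypothesis \eqref{eq:amontrer} to the fixed scale $r$ via a covering argument yields, for balls $B_1,B_2$ of radius $r$ with $d(B_1,B_2)\gtrsim r$,
\[
\|TQ_s^{(N)}\|_{L^2(B_1)\to L^2(B_2)}\;\lesssim\;\Bigl(\frac{\sqrt{s}}{r}\Bigr)^{\tilde N-\nu}\Bigl(\frac{d(B_1,B_2)}{r}\Bigr)^{-\tilde N},
\]
so the $\frac{ds}{s}$-integral converges only when $\tilde N>\nu$, not merely $\tilde N>\nu/2$ as the proposition assumes. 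Your claimed compensation from the $L^p\to L^2$ gain of $e^{-s\mathcal L/2}$ does not supply the missing $(\sqrt{s}/r)^{\varepsilon}$: in averaged off-diagonal form that gain is scale-invariant, and in any case an estimate that is merely ``uniform in $s$'' still makes $\int_0^{r^2}\frac{ds}{s}$ diverge at $s=0$. The scale-transfer part of Lemma~\ref{lem:comp-OD} likewise produces a uniform-in-$s$ bound of reduced order, not a factor decaying with $s$.

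The paper closes this step differently: it cites Step~2 of \cite[Corollary~3.6]{B} (equivalently \cite[Proposition~3.25, Lemma~4.12, Corollary~4.14]{FK}) to pass directly from \eqref{eq:amontrer} to the $T(I-P_t^{(N)})$ off-diagonal estimate \emph{at the same scale} $\sqrt{t}$, using the assumed $L^2$-boundedness of $T$ as an essential input. You invoke that $L^2$-boundedness only for the final Marcinkiewicz interpolation, but it is already needed here to avoid the divergent integral.
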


\begin{rem} The same proof yields that $T$ is bounded on the weighted space $L^p(\omega)$ for every weight $\omega \in {\mathbb A}_{p} \cap RH_{(\frac{2}{p})'}$.
\end{rem}

\begin{proof}[Proof of Proposition $\ref{prop:extrap<2}$] We refer the reader to \cite[Theorem 5.11]{BZ} and to \cite[Theorem 6.4]{BZ} (for the weighted part) for a proof of this result. The second assumption of \cite[Theorem 5.11]{BZ} is satisfied as a consequence of the kernel estimates for $P_t^{(N)}$ established in Lemma \ref{prop:kernel-est}. Notice however that instead of \eqref{eq:amontrer}, the first assumption of \cite[Theorem 5.11]{BZ} reads as
\begin{equation} \left\| T (I-P_t^{(N)}) \right\|_{L^2(B_1) \to L^2(B_2)} \lesssim \left(1+\frac{d(B_1,B_2)}{r}\right)^{-\tilde{N}} \label{eq:amontrer2} \end{equation}
for the choice $B_Q=I-P_t^{(N)}$.
Following Step 2 of \cite[Corollary 3.6]{B}, it is known that under the assumption that $T$ is bounded on $L^2(M,\mu)$, \eqref{eq:amontrer} implies \eqref{eq:amontrer2}, thus \eqref{eq:amontrer} is sufficient to conclude. 
Equivalently, the desired result can be obtained as a combination of \cite[Proposition 3.25, Lemma 4.12 and Corollary 4.14]{FK}.
\end{proof}

\begin{proposition} \label{prop:off1}
Let $(M,d,\mu, {\mathcal E})$  be a doubling metric  measure   Dirichlet space with a ``carr\'e du champ'' satisfying \eqref{due}. Let $\alpha\in(0,1)$. Assume \eqref{eq:gradient} for some $p_2\in[2,{+\infty})$.  Then there exists $\PP_0=\PP_0(\nu)$ such that for every $\PP\geq \PP_0$ and every $g\in L^\infty(M,\mu)$, the paraproduct $\Pi^{(\PP),1}_g=\Pi^{1}_g$ satisfies the following off-diagonal estimates: for every $r>0$ and every pair of balls $B_1,B_2$ of radius $r$,
\begin{equation} \label{eq:amontrer0}
 \left\| \mathcal{L}^{\alpha/2} \Pi^1_g[\mathcal{L}^{-\alpha/2} Q_{r^2}^{(N)}] \right\|_{L^{p_2}(B_1) \to L^{p_2}(B_2)} \lesssim \left(1+\frac{d(B_1,B_2)}{r}\right)^{-\nu}. \end{equation}
\end{proposition}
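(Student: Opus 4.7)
The plan is to start from the representation
\[
\mathcal{L}^{\alpha/2}\Pi_g^1\bigl[\mathcal{L}^{-\alpha/2}Q_{r^2}^{(N)}f\bigr]=\int_0^{+\infty}\!\int_0^t K_{\alpha,g}(s,t)\bigl[Q_{r^2}^{(N)}f\bigr]\,\frac{ds}{s}\frac{dt}{t},
\]
apply Minkowski's inequality in $L^{p_2}(B_2)$, estimate the integrand for $s\leq t$ and $f\in L^{p_2}(B_1)$, and integrate. Two ingredients are key. First, because $p_1\leq 2\leq p_2$, H\"older's inequality on a ball of radius $\sqrt{t}$ upgrades Theorem \ref{thm:KLpLqII} into an $L^{p_2}$-$L^{p_2}$ off-diagonal estimate for $K_{\alpha,g}(s,t)$ at scale $\sqrt{t}$, of arbitrarily high order $\tilde N$ (as soon as $\PP\geq \PP_0(\tilde N,\nu)$), with the prefactor $(s/t)^{(1-\alpha)/2}$. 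Second, the functional-calculus identity
\[
Q_t^{(\PP)}\,\mathcal{L}^{-\alpha/2}\,Q_{r^2}^{(N)}=c_{\PP,N,\alpha}\,\frac{t^{\PP}\,r^{2N}}{(t+r^2)^{\PP+N-\alpha/2}}\,Q_{t+r^2}^{(\PP+N-\alpha/2)}
\]
replaces the inner composition by a single operator $Q_{t+r^2}^{(\PP+N-\alpha/2)}$ acting on $f$, absorbing the $\mathcal{L}^{-\alpha/2}$ and producing the scalar prefactor $\Phi(t,r)$ which is comparable to $(t/r^2)^{\PP}r^{\alpha}$ for $t\leq r^2$ and to $(r^2/t)^{N}t^{\alpha/2}$ for $t\geq r^2$.

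I would then split the $t$-integral at $t=r^2$. In the regime $t\leq r^2$ the natural scale $\sqrt{t+r^2}\simeq r$ of $Q_{t+r^2}^{(\PP+N-\alpha/2)}$ already matches the target scale: composing its pointwise Gaussian kernel bound (Lemma \ref{prop:kernel-est}) at scale $r$ with the $L^{p_2}$-boundedness of the outer factor $Q_s\mathcal{L}^{\alpha/2}$ and with $\|P_t g\|_\infty\lesssim \|g\|_\infty$ delivers an $L^{p_2}$-$L^{p_2}$ off-diagonal bound at scale $r$ with spatial decay $(1+d(B_1,B_2)/r)^{-M}$ of arbitrary order, while the prefactor $(t/r^2)^{\PP}$ ensures convergence of $\int_0^{r^2}\frac{dt}{t}$, and $\int_0^{t}(s/t)^{(1-\alpha)/2}\frac{ds}{s}$ is a finite constant since $\alpha<1$. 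In the regime $t\geq r^2$ one runs the argument of Theorem \ref{thm:KLpLqII} with $Q_{t+r^2}^{(\PP+N-\alpha/2)}$ in place of $Q_t$ and obtains an $L^{p_2}$-$L^{p_2}$ off-diagonal bound at the coarser scale $\sqrt{t+r^2}\simeq\sqrt{t}$; a standard covering argument then converts it to scale $r$ at the cost of a polynomial factor $(1+d(B_1,B_2)/r)^{C\nu/p_2}$, and the algebraic gain $(r^2/t)^{N}$ ensures both convergence of $\int_{r^2}^{+\infty}\frac{dt}{t}$ and the required polynomial spatial decay once $\PP$ (hence $N$) is taken large enough in terms of $\nu$ and $p_2$.

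The main obstacle is precisely this scale mismatch in the regime $\sqrt{t}>r$: Theorem \ref{thm:KLpLqII} naturally yields decay at the coarser scale $\sqrt{t}$, whereas the target estimate requires decay at the finer scale $r$. The rescue comes from exploiting the algebraic gain $(r^2/t)^{N}$ produced by the composition with $Q_{r^2}^{(N)}$ via the identity above; taking $\PP$ sufficiently large makes this factor dominate the geometric loss incurred when balls of radius $r$ are covered by balls of radius $\sqrt{t}$, and simultaneously delivers integrability of the outer time integral with the claimed polynomial spatial decay at scale $r$.
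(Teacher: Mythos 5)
Your overall strategy — split the time integral at $t=r^2$, use a functional-calculus identity to absorb the extra $Q_{r^2}^{(N)}$, and exploit the resulting scalar prefactor to pay for the scale conversion — is the same one the paper uses, but there is a genuine gap in your treatment of the regime $s\leq t\leq r^2$.

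The trouble is the claimed prefactor $(s/t)^{(1-\alpha)/2}$ in the $s$-integral. You derive it from "the $L^{p_2}$-boundedness of the outer factor $Q_s\mathcal{L}^{\alpha/2}$", but $Q_s\mathcal{L}^{\alpha/2}=c\,s^{-\alpha/2}(s\mathcal{L})^{\alpha/2}Q_s^{(\PP)}$ is \emph{not} uniformly bounded on $L^{p_2}$; its operator norm is $\simeq s^{-\alpha/2}$. Feeding that into the integrand gives $\int_0^t s^{-\alpha/2}\,\frac{ds}{s}=+\infty$, not a convergent integral. The gain $(s/t)^{(1-\alpha)/2}$ in Theorem~\ref{thm:KLpLqII} does not come from a crude bound on $Q_s\mathcal{L}^{\alpha/2}$: it is produced by the rewriting $\tilde K_{\alpha,g}(s,t)h=(s/t)^{(1-\alpha)/2}(t\mathcal{L})^{1/2}\tilde Q_s(s\mathcal{L})^{-(1-\alpha)/2}\bigl[\tilde Q_t(t\mathcal{L})^{-\alpha/2}h\cdot P_tg\bigr]$ followed by Lemma~\ref{lemf}, which trades the half power of $\mathcal{L}$ for a gradient and hence uses \eqref{eq:gradient}. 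Your functional-calculus identity replaces the inner operator $Q_t\mathcal{L}^{-\alpha/2}$ by $Q_{t+r^2}^{(\PP+N-\alpha/2)}$, which is no longer of the form $Q_\tau\mathcal{L}^{-\alpha/2}$ for any $\tau$; so Theorem~\ref{thm:KLpLqII} does not apply as a black box to the modified kernel, and you cannot both invoke the $(s/t)^{(1-\alpha)/2}$ gain and abandon the structure that produces it. (A smaller issue: your asserted covering cost $(1+d(B_1,B_2)/r)^{C\nu/p_2}$ in the regime $t\geq r^2$ is not the right quantity — the cost of passing from scale $\sqrt t$ to scale $r$ is polynomial in $t/r^2$, absorbed by $(r^2/t)^N$ with $N$ large, not polynomial in $d(B_1,B_2)/r$.)

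The paper avoids this problem by choosing a different composition identity which deliberately preserves the structure needed by Theorem~\ref{thm:KLpLqII}: for $r^2\leq t$ one writes $Q_tQ_{r^2}^{(N)}=c(r^2/t)^NQ_t^{(\PP+N)}e^{-r^2\mathcal{L}}$, and then inserts the factor $Q_{t/2}$ \emph{together with} the $\mathcal{L}^{-\alpha/2}$ into a modified kernel $\tilde K_{\alpha,g}(s,t)=Q_s\mathcal{L}^{\alpha/2}\bigl(Q_{t/2}\mathcal{L}^{-\alpha/2}(\,\cdot\,)\cdot P_tg\bigr)$, which is of exactly the form to which Theorem~\ref{thm:KLpLqII} applies; similarly for $t\leq r^2$, with $\tilde K_{\alpha,g}(s,r^2)$. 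This is the step your argument needs and currently skips. To repair your proof you would either have to revert to the paper's choice of identity, or re-derive the $(s/\cdot)^{(1-\alpha)/2}$ off-diagonal estimate for the modified kernel $Q_s\mathcal{L}^{\alpha/2}\bigl[Q_{t+r^2}^{(\PP+N-\alpha/2)}(\,\cdot\,)\cdot P_tg\bigr]$ from scratch, which means reproducing the $\mathcal{L}^{1/2}\mapsto\nabla$ argument of Theorem~\ref{thm:KLpLqII} at the new scale $\sqrt{t+r^2}$.
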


\begin{rem} Up to considering a larger parameter $\PP$, we may have off-diagonal estimates at any order. We chose the order $\nu$ for convenience. Such a proposition also holds for the second part $\Pi_g^2$ of the paraproduct and is indeed easier (as shown by Proposition \ref{prop:para2}, this second part is far more easy to handle with than the first part).
\end{rem}

\begin{proof}
Let $\alpha\in(0,1)$ and  $g\in L^\infty(M,\mu)$. Consider the operator
$$ T:= \mathcal{L}^{\alpha / 2} \Pi_g^1 (\mathcal{L}^{-\alpha/2}).$$
Let us fix balls $B_1,B_2$ of radius $r$, a function $f \in L^2(M,\mu)$ supported in $B_2$, and consider an integer $N\geq 2\nu+1$.

We have
$$ T Q_{r^2}^{(N)}(f) = \int_0^{+\infty}\int _0^t  \mathcal{L}^{\alpha / 2} Q_s^{} \left[Q_t \mathcal{L}^{-\alpha/2} Q_{r^2}^{(N)}(f) \cdot P_t g \right] \, \frac{ds}{s} \frac{dt}{t}.$$
By  the definition \eqref{eq:K} of the kernel $K_{\alpha,g}$,
$$ K_{\alpha,g} (s,t):=Q_s \mathcal{L}^{\alpha/2} (Q_t \mathcal{L}^{-\alpha/2} (\, . \,) \cdot P_t g),$$
we get
$$ TQ_{r^2}^{(N)} f =\iint_{0<s\leq t} K_{\alpha,g} (s,t) Q_{r^2}^{(N)} f \, \frac{ds}{s} \frac{dt}{t}.$$
If $r^2 \leq t$, then write
$$ Q_t Q_{r^2}^{(N)} = Q^{(\PP)}_t Q^{(N)}_{r^2}= \left(\frac{r^2}{t}\right)^N Q^{(\PP+N)}_{t} e^{-r^2 \mathcal{L}},$$
so that
$$ K_{\alpha,g} (s,t) Q_{r^2}^{(N)} f = c\left(\frac{r^2}{t}\right)^N \tilde{K}_{\alpha,g} (s,t) Q_{t/2}^{(N)} e^{-r^2 \mathcal{L}} f,$$
with $\tilde{K}_{\alpha,g} (s,t)= Q_s \mathcal{L}^{\alpha/2} (Q_{t/2} \mathcal{L}^{-\alpha/2} (\, . \,) \cdot P_t g)$.

Let $s\leq t$. Abbreviate  $\eps:=\frac{1-\alpha}{2}>0$. Notice that Theorem \ref{thm:KLpLqII} equally applies to $\tilde{K}_{\alpha,g}$. Thus, for large enough integers $\PP$ and $\tilde{N}$, $\tilde{K}_{\alpha,g} (s,t)$ satisfies $L^{p_2}$-$L^{p_2}$ off-diagonal estimates in $\sqrt{t}$ of order $\tilde{N}$ with extra factor $\left(\frac{s}{t}\right)^{\eps}$. On the other hand, Lemma \ref{lem:off} yields  $L^{p_2}$-$L^{p_2}$ off-diagonal estimates in $\sqrt{t}$ for both $Q_{t/2}^{(N)}$ and $e^{-r^2 \mathcal{L}}$ of arbitrary order. Choose $\tilde{N}>\nu$. By Lemma \ref{lem:comp-OD}, we can combine these off-diagonal estimates and obtain
\begin{align*}
&\norm{ K_{\alpha,g}(s,t)[Q_{r^2}^{(N)} f]}_{L^{p_2}(B_1)} \lesssim \left(\frac{r^2}{t}\right)^N \norm{ \tilde{K}_{\alpha,g}(s,t)[Q_{t/2}^{(N)}e^{-r^2\mathcal{L}} f]}_{L^{p_2}(\tilde B_1)}\\
		& \lesssim  \left(\frac{r^2}{t}\right)^{N} \left(\frac{s}{t}\right)^\eps  \left(1+\frac{d^2(B_1, B_2)}{t}\right)^{-\tilde{N}} \|f\|_{L^{p_2}(B_2)} \|g\|_\infty.
		\end{align*}	
By integrating in $s\in(0,t)$ and in $t\geq r^2$, one obtains for $N>\tilde{N}$
$$
\int_{r^2}^{+\infty}  \int_0^t\norm{ K_{\alpha,g}(s,t)[Q_{r^2}^{(N)} f] }_{L^{p_2}(B_1)} \, \frac{ds}{s} \frac{dt}{t}
		\lesssim   \left(1+\frac{d^2(B_1,B_2)}{r^2}\right)^{-\tilde{N}} \|f\|_{L^{p_2}(B_2)} \|g\|_\infty . $$

If otherwise $r^2 \geq t$, then write
$$ Q_t Q_{r^2}^{(N)} =  Q^{(\PP)}_t Q^{(N)}_{r^2} = \left(\frac{t}{r^2}\right)^\PP Q^{(\PP+N)}_{r^2}e^{-t \mathcal{L}},$$
so that
$$ K_{\alpha,g} (s,t) Q_{r^2}^{(N)} f = c\left(\frac{t}{r^2}\right)^N \tilde{K}_{\alpha,g} (s,r^2) Q_{r^2/2}^{(N)} e^{-t \mathcal{L}} f.$$
We therefore apply in this case Theorem \ref{thm:KLpLqII} to $\tilde{K}_{\alpha,g} (s,r^2)$. 
Using the same arguments as above and taking into account $r^2\geq t$, we obtain for large enough integers $\PP$ and $\tilde{N}$,
$$
\norm{K_{\alpha,g} (s,t)[Q_{r^2}^{(N)} f] }_{L^{p_2}(B_1)}
		\lesssim \left(\frac{t}{r^2}\right)^\PP \left(\frac{s}{r^2}\right)^\eps   \left(1+\frac{d^2(B_1,B_2)}{r^2}\right)^{-\tilde{N}} \|f\|_{L^{p_2}(B_2)} \|g\|_\infty. $$
Integrating in $s\in(0,t)$ and then in $t\leq r^2$ yields
$$
\int_0^{r^2}  \int_0^t\norm{K_{\alpha,g} (s,t)[Q_{r^2}^{(N)}f] }_{L^{p_2}(B_1)} \, \frac{ds}{s} \frac{dt}{t}
		\lesssim   \left(1+\frac{d^2(B_1,B_2)}{r^2}\right)^{-\tilde{N}} \|f\|_{L^{p_2}(B_2)} \|g\|_\infty . $$

Summarising the above, we have obtained
\begin{equation} \| T Q_{r^2}^{(N)}(f)\|_{L^{p_2}(B_1)} \lesssim 
\left(1+\frac{d^2(B_1,B_2)}{r^2}\right)^{-\tilde{N}} \|f\|_{L^{p_2}(B_2)} \|g\|_\infty, \label{eq:offpi} \end{equation}
where $\PP,N,\tilde{N}$ are large enough integers depending on $\nu$ and $p_2$.
This ends the proof of \eqref{eq:amontrer0}.
\end{proof}

\begin{proof}[Proof of Theorem $\ref{thm:tent-extrap-small}$]
The boundedness of $(g,f)\mapsto\Pi_g(f)$ from $L^\infty(M,\mu) \times \dot{L}^p_{\alpha}(M,\mathcal{L},\mu)$ to $\dot{L}^p_{\alpha}(M,\mathcal{L},\mu)$  is equivalent to the boundedness of
$(g,f)\mapsto \mathcal{L}^{\alpha / 2} \Pi_g \mathcal{L}^{-\alpha/2}f$ from $L^\infty(M,\mu) \times L^p(M,\mu)$ to $L^p(M,\mu)$.
We  have  already seen in Proposition \ref{prop:para2} that it only remains to study the operator
$$ T:= \mathcal{L}^{\alpha / 2} \Pi_g^1 (\mathcal{L}^{-\alpha/2}),$$
and prove its boundedness in $L^p$ for $p\leq 2$.

This is done by the extrapolation argument from Proposition \ref{prop:extrap<2}: indeed by Theorem \ref{gaza}, we already know that $T$ is $L^2$-bounded and Proposition \ref{prop:off1} with $L^2$ Davies-Gaffney estimates yields that \eqref{eq:amontrer0} holds for $p_2=2$. We may also apply Proposition \ref{prop:extrap<2} to $T$ and obtain its $L^p$-boundedness for $p\in(1,2]$.
\end{proof}

\section{Boundedness of the paraproducts for $p\geq p_0$ under $(G_{p_0})$ via extrapolation} \label{sec=gp}

The main results of this section are the two following ones.

\begin{theorem} \label{thm:extrap>22} Let $(M,d,\mu, {\mathcal E})$  be a doubling metric  measure   Dirichlet space with a ``carr\'e du champ'' satisfying \eqref{due}.
Let $\alpha \in (0,1)$ and let $p\in(2,{+\infty})$ with  $1-\alpha>\nu(\frac{1}{2}-\frac{1}{p})$. Then there exists $\PP_0=\PP_0(\nu,p)>0$ such that for every integer $\PP\geq \PP_0$, the paraproduct defined in \eqref{def:paraproduct} is bounded from $L^\infty(M,\mu) \times  \dot{L}^p_{\alpha}(M,\mathcal{L},\mu)$ to $\dot{L}^p_{\alpha}(M,\mathcal{L},\mu)$. We have
\[
	\norm{\Pi_g(f)}_{p,\alpha} \lesssim \norm{f}_{p,\alpha} \norm{g}_{\infty},
\]
and  $A(\alpha,p)$ holds.
\end{theorem}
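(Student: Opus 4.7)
By the product decomposition \eqref{eq:paraproduit-decomposition} and Corollary \ref{cor:A}, it suffices to prove the paraproduct bound, and by Proposition \ref{prop:para2} the second piece $\Pi_g^2$ is already controlled on every $\dot L^p_\alpha$. So the task reduces to showing that, for $p\in(2,+\infty)$ with $1-\alpha>\nu(\tfrac12-\tfrac1p)$, the operator
$$T:=\mathcal{L}^{\alpha/2}\,\Pi_g^1\,\mathcal{L}^{-\alpha/2}$$
is bounded on $L^p(M,\mu)$. By Theorem \ref{gaza}(i) we already know $T$ is bounded on $L^2(M,\mu)$, and the plan is to apply a Blunck--Kunstmann-type extrapolation in the regime $p>2$: if $T$ is $L^2$-bounded and $T Q_{r^2}^{(N)}$ satisfies $L^2$-$L^p$ off-diagonal estimates at scale $r$ of order larger than $\nu/2$, uniformly in $r>0$, then $T$ extends boundedly to $L^q(M,\mu)$ for every $q\in(2,p)$ (see \cite{AM,BK}). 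This is the dual counterpart of Proposition \ref{prop:extrap<2}.

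The whole work is therefore to establish $L^2$-$L^p$ off-diagonal estimates for $T Q_{r^2}^{(N)}$. Starting from the kernel representation
$$T Q_{r^2}^{(N)}f=\iint_{0<s\le t} K_{\alpha,g}(s,t)\, Q_{r^2}^{(N)}f \,\frac{ds}{s}\frac{dt}{t},$$
I would first apply Theorem \ref{thm:KLpLqII} with $p_1=p_2=2$ (which only uses \eqref{eq:gradient} for $p_2=2$, i.e.\ the automatic $L^2$ Davies--Gaffney estimate from Proposition \ref{prop:Davies-Gaffney}) to obtain, for $s\le t$ and concentric pairs of balls of radius $\sqrt t$,
$$\Bigl(\aver{B_1}|K_{\alpha,g}(s,t)h|^2\Bigr)^{1/2}\lesssim \Bigl(\frac{s}{t}\Bigr)^{(1-\alpha)/2}\Bigl(1+\tfrac{d^2(B_1,B_2)}{t}\Bigr)^{-\widetilde N}\Bigl(\aver{B_2}|h|^2\Bigr)^{1/2}\|g\|_\infty.$$

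The key point is to upgrade this to an $L^2$-$L^p$ off-diagonal estimate on $K_{\alpha,g}(s,t)$ at the same scale $\sqrt t$. To that end, factor the outermost operator as $Q_s\mathcal{L}^{\alpha/2}=c\,s^{-\alpha/2}Q_s^{(\PP+\alpha/2)}$ and use Lemma \ref{prop:kernel-est}, which gives pointwise kernel bounds and hence $L^2$-$L^p$ off-diagonal control at scale $\sqrt s$ with a local volume loss $V(\cdot,\sqrt s)^{1/p-1/2}$. By \eqref{dnu}, this loss is dominated by $(t/s)^{\nu(1/2-1/p)/2}V(\cdot,\sqrt t)^{1/p-1/2}$ when $s\le t$, and combined with the gain $(s/t)^{(1-\alpha)/2}$ already present one obtains a net factor $(s/t)^{\theta}$ with
$$\theta:=\tfrac{1}{2}\bigl[(1-\alpha)-\nu(\tfrac12-\tfrac1p)\bigr]>0,$$
precisely under the assumption of the theorem. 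Writing things on a bounded covering of $B_1$ by balls of radius $\sqrt s$ and composing the two sets of off-diagonal estimates by Lemma \ref{lem:comp-OD} then yields $L^2$-$L^p$ off-diagonal estimates for $K_{\alpha,g}(s,t)$ at scale $\sqrt t$ of arbitrarily large order (for $\PP$ large enough), with the integrable gain factor $(s/t)^{\theta}$.

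With these estimates in hand, the proof concludes as in Proposition \ref{prop:off1}: rewrite $Q_t Q_{r^2}^{(N)}=c(r^2/t)^N Q_t^{(\PP+N)} e^{-r^2\mathcal{L}}$ for $t\ge r^2$ and $Q_t Q_{r^2}^{(N)}=c(t/r^2)^\PP Q_{r^2}^{(\PP+N)} e^{-t\mathcal{L}}$ for $t\le r^2$, compose these polynomial factors with the above off-diagonal estimates (again by Lemma \ref{lem:comp-OD}), and integrate $\int_0^t\!\int\frac{ds}{s}\frac{dt}{t}$; the positivity of $\theta$ and the freedom to choose $\PP,N$ large ensure convergence and produce $L^2$-$L^p$ off-diagonal estimates of order any prescribed $\widetilde N>\nu/2$ for $T Q_{r^2}^{(N)}$ at scale $r$. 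Invoking the extrapolation theorem yields $L^q$-boundedness of $T$ for all $q\in(2,p)$; since $p$ is any exponent satisfying $1-\alpha>\nu(\tfrac12-\tfrac1p)$ strictly, the full range is covered. The main obstacle is the $L^2\to L^p$ upgrade: the multiplication by $P_t g$ forces the natural scale of the bilinear piece to be $\sqrt t$ rather than $\sqrt s$, so one has to be careful to transfer the Sobolev-type loss between the two scales without spoiling the off-diagonal decay, which is exactly why the threshold $1-\alpha>\nu(\tfrac12-\tfrac1p)$ appears.
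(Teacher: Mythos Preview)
Your upgrade of the $L^2$--$L^2$ off-diagonal bound on $K_{\alpha,g}(s,t)$ to an $L^2$--$L^p$ one (via $Q_s=2^{\PP}e^{-s\mathcal{L}/2}Q_{s/2}$ and Lemma \ref{lem:off}~(iii)) is sound, and it does produce the factor $(s/t)^{\theta}$ with $\theta=\tfrac12[(1-\alpha)-\nu(\tfrac12-\tfrac1p)]>0$. So you obtain $L^2$--$L^p$ off-diagonal estimates for $TQ_{r^2}^{(N)}$ exactly as in Proposition \ref{prop:off1}. The gap is the extrapolation step.

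The statement you invoke --- ``$T$ bounded on $L^2$ plus $L^2$--$L^p$ off-diagonal for $TQ_{r^2}^{(N)}$ implies $T$ bounded on $L^q$ for $q\in(2,p)$'' --- is \emph{not} the dual of Proposition \ref{prop:extrap<2} and is not what \cite{BK,AM,ACDH} provide. The genuine dual would concern $Q_{r^2}^{(N)}T$ (post-composed), which for a non-self-adjoint operator like $T=\mathcal{L}^{\alpha/2}\Pi_g^1\mathcal{L}^{-\alpha/2}$ is a different beast. The ACDH/Auscher--Martell good-$\lambda$ machinery for the range $p>p_2$ requires, besides $L^{p_2}$ off-diagonal decay of $TQ_t^{(N)}$, a higher-integrability bound for $T\,P_{r^2}^{(N)}f$ (condition \eqref{eq:amontrer3} in Proposition \ref{prop:extra-p>2}). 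Your information on $TQ_{r^2}^{(N)}$ does not yield this: writing $TP_{r^2}^{(N)}f=\iint_{s\le t}K_{\alpha,g}(s,t)[P_{r^2}^{(N)}f]\,\frac{ds}{s}\frac{dt}{t}$, for $t\ge r^2$ the factor $Q_tP_{r^2}^{(N)}$ carries no extra decay in $t$, and the resulting integral $\int_{r^2}^{\infty}\!\int_0^t(s/t)^{\theta}\,\frac{ds}{s}\frac{dt}{t}$ diverges. Trying instead $TP_{r^2}^{(N)}=T-\int_0^{r^2}TQ_s^{(N)}\,\frac{ds}{s}$ is circular, since the term $Tf$ has no a priori $L^{\bar p}$-control.

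The paper circumvents this by a genuinely different mechanism. Via Lemma \ref{lemme} one reduces to the square function
\[
U(f)=\Bigl(\int_0^{\infty}\Bigl|\int_s^{\infty}\tilde K_{\alpha,g}(s,t)[\tilde Q_t f]\,\frac{dt}{t}\Bigr|^2\frac{ds}{s}\Bigr)^{1/2},
\]
and then verifies \eqref{eq:amontrer3} for $U$ rather than for $T$. The point is that the $L^2(\frac{ds}{s})$-structure of $U$ converts the divergent $\int\frac{dt}{t}$ into an $L^2(\frac{dt}{t})$-quantity which is controlled by the conical square function $\mathcal{G}_{N/2}(f)$ (bounded on every $L^p$, Proposition \ref{prop:square-function}~(iv)). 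Concretely, in the regime $t\ge r^2$ one gets
\[
\Bigl(\int_{r^2}^{\infty}\left(\aver{2^{\ell}\tilde B}|\tilde Q_t f|^2\right)\frac{dt}{t}\Bigr)^{1/2}\lesssim \inf_{x\in B}\mathcal{M}\bigl[\mathcal{G}_{N/2}(f)^2\bigr](x)^{1/2},
\]
which is exactly the missing ingredient. Your argument lacks this square-function reduction; once inserted, Proposition \ref{prop:extra-p>2} applies with $p_2=2$, $\bar p$ any exponent with $1-\alpha>\nu(\tfrac12-\tfrac{1}{\bar p})$, and $S=\mathcal{G}_{N/2}+{\rm Id}$, yielding the claim.
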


\begin{theorem} \label{thm:extrap>2} Let $(M,d,\mu, {\mathcal E})$  be a doubling metric  measure   Dirichlet space with a ``carr\'e du champ'' satisfying \eqref{due} and $(G_{p_0})$ for some $p_0\in(2,{+\infty}]$.
Let $\alpha \in (0,1)$ and let $p\in[p_0,{+\infty})$ with  $1-\alpha>\nu(\frac{1}{p_0}-\frac{1}{p})$. Then there exists $\PP_0=\PP_0(\nu,p)>0$ such that for every integer $\PP\geq \PP_0$, the paraproduct defined in \eqref{def:paraproduct} is bounded from $L^\infty(M,\mu) \times  \dot{L}^{p}_{\alpha}(M,\mathcal{L},\mu)$ to $\dot{L}^{p}_{\alpha}(M,\mathcal{L},\mu)$.
We have
\[
	\norm{\Pi_g(f)}_{p,\alpha} \lesssim \norm{f}_{p,\alpha} \norm{g}_{\infty},
\]
and  $A(\alpha,p)$ holds.
\end{theorem}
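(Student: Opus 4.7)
The proof follows the template of Theorem \ref{thm:tent-extrap-small}, with Proposition \ref{prop:extrap<2} replaced by an Auscher--Martell / Blunck--Kunstmann-type extrapolation tailored to exponents $p > p_0$ (cf.\ \cite{BK,AM,BZ}). By the product decomposition \eqref{eq:paraproduit-decomposition} and Corollary \ref{cor:A}, and since Proposition \ref{prop:para2} already handles $\Pi_g^2$ on every $\dot L^p_\alpha$, it suffices to prove the $L^p$-boundedness of
\[
    T := \mathcal{L}^{\alpha/2}\, \Pi_g^1\, \mathcal{L}^{-\alpha/2},
\]
with operator norm $\lesssim \|g\|_\infty$.

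Under $(G_{p_0})$, interpolation with the Davies--Gaffney estimates of Proposition \ref{prop:Davies-Gaffney} yields \eqref{eq:gradient} for every $q \in [2,p_0)$. For such a $q$, Theorem \ref{thm:KLpLqII} furnishes $L^{q}$-$L^{q}$ off-diagonal estimates on the kernel $K_{\alpha,g}(s,t)$ at scale $\sqrt{t}$ carrying the essential smoothing factor $(s/t)^{(1-\alpha)/2}$, while Theorem \ref{gaza} supplies the $L^q$-boundedness of $T$.

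The key new step is to upgrade these ingredients into an $L^{q}$-to-$L^p$ off-diagonal estimate for $T Q_{r^2}^{(N)}$ of the form
\[
    \Bigl(\aver{B_1}|T Q_{r^2}^{(N)} f|^{p}\, d\mu\Bigr)^{1/p}
    \lesssim \Bigl(1+\tfrac{d(B_1,B_2)}{r}\Bigr)^{-\nu} \sum_{\ell \ge 0} \gamma(\ell) \Bigl(\aver{2^\ell B_2}|f|^{q}\, d\mu\Bigr)^{1/q} \|g\|_\infty,
\]
for all balls $B_1,B_2$ of radius $r$. I would mimic the proof of Proposition \ref{prop:off1}: expand $Q_t Q_{r^2}^{(N)}$, split the $t$-integral at $t=r^2$, and for each piece compose the local $L^q$-to-$L^q$ kernel bound with the $L^q$-to-$L^p$ estimate supplied by the third item of Lemma \ref{lem:off} (applicable whenever the working scale does not exceed $r$); the latter costs a Sobolev-type factor $(r/\sqrt{t})^{\nu(1/q-1/p)}$. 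The smoothing $(s/t)^{(1-\alpha)/2}$ inherited from Theorem \ref{thm:KLpLqII} precisely compensates this loss, provided $\nu(\tfrac{1}{q}-\tfrac{1}{p}) < 1-\alpha$, while the remaining off-diagonal decay can be made of arbitrary order by choosing $\PP$ large enough.

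Finally, the Auscher--Martell extrapolation theorem for $p > q$ (cf.\ \cite[Theorem 2.1]{AM}), combined with the $L^{q}$-boundedness of $T$, the off-diagonal estimate above, and the kernel bounds on $P_{r^2}^{(N)}$ from Lemma \ref{prop:kernel-est}, yields the $L^p$-boundedness of $T$ for every $p$ with $\nu(\tfrac{1}{q}-\tfrac{1}{p}) < 1-\alpha$. Since we may take $q \in [2,p_0)$ arbitrarily close to $p_0$, this covers all $p \in [p_0,+\infty)$ (including the endpoint $p = p_0$) satisfying $\nu(\tfrac{1}{p_0}-\tfrac{1}{p}) < 1-\alpha$. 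The chief technical hurdle is the book-keeping in the double $(s,t)$-integration, ensuring that the Sobolev loss, the smoothing factor $(s/t)^{(1-\alpha)/2}$, and the off-diagonal decay combine to give both arbitrary decay in $d(B_1,B_2)/r$ and absolute convergence of the $(s,t)$-integral precisely under the sharp hypothesis $\nu(\tfrac{1}{p_0}-\tfrac{1}{p}) < 1-\alpha$.
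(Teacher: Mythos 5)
Your overall strategy is the right one: pass to $T=\mathcal{L}^{\alpha/2}\Pi_g^1\mathcal{L}^{-\alpha/2}$, get $\eqref{eq:gradient}$ for every $q\in[2,p_0)$ by interpolating $(G_{p_0})$ with Davies--Gaffney, feed the $L^q$-$L^q$ off-diagonal kernel bounds of Theorem~\ref{thm:KLpLqII} together with the $L^q$-boundedness from Theorem~\ref{gaza} into an Auscher--Martell-type $p>q$ extrapolation, and observe that the sharp condition $\nu(\tfrac{1}{p_0}-\tfrac{1}{p})<1-\alpha$ arises from balancing the Sobolev loss $(\frac{r}{\sqrt{s}})^{\nu(1/q-1/p)}$ against the smoothing factor $(s/t)^{(1-\alpha)/2}$. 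However, your key step is not the one the extrapolation theorem needs, and as stated the argument has a genuine gap.

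The $p>q$ extrapolation (Proposition~\ref{prop:extra-p>2}, after \cite{ACDH} and \cite[Theorem 3.13]{AM}) does not run on an ``$L^q$-to-$L^p$ off-diagonal estimate for $TQ_{r^2}^{(N)}$.'' It requires two separate hypotheses: (i) $L^{q}$-$L^{q}$ off-diagonal bounds for $TQ_{t}^{(N)}$ (equivalently $T(I-P_t^{(N)})$), which is exactly what Proposition~\ref{prop:off1} gives and you have correctly outlined; and (ii) a \emph{local good-$\lambda$-type} estimate of the form
\[
\Bigl(\aver{B}\,|T(P_{r^2}^{(N)}f)|^{\bar p}\,d\mu\Bigr)^{1/\bar p}\ \lesssim\ \Bigl(\inf_{x\in B}\mathcal{M}\bigl[|S(f)|^{q}\bigr](x)\Bigr)^{1/q},
\]
with a sublinear $S$ that is itself bounded on $L^p$. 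You never produce the $P_{r^2}^{(N)}$-estimate (ii); the ``kernel bounds on $P_{r^2}^{(N)}$ from Lemma~\ref{prop:kernel-est}'' are Gaussian bounds and do not yield (ii) by themselves, and replacing $P_{r^2}^{(N)}$ by $Q_{r^2}^{(N)}$ only controls the high-frequency/oscillatory part and says nothing about the low-frequency part of $f$ that (ii) is designed to capture. The two hypotheses are logically independent: (i) is the cancellation estimate, (ii) is the uniform local averaging estimate, and without (ii) the extrapolation does not close.

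The paper's proof (Theorem~\ref{thm:extrap>2-bis}) deals with (ii) in two steps you do not mention. First, Lemma~\ref{lemme} replaces $T$ by the square functional $U(f)=\bigl(\int_0^\infty|\int_s^\infty\tilde K_{\alpha,g}(s,t)\tilde Q_t f\,\frac{dt}{t}|^2\frac{ds}{s}\bigr)^{1/2}$, a reduction that turns the double $(s,t)$-integral into a quantity living in $L^2(\frac{dt}{t})$. Second, $U(P_{r^2}^{(N)}f)$ is estimated pointwise over three $(s,t)$-regimes ($s\le t\le r^2$, $r^2\le s\le t$, $s\le r^2\le t$), and the regime $t\ge r^2$ is precisely where the conical square function $\mathcal{G}_{N/2}$ from Proposition~\ref{prop:square-function}(iv) appears: the resulting bound is $\inf_B\mathcal{M}[\mathcal{G}_{N/2}(f)^{q}]^{1/q}+\inf_B\mathcal{M}[|f|^{q}]^{1/q}$, and $S=\mathcal{G}_{N/2}+|\cdot|$ is $L^p$-bounded. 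This is the missing piece. If you add the reduction to $U$ and the conical square function estimate for the $P_{r^2}^{(N)}$-part, your proposal becomes essentially the paper's argument.
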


Using either $L^2$ Davies-Gaffney estimates (which correspond to \eqref{eq:gradient} for $p_2=2$) in combination with Theorem \ref{gaza}, or the fact that $(G_{p_0})$ implies \eqref{eq:gradient} for every $p_2\in[2,p_0)$ in combination with  Theorem \ref{gaza}, the two previous theorems will be a direct consequence of the following one.

\begin{theorem} \label{thm:extrap>2-bis} Let $(M,d,\mu, {\mathcal E})$  be a doubling metric  measure   Dirichlet space with a ``carr\'e du champ'' satisfying \eqref{due}. Assume \eqref{eq:gradient} for some $p_2 \in [2,{+\infty})$ and  let $p>p_2$ with $1-\alpha>\nu(\frac{1}{p_2}-\frac{1}{p})$. There exists $\PP_0=\PP_0(\nu,p)>0$ such that for every integer $\PP\geq \PP_0$, if the paraproduct defined in \eqref{def:paraproduct} is bounded from $L^\infty(M,\mu) \times  \dot{L}^{p_2}_{\beta}(M,\mathcal{L},\mu)$ to $\dot{L}^{p_2}_{\beta}(M,\mathcal{L},\mu)$ for all $\beta\in(0,1)$ then it is bounded from $L^\infty(M,\mu) \times  \dot{L}^{p}_{\alpha}(M,\mathcal{L},\mu)$ to $\dot{L}^{p}_{\alpha}(M,\mathcal{L},\mu)$. We have
\[
	\norm{\Pi_g(f)}_{p,\alpha} \lesssim \norm{f}_{p,\alpha} \norm{g}_{\infty},
\]
and  $A(\alpha,p)$ holds.
\end{theorem}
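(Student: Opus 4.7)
My plan for Theorem \ref{thm:extrap>2-bis}. By the product decomposition \eqref{eq:paraproduit-decomposition} and Corollary \ref{cor:A}, together with Proposition \ref{prop:para2} which already gives the boundedness of the second part $\Pi_g^2$ of the paraproduct in every Sobolev space $\dot L^p_\alpha$ for $\alpha\in(0,1)$ and $p\in(1,+\infty)$, everything reduces to proving the $L^p$-boundedness of
\[
T:=\mathcal{L}^{\alpha/2}\,\Pi_g^1\,\mathcal{L}^{-\alpha/2}.
\]
The strategy is a standard upward extrapolation in the style of Auscher-Martell (see \cite{AM,BK,BZ}), based on two ingredients. \emph{First}, $T$ is bounded on $L^{p_2}(M,\mu)$: this follows by applying the hypothesis of the theorem to $\beta=\alpha$ and invoking Proposition \ref{prop:para2} once more. \emph{Second}, $T$ satisfies $L^{p_2}$-$L^p$ off-diagonal estimates after the usual regularisation: for every pair of balls $B_1,B_2$ of radius $r$ and every $f$ supported in $B_2$,
\begin{equation}\label{eq:plangoal}
\bigl\|T\,Q_{r^2}^{(N)}f\bigr\|_{L^p(B_1)}\lesssim r^{-\nu(\frac{1}{p_2}-\frac{1}{p})}\left(1+\frac{d(B_1,B_2)}{r}\right)^{-\tilde N}\|f\|_{L^{p_2}(B_2)}\|g\|_\infty,
\end{equation}
for a sufficiently large integer $\PP$ and correspondingly large $N$ and $\tilde N$. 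Once \eqref{eq:plangoal} is combined with the $L^{p_2}$-boundedness of $T$, the upward extrapolation theorem of \cite{AM} (or its reformulation in \cite{BZ}) yields the $L^p$-boundedness of $T$, hence $\|\Pi_g(f)\|_{p,\alpha}\lesssim\|f\|_{p,\alpha}\|g\|_\infty$, and Corollary \ref{cor:A} then delivers $A(\alpha,p)$.

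To produce \eqref{eq:plangoal} I would closely follow the scheme of Proposition \ref{prop:off1}, expanding
\[
T\,Q_{r^2}^{(N)}f=\iint_{0<s\le t}K_{\alpha,g}(s,t)\,Q_{r^2}^{(N)}f\,\frac{ds}{s}\frac{dt}{t}
\]
and splitting along the two regimes $t\le r^2$ and $t\ge r^2$. In each regime, as in the proof of Proposition \ref{prop:off1}, one extracts a small factor $(t/r^2)^{\PP}$ or $(r^2/t)^{N}$ out of the operator $Q_{r^2}^{(N)}$ via the semigroup identity, so that the composition reduces to a modified kernel $\tilde K_{\alpha,g}$ at the common scale $\max(\sqrt{t},r)$, post-composed with an auxiliary $Q_{\cdot}^{(\cdot)}e^{-\cdot\mathcal{L}}$. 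The key new ingredient is to upgrade the $L^{p_1}$-$L^{p_2}$ bound of Theorem \ref{thm:KLpLqII} to an $L^{p_2}$-$L^p$ bound on this composition; this is achieved by using the Gaussian kernel bounds of Lemma \ref{prop:kernel-est} on the auxiliary semigroup factor to absorb the Sobolev-type volume loss $|B_{\sqrt{t}}|^{\frac{1}{p}-\frac{1}{p_2}}\simeq \sqrt{t}^{-\nu(\frac{1}{p_2}-\frac{1}{p})}$. The surplus $(s/t)^{(1-\alpha)/2}$ carried by $K_{\alpha,g}$ together with the decay $(t/r^2)^{\PP}$ or $(r^2/t)^{N}$ then provides the convergence in the joint $(s,t)$-integration; balancing the Sobolev-type loss against these decays forces the quantitative condition $(1-\alpha)>\nu(\frac{1}{p_2}-\frac{1}{p})$, which is exactly the hypothesis of the theorem.

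The main obstacle I expect lies in this bookkeeping. Theorem \ref{thm:KLpLqII} is only stated for $p_1\le 2\le p_2$, so one cannot simply raise its target exponent past $p_2$; the Sobolev-type gain must therefore be inserted by hand, either through kernel estimates on the additional semigroup factor or, more systematically, by re-running the computation of Lemma \ref{lemf} and Theorem \ref{thm:KLpLqII} with $L^p$ as target in place of $L^{p_2}$, letting the Sobolev loss appear explicitly as a power of the scale. Ensuring that the final off-diagonal order $\tilde N$ in \eqref{eq:plangoal} stays as large as needed for the extrapolation, uniformly in the other parameters and at the cost of $\PP$ sufficiently large, is the main technical subtlety; everything else, namely the reduction to $\Pi_g^1$, the verification of the $L^{p_2}$ base case, and the extrapolation itself, is routine given the machinery already in place.
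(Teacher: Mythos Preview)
Your reduction to $T=\mathcal{L}^{\alpha/2}\Pi_g^1\mathcal{L}^{-\alpha/2}$ and the observation that $T$ is bounded on $L^{p_2}$ are correct, but the extrapolation scheme you invoke is the wrong one. The estimate \eqref{eq:plangoal} on $TQ_{r^2}^{(N)}$ is essentially the input for \emph{downward} extrapolation (Proposition \ref{prop:extrap<2}), not upward. For $p>p_2$ the Auscher--Martell theorem, as recorded in Proposition \ref{prop:extra-p>2}, requires two distinct ingredients: the $L^{p_2}$--$L^{p_2}$ off-diagonal bound \eqref{eq:amontrer-f} on $TQ_t^{(N)}$ (which you essentially have from Proposition \ref{prop:off1}), \emph{and} the local reverse-H\"older-type estimate \eqref{eq:amontrer3} on $T(P_{r^2}^{(N)}f)$ in $L^{\bar p}$ for some $\bar p>p$, controlled by a maximal function of an auxiliary sublinear $S(f)$. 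Your plan addresses only the first of these and omits the second, which is where all the work lies and where the condition $1-\alpha>\nu(\tfrac{1}{p_2}-\tfrac{1}{p})$ actually enters.

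Concretely, the paper does not try to improve the integrability of $TQ_{r^2}^{(N)}f$; instead it first passes, via Lemma \ref{lemme}, from $T$ to the square functional $U$, and then proves the analogue of \eqref{eq:amontrer3} for $U(P_{r^2}^{(N)}f)$. This requires splitting the double integral into the three regimes $s\le t\le r^2$, $r^2\le s\le t$, and $s\le r^2\le t$; in the regimes with $t\ge r^2$ the control is obtained not by a pointwise bound but through the \emph{conical} square function $\mathcal{G}_{N/2}$, which is the auxiliary $S$ in \eqref{eq:amontrer3}. The Sobolev-type loss you anticipate does appear, as a factor $(r^2/s)^{\frac{\nu}{2}(\frac{1}{p_2}-\frac{1}{\bar p})}$ coming from the $L^{p_2}$--$L^{\bar p}$ step on $\widehat Q_s$, and it is precisely balanced against the gain $(s/t)^{(1-\alpha)/2}$, forcing $1-\alpha>\nu(\tfrac{1}{p_2}-\tfrac{1}{\bar p})$. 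Your plan to ``upgrade Theorem \ref{thm:KLpLqII} to an $L^{p_2}$--$L^p$ bound on $TQ_{r^2}^{(N)}$'' would not feed into any available extrapolation statement; you need to shift the target to $TP_{r^2}^{(N)}$ (or rather $UP_{r^2}^{(N)}$) and bring in the conical square function.
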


We are going to prove the previous theorem as an application of the following extrapolation result (\cite{ACDH}, \cite[Theorem 3.13]{AM}).

\begin{proposition} \label{prop:extra-p>2}
Let $T$ be a linear operator and $S$ a sublinear operator. Let $p_2\in[2,{+\infty})$, and assume that $T$ is bounded on $L^{p_2}(M,\mu)$. Assume that $T$ satisfies the following off-diagonal estimates: There exists an integer $N\geq1$, an exponent $\bar p\in(p_2,{+\infty})$ and an exponent  $\tilde{N}>\frac{\nu}{2}$ such that for every pair of balls $B_1,B_2$ of radius $r=\sqrt{t}>0$, we have
\begin{equation} \label{eq:amontrer-f}
 \left\| T Q_t^{(N)} \right\|_{L^{p_2}(B_1) \to L^{p_2}(B_2)} \lesssim \left(1+\frac{d(B_1,B_2)}{r}\right)^{-\tilde{N}} \end{equation}
and
\begin{equation} \left( \aver{B} |T(P_{r^2}^{(N)} f) |^{\bar p} \, d\mu \right)^{1/\bar p} \lesssim  \left(\inf_{x\in B} {\mathcal M}[|S(f)|^{p_2}]\right)^{1/ p_2}.\label{eq:amontrer3} \end{equation}
If, for some $p\in(p_2,\bar{p})$, $S$ is bounded on $L^p(M,\mu)$, then $T$ is bounded on $L^p(M,\mu)$. 
\end{proposition}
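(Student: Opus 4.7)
My plan is to view Proposition \ref{prop:extra-p>2} as an instance of the Auscher--Martell extrapolation machinery \cite[Theorem 3.13]{AM} (see also \cite{ACDH}), with the approximating family $(P_{r^2}^{(N)})_{r>0}$ playing the role of the ``smoothing operators'' associated with $T$. For every ball $B$ of radius $r_B$, I would split
$$ Tf \;=\; T(I - P_{r_B^2}^{(N)}) f + T P_{r_B^2}^{(N)} f \;=:\; T_{1,B} f + T_{2,B} f,$$
and establish the two averaged bounds on $B$ that the extrapolation theorem requires. The ``smooth'' piece $T_{2,B} f$ is controlled on $B$ immediately by hypothesis \eqref{eq:amontrer3}, which provides the $L^{\bar p}$-averaged bound in terms of $\bigl(\inf_B \calM[|Sf|^{p_2}]\bigr)^{1/p_2}$.

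The substantive step is to derive, for the ``rough'' piece $T_{1,B}$, an averaged $L^{p_2}$ off-diagonal estimate of the form
$$\left(\aver{B}|T_{1,B} f|^{p_2}\,d\mu\right)^{1/p_2} \;\lesssim\; \sum_{j\geq 0}\gamma(j)\left(\aver{2^{j+1}B}|f|^{p_2}\,d\mu\right)^{1/p_2},$$
with $\sum_j \gamma(j)<+\infty$. To do this, I would use \eqref{along} to represent $I - P_{r_B^2}^{(N)}$ as an integral of $Q_s^{(N)}$ over $s\in(0,r_B^2)$, then decompose $f = \sum_{j\geq 0} f\mathbf{1}_{C_j(B)}$ with $C_0(B)=4B$ and $C_j(B)=2^{j+1}B\setminus 2^j B$ for $j\geq 1$. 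For each fixed $s\in (0,r_B^2)$ and $j\geq 1$, cover $B$ and $C_j(B)$ by families of balls of radius $\sqrt{s}$ whose cardinalities are controlled by powers of $r_B/\sqrt{s}$ via \eqref{dnu}; applying \eqref{eq:amontrer-f} pairwise on these covering balls yields a decay factor $(1+2^j r_B/\sqrt{s})^{-\tilde N}$, and the hypothesis $\tilde N>\nu/2$ lets one sum over the coverings in the spirit of Lemma \ref{lem:comp-OD}. Integration against $ds/s$ then produces the geometric coefficient $\gamma(j)$.

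Combining these two bounds with the $L^{p_2}$-boundedness of $T$ assumed in the statement puts us exactly in the situation of \cite[Theorem 3.13]{AM}, which then delivers $L^p$-boundedness of $T$ for every $p\in(p_2,\bar p)$ on which the control operator $S$ is bounded. The main obstacle in the argument is the averaged off-diagonal bound on $T_{1,B}$: one must simultaneously control the covering sum (for which $\tilde N>\nu/2$ is critical), the integration in $s$ against the singular measure $ds/s$ (for which the decay of the off-diagonal factor in $s$ supplies the integrability near $s=0$), and the sum over the dyadic annuli (for which one needs the resulting exponent in $2^{-j}$ to dominate the $\nu$ loss from the measure ratio $|2^{j+1}B|/|B|$). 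Once this rough off-diagonal bound is in place, the remainder of the proof is a black-box invocation of the extrapolation theorem.
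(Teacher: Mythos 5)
Your proposal follows essentially the same route as the paper's: both invoke the Auscher--Martell extrapolation theorem \cite[Theorem 3.13]{AM} (going back to \cite{ACDH}), via the splitting $Tf = T(I - P_{r^2}^{(N)})f + TP_{r^2}^{(N)}f$, with the ``smooth'' part handled directly by hypothesis \eqref{eq:amontrer3} and the ``rough'' part requiring averaged $L^{p_2}$ off-diagonal bounds. The only genuine divergence is where those bounds for $T(I-P_{r^2}^{(N)})$ come from: the paper outsources the conversion of \eqref{eq:amontrer-f} (a bound on $TQ_t^{(N)}$) into the $T(I-P_t^{(N)})$ form required by \cite{AM} to Step 2 of \cite[Corollary 3.6]{B}, as flagged in the remark after the proposition and in the proof of Proposition \ref{prop:extrap<2}, while you propose to carry it out by hand via the Calder\'on representation \eqref{along}, a dyadic annular decomposition of $f$, and a covering at scale $\sqrt{s}$. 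That is a reasonable way to make the conversion self-contained, and it buys transparency at the cost of bookkeeping.

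Two caveats. First, the near-diagonal annuli ($j=0,1$) cannot be controlled by off-diagonal estimates alone; they require the global $L^{p_2}$-boundedness of $T$ combined with that of $I-P_{r^2}^{(N)}$. You do invoke $L^{p_2}$-boundedness of $T$ at the end, but it is needed already inside the rough-part estimate, not merely as a black-box input to \cite{AM}. Second, you assert that $\tilde N>\nu/2$ suffices to sum over the coverings ``in the spirit of Lemma \ref{lem:comp-OD}''. That lemma's hypothesis $N>\nu/2$ applies to composition at a \emph{single} scale; the part of the lemma germane here --- promoting off-diagonal estimates from scale $\sqrt{s}$ to the larger scale $r$ --- requires order strictly greater than $\nu$ (in the $(1+d^2/r^2)^{-N}$ normalisation, i.e.\ $\tilde N > 2\nu$ in the normalisation of \eqref{eq:amontrer-f}), and the $ds/s$ integration also requires $\tilde N>\nu$ to converge near $s=0$ when combined with the $(r/\sqrt{s})^\nu$ loss from covering. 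So a naive covering argument needs a larger $\tilde N$ than the stated threshold; this is harmless for the applications (since $\tilde N$ can be taken as large as one likes by enlarging $\PP$), but reproducing the $\nu/2$ threshold as written would require a more careful $\ell^{p_2}$ accounting of the covering sum, which your sketch glosses over.
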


\begin{rem} 
\begin{itemize}
\item 
The assumptions in \cite{ACDH}, \cite[Theorem 3.13]{AM} are stated in terms of $L^{p_2}$ off-diagonal estimates for $T(I-P_t^{(N)})$ instead of \eqref{eq:amontrer-f}.
As explained in the proof of Proposition $\ref{prop:extrap<2}$, the $L^{p_2}$ boundedness of $T$ allows us to deduce from \eqref{eq:amontrer-f} such $L^{p_2}$-off-diagonal estimates  for $T(I-P_t^{(N)})$. 
\item For $p\in(p_2,\bar p)$ as above, $T$ is also bounded on the weighted space $L^p(\omega)$ for every weight $\omega \in {\mathbb A}_{p\over p_2} \cap RH_{(\frac{\bar p}{p})'}$.
\end{itemize}
\end{rem}

 As we have already seen in Proposition \ref{prop:para2},  in order to prove Theorem \ref{thm:extrap>2} we only have to study the $L^p$- boundedness of the operator
$$ T:= \mathcal{L}^{\alpha / 2} \Pi_g^1 (\mathcal{L}^{-\alpha/2}),$$
with
$$ \Pi_g^1(f) := \int_0^{+\infty} (I-P_t) \left[P_t g \cdot Q_tf \right] \, \frac{dt}{t}.$$
We recall that the kernel $K_{\alpha,g} $ is defined as
$$ K_{\alpha,g} (s,t):=Q_s \mathcal{L}^{\alpha/2} (Q_t \mathcal{L}^{-\alpha/2} (\, . \,) \cdot P_t g),$$
hence
$$ T  = \int_0^{+\infty} \int_0^t  K_{\alpha,g} (s,t) \, \frac{ds}{s} \frac{dt}{t}.$$
As a direct application of Lemma \ref{lem:orthogonality}, we have the following reduction.

\begin{lemma} \label{lemme}
Define the quadratic functional 
$$ U(f) := \left( \int_0^{+\infty} \left| \int_s^{+\infty} \tilde K_{\alpha,g} (s,t)[ \tilde Q_t f] \, \frac{dt}{t} \right|^2 \, \frac{ds}{s} \right)^{1 / 2},$$
where $\tilde Q_s := (Q_s)^{1/2}$ and $\tilde K(s,t) := \tilde Q_s \mathcal{L}^{\alpha/2} (\tilde Q_t \mathcal{L}^{-\alpha/2} (\, . \,) \cdot P_t g)$, so that 
$$ K_{\alpha,g} (s,t) = \tilde Q_s \tilde K_{\alpha,g} (s,t) \tilde Q_t.$$
Then for $p\in(2,{+\infty})$, the boundedness of $U$ on $L^p(M,\mu)$ implies the boundedness of $T$ on $L^p(M,\mu)$, and we have
$$ \|T\|_{p \to p} \lesssim \|U\|_{p \to p}.$$
\end{lemma}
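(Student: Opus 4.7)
The plan is to exploit the factorization $K_{\alpha,g}(s,t) = \tilde Q_s \tilde K_{\alpha,g}(s,t) \tilde Q_t$ together with a duality argument that mirrors the proof of Lemma \ref{lem:orthogonality}, but which carries only one factor of $\tilde Q_s$ (instead of the $\tilde Q_s^2 = Q_s$ appearing there). First I would substitute this factorization into $Tf = \int_0^{+\infty} \int_0^t K_{\alpha,g}(s,t)[f] \, \frac{ds}{s} \frac{dt}{t}$ and apply Fubini on the region $\{0 < s \leq t\}$ to rewrite
\[
Tf = \int_0^{+\infty} \tilde Q_s \Phi(s) \, \frac{ds}{s}, \qquad \Phi(s) := \int_s^{+\infty} \tilde K_{\alpha,g}(s,t)[\tilde Q_t f] \, \frac{dt}{t},
\]
so that by construction $U(f)(x) = \bigl(\int_0^{+\infty} |\Phi(s)(x)|^2 \, \frac{ds}{s}\bigr)^{1/2}$.

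Next I would argue by duality: for $h \in L^{p'}(M,\mu)$, the self-adjointness of $\tilde Q_s$ (which follows from self-adjointness of $\mathcal{L}$ and the functional-calculus definition $\tilde Q_s = c(s\mathcal{L})^{\PP/2} e^{-s\mathcal{L}/2}$) allows me to move $\tilde Q_s$ across, giving $\langle Tf, h\rangle = \int_0^{+\infty} \langle \Phi(s), \tilde Q_s h\rangle \, \frac{ds}{s}$. A pointwise Cauchy--Schwarz in $s$ followed by H\"older in $x$ then yields
\[
|\langle Tf, h\rangle| \leq \|U(f)\|_p \cdot \Bigl\| \bigl(\int_0^{+\infty} |\tilde Q_s h|^2 \,\tfrac{ds}{s}\bigr)^{1/2} \Bigr\|_{p'}.
\]
The second factor is, up to a multiplicative constant, the horizontal square function $g_{\PP/2}(h)$, which is bounded on $L^{p'}(M,\mu)$ for every $p' \in (1,+\infty)$ by Proposition \ref{prop:square-function}(i). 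Taking the supremum over all $h$ with $\|h\|_{p'} \leq 1$ then gives $\|Tf\|_p \lesssim \|U(f)\|_p$.

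The main obstacle I anticipate is not conceptual but technical, namely justifying rigorously the applicability of Fubini and the absolute convergence of the iterated integrals defining $Tf$ and $\Phi(s)$. The cleanest way to handle this is to first restrict to $f \in \calS^p$, where the decay of $Q_t f$ as $t\to 0^+$ and $t\to +\infty$ (as recalled just after the definition of $\Pi_g$ in Section \ref{para}) makes every integral absolutely convergent, and then extend to all of $\dot L^p_\alpha$ by the density of $\calS^p$ discussed after Definition \ref{def:calS}.
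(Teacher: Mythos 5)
Your proof is correct and takes essentially the same route as the paper: the paper states this as "a direct application of Lemma \ref{lem:orthogonality}", which would mean writing $Tf=\int_0^{+\infty}Q_sF_s\,\frac{ds}{s}$ with $F_s=\mathcal{L}^{\alpha/2}\int_s^{+\infty}(Q_t\mathcal{L}^{-\alpha/2}f\cdot P_tg)\frac{dt}{t}$, applying that lemma, and observing that $\tilde Q_sF_s=\Phi(s)$, while you re-run the duality/Cauchy--Schwarz argument that proves Lemma \ref{lem:orthogonality} after absorbing one $\tilde Q_s$ into $\Phi(s)$. The only cosmetic imprecision is identifying $\bigl(\int_0^{+\infty}|\tilde Q_sh|^2\frac{ds}{s}\bigr)^{1/2}$ with $g_{\PP/2}(h)$ on the nose: since $\tilde Q_s=c\,Q_{s/2}^{(\PP/2)}$, it is a constant multiple of $g_{\PP/2}(h)$ after the harmless change of variables $s\mapsto s/2$, which is exactly how the paper handles this point in the proof of Lemma \ref{lem:orthogonality}.
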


We are now going to prove Theorem \ref{thm:extrap>2-bis}, based on the extrapolation method in Lebesgue spaces of Proposition \ref{prop:extra-p>2}.

\begin{proof}[Proof of Theorem {\rm \ref{thm:extrap>2-bis}}]

According to Lemma \ref{lemme}, we only have to prove the boundedness of the square functional 
$$ U(f) := \left( \int_0^{+\infty} \left| \int_s^{+\infty} \tilde K_{\alpha,g} (s,t)[ \tilde Q_t f] \, \frac{dt}{t} \right|^2 \, \frac{ds}{s} \right)^{1 / 2},$$
which will be done by applying Proposition \ref{prop:extra-p>2}.

By Proposition \ref{prop:off1}, we already know that \eqref{eq:amontrer-f} holds for $\Pi_g^1$ and the same proof allows us to prove also \eqref{eq:amontrer-f} for the square function $U$ (which is even easier). It remains to check \eqref{eq:amontrer3}.

Fix a ball $B$ of radius $r$ and some integer $N\geq \PP$ satisfying $N\geq \nu+1$. If $\PP$ is large enough, then we may also consider
 $$\widehat{K}_{\alpha,g} (s,t):=\widehat{Q}_s \mathcal{L}^{\alpha/2} (\tilde Q_t \mathcal{L}^{-\alpha/2} (\, . \,) \cdot P_t g),$$
where $\widehat{Q}_s=(\tilde Q_s)^{1/2}$. (We may choose $D \in 4\N$ for convenience). Notice that then both $\widehat{K}_{\alpha,g}$ and $\widehat{Q}_s$ satisfy the same off-diagonal estimates as $K_{\alpha,g}$ and $\tilde Q_s$, respectively. By definition, we have
$$ \tilde K_{\alpha,g} = \widehat{Q}_s \widehat{K}_{\alpha,g}.$$
If $s\leq t\leq r^2$, then
\begin{equation} \label{eq:Qtr}
Q_t P_{r^2}^{(N)} = (t\mathcal{L})^{\PP} e^{-t \mathcal{L}} P_{r^2}^{(N)} = \left(\frac{2t}{r^2}\right)^{\PP} Q_{\frac{r^2}{2}} R_{r^2}^{(N)} e^{-t\mathcal{L}},
\end{equation}
where  $R_{r^2}^{(N)}e^{-\frac{r^2}{2}\mathcal{L}} = P_{r^2}^{(N)}$ as defined in Remark \ref{Pt-rem}, and $R_{r^2}^{(N)}$ satisfies the same off-diagonal estimates as $P_{r^2}^{(N)}$.
Consequently,
$$\tilde K_{\alpha,g} (s,t)[ \tilde Q_t P_{r^2}^{(N)} f] 
= \left(\frac{2t}{r^2}\right)^{\PP }  \widehat{Q}_s \widehat{K}_{\alpha,g} (s,r^2/2) [\tilde Q_{\frac{r^2}{2}} R_{r^2}^{(N)} e^{-t\mathcal{L}} f].$$

Then, from Lemma \ref{lem:off} we kow that $\widehat{Q}_s$ satisfies $L^{p_2}$-$L^p$ off-diagonal estimates at scale $r$ with an extra factor $\left(\frac{r^2}{s}\right)^{\frac{\nu}{2}(\frac{1}{p_2}-\frac{1}{p})}$. Moreover, Theorem \ref{thm:KLpLqII} yields that  $\widehat{K}_{\alpha,g} (s,r^2/2)$ also satisfies $L^{p_2}$-$L^{p_2}$ off-diagonal estimates at scale $r$ with a factor $\left(\frac{s}{r^2}\right)^{\frac{1-\alpha}{2}}$.  Lemma \ref{lem:off} implies $L^{p_2}$-$L^{p_2}$ off-diagonal estimates at scale $r$ for $\tilde Q_{\frac{r^2}{2}}$,  $R_{r^2}^{(N)}$ and  $e^{-t\mathcal{L}}$. All of these off-diagonal estimates are of an order which can be chosen as large as we want, up to choosing $\PP$ sufficiently large. By composing all these estimates according to Lemma \ref{lem:comp-OD}, it follows
for a large enough $\PP$, 
\begin{align*}
\left( \aver{B} | \tilde K_{\alpha,g} (s,t)[ \tilde Q_t P_{r^2}^{(N)} f]|^{ \bar p} \, d\mu \right)^{1/{ \bar p}} &  \lesssim \left(\frac{t}{r^2}\right)^{\PP} \left(\frac{s}{r^2}\right)^{\frac{1-\alpha}{2}-\frac{\nu}{2}(\frac{1}{p_2}-\frac{1}{\bar p})}   \left(\inf_{x\in B} {\mathcal M}(|f|^{p_2}) \right)^{1/p_2} \|g\|_\infty.
\end{align*}
First applying Minkowski's inequality and then  integrating over $s\leq t\leq r^2$ gives for $1-\alpha> \nu(\frac{1}{p_2}-\frac{1}{\bar p})$
\begin{align*}
\left( \aver{B} \left(\int_0^{r^2} \left| \int_s^{r^2}  | \tilde K_{\alpha,g} (s,t)[ \tilde Q_t P_{r^2}^{(N)} f]| \, \frac{dt}{t} \right|^2 \, \frac{ds}{s} \right)^{\bar p/2} \, d\mu \right)^{1 /  \bar p} \lesssim  \left(\inf_{x\in B} {\mathcal M}(|f|^{p_2}) \right)^{1/p_2} \|g\|_\infty .
\end{align*} 
If $ r^2\leq s\leq t$, then similarly as above, Lemma \ref{lem:off} and Theorem \ref{thm:KLpLqII} yield for $\bar p>p_2$ and for large enough $\PP$ (with $\tilde N$ an exponent eventualy varying from a line to the next one)
\begin{align*}
& \left( \aver{B} | \tilde K_{\alpha,g} (s,t)[ \tilde Q_t P_{r^2}^{(N)} f]|^{ \bar p} \, d\mu \right)^{1/{ \bar p}} \\
 & \hspace{2cm} \lesssim \left(\frac{t}{s}\right)^{\frac{\nu}{2}(\frac{1}{p_2}-\frac{1}{\bar p})} \sum_{j\geq 0} 2^{-j\tilde{N}} \left( \aver{2^j  \tilde B} |  \widehat{K}_{\alpha,g} (s,t)[ \tilde Q_t P_{r^2}^{(N)} f]|^{p_2} \, d\mu \right)^{1/p_2} \\
 & \hspace{2cm} \lesssim  \left(\frac{s}{t}\right)^{\frac{1-\alpha}{2}-\frac{\nu}{2}(\frac{1}{p_2}-\frac{1}{\bar p})}  \left[ \sum_{\ell \geq 0} 2^{-\ell\tilde{N}} \left(\aver{2^\ell \tilde B} | \tilde Q_t P_{r^2}^{(N)} f | ^{2} \, d\mu \right)^{1/ 2}\right] \|g\|_\infty \\
 & \hspace{2cm} \lesssim  \left(\frac{s}{t}\right)^{\frac{1-\alpha}{2}-\frac{\nu}{2}(\frac{1}{p_2}-\frac{1}{\bar p})}  \left[ \sum_{\ell \geq 0} 2^{-\ell \tilde{N}} \left(\aver{2^\ell \tilde B} | \tilde Q_t f | ^{2} \, d\mu \right)^{1/ 2}\right] \|g\|_\infty,
\end{align*}
where $\tilde B= \frac{\sqrt{t}}{r}B$ is the dilated ball, and we used $L^2$ off diagonal estimates for $P_{r^2}^{(N)}$ in the last step.

By Minkowski's inequality, integrating over $s \in (0,t)$, and H\"older's inequality, we get for $1-\alpha> \nu(\frac{1}{p_2}-\frac{1}{\bar p})$
\begin{align*}
&\left( \int_{r^2}^{{+\infty}} \left| \int_{s}^{+\infty} \left( \aver{B} | \tilde K_{\alpha,g} (s,t)[\tilde Q_t P_{r^2}^{(N)} f]|^{ \bar p} \, d\mu \right)^{1/ { \bar p}} \,  \frac{dt}{t} \right|^2 \, \frac{ds}{s}\right)^{1 / 2}\\ 
& \hspace{2cm} \lesssim \left[ \sum_{\ell\geq 0} 2^{-\ell\tilde{N}}  \left(\int_{r^2}^{+\infty} \aver{2^\ell \tilde B} | \tilde Q_t f | ^2  \, \frac{d\mu dt}{t} \right)^{1/ 2}\right] \|g\|_\infty \\
& \hspace{2cm} \lesssim \left(\inf _{x\in B} {\mathcal M}[{\mathcal G}_{N/2}(f)^2](x)\right)^{1 / 2} \|g\|_\infty \\
& \hspace{2cm} \lesssim \left(\inf _{x\in B} {\mathcal M}[{\mathcal G}_{N/2}(f)^{p_2}](x)\right)^{1 / p_2} \|g\|_\infty,
\end{align*}		
where ${\mathcal G}_{N/2}$ is the conical square function associated to $\tilde Q_t$, see Proposition \ref{prop:square-function}.

\medskip
\noindent
If $ s\leq r^2 \leq t$ then by Lemma \ref{lem:off}, for $\bar p>p_2$ 
\begin{align*}
 & \left( \aver{B} | \tilde K_{\alpha,g} (s,t)[ \tilde Q_t P_{r^2}^{(N)} f]|^{\bar p} \, d\mu \right)^{1/{\bar p}} \\
 & \hspace{2cm} \lesssim \left(\frac{r}{\sqrt{s}}\right)^{\nu(\frac{1}{p_2}-\frac{1}{\bar p})} \sum_{j\geq 0} 2^{-j\tilde{N}} \left( \aver{2^j B} | \widehat{K}_{\alpha,g} (s,t)[ \tilde Q_t P_{r^2}^{(N)} f]|^{p_2} \, d\mu \right)^{1/p_2}.
 \end{align*}
By repeating the same argument as before, we obtain
\begin{align*}
& \left( \int_0^{{+\infty}} \left| \int_{s}^{+\infty} \left( \aver{B} | \tilde K_{\alpha,g} (s,t)[\tilde Q_t P_{r^2}^{(N)} f]|^{\bar p} \, d\mu \right)^{1/{\bar p}} \,  \frac{dt}{t} \right|^2 \, \frac{ds}{s}\right)^{1 / 2} \\
& \hspace{2cm} \lesssim \left(\inf _{x\in B} {\mathcal M}[{\mathcal G}_{N/2}(f)^{p_2}](x)\right)^{1 / p_2} \|g\|_\infty , 
\end{align*}
as soon as $1-\alpha>\nu(\frac{1}{p_2}-\frac{1}{\bar p})$.

\medskip
\noindent
Gathering the above estimates, we obtain that the square function $U$ satisfies for ${\bar p}>p_2$ with $1-\alpha> \nu(\frac{1}{p_2}-\frac{1}{\bar p})$
$$ \left( \aver{B} |U(P_{r^2}^{(N)} f) |^{\bar p} \, d\mu \right)^{1/ \bar p} \lesssim \|g\|_\infty \left(\inf_{x\in B} {\mathcal M}[|{\mathcal G}_{N/2}(f)|^{p_2}]\right)^{1/ p_2} + \|g\|_\infty \left(\inf_{x\in B} {\mathcal M}(|f|^{p_2}) \right)^{1/ p_2},$$
where ${\mathcal G}_{N/2}$ is the conical square version. Since the conical square function is bounded on every $L^p$-space (see Proposition \ref{prop:square-function}), we may then extrapolate by using  Proposition \ref{prop:extra-p>2}. We deduce that $U$ is bounded on $L^p$ for every $p\in(p_2,\bar p)$. This holds for every $\bar p>p_2$ and $\alpha\in (0,1)$ such that $1-\alpha> \nu(\frac{1}{p_2}-\frac{1}{\bar p})$ so we conclude that $U$ is bounded on $L^p$ for every $p>p_2$ such that $1-\alpha> \nu(\frac{1}{p_2}-\frac{1}{p})$, which then implies the $\dot L^p_\alpha$-boundedness of the paraproduct $\Pi_g$.
\end{proof}

\section{Boundedness of the paraproducts for $p\geq p_0$ under $(G_{p_0})$ and $(DG_{2})$ via extrapolation} \label{sec p>2}

In this section, we prove stronger results under the additional assumption of a De Giorgi property. The proofs are, as in the previous section, based on $L^p$ extrapolation techniques.

\begin{theorem} \label{thm:tent-extrap-large} Let $(M,d,\mu, {\mathcal E})$  be a doubling metric  measure   Dirichlet space with a ``carr\'e du champ'' satisfying \eqref{due}.  Let $2<p_0 \leq {+\infty}$ and  assume $(G_{p_0})$ with $({DG}_{2,\kappa})$ for some $\kappa\in(0,1)$. Then the paraproduct defined in \eqref{def:paraproduct} is bounded from $L^\infty(M,\mu) \times  \dot{L}^p_{\alpha}(M,\mathcal{L},\mu)$ to $\dot{L}^p_{\alpha}(M,\mathcal{L},\mu)$ for every $\alpha\in(0,1-\kappa)$ and $p\in(2,{+\infty})$. We have
\[
	\norm{\Pi_g(f)}_{p,\alpha} \lesssim \norm{f}_{p,\alpha} \norm{g}_{\infty}.
\]
Therefore  $A(\alpha,p)$ holds.
\end{theorem}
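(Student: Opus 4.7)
By the product decomposition \eqref{eq:paraproduit-decomposition} and Corollary \ref{cor:A}, together with Proposition \ref{prop:para2}, it suffices to establish the $L^p(M,\mu)$-boundedness of the operator $T:=\mathcal{L}^{\alpha/2}\,\Pi_g^1\,\mathcal{L}^{-\alpha/2}$ for every $p\in(2,+\infty)$ and $\alpha\in(0,1-\kappa)$. By Theorem \ref{gaza} we already have $L^p$-boundedness for $p\in[2,p_0)$ and every $\alpha\in(0,1)$, so the issue is the range $p\geq p_0$. Applying Lemma \ref{lemme}, the problem further reduces to proving the $L^p$-boundedness of the square function
$$U(f):=\left(\int_0^{+\infty}\left|\int_s^{+\infty}\tilde K_{\alpha,g}(s,t)[\tilde Q_t f]\,\frac{dt}{t}\right|^2\frac{ds}{s}\right)^{1/2}.$$

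The plan is then to invoke the extrapolation result Proposition \ref{prop:extra-p>2} with $p_2=2$ and with $\bar p$ taken arbitrarily close to $+\infty$. The off-diagonal condition \eqref{eq:amontrer-f} for $U$ at scale $\sqrt{t}$ is obtained exactly as in Proposition \ref{prop:off1}, by composing the $L^2$-off-diagonal bounds on $K_{\alpha,g}$ (coming from Theorem \ref{thm:KLpLqII} applied with $p_2=2$, which uses only $L^2$ Davies--Gaffney) with those of $Q_t^{(N)}$. What is new is the reverse-H\"older-type estimate
$$\left(\aver{B}|U(P_{r^2}^{(N)}f)|^{\bar p}\,d\mu\right)^{1/\bar p}\lesssim \|g\|_\infty\Bigl(\inf_{x\in B}\mathcal{M}\bigl[|\mathcal{G}_{N}f|^2+|f|^2\bigr](x)\Bigr)^{1/2},$$
which I would obtain by feeding the much sharper $L^2$-$L^\infty$ off-diagonal bound for $\tilde K_{\alpha,g}(s,t)$ from Theorem \ref{thm:KLpLqIII} into the same three-region scheme used in the proof of Theorem \ref{thm:extrap>2-bis}. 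The bound carries the factor $(s/t)^{(1-\alpha)/2}(t/s)^{\kappa'/2}$ for any $\kappa'\in(\kappa,1)$, while $\tilde Q_t$ and $P_{r^2}^{(N)}$ contribute additional $L^2$-off-diagonal decay that can be absorbed into a Hardy--Littlewood maximal function of $|\mathcal{G}_N f|^2$ or $|f|^2$.

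Concretely, I would split the integral into the regions $s\leq t\leq r^2$, $r^2\leq s\leq t$, and $s\leq r^2\leq t$. In each, after pushing the averages on $B$ through the off-diagonal kernel bounds (using \eqref{eq:Qtr}-type rewritings to convert $Q_tP_{r^2}^{(N)}$ into kernels with an extra $(t/r^2)^{\PP}$ or $(r^2/t)^{\PP}$ damping for $\PP$ large), one is left with a scalar integral of $(s/t)^{(1-\alpha)/2}(t/s)^{\kappa'/2}=(s/t)^{(1-\alpha-\kappa')/2}$ over $s\leq t$. This is integrable precisely when $\alpha+\kappa'<1$, which, by letting $\kappa'\downarrow\kappa$, is exactly the hypothesis $\alpha<1-\kappa$. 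The residual operator that acts on $f$ is either the identity or (when $t\geq r^2$) the conical square function $\mathcal{G}_N$ from Proposition \ref{prop:square-function}\,(iv), both sublinear and bounded on every $L^p$ with $p\in(1,+\infty)$.

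With this, Proposition \ref{prop:extra-p>2} applies with $p_2=2$ and $\bar p$ arbitrarily close to $+\infty$, yielding the $L^p$-boundedness of $U$ (hence of $T$) for every $p\in(2,+\infty)$, and the product decomposition then gives $A(\alpha,p)$. The main obstacle is precisely the bookkeeping of the kernel bound $(s/t)^{(1-\alpha)/2}(t/s)^{\kappa'/2}$: the key gain over Section \ref{sec=gp} is that, thanks to the De Giorgi property, the loss in the reverse-H\"older step becomes $(t/s)^{\kappa'/2}$, independent of the integrability exponent; this is what lets one push the extrapolation all the way to $\bar p=+\infty$ and reach every $p\in(2,+\infty)$, and what fixes the sharp threshold $\alpha<1-\kappa$.
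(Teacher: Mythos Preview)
Your proposal is correct and follows essentially the same approach as the paper's proof: reduction to the square function $U$ via Lemma~\ref{lemme}, extrapolation through Proposition~\ref{prop:extra-p>2} with $p_2=2$, and verification of the reverse-H\"older hypothesis \eqref{eq:amontrer3} using the $L^2$--$L^\infty$ kernel bound from Theorem~\ref{thm:KLpLqIII}, whose factor $(s/t)^{(1-\alpha-\kappa')/2}$ forces $\alpha<1-\kappa$. The paper in fact obtains an honest $L^\infty(B)$ bound on $U(P_{r^2}^{(N)}f)$ (so $\bar p=\infty$ directly rather than ``arbitrarily close to $+\infty$''), and it only distinguishes the two regimes $s\le t\le r^2$ and $r^2\le t$ rather than three, but these are cosmetic differences.
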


As a consequence, we obtain our main result of this section.

\begin{theorem} \label{thm:tent-extrap-large-3} Let $(M,d,\mu, {\mathcal E})$  be a doubling metric  measure   Dirichlet space with a ``carr\'e du champ'' satisfying \eqref{due}. Let $2<p_0 \leq {+\infty}$ and  assume $(G_{p_0})$ with $({DG}_{2,\kappa})$ for some $\kappa\in(0,1)$ (and also $\kappa<\frac{\nu}{p_0}$, else the result is implied by Theorem \ref{thm:extrap>2}). Then for $p\in(1,{+\infty})$ the paraproduct defined in \eqref{def:paraproduct} is bounded from $L^\infty(M,\mu) \times  \dot{L}^p_{\alpha}(M,\mathcal{L},\mu)$ to $\dot{L}^p_{\alpha}(M,\mathcal{L},\mu)$ for every $\alpha\in(0,\gamma_p)$ 
with
$$\gamma_p:=\left\{ \begin{array}{l}
1, \quad \textrm{if $p\leq p_0$} \vsp \\
1-\kappa\left(1 -\frac{p_0}{p}\right), \quad \textrm{if $p\geq p_0$.} 
\end{array}
\right. $$
We have
\[
	\norm{\Pi_g(f)}_{p,\alpha} \lesssim \norm{f}_{p,\alpha} \norm{g}_{\infty}.
\]
Therefore  $A(\alpha,p)$ holds.
\end{theorem}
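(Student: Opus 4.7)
The plan is to reduce to previously proven cases via complex interpolation. The range $p \leq p_0$ is already covered: Theorem \ref{thm:tent-extrap-small} handles $p \in (1,2)$, Theorem \ref{gaza}(ii) handles $p \in [2, p_0)$, and the endpoint $p = p_0$ follows from Theorem \ref{thm:extrap>2}, since its constraint $1 - \alpha > \nu(\frac{1}{p_0} - \frac{1}{p})$ reduces at $p = p_0$ to $\alpha < 1$. So I only need to treat $p > p_0$.

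For $p > p_0$ and a target exponent $\alpha < \gamma_p = 1 - \kappa(1 - p_0/p)$, the idea is to interpolate between two endpoint estimates on the linear operator $f \mapsto \Pi_g(f)$ (with $g$ fixed): boundedness at $(p_0, \alpha_0)$ for any $\alpha_0 \in (0,1)$, given by Theorem \ref{gaza}(ii); and boundedness at $(p_1, \alpha_1)$ for some $p_1 \in (p, +\infty)$ and $\alpha_1 \in (0, 1-\kappa)$, given by Theorem \ref{thm:tent-extrap-large}. The interpolation parameter $\eta \in (0,1)$ determined by $\frac{1}{p} = \frac{1-\eta}{p_0} + \frac{\eta}{p_1}$ is $\eta = \frac{1 - p_0/p}{1 - p_0/p_1}$, which tends to $1 - p_0/p$ as $p_1 \to +\infty$. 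Consequently, the maximum admissible interpolated regularity $(1-\eta)\alpha_0 + \eta \alpha_1$ approaches $(1-\eta) + \eta(1-\kappa) = 1 - \eta\kappa \to \gamma_p$. Hence, for any $\alpha < \gamma_p$, choosing $p_1$ sufficiently large and then picking $\alpha_0 < 1$, $\alpha_1 < 1 - \kappa$ so that $(1-\eta)\alpha_0 + \eta\alpha_1 = \alpha$ matches all constraints.

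To actually carry out the interpolation, I plan to apply Stein's theorem on the analytic family $T_z := \mathcal{L}^{z/2} \Pi_g \mathcal{L}^{-z/2}$, defined on the strip between $\Re z = \alpha_1$ and $\Re z = \alpha_0$. On each boundary line $\Re z = \alpha_j$, one writes $T_{\alpha_j + i\beta} = \mathcal{L}^{i\beta/2} T_{\alpha_j} \mathcal{L}^{-i\beta/2}$ and invokes Proposition \ref{prop:imaginary} together with the $H^\infty$ calculus of $\mathcal{L}$ on $L^{p_j}$ (available under \eqref{dnu} and \eqref{due}) to bound the imaginary powers with at most polynomial growth in $|\beta|$, which is admissible for Stein's theorem. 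This yields boundedness of $T_\alpha$ on $L^p(M,\mu)$, equivalent to boundedness of $\Pi_g$ on $\dot{L}^p_\alpha(M,\mathcal{L},\mu)$ with norm $\lesssim \|g\|_\infty$. The algebra property $A(\alpha,p)$ then follows from Corollary \ref{cor:A} via the product decomposition \eqref{eq:paraproduit-decomposition}. The main obstacle lies in the interpolation setup: verifying analyticity and adequate boundary behaviour of $T_z$ in the strip. Once the parameter arithmetic above is matched to $\gamma_p$, the remaining work is a routine application of Stein's theorem using tools already collected in Section 2.
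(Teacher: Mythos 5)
Your proposal is correct and follows essentially the same strategy as the paper: reduce to Stein's complex interpolation applied to the analytic family $T_z := \mathcal{L}^{z/2}\Pi_g\mathcal{L}^{-z/2}$, with boundary control on the lines $\Re z = \alpha_j$ obtained by conjugating with imaginary powers $\mathcal{L}^{i\beta/2}$ and invoking Proposition \ref{prop:imaginary} for their polynomial growth in $|\beta|$. The only cosmetic difference is that the paper takes its first endpoint from Theorem \ref{gaza} with exponents $q_0 \in (2,p_0)$ and lets $q_0 \to p_0^-$ in the limit, whereas you anchor that endpoint exactly at $p_0$ via Theorem \ref{thm:extrap>2}; both lead to the same $\gamma_p$, and your explicit parameter arithmetic (tracking $\eta \to 1 - p_0/p$ as $p_1 \to +\infty$) is a welcome elaboration of what the paper leaves implicit.
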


We postpone the proof of Theorem \ref{thm:tent-extrap-large} 
to the end of this section, and we now prove Theorem \ref{thm:tent-extrap-large-3} as a consequence.

\begin{proof}[Proof of Theorem $\ref{thm:tent-extrap-large-3}$]  The case $p<p_0$ has already been studied in Theorem \ref{gaza}, so we only focus on the case $p\in[p_0,{+\infty})$.
Fix  $g\in L^\infty$. For $z$ a complex number with $\Re(z)\in(0,1)$,  define
$$ T^z:= \mathcal{L}^{z / 2} \Pi_g (\mathcal{L}^{-z/2}).$$

Theorem \ref{gaza} shows that $T^{\alpha}$ is $L^{p}$-bounded for every $\alpha \in(0,1)$ and every $p\in(2,p_0)$. Then by combining with imaginary powers of $\mathcal{L}$, which are $L^{p}$-bounded (see Proposition \ref{prop:imaginary}), we deduce that for every $\alpha\in(0,1)$ and $\beta\in \R$, $T^{\alpha+i\beta}$ is $L^{p}$-bounded and
$$ \sup_{\beta\in \R}  (1+|\beta|)^{-s}  \| T^{\alpha+i \beta} \|_{p\to p} \lesssim C^0_\alpha,$$
for some constant $C^0_\alpha$ and any $s>\nu$.

Moreover, Theorem \ref{thm:tent-extrap-large} shows that $T^{\alpha}$ is $L^{p}$-bounded for every $\alpha \in(0,1-\kappa)$ and every $p\in (2,{+\infty})$. Then by using  Proposition \ref{prop:imaginary} we deduce that, for every $\alpha\in(0,1-\kappa)$ and $\beta\in \R$, $T^{\alpha+i\beta}$ is $L^{p}$-bounded and
$$ \sup_{\beta\in \R} (1+|\beta|)^{-s} \| T^{\alpha+i \beta} \|_{p\to p} \lesssim C^1_\alpha,$$
for some constant $C^1_\alpha$ and any $s>\nu$.

We then conclude the proof by applying Stein's complex interpolation method (\cite[Theorem 1]{Stein}) to the family $(T^z)_z$.
\end{proof}

\begin{proof}[Proof of Theorem $\ref{thm:tent-extrap-large}$]

By interpolating assumption $(G_{p_0})$ with $L^2$-$L^2$ Davies-Gaffney estimates, \eqref{eq:gradient} holds for every $p_2=p_1\in(2,p_0)$.
We reproduce the same reasoning as done for Theorem \ref{thm:extrap>2-bis}, relying on the extrapolation result Proposition \ref{prop:extra-p>2}.

So as previously, according to Lemma \ref{lemme}, we only have to prove the boundedness of the quadratic functional 
$$ U(f) := \left( \int_0^{+\infty} \left| \int_s^{+\infty} \tilde K_{\alpha,g} (s,t)[ \tilde Q_t f] \, \frac{dt}{t} \right|^2 \, \frac{ds}{s} \right)^{1 / 2},$$
which will be done by applying Proposition \ref{prop:extra-p>2}.

Fix a ball $B$ of radius $r$, and consider $U[P_{r^2}^{(N)}f]$ for some large enough integer $N\geq \PP/2$.

\medskip
\noindent
If $s\leq t\leq r^2$, then as in \eqref{eq:Qtr}
$$Q_t P_{r^2}^{(N)} 
 = \left(\frac{2t}{r^2}\right)^{\PP} Q_{\frac{r^2}{2}} R_{r^2}^{(N)} e^{-t\mathcal{L}},$$ 
and consequently
$$ \tilde K_{\alpha,g} (s,t)[\tilde Q_t P_{r^2}^{(N)}f] = \left(\frac{2t}{r^2}\right)^{\PP} \tilde K_{\alpha,g} (s,r^2/2)[ R_{r^2}^{(N)} e^{-t\mathcal{L}}f].$$
Hence, combining what was done for Theorem \ref{thm:extrap>2-bis} and Theorem \ref{thm:KLpLqIII} gives 
$$
 \|\tilde K_{\alpha,g} (s,t)[ \tilde Q_t P_{r^2}^{(N)} f]\|_{L^{\infty}(B)}
		\lesssim   \left(\frac{t}{r^2}\right)^{\PP} \left(\frac{s}{r^2}\right)^{\frac{1-\alpha-\kappa'}{2}}   \left(\inf_{x\in B} {\mathcal M}(|f|^2)(x) \right)^{1/ 2} \|g\|_\infty. $$
By integrating over $s\leq t\leq r^2$, one obtains
\begin{align*}
\left( \int_0^{r^2} \left| \int_s^{r^2} \sup_B | \tilde K_{\alpha,g} (s,t)[  \tilde Q_t P_{r^2}^{(N)} f]| \, \frac{dt}{t} \right|^2 \, \frac{ds}{s} \right)^{1 / 2} \lesssim  \left(\inf_{x\in B} {\mathcal M}(|f|^2)(x) \right)^{1/2} \|g\|_\infty .
\end{align*} 

\medskip
\noindent
If $ r^2\leq t$, then  Theorem \ref{thm:KLpLqIII} (with $\kappa'\in(\kappa,1)$) similarly yields
$$
\sup_{B} | \tilde K_{\alpha,g} (s,t)[ e^{-r^2\mathcal{L}} f]|
		\lesssim \left(\frac{s}{t}\right)^{\frac{1-\alpha-\kappa'}{2}}  \left[ \sum_{\ell \geq 0} 2^{-\ell M} \left(\aver{2^\ell \tilde B_{\sqrt{t}}} | \tilde Q_t  f | ^2 \, d\mu \right)^{1/ 2}\right] \|g\|_\infty , $$
		where $\tilde B_{\sqrt{t}}= \frac{\sqrt{t}}{r}B$ is the dilated ball and $M$ a large enough integer.
By integrating for $s\leq t$ and using Cauchy-Schwarz inequality, we get as soon as $1-\alpha-\kappa'>0$
\begin{align*}
&\left( \int_0^{{+\infty}} \left| \int_{s}^{+\infty} \sup_B | \tilde K_{\alpha,g} (s,t)[e^{-r^2\mathcal{L}} f]|  \, \frac{dt}{t} \right|^2 \, \frac{ds}{s}\right)^{1 / 2}\\ &
\lesssim \left[ \sum_{\ell\geq 0} 2^{-\ell M}  \left(\int_{r^2}^{+\infty} \aver{2^\ell \tilde B} | \tilde Q_t f | ^2  \, \frac{d\mu dt}{t} \right)^{1/ 2}\right] \|g\|_\infty \\
& \lesssim \left(\inf _{x\in B} {\mathcal M}[|{\mathcal G}_{N/2}(f)|^2](x)\right)^{1 / 2} \|g\|_\infty , 
\end{align*}		
where ${\mathcal G}_{N/2}(f)$ is the conical square function associated with $\tilde Q_tf$, see Proposition \ref{prop:square-function}.

\medskip
\noindent
Conclusion: by combining the previous estimates we obtain that the square function $U$ satisfies
$$ \|U(P_{r^2}^{(N)} f) \|_{L^\infty(B)} \lesssim \|g\|_\infty \left(\inf_{x\in B} {\mathcal M}[|{\mathcal G}_{N/2}(f)|^2](x)\right)^{1/ 2} + \|g\|_\infty \left(\inf_{x\in B} {\mathcal M}(|f|^2)(x) \right)^{1/ 2},$$
as soon as $1-\alpha>\kappa$ (in which case there exists $\kappa'<\kappa$ with $1-\alpha-\kappa'>0$).

We can then apply the extrapolation result Proposition \ref{prop:extra-p>2}.  Since ${\mathcal G}_{N/2}(f)$ is bounded on $L^p$ according to Proposition \ref{prop:square-function}, we obtain that $U$ is bounded and therefore $T$ on $L^p(M,\mu)$ for every $p\in(2,{+\infty})$. All these computations require $1-\kappa>\alpha$, which is the main condition.
\end{proof}

\section{The case $p>2$ via oscillation} \label{sec:osci}

\begin{definition} \label{def:S} Let $\alpha>0$  and $\rho\in [1,\infty)$.
For $f\in L^1_{\loc}(M,\mu)$, $\alpha>0$ and $x\in M$, we consider the quadratic functional
$$ S_{\alpha}^\rho f(x):=\left(\int_{0}^{{+\infty}} \left[\frac{1}{r^{\alpha}} \rho\text{-}\osc_{B(x,r)}(f)\right]^2\frac{dr}{r}\right)^{{1/2}},
$$
where for a ball $B$, $ \rho\text{-}\osc_{B}$ denotes the $L^\rho$-oscillation defined by
$$ \rho\text{-}\osc_{B}(f):= \left(\aver{B} \left|f- \aver{B}f \, d\mu \right|^\rho \, d\mu \right)^{1/ \rho}.$$
\end{definition}

We are going to prove the two following results.

\begin{theorem} \label{thm:osci}
Let $(M,d,\mu, {\mathcal E})$  be a doubling metric  measure   Dirichlet space with a ``carr\'e du champ'' satisfying \eqref{due}. Assume   $(H^\eta)$ for some $\eta\in(0,1]$. Let $\alpha\in(0,\eta)$ and $p\in(1,{+\infty})$.
Then the paraproduct defined in \eqref{def:paraproduct} is bounded from $L^\infty(M,\mu) \times  \dot{L}^p_{\alpha}(M,\mathcal{L},\mu)$ to $\dot{L}^p_{\alpha}(M,\mathcal{L},\mu)$.  We have
\[
	\norm{\Pi_g(f)}_{p,\alpha} \lesssim \norm{f}_{p,\alpha} \norm{g}_{\infty}.
\]
It follows that  $A(\alpha,p)$ holds.
Moreover the space $\dot{L}^p_{\alpha}(M,\mathcal{L},\mu)$ is characterized by $S_\alpha$-functionals: for $1\leq \rho<\min(2,p)$ and $\alpha\in(0,\eta)$ we have
$$ \|f\|_{\dot L^p_\alpha} \simeq \|S^{\rho}_\alpha(f)\|_{p}.$$
In particular, $E(\alpha,p)$ holds.

\end{theorem}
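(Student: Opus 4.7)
The argument revolves around the oscillation characterization $\|f\|_{\dot L^p_\alpha}\simeq \|S^\rho_\alpha f\|_p$, from which both $A(\alpha,p)$ and the paraproduct bound follow via the pointwise Leibniz-type inequality
\[
\rho\text{-}\osc_B(fg)\lesssim \|g\|_\infty\,\rho\text{-}\osc_B(f)+\|f\|_\infty\,\rho\text{-}\osc_B(g),
\]
obtained by decomposing $fg-f_Bg_B=(f-f_B)g+f_B(g-g_B)$ and taking $\rho$-averages on $B$.

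For the direct inequality $\|S^\rho_\alpha f\|_p\lesssim \|\mathcal{L}^{\alpha/2}f\|_p$, I would set $g:=\mathcal{L}^{\alpha/2}f$ and use the Calder\'on reproducing formula of Proposition \ref{prop:reproducing} to write
\[
f=c\int_0^{+\infty} t^{\alpha/2}\,Q_t^{(\PP-\alpha/2)}g\,\frac{dt}{t},
\]
up to the harmless $N(\mathcal{L})$ contribution on which $S^\rho_\alpha$ and $\mathcal{L}^{\alpha/2}$ both vanish. For a fixed ball $B(x,r)$, split the integral at $t=r^2$. On small scales $t\leq r^2$, the oscillation is controlled pointwise by $\mathcal{M}(|g|^\rho)^{1/\rho}(x)$ via the $L^\rho$ off-diagonal bounds of Lemma \ref{lem:off}. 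On large scales $t\geq r^2$, the oscillation estimate $(\overline{H}^\lambda_{1,\infty})$ of Proposition \ref{prop:bcf1} (for any $\alpha<\lambda<\eta$) furnishes a decay factor $(r/\sqrt{t})^\lambda$ weighted by dyadic averages of $|g|$. Inserting these bounds into the definition of $S^\rho_\alpha f(x)$ and carrying out the $t$- and $r$-integrals (which converge thanks to $\lambda>\alpha$) gives a pointwise control by the Hardy-Littlewood maximal function of $g$; the $L^{p/\rho}$-boundedness of $\mathcal{M}$ then concludes.

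For the reverse inequality $\|\mathcal{L}^{\alpha/2}f\|_p\lesssim \|S^\rho_\alpha f\|_p$, I would invoke a Balakrishnan-type subordination formula $\mathcal{L}^{\alpha/2}f=c_\alpha\int_0^{+\infty}(I-e^{-t\mathcal{L}})f\,\frac{dt}{t^{1+\alpha/2}}$ combined with the conservation property \eqref{cons}, which yields
\[
(I-e^{-t\mathcal{L}})f(x)=\int_M(f(x)-f(y))\,p_t(x,y)\,d\mu(y).
\]
Using the Gaussian upper bound \eqref{UE} and a dyadic annular decomposition around $x$, the integrand is dominated by $\sum_{k\geq 0}2^{-kM}\aver{B(x,2^k\sqrt{t})}|f-f(x)|\,d\mu$. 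A telescoping argument linking $|f(x)-f_{B(x,r)}|$ to $\rho\text{-}\osc_{B(x,s)}(f)$ at comparable scales, together with Jensen's inequality, reduces the pointwise bound on $\mathcal{L}^{\alpha/2}f(x)$ to a maximal/dyadic variant of $S^\rho_\alpha f(x)$, and the $L^{p/\rho}$-boundedness of $\mathcal{M}$ closes the estimate.

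With the equivalence in hand, the Leibniz inequality for oscillations gives $A(\alpha,p)$ and $E(\alpha,p)$ at once. For the sharper bilinear paraproduct statement, the same strategy applied to $\Pi_g(f)=\int_0^{+\infty}Q_tf\cdot P_tg\,\frac{dt}{t}$, factoring out $\|P_tg\|_\infty\lesssim \|g\|_\infty$ in the oscillation estimate, yields $\|S^\rho_\alpha(\Pi_gf)\|_p\lesssim \|g\|_\infty\,\|\mathcal{L}^{\alpha/2}f\|_p$. The main technical obstacle is the reverse inequality: the kernel representation of $(I-e^{-t\mathcal{L}})f$ produces the \emph{pointwise} difference $f-f(x)$ rather than a ball-mean oscillation, so a careful comparison via telescoping over dyadic balls is required to extract precisely an $S^\rho_\alpha$-type control, and the strict inequality $\alpha<\eta$ built into the hypothesis $(H^\eta)$ is essential for the convergence of the scale integrals at both ends.
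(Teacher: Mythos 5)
You invert the paper's logical order. The paper first proves the paraproduct bound \emph{directly} via the pointwise kernel estimate of Proposition~\ref{prop:L2Lp-osci}, the reduction to a square function through Lemma~\ref{lemme}, and Fefferman--Stein, and only then establishes the oscillation characterization separately (Propositions~\ref{lemma:salpha} and~\ref{prop:osci}); you instead prove the characterization first and derive the algebra property and the paraproduct bound from it. Your split-at-$t=r^2$ argument for $\|S^\rho_\alpha f\|_p\lesssim\|\mathcal{L}^{\alpha/2}f\|_p$ using the Calder\'on reproducing formula, $(\overline{H}^\lambda_{1,\infty})$, Schur/Hardy in the scale variable, and Fefferman--Stein is essentially Proposition~\ref{prop:osci} in continuous rather than dyadic notation, so that part is fine. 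Routing the paraproduct bound through a localized Leibniz rule for oscillations (controlling both $\|g\|_\infty\,\rho\text{-}\osc_B(Q_tf)$ and $\|Q_tf\|_{L^\infty(B)}\,\rho\text{-}\osc_B(P_tg)$, each with decay $(r/\sqrt t)^\lambda$ for $t>r^2$) is also viable, though the paper's square-function argument is more modular and does not require the characterization.

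The genuine gap is in the hard direction $\|\mathcal{L}^{\alpha/2}f\|_p\lesssim\|S^\rho_\alpha f\|_p$, on which your whole chain depends (you need it to turn control of $\|S^\rho_\alpha(\Pi_gf)\|_p$ into control of $\|\Pi_gf\|_{p,\alpha}$, and to obtain $E(\alpha,p)$). From the Balakrishnan formula together with the conservation property, the Gaussian bound, and the telescoping of dyadic averages you obtain a pointwise bound of the form
\[
|\mathcal{L}^{\alpha/2}f(x)|\;\lesssim\;\int_0^{+\infty}s^{-\alpha}\,\rho\text{-}\osc_{B(x,s)}(f)\,\frac{ds}{s},
\]
which is the $L^1$-in-$s$ analogue of $S^\rho_\alpha f(x)$. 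This quantity is \emph{not} pointwise dominated by the quadratic functional $S^\rho_\alpha f(x)=\bigl(\int_0^{+\infty}[s^{-\alpha}\rho\text{-}\osc_{B(x,s)}(f)]^2\,\frac{ds}{s}\bigr)^{1/2}$, and naive Cauchy--Schwarz in $s$ produces a divergent constant, so taking $L^p$ norms of your pointwise bound does not close the estimate. One has to preserve the quadratic structure from the outset: either use a quadratic Calder\'on reproducing formula for $\mathcal{L}^{\alpha/2}f$ and Cauchy--Schwarz between the two $Q_t$-factors, or dualize by pairing $\mathcal{L}^{\alpha/2}f$ with $h\in L^{p'}$, expand $h=\int_0^{+\infty}Q_t^{(N)}h\,\frac{dt}{t}$, apply Cauchy--Schwarz in $t$, and then invoke the $L^{p'}$-boundedness of the resulting horizontal square function of $h$. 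The paper sidesteps this by citing \cite{CRT,BBR} for Proposition~\ref{lemma:salpha}; if you want a self-contained argument, that duality (or quadratic CRF) step is the missing ingredient.
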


\begin{theorem} \label{thm:osci2} Let $(M,d,\mu, {\mathcal E})$  be a doubling metric  measure   Dirichlet space with a ``carr\'e du champ''. 
Assume the combination $(G_{p_0})$ with $(P_{p_0})$ for some $p_0\in(2,{+\infty})$. Then the space $\dot{L}^p_{\alpha}(M,\mathcal{L},\mu)$ is characterized by $S_\alpha^\rho$-functionals: for $\rho<2$ closed enough to $2$, every $p\in(2,p_0)$ and $\alpha\in(0,1)$ we have
$$ \|f\|_{\dot L^p_\alpha} \simeq \|S^{\rho}_\alpha(f)\|_{p}.$$
In particular, $E(\alpha,p)$ holds.
\end{theorem}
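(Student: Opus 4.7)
The plan is to establish the norm equivalence as two separate inequalities. Since by \cite[Proposition 2.1]{BCF1} the combination $(G_{p_0})+(P_{p_0})$ implies the Gaussian upper estimate \eqref{due}, every tool developed in Sections 2--3 (Calderón reproducing formula, kernel bounds, Fefferman--Stein, square functionals of Proposition \ref{prop:square-function}) is at our disposal. The last assertion $E(\alpha,p)$ is by definition equivalent to the claimed norm equivalence. Throughout, constants (elements of $N(\mathcal{L})$) play no role since oscillations kill them, so we may work modulo $N_p(\mathcal{L})$ and freely use Calderón reproducing on $\overline{\Ran_p(\mathcal{L})}$.

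For the lower bound $\|\mathcal{L}^{\alpha/2}f\|_p\lesssim \|S_\alpha^\rho f\|_p$, I plan to apply Lemma \ref{lem:orthogonality} to the ($t$-independent) function $\mathcal{L}^{\alpha/2}f$ and absorb the fractional power into the kernel, obtaining
$$\|\mathcal{L}^{\alpha/2}f\|_p\lesssim \left\|\left(\int_0^{+\infty} t^{-\alpha}|Q_t^{(M)} f|^2 \,\tfrac{dt}{t}\right)^{1/2}\right\|_p$$
for $M$ large enough. Since $Q_t^{(M)}$ annihilates constants (by conservation \eqref{cons} and functional calculus) and its kernel enjoys the Gaussian bound of Lemma \ref{prop:kernel-est}, splitting $M$ into annuli centred at $x$, using the mean of $f$ on each ball as a subtracted constant, and telescoping yields the pointwise estimate $|Q_t^{(M)} f(x)|\lesssim \sum_{k\geq 0} 2^{-kM'}\rho\text{-}\osc_{B(x,2^k\sqrt{t})}(f)$ with $M'$ arbitrary (for $M$ large). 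Squaring, interchanging sum and integral, and performing the change of variable $u=2^k\sqrt{t}$ in each term produces the pointwise majoration by $S_\alpha^\rho f(x)$; taking $L^p$ norms concludes. Notice that this argument needs only \eqref{due} and works for every $\rho\geq 1$.

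For the upper bound $\|S_\alpha^\rho f\|_p\lesssim \|\mathcal{L}^{\alpha/2}f\|_p$ --- the main difficulty --- I will set $g=\mathcal{L}^{\alpha/2}f$, write $f=c_N\int_0^{+\infty} s^{\alpha/2}Q_s^{(N)}g\,\tfrac{ds}{s}$ via Calderón, and, for each ball $B(x,r)$, decompose the integral at the threshold $s=r^2$. On the range $s\leq r^2$ I estimate crudely, using $\rho\text{-}\osc_{B(x,r)}(Q_s^{(N)}g)\leq 2(\aver_{B(x,r)}|Q_s^{(N)}g|^\rho\,d\mu)^{1/\rho}$. On the range $s\geq r^2$, since $\rho\leq p_0$, I use $\rho\text{-}\osc\leq p_0\text{-}\osc$ followed by $(P_{p_0})$ and then bound the $L^{p_0}$-average of $\sqrt{s}\nabla Q_s^{(N)}g$ through the $L^{p_0}$-off-diagonal estimates at scale $\sqrt{s}$ obtained by composing (via Lemma \ref{lem:comp-OD}) $L^{p_0}$-$L^{p_0}$ off-diagonal bounds for $\sqrt{s}\nabla e^{-s\mathcal{L}/2}$ (interpolating $(G_{p_0})$ with Proposition \ref{prop:Davies-Gaffney}) and off-diagonal bounds for $Q_{s/2}^{(N)}$. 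Reversing orders of integration (Hardy) in the resulting double integral over $r$ and $s$, one arrives at a pointwise control of $S_\alpha^\rho f(x)$ by an expression of the form $\mathcal{M}^{1/\rho}[|Q_\cdot^{(N)}g|^\rho](x)+\mathcal{M}^{1/p_0}[|\sqrt{\cdot}\nabla Q_\cdot^{(N)}g|^{p_0}](x)$ in a quadratic $L^2\big(\tfrac{ds}{s}\big)$ sense; the $L^p$-norm is then finished by the Fefferman--Stein inequality of Proposition \ref{prop:FS} together with the $L^p$-boundedness of horizontal and conical square functions (Proposition \ref{prop:square-function}) and of $G_N$ in the range $p\in(2,p_0)$ (Proposition \ref{prop:K}).

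The hard point will be closing the $L^p$ estimate uniformly for $p\in(2,p_0)$ and $\alpha\in(0,1)$: this is the source of the requirement that $\rho$ be chosen close to $2$. One needs simultaneously $\rho<2$ (so that Proposition \ref{prop:FS} is available at exponent $\rho$ for every $p>2$), $\rho\leq p_0$ (for Poincaré), and a sharp control of the loss $(\sqrt{s}/r)^{\nu/p_0}$ coming from passing from the ball $B(x,r)$ to $B(x,\sqrt{s})$ when averaging the gradient; the latter requires an $\alpha$-independent margin between $\rho$ and $2$, which is achieved by taking $\rho$ sufficiently close to $2$. All remaining restrictions ($p\in(2,p_0)$, $\alpha\in(0,1)$) then follow from the availability of the vertical square function of Proposition \ref{prop:K} in this range.
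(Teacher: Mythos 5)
Your plan for the easy direction $\|\mathcal{L}^{\alpha/2}f\|_p\lesssim\|S_\alpha^\rho f\|_p$ matches the paper's Proposition \ref{lemma:salpha} (quoted from \cite{CRT,BBR}), and your overall architecture for the hard direction --- Calder\'on decomposition of $f$, a local regularity estimate scale by scale, Hardy's inequality, and closure by Fefferman--Stein plus square-function bounds --- is structurally the same as the paper's (Proposition \ref{prop:poinc2} built on the template of Proposition \ref{prop:osci}). But there is a genuine gap in your treatment of the range $r\leq\sqrt{s}$, and it is precisely the step you flag as the ``hard point''.

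The problem is the \emph{exponent of the gradient average} that comes out of Poincar\'e. Applying plain $(P_{p_0})$ to $Q_s^{(N)}g$ on $B(x,r)$ yields
$p_0\text{-}\osc_{B(x,r)}(Q_s^{(N)}g)\lesssim r\bigl(\aver_{B(x,r)}|\nabla Q_s^{(N)}g|^{p_0}\,d\mu\bigr)^{1/p_0}$,
an $L^{p_0}$-average of the gradient. This is deadly in two ways. First, if you carry the exponent $p_0$ all the way to the end, the Fefferman--Stein inequality (Proposition \ref{prop:FS}) requires the maximal exponent to lie strictly below $\min(p,2)=2$, whereas $p_0>2$; so $\mathcal M_{p_0}$ cannot be summed in $L^p$ for $p\in(2,p_0)$. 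Second, if you try to reduce the exponent back to $\rho<2$ by passing to the natural scale $\sqrt{s}$ and using $L^\rho$-$L^{p_0}$ off-diagonal bounds for the gradient, the enlargement from $B(x,r)$ to $B(x,\sqrt{s})$ costs the factor $(\sqrt{s}/r)^{\nu/p_0}$ you mention. Combined with the Poincar\'e factor $r/\sqrt{s}$, the decay you end up feeding into the Schur argument is $(r/\sqrt{s})^{1-\nu/p_0}$, which restricts $\alpha$ to $(0,\,1-\nu/p_0)$, not to $(0,1)$. Making $\rho$ closer to $2$ does \emph{not} help here: the loss depends only on $\nu$ and $p_0$, not on $\rho$, so your proposed fix cannot recover the full range of $\alpha$.

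The paper sidesteps both difficulties at once by invoking the Meyers-type self-improvement of \cite[Theorem 3.4 and Remark 3.5]{BF2}: under $(G_{p_0})$ together with $(P_{p_0})$, one has, for every $\rho\in(1,2)$ and every ball $B_r$,
$$\Bigl(\aver{B_r}\bigl|h-\aver{B_r}h\,d\mu\bigr|^{p_0}d\mu\Bigr)^{1/p_0}\lesssim r\Bigl(\aver{2B_r}|\nabla h|^{\rho}\,d\mu\Bigr)^{1/\rho}+r^{2}\,\|\mathcal{L}h\|_{L^\infty(4B_r)}.$$
The crucial features are that the gradient average on the right is at exponent $\rho<2$ (so Fefferman--Stein applies after converting to $\mathcal M_\rho$), and that it lives on $2B_r$, a ball of comparable radius --- so it is controlled by $\mathcal M_\rho[\nabla h](x)$ directly, with no enlargement to $\sqrt{s}$ and no loss $(\sqrt{s}/r)^{\nu/p_0}$. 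This is not a consequence of $(P_{p_0})$ alone: $(P_\rho)$ with $\rho<2$ is strictly stronger than $(P_{p_0})$ for $p_0>2$, and the quoted inequality is extracted from $(G_{p_0})+(P_{p_0})$ by a Caccioppoli--Meyers argument, not by H\"older. You would need to import that self-improvement (or prove an analogue for $Q_s^{(N)}g$ in place of $(t\mathcal{L})e^{-t\mathcal{L}}f$) before your decomposition can close in the claimed ranges $p\in(2,p_0)$, $\alpha\in(0,1)$.
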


\begin{rem} Under $(G_{p_0})$, in the considered range $p\in(2,p_0)$ and $\alpha\in(0,1)$, it is already known that the paraproducts are bounded in the Sobolev space and so $A(\alpha,p)$ holds (see Theorem \ref{gaza}).
\end{rem}

Let us observe that for two test functions $f,g$, every ball $B$ and every exponent $\rho\geq 1$, one has
$$ \rho\text{-}\osc_{B}(fg) \leq \rho\text{-}\osc_{B}(f)\|g\|_\infty + \|f\|_\infty\rho\text{-}\osc_{B}(g),$$
and for every Lipschitz function $F$ 
$$  \rho\text{-}\osc_{B}(F(f)) \lesssim \|F\|_{\textrm{Lip}} \rho\text{-}\osc_{B}(f).$$
Consequently, as soon as the Sobolev norm $\dot L^p_{\alpha}$ is characterized by a quadratic functional $S_\alpha^\rho$ for some $\rho\in[1,{+\infty}]$,  property $A(\alpha,p)$ is also satisfied and the following sharp chain rule.

\begin{coro} \label{cor:chainrule} Under the assumptions of Theorems $\ref{thm:osci}$ or $\ref{thm:osci2}$,  for  $p$ and $\alpha$ in their respective ranges, and every Lipschitz function $F$, the map $ f\rightarrow F(f)$ is bounded in $\dot{L}^p_{\alpha}(M,\mathcal{L},\mu)$ and
$$ \|F(f)\|_{\dot L^p_\alpha} \lesssim \|F\|_{\textrm{Lip}} \|f\|_{\dot L^p_\alpha}.$$
\end{coro}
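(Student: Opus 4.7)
The plan is to combine the characterization $\|f\|_{\dot L^p_\alpha} \simeq \|S_\alpha^\rho(f)\|_p$ provided by Theorems \ref{thm:osci} and \ref{thm:osci2} with a pointwise domination of $\rho\text{-}\osc_B(F(f))$ by $\rho\text{-}\osc_B(f)$ when $F$ is Lipschitz. Since the functional $S_\alpha^\rho$ depends on $f$ only through its $L^\rho$-oscillations on balls, such a pointwise control will propagate directly to the Sobolev norm after taking an $L^p$-norm.

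First, for every ball $B \subset M$, every $\rho \geq 1$ and every $f\in L^1_{\loc}(M,\mu)$, I will establish
$$\rho\text{-}\osc_B(F(f)) \lesssim \|F\|_{\mathrm{Lip}} \cdot \rho\text{-}\osc_B(f).$$
The argument is elementary: setting $c_B := F\bigl(\aver{B} f\,d\mu\bigr)$, the triangle inequality in $L^\rho(B,d\mu/|B|)$ yields
$$\rho\text{-}\osc_B(F(f)) \leq 2\left(\aver{B} |F(f) - c_B|^\rho\, d\mu\right)^{1/\rho},$$
while the Lipschitz estimate $|F(f(x)) - F(\aver{B} f\,d\mu)| \leq \|F\|_{\mathrm{Lip}}\,|f(x) - \aver{B} f\,d\mu|$, integrated over $x \in B$, bounds the right-hand side by $2\|F\|_{\mathrm{Lip}}\,\rho\text{-}\osc_B(f)$.

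Inserting this inequality into Definition \ref{def:S} gives, for a.e.\ $x \in M$, the pointwise domination
$$S_\alpha^\rho(F(f))(x) \lesssim \|F\|_{\mathrm{Lip}} \cdot S_\alpha^\rho(f)(x).$$
Taking $L^p$-norms and applying the norm equivalence $\|\cdot\|_{\dot L^p_\alpha} \simeq \|S_\alpha^\rho(\cdot)\|_p$ coming from Theorem \ref{thm:osci} or Theorem \ref{thm:osci2} (with $\rho$ chosen admissibly: $\rho \in [1,\min(2,p))$ in the first regime, and $\rho<2$ close enough to $2$ in the second), we obtain
$$\|F(f)\|_{\dot L^p_\alpha} \simeq \|S_\alpha^\rho(F(f))\|_p \lesssim \|F\|_{\mathrm{Lip}}\,\|S_\alpha^\rho(f)\|_p \simeq \|F\|_{\mathrm{Lip}}\,\|f\|_{\dot L^p_\alpha}.$$

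The statement involves no genuine obstacle: the whole estimate is driven by the pointwise oscillation inequality, which was in fact already observed in the paragraph preceding the corollary. The only points to attend to are the admissible range of $\rho$ (so as to invoke the correct characterization) and the routine verification, by density of the test class and the above pointwise bound, that $F(f)$ lies in $\dot L^p_\alpha$ whenever $f$ does.
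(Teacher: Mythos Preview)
Your proof is correct and follows essentially the same approach as the paper: the paragraph immediately preceding the corollary already records the oscillation inequality $\rho\text{-}\osc_B(F(f)) \lesssim \|F\|_{\mathrm{Lip}}\,\rho\text{-}\osc_B(f)$, and the corollary is stated as an immediate consequence of this together with the characterization $\|f\|_{\dot L^p_\alpha} \simeq \|S_\alpha^\rho(f)\|_p$. Your write-up simply makes the routine steps explicit.
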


We will see in Proposition \ref{prop:nec}, that such a characterisation of Sobolev norms (through quadratic functional) cannot hold in a systematic way, since some of them require the Poincar\'e inequality $(P_2)$.

Here the sharpness refers to the fact that we only require a Lipschitz control of the nonlinearity $F$. We refer the reader to Section \ref{sec:chainrule} for a chain rule under weaker assumptions on the ambient space $(M,d,\mu, {\mathcal E})$ but more regular nonlinearities $F$.

We are going to simultaneously prove Theorems  \ref{thm:osci} and \ref{thm:osci2}  in the two following sections: in Section \ref{bosci}  the statements concerning paraproducts  and in Section \ref{quadra} the statements concerning the functionals $S_\alpha^\rho$.  Theorem \ref{thm:osci} is the combination of Propositions \ref{prop:L2Lp-osci} and \ref{prop:osci}, whereas Theorem \ref{thm:osci2} follows from Proposition \ref{prop:poinc2}.

\subsection{Boundedness of paraproducts via oscillation}\label{bosci}

We first recall that according to Lemma \ref{lemme}, to prove the boundedness of the paraproduct it is enough to prove the $L^p$-boundedness of the square function
$$ U(f) := \left( \int_0^{+\infty} \left| \int_s^{+\infty} \tilde K_{\alpha,g} (s,t)[ \tilde Q_t f] \, \frac{dt}{t} \right|^2 \right)^{1 / 2},$$
where $\tilde Q_s := (Q_s)^{1/2}$ and $\tilde K(s,t) := \tilde Q_s \mathcal{L}^{\alpha/2} (\tilde Q_t \mathcal{L}^{-\alpha/2} (\, . \,) \cdot P_t g)$, so that
$$ K_{\alpha,g} (s,t) = \tilde Q_s \tilde K_{\alpha,g} (s,t) \tilde Q_t.$$

\begin{proposition} \label{prop:L2Lp-osci} 
Let $(M,d,\mu, {\mathcal E})$  be a doubling metric  measure   Dirichlet space with a ``carr\'e du champ'' satisfying \eqref{due}. Assume $(H^\eta)$ for some $\eta\in(0,1]$.
Then for every $\alpha<\lambda<\eta$, the kernel $\tilde K_{\alpha,g}$ satisfies for $s\leq t$ the pointwise estimate
$$ \tilde K_{\alpha,g}(s,t) [h](x_0) \lesssim \|g\|_{\infty}  \left(\frac{s}{t}\right)^{\frac{\lambda-\alpha}{2}}  {\mathcal M}(h)(x_0) \qquad \forall x_0\in M.$$
\end{proposition}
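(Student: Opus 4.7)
The plan is to combine the cancellation $\tilde Q_s \mathcal{L}^{\alpha/2}\cdot 1 = 0$ with the H\"older-type oscillation control provided by $(H^\eta)$, via a dyadic decomposition around $x_0$ at scale $\sqrt{s}\leq\sqrt{t}$.

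First, using $\tilde Q_r = c(r\mathcal{L})^{D/2} e^{-r\mathcal{L}/2}$, the factorizations
\[
\tilde Q_s \mathcal{L}^{\alpha/2} = c_1\, s^{-\alpha/2}\, Q_{s/2}^{(N_1)}, \qquad \tilde Q_t \mathcal{L}^{-\alpha/2} = c_2\, t^{\alpha/2}\, Q_{t/2}^{(N_2)},
\]
with $N_1 = (D+\alpha)/2$ and $N_2 = (D-\alpha)/2$ large (by choice of $D$), reorganize $\tilde K_{\alpha,g}(s,t)[h] = c_1 s^{-\alpha/2} Q_{s/2}^{(N_1)}(uv)$ with $u := c_2 t^{\alpha/2} Q_{t/2}^{(N_2)}(h)$ and $v := P_t g$. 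Lemma \ref{prop:kernel-est} provides Gaussian-type bounds of arbitrary order for the kernel $k(x_0,\cdot)$ of $Q_{s/2}^{(N_1)}$ at scale $\sqrt{s}$, and the conservation property \eqref{cons} yields $Q_{s/2}^{(N_1)}(1)=0$. Therefore
\[
\tilde K_{\alpha,g}(s,t)[h](x_0) = c_1 s^{-\alpha/2} \int k(x_0,y)\bigl[v(y)(u(y)-u(x_0))+u(x_0)(v(y)-v(x_0))\bigr]\, d\mu(y).
\]

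The key oscillation estimates come from $(H^\eta)$: Proposition \ref{prop:bcf1} upgrades $(H^\eta)$ to $\bar H_{1,\infty}^\lambda$ for every $\lambda < \eta$. Applied to the representations $u = c_2' t^{\alpha/2} e^{-t\mathcal{L}/4}[Q_{t/4}^{(N_2)}(h)]$ and $v = e^{-t\mathcal{L}/2}[R_t^{(D)}(g)]$ (the latter via Remark \ref{Pt-rem}(iii) and functional calculus), one obtains, for every ball $B(x_0,r)$ with $r \leq \sqrt{t}$,
\[
\infty\text{-}\osc_{B(x_0,r)}(v) \lesssim (r/\sqrt{t})^{\lambda}\,\|g\|_\infty, \qquad \infty\text{-}\osc_{B(x_0,r)}(u) \lesssim (r/\sqrt{t})^{\lambda}\, t^{\alpha/2}\, \mathcal{M}(h)(x_0).
\]
The first uses $\|R_t^{(D)}(g)\|_\infty \lesssim \|g\|_\infty$ (Remark \ref{rem:kr}); the second uses Lemma \ref{lem:off} applied to $Q_{t/4}^{(N_2)}(h)$ to dominate the $L^1$-averages appearing in $\bar H_{1,\infty}^\lambda$ by $\mathcal{M}(h)(x_0)$ via doubling. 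Similarly $|u(x_0)| \lesssim t^{\alpha/2} \mathcal{M}(h)(x_0)$ and $\|v\|_\infty \lesssim \|g\|_\infty$.

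Finally, partition $M$ into annuli $C_\ell = B(x_0, 2^\ell\sqrt{s})\setminus B(x_0, 2^{\ell-1}\sqrt{s})$ (with $C_0 = B(x_0,\sqrt{s})$). For $\ell$ with $2^\ell\sqrt{s}\leq\sqrt{t}$, the oscillation estimates above together with the Gaussian decay of $k$ (giving an effective $e^{-c4^\ell}$ after averaging on $C_\ell$ by doubling) bound the contribution of $C_\ell$ by $e^{-c4^\ell}\,(2^\ell\sqrt{s}/\sqrt{t})^{\lambda}\, t^{\alpha/2}\,\mathcal{M}(h)(x_0)\,\|g\|_\infty$. For $\ell$ with $2^\ell\sqrt{s} > \sqrt{t}$, the trivial pointwise bound $|uv|\lesssim t^{\alpha/2}\mathcal{M}(h)(\cdot)\|g\|_\infty$ combined with the Gaussian decay $e^{-c4^\ell}$ (which dwarfs any polynomial factor of $t/s$) yields an even smaller contribution. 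Summing the resulting geometric series in $\ell$ extracts a factor $(s/t)^{\lambda/2}$, which together with the prefactor $s^{-\alpha/2}$ gives the announced $(s/t)^{(\lambda-\alpha)/2}\,\mathcal{M}(h)(x_0)\|g\|_\infty$. The main technical obstacle is the transfer of $(H^\eta)$ from $e^{-t\mathcal{L}}$ to the operators $P_t$ and $Q_t^{(N)}$ applied to $g$ and $h$; this is resolved by the factorizations above, which isolate one semigroup factor to which $(H^\eta)$ applies directly, while the remaining factor has a kernel with Gaussian-type decay at scale $\sqrt{t}$, so that its contribution to the oscillation is absorbed into $\mathcal{M}(h)(x_0)$ or $\|g\|_\infty$.
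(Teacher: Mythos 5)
Your proof is correct in spirit and arrives at the same estimate, but the route is genuinely different from the paper's. The paper subtracts the \emph{average} $\bar u:=\aver{B_{\sqrt{s}}}\tilde Q_t\mathcal{L}^{-\alpha/2}h\,d\mu$ (a constant) from $u$ and splits $\tilde Q_s\mathcal{L}^{\alpha/2}[uv]=\tilde Q_s\mathcal{L}^{\alpha/2}[(u-\bar u)v]+\bar u\cdot \tilde Q_s\mathcal{L}^{\alpha/2}[v]$ by linearity alone; no conservation is invoked. The second piece is then killed not by an oscillation estimate for $v=P_tg$ but by the algebraic identity $Q_s(s\mathcal{L})^{\alpha/2}P_t=(s/t)^{\PP+\alpha/2}e^{-s\mathcal{L}}(t\mathcal{L})^{\PP+\alpha/2}P_t$, which produces the much stronger factor $(s/t)^{\PP+\alpha/2}$ at once. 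Only the first piece requires $(\overline H^{\lambda}_{1,\infty})$, and only for $u$. You, instead, subtract the \emph{pointwise value} $u(x_0)v(x_0)$ and rely on the cancellation $Q_{s/2}^{(N_1)}(\Eins)=0$, which indeed follows from the conservation property \eqref{cons} (for non-integer $N_1$ one can argue via the integral representation $Q_t^{(N)}=c\int_0^\infty (s\mathcal{L})^K e^{-(s+t)\mathcal{L}}(t/s)^N\,ds/s$ with $K>N$ an integer, for which termwise conservation gives $0$). This leads to the symmetric split $v(u-u(x_0))+u(x_0)(v-v(x_0))$ and thus requires a H\"older bound for \emph{both} $u$ and $v$; your derivation of $\infty\text{-}\osc_{B(x_0,r)}(P_tg)\lesssim (r/\sqrt t)^\lambda\|g\|_\infty$ via $P_t=e^{-t\mathcal{L}/2}R_t^{(\PP)}$ and Remark \ref{rem:kr} is correct. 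The paper's version is a touch cleaner because it sidesteps the conservation-based cancellation and gets a better decay on the $\bar u$ term "for free"; yours is more symmetric and maybe more transparent. One imprecision to flag: $N_1=(\PP+\alpha)/2$ is not an integer, so Lemma \ref{prop:kernel-est} gives only \emph{polynomial} off-diagonal decay of order $N_1$ for the kernel of $Q_{s/2}^{(N_1)}$, not Gaussian decay $e^{-c4^\ell}$ as you claim. This is harmless: with $\PP$ large one has $\int_{C_\ell}|k(x_0,y)|\,d\mu(y)\lesssim 2^{\ell(\nu-2N_1)}$ with $N_1$ arbitrarily large, and for the annuli with $2^\ell\sqrt s>\sqrt t$ one should use the pointwise bound $|u(y)|\lesssim t^{\alpha/2}\bigl(1+d(x_0,y)/\sqrt t\bigr)^\nu\,\mathcal{M}(h)(x_0)$ (rather than $\mathcal{M}(h)(y)$, which is not controlled by $\mathcal{M}(h)(x_0)$); the resulting sum over $\ell\gtrsim \tfrac12\log_2(t/s)$ is $\lesssim (s/t)^{N_1-\nu/2-\alpha/2}\lesssim (s/t)^{(\lambda-\alpha)/2}$ once $N_1>(\nu+\lambda)/2$, which is guaranteed by the choice of $\PP$.
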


\begin{proof} Let $x_0 \in M$. We have 
\begin{align*}
	\tilde K_{\alpha,g} (s,t)[h] (x_0)= \left(\frac{s}{t}\right)^{-\alpha/2} \tilde Q_s (s\mathcal{L})^{\alpha/2} (\tilde Q_t (t\mathcal{L})^{-\alpha/2} h \cdot P_t g) (x_0).
\end{align*}

Consider $B_{\sqrt{s}}$ the ball of centre $x_0$ and radius $\sqrt{s}$. Then by linearity 
\begin{align*}
&\left| Q_s (s\mathcal{L})^{\alpha/ 2}  (Q_t (t\mathcal{L})^{-\alpha/2} h \cdot P_t g) (x_0)\right|  \\ \lesssim &\left|Q_s (s\mathcal{L})^{\alpha / 2}  \left[\left(Q_t (t\mathcal{L})^{-\alpha/2} h - \aver{B_{\sqrt{s}}}Q_t (t\mathcal{L})^{-\alpha/2} h \, d\mu \right) \cdot P_t g\right] (x_0)\right| \\
& + \left| \aver{B_{\sqrt{s}}}Q_t (t\mathcal{L})^{-\alpha/2} h \, d\mu \right| \left|Q_s (s\mathcal{L})^{\alpha / 2} [P_t g] (x_0)\right|,
\end{align*}
which gives us two terms $I$ and $II$.

The second term is the easiest, since $Q_s (s\mathcal{L})^{\alpha / 2} P_t = (\frac{s}{t})^{\PP+ \alpha/2} e^{-s\mathcal{L}} (t\mathcal{L})^{\PP+\alpha/2} P_t$, so due to the $L^\infty$-boundedness of $e^{-s\mathcal{L}} (t\mathcal{L})^{\PP+\alpha/2} P_t$, we deduce that
\begin{align*}
 II & \lesssim \left( \frac{s}{t} \right)^{\PP+\alpha/2} \left| \aver{B_{\sqrt{s}}}Q_t (t\mathcal{L})^{-\alpha/2} h \, d\mu \right|  \|g\|_{\infty} \\
 & \lesssim \left( \frac{s}{t} \right)^{\PP+\alpha/2} {\mathcal M}[h](x_0) \|g\|_{\infty},
\end{align*}
where we used Lemma \ref{lem:off} (item 1) in the last step.

For the first term $I$, we use the $L^{p_0}$-$L^{\infty}$ off-diagonal estimates for $Q_s$ (Lemma \ref{prop:kernel-est}) and we get
\begin{align}
I \lesssim \|g\|_{\infty} \sum_{\ell \geq 0} 2^{-\ell(\PP+\alpha/2)}  |2^\ell B_{\sqrt{s}}|^{-\frac{1}{p_0}} \left\|Q_t (t\mathcal{L})^{-\alpha/2} h - \aver{B_{\sqrt{s}}}Q_t (t\mathcal{L})^{-\alpha/2} h\right\|_{L^{p_0}(2^\ell B_{\sqrt{s}})}. \label{eq:oscil2}
\end{align}
Since $(H^\eta)$ self-improves into $(\overline{H}_{1,\infty}^\lambda)$ for $\lambda\in(\alpha,\eta)$ (see item 2 of Proposition \ref{prop:bcf1}), one has with $Q_t(t\mathcal{L})^{-\alpha/2}= 2^{\PP} e^{-\frac{t}{2}\mathcal{L}} Q_{t/2}^{(\PP-\alpha/2)}$ that for every integer $k\in\{0,..,\ell\}$
\begin{align*}
  \infty\text{-}\osc_{2^k B_{\sqrt{s}}} (Q_t (t\mathcal{L})^{-\alpha/2} h) & \lesssim 2^{k\lambda} \left(\frac{s}{t}\right)^{\lambda / 2} \sup_{j\geq 0} \left( \aver{B(x_0,2^j \sqrt{t})} |Q_{t/2}^{(\PP-\alpha/2)} h| \, d\mu \right) \\
  & \lesssim 2^{k\lambda} \left(\frac{s}{t}\right)^{\lambda / 2} \sup_{j\geq 0} \left(\aver{2^j \tilde B_{\sqrt{t}}} |h| \, d\mu\right) \\
  & \lesssim 2^{k\lambda} \left(\frac{s}{t}\right)^{\lambda / 2} \calM[h](x_0)
\end{align*}
where we have used Lemma \ref{prop:kernel-est} to  estimate pointwise the kernel of $Q_{t/2}^{(\PP-\alpha/2)}$.

Since
\begin{align*}
  &\left\|Q_t (t\mathcal{L})^{-\alpha/2} h - \aver{B_{\sqrt{s}}}Q_t (t\mathcal{L})^{-\alpha/2} h\right\|_{L^{p_0}(2^\ell B_{\sqrt{s}})} \\ 
  & \qquad \lesssim \sum_{k=0}^\ell  \left\|Q_t (t\mathcal{L})^{-\alpha/2} h - \aver{2^k B_{\sqrt{s}}}Q_t (t\mathcal{L})^{-\alpha/2} h\right\|_{L^{p_0}(2^k B_{\sqrt{s}})} \\
 & \qquad \lesssim \sum_{k=0}^\ell |2^\ell B_{\sqrt{s}}|^{1/p_0} \infty\text{-}\osc_{2^k B_{\sqrt{s}}} (Q_t (t\mathcal{L})^{-\alpha/2} h),
\end{align*}
it follows that
\begin{align*}
  \left\|Q_t (t\mathcal{L})^{-\alpha/2} h - \aver{B_{\sqrt{s}}}Q_t (t\mathcal{L})^{-\alpha/2} h\right\|_{L^{p_0}(2^\ell B_{\sqrt{s}})} \lesssim 2^{\ell \lambda} |2^\ell B_{\sqrt{s}}|^{1/ {p_0}} \left(\frac{s}{t}\right)^{\lambda / 2}  \calM [h](x_0). 
\end{align*}
Finally, since $\PP>\nu+1$ (so $\PP+\alpha/2>1>\lambda$)
\begin{align*}
I & \lesssim \left(\frac{s}{t}\right)^{{\lambda / 2}} \|g\|_{\infty}    \left( \sum_{\ell \geq 0} 2^{-\ell(\PP+\alpha/2)} \ell 2^{\ell \lambda} \right) \calM [h](x_0) \\
& \lesssim \left(\frac{s}{t}\right)^{{\lambda / 2}} \|g\|_{\infty} \calM [h](x_0).
\end{align*}
 Hence
$$ \tilde K_{\alpha,g}(s,t) (h)(x_0) \lesssim \left(\frac{s}{t}\right)^{\frac{\lambda-\alpha}{2}} \|g\|_{\infty} \calM [h](x_0).$$
\end{proof}

We can now conclude the proof of  the statements about paraproducts in Theorem \ref{thm:osci}. 

\begin{proof}[Proof of Theorem $\ref{thm:osci}$] We use Proposition \ref{prop:L2Lp-osci}, so that we have the following pointwise bound of the square function $U$ (as soon as $\alpha<\lambda$):
\begin{align*}
U(f) & \lesssim  \|g\|_{\infty}  \left( \int_0^{+\infty} \left| \int_s^{+\infty}    \left(\frac{s}{t}\right)^{\frac{\lambda-\alpha}{2}}  {\mathcal M}[\tilde Q_t f] \, \frac{dt}{t} \right|^2 \right)^{1 / 2} \\
& \lesssim \|g\|_{\infty}  \left( \int_0^{+\infty} \left|  {\mathcal M}[\tilde Q_t f]   \right|^2 \, \frac{dt}{t} \right)^{1 / 2} .
\end{align*}
By using the Fefferman-Stein inequality (see Proposition \ref{prop:FS}) and the $L^p$-boundedness of the horizontal square functionals (see Proposition \ref{prop:square-function}), we deduce that $U$ is $L^p$-bounded, which implies (see Lemma \ref{lemme}) the $\dot L^p_\alpha$-boundedness of the paraproduct.
\end{proof}

\subsection{Characterisation of Sobolev norms via $S_\alpha$ }\label{quadra}

The following statement can be found in \cite[Section 2.1.1]{CRT} and \cite[Section 5.2]{BBR}. The proof works in our setting.

\begin{proposition} \label{lemma:salpha} Assume \eqref{due}. Suppose $p,\rho\in(1,{+\infty})$, $\alpha>0$, and let $f \in L^1_{\loc}(M,\mu)$.
If $S_\alpha^\rho (f)\in L^p(M,\mu)$, then $f\in \dot{L}^p_{\alpha}(M,\mathcal{L},\mu)$ and
$$ \|f\|_{\dot L^p_\alpha} \lesssim \|S_\alpha^\rho (f)\|_{p}.$$
\end{proposition}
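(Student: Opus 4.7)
The plan is to reduce the proposition to a pointwise estimate of the form
\begin{equation*}
\left(\int_0^{+\infty}\left|t^{-\alpha/2}Q_t^{(K)}f(x)\right|^2\frac{dt}{t}\right)^{1/2}\lesssim S_\alpha^\rho f(x),\qquad x\in M,
\end{equation*}
for a sufficiently large parameter $K=K(\alpha,\nu)$, and then combine it with a lower $L^p$ square-function bound for $\mathcal{L}^{\alpha/2}f$. By Definition~\ref{def:calS} and the density of $\calS^p+N(\mathcal{L})$ in $\dot L^p_\alpha$ observed after Proposition~\ref{prop:reproducing}, it is enough to treat $f\in\calS^p$, so that every integral below is absolutely convergent and $\mathcal{L}^{\alpha/2}f\in L^p$ makes sense.

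For the square-function lower bound, I use the Calder\'on reproducing formula \eqref{calde} in the polarised form $h=c'\int_0^{+\infty}Q_{t/2}^{(N)}Q_{t/2}^{(N)}h\,\frac{dt}{t}$, valid for $h\in\overline{\Ran_p(\mathcal{L})}$ and any integer $N\geq 1$. Testing against $g\in L^{p'}(M,\mu)$, using self-adjointness of $Q_{t/2}^{(N)}$ on $L^2$, Cauchy--Schwarz in $(x,t)$, and the $L^{p'}$-boundedness of the horizontal square function $g_N$ from Proposition~\ref{prop:square-function}(i), I obtain $\|h\|_p\lesssim\|g_N(h)\|_p$. Applied to $h=\mathcal{L}^{\alpha/2}f$ together with the identity $Q_t^{(N)}\mathcal{L}^{\alpha/2}=c_{N,\alpha}\,t^{-\alpha/2}Q_t^{(N+\alpha/2)}$ coming from functional calculus, and setting $K:=N+\alpha/2$, this reduces the proposition to
\begin{equation*}
\left\|\left(\int_0^{+\infty}t^{-\alpha}|Q_t^{(K)}f|^2\,\frac{dt}{t}\right)^{1/2}\right\|_p\lesssim \|S_\alpha^\rho f\|_p,
\end{equation*}
with $K$ arbitrarily large.

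The core step is the pointwise control of $|Q_t^{(K)}f(x)|$ by $\rho$-oscillations of $f$. Since $\mathcal{L}1=0$ (by conservation \eqref{cons} and functional calculus), one has $Q_t^{(K)}1=0$, hence for every constant $c$
\begin{equation*}
Q_t^{(K)}f(x)=\int_M k_t^{(K)}(x,y)(f(y)-c)\,d\mu(y).
\end{equation*}
Choosing $c=c_0:=\aver{B(x,\sqrt{t})}f\,d\mu$, decomposing $M$ into the dyadic annuli $B_{j+1}\setminus B_j$ with $B_j:=B(x,2^j\sqrt{t})$, and invoking the Gaussian bound of Lemma~\ref{prop:kernel-est} together with \eqref{dnu} gives
\begin{equation*}
|Q_t^{(K)}f(x)|\lesssim\sum_{j\geq 0}4^{-j(K-\nu/2)}\,\aver{B_{j+1}}|f-c_0|\,d\mu.
\end{equation*}
A telescoping argument on the averages $c_k:=\aver{B_k}f\,d\mu$, followed by H\"older's inequality to upgrade the resulting $L^1$-oscillations into $L^\rho$-oscillations, yields $\aver{B_{j+1}}|f-c_0|\,d\mu\lesssim\sum_{k=0}^{j+1}\rho\text{-}\osc_{B_k}(f)$; exchanging summations then produces, for $K$ large enough,
\begin{equation*}
|Q_t^{(K)}f(x)|\lesssim\sum_{k\geq 0}2^{-k\beta}\,\rho\text{-}\osc_{B(x,2^k\sqrt{t})}(f),
\end{equation*}
with $\beta>0$ as large as desired.

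To finish, I plug this bound into the $t$-integral and apply Cauchy--Schwarz in $k$ with geometric weights (valid as soon as $\beta>\alpha$) to square the sum without losing summability; the change of variables $r=2^k\sqrt{t}$, $\frac{dr}{r}=\frac{1}{2}\frac{dt}{t}$, then identifies each resulting $k$-th integral with a multiple of $S_\alpha^\rho f(x)^2$. This proves the pointwise estimate, whence taking $L^p$-norms and chaining with the reduction of the second paragraph yields $\|\mathcal{L}^{\alpha/2}f\|_p\lesssim\|S_\alpha^\rho f\|_p$, as required. The main technical obstacle I anticipate is the telescoping step: controlling $|c_0-c_{j+1}|$ by a sum of $\rho$-oscillations while accumulating only a mild $j$-dependent factor, which must then be absorbed into the polynomial decay of $k_t^{(K)}$ by taking $K$ sufficiently large in terms of $\alpha,\nu,\rho$.
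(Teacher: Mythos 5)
Your argument reproduces the standard proof from the references the paper cites (\cite[Section 2.1.1]{CRT}, \cite[Section 5.2]{BBR}): the key pointwise estimate $|Q_t^{(K)}f(x)|\lesssim\sum_{k\ge0}2^{-k\beta}\,\rho\text{-}\osc_{B(x,2^k\sqrt{t})}(f)$, obtained from the cancellation $Q_t^{(K)}1=0$, the Gaussian decay of $k_t^{(K)}$ from Lemma~\ref{prop:kernel-est}, an annular decomposition and a telescoping of ball averages, followed by Cauchy--Schwarz in $k$ and the change of variables $r=2^k\sqrt{t}$, is exactly right, and the duality reduction to the square-function lower bound is also correct. (A small arithmetic slip: after the change of variables the geometric factor is $2^{-k(\beta-2\alpha)}$, so the condition should read $\beta>2\alpha$ rather than $\beta>\alpha$; this is immaterial since $K$, hence $\beta=2(K-\nu/2)$, is free.)

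One genuine caveat concerns the opening reduction. You write that ``by density of $\calS^p+N(\mathcal{L})$ in $\dot L^p_\alpha$ it is enough to treat $f\in\calS^p$,'' but this runs backwards: the conclusion to be proved is precisely that $f\in\dot L^p_\alpha$, so you may not yet approximate $f$ in the $\dot L^p_\alpha$-norm by $\calS^p$ elements. The fix is that the pointwise estimate you derive applies directly to $f\in L^1_{\loc}$: by the cancellation and the polynomial kernel decay, $Q_t^{(K)}(f-c_0)(x)$ is an absolutely convergent integral whenever the oscillation sum on the right is finite, which holds for a.e.\ $x$ when $S^\rho_\alpha f\in L^p$. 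This yields the honest a priori bound $\|(\int_0^\infty t^{-\alpha}|Q_t^{(K)}f|^2\,\tfrac{dt}{t})^{1/2}\|_p\lesssim\|S^\rho_\alpha f\|_p$, and one then uses the Calder\'on reproducing formula to identify the left-hand side with $\mathcal{L}^{\alpha/2}f$ and conclude $f\in\dot L^p_\alpha$, rather than assuming membership from the outset.
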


The proof of the reverse inequality in  \cite[Section 2.1.2]{BBR} uses pointwise gradient estimates. We are now going to observe that 
the weaker assumption $(H^\eta)$ is in fact sufficient, as noted already in \cite[p.333]{CRT}.
Without loss of generality, we assume $N(\mathcal{L})=\{0\}$ in the following.

\begin{proposition} \label{prop:osci} Assume $(H^\eta)$ for some $\eta\in(0,1]$. Fix $\alpha\in(0,\eta)$. Then for every $p\in(1,{+\infty})$ with $\rho<\min(2,p)$ and every $f\in \dot{L}^p_{\alpha}(M,\mathcal{L},\mu)$, we have
$$ \|f\|_{\dot L^p_\alpha} \simeq \|S^{\rho}_\alpha(f)\|_{p}.$$
\end{proposition}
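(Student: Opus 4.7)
Since the direction $\|f\|_{\dot L^p_\alpha} \lesssim \|S_\alpha^\rho f\|_p$ is already given by Proposition \ref{lemma:salpha}, I only need to establish $\|S_\alpha^\rho f\|_p \lesssim \|f\|_{\dot L^p_\alpha}$. By density of $\calS^p$ in $\dot L^p_\alpha$, it suffices to take $f \in \calS^p$ and set $g := \mathcal{L}^{\alpha/2} f \in L^p$, so that $f = \mathcal{L}^{-\alpha/2} g$. The subordination formula
\[
f(y) = \frac{1}{\Gamma(\alpha/2)} \int_0^\infty t^{\alpha/2} \, e^{-t\mathcal{L}} g(y) \, \frac{dt}{t}
\]
yields, by Minkowski,
\[
\rho\text{-}\osc_{B(x,r)}(f) \le C_\alpha \int_0^\infty t^{\alpha/2} \, \rho\text{-}\osc_{B(x,r)}(e^{-t\mathcal{L}} g) \, \frac{dt}{t}.
\]

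For the integrand I would use two complementary estimates. When $t \le r^2$, Lemma \ref{lem:off} yields the trivial bound
\[
\rho\text{-}\osc_{B(x,r)}(e^{-t\mathcal{L}} g) \le 2\left(\aver{B(x,r)}|e^{-t\mathcal{L}} g|^\rho \, d\mu\right)^{1/\rho} \lesssim \mathcal{M}_\rho g(x),
\]
where $\mathcal{M}_\rho := \mathcal{M}(|\cdot|^\rho)^{1/\rho}$. When $t \ge r^2$, Proposition \ref{prop:bcf1} produces $(\overline H^\lambda_{1,\infty})$ for any $\lambda \in (\alpha, \eta)$, giving
\[
\rho\text{-}\osc_{B(x,r)}(e^{-t\mathcal{L}} g) \le \infty\text{-}\osc_{B(x,r)}(e^{-t\mathcal{L}} g) \lesssim \Bigl(\frac{r}{\sqrt t}\Bigr)^\lambda \mathcal M_\rho g(x).
\]
A direct Minkowski combination of these two regimes produces only the pointwise \emph{uniform} bound $r^{-\alpha}\rho\text{-}\osc_{B(x,r)}(f) \lesssim \mathcal{M}_\rho g(x)$, which is insufficient because $\int_0^\infty \frac{dr}{r}$ diverges.

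To obtain a scale-sensitive estimate, I would apply a weighted Cauchy-Schwarz inequality in the $t$-variable with a weight $w_r(t)$ of the form $(t/r^2)^\beta \mathbb 1_{t \le r^2} + (t/r^2)^\gamma \mathbb 1_{t > r^2}$ where $-\alpha < \beta < 0$ and $\gamma < \min(-\alpha,\lambda)$, so that $\int_0^\infty t^\alpha w_r(t) \, \frac{dt}{t} \simeq r^{2\alpha}$. This gives
\[
r^{-2\alpha}\rho\text{-}\osc_{B(x,r)}(f)^2 \lesssim \int_0^\infty \frac{1}{w_r(t)} \rho\text{-}\osc_{B(x,r)}(e^{-t\mathcal{L}} g)^2 \, \frac{dt}{t}.
\]
To extract genuine cancellation from the integrand (so as to avoid the logarithmic divergence at large $t$), one injects $e^{-t\mathcal{L}} g = \int_t^{\infty} Q_s^{(1)} g \, \frac{ds}{s}$ and applies $(\overline H^\lambda_{1,\infty})$ to $Q_s^{(1)} g$ in place of $e^{-t\mathcal{L}} g$, thereby replacing the semigroup by a quantity carrying a built-in two-sided decay in $s$.

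Integrating in $r$ with Fubini then converts the weighted square-function estimate into the bound
\[
S_\alpha^\rho f(x)^2 \lesssim \int_0^\infty \mathcal{M}\bigl[|Q_s^{(N)} g|^\rho\bigr](x)^{2/\rho} \, \frac{ds}{s},
\]
up to admissible constants. Since $\rho < \min(2,p)$, the Fefferman–Stein inequality (Proposition \ref{prop:FS}) absorbs the Hardy–Littlewood maximal function, after which the $L^p$-boundedness of the horizontal square function (Proposition \ref{prop:square-function} (i)) finishes:
\[
\|S_\alpha^\rho f\|_p \lesssim \Bigl\|\Bigl(\int_0^\infty |Q_s^{(N)} g|^2 \frac{ds}{s}\Bigr)^{1/2}\Bigr\|_p \lesssim \|g\|_p = \|\mathcal{L}^{\alpha/2} f\|_p.
\]

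The main obstacle is the passage from the scale-uniform estimate furnished by Minkowski and $(H^\eta)$ to a scale-sensitive square-function estimate: one must calibrate the weight $w_r$ so that simultaneously the $r^{2\alpha}$ normalisation, the convergence of the resulting $r$-integral after Fubini, and the reduction to a horizontal square function of $g$ are all compatible. The condition $\alpha < \lambda < \eta$ (requiring $\alpha < \eta$) is dictated precisely by this calibration, since it governs the convergence of $\int_{r^2}^\infty t^{(\alpha - \lambda)/2} \frac{dt}{t}$ at infinity.
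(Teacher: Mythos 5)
Your strategy, once the correction you propose is carried out, is the same as the paper's. The subordination detour is correctly diagnosed as insufficient, and inserting $e^{-t\mathcal{L}}g = \int_t^\infty Q_s^{(1)}g\,\frac{ds}{s}$ into it and evaluating the $t$-integral exactly reduces your identity to a Calder\'on reproducing formula
\[
f = c\int_0^\infty s^{\alpha/2}Q_s^{(1)}g\,\frac{ds}{s} = c'\int_0^\infty Q_s^{(1+\alpha/2)}f\,\frac{ds}{s},
\]
which is essentially what the paper starts from (decomposed dyadically in $s$, with $Q_s^{(1)}f$). The weighted Cauchy--Schwarz you invoke is the Schur argument the paper cites: with $K(u):=u^{\alpha/2}\Eins_{u\le 1}+u^{(\alpha-\lambda)/2}\Eins_{u>1}$, one has $K\in L^1(\frac{du}{u})$ precisely because $0<\alpha<\lambda$, so that
\[
\left[r^{-\alpha}\,\rho\text{-}\osc_{B(x,r)}(f)\right]^2\lesssim \int_0^\infty K\!\left(\frac{s}{r^2}\right)\,\mathcal{M}_\rho\bigl[Q_{cs}^{(1)}g\bigr](x)^2\,\frac{ds}{s},
\]
and after Fubini the $r$-integral contributes $\frac{1}{2}\|K\|_{L^1(du/u)}$ by the change of variable $v=s/r^2$. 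Two small inaccuracies in your write-up: the calibration you state, $\gamma<\min(-\alpha,\lambda)$, should be the two-sided $-\lambda<\gamma<-\alpha$ (the lower bound is what makes the $t\ge r^2$ part of $\int_0^\infty w_r(t)^{-1}\,\rho\text{-}\osc(\cdot)^2\,\frac{dt}{t}$ converge after the H\"older estimate is squared); and the hypothesis $\rho<\min(2,p)$ is forced by the trivial regime $s<r^2$, where the oscillation is controlled by $\mathcal{M}_\rho$ and Fefferman--Stein requires $\rho<\min(p,2)$, not by the H\"older regime, so invoking $(\overline{H}^\lambda_{1,\infty})$ (hence $\mathcal{M}_1$) there is harmless but is not the source of the restriction.
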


\begin{proof} Due to Proposition \ref{lemma:salpha}, it only remains to prove that
$$ \|S^{\rho}_\alpha(f)\|_{p} \lesssim \|f\|_{\dot L^p_\alpha}.$$
We first decompose the identity with the semigroup as  
\begin{align*}
 f& =-\int_0^{{+\infty}}\frac{ \partial}{\partial{t}}(e^{-t\mathcal{L}}f) dt  =\int_0^{{+\infty}}(t\mathcal{L}) e^{-t\mathcal{L}}f \, \frac{dt}{t} \\
  & =\sum_{n=-\infty}^{{+\infty}} \int_{2^n}^{2^{n+1} } (t\mathcal{L})e^{-t\mathcal{L}}f \, \frac{dt}{t},
\end{align*}
and define the piece at scale $2^{n}$ as
$$f_n:=\int_{2^n}^{2^{n+1} } (t\mathcal{L})e^{-t\mathcal{L}}f \, \frac{dt}{t}.$$
Then fix $x\in M$ and a scale $r>0$. We have
$$ \rho \text{-}\osc_{B(x,r)} (f_n) \leq \int_{2^n}^{2^{n+1} } \rho \text{-} \osc_{B(x,r)}[(t\mathcal{L})e^{-t\mathcal{L}}f] \, \frac{dt}{t}.$$
Using $(H^{\eta})$, which implies $(\overline{H}^{\lambda}_{\rho,\rho})$ for some $\lambda\in(\alpha,\eta)$ (see item 2 of Proposition \ref{prop:bcf1}) we know that if $r\lesssim \sqrt{t}$ then
$$ \rho \text{-} \osc_{B(x,r)}[(t\mathcal{L})e^{-t\mathcal{L}}f] \lesssim \left(\frac{r}{\sqrt{t}}\right)^\lambda {\mathcal M}_{\rho}[(t\mathcal{L})e^{-\frac{t}{2}\mathcal{L}}f](x).$$
So if $r\lesssim 2^{\frac{n}{2}}$, then we deduce by the Cauchy-Schwarz inequality that
\begin{equation} \rho \text{-} \osc_{B(x,r)} (f_n) \lesssim  \left(r 2^{-\frac{n}{2}}\right)^\lambda \left(\int_{2^n}^{2^{n+1} } \left|{\mathcal M}_{\rho}[(t\mathcal{L})e^{-\frac{t}{2}\mathcal{L}}f](x)\right|^2 \, \frac{dt}{t}\right)^{1/ 2}. \label{eq:osci1} \end{equation}
Moreover, if $2^{n/ 2} \lesssim r$, we use
$$ \rho \text{-} \osc_{B(x,r)}[(t\mathcal{L})e^{-t\mathcal{L}}f] \lesssim  {\mathcal M}_{\rho}[(t\mathcal{L})e^{-t\mathcal{L}}f](x)$$
which yields
\begin{equation} \rho \text{-}\osc_{B(x,r)} (f_n) \lesssim \left(\int_{2^n}^{2^{n+1} } \left|{\mathcal M}_{\rho}[(t\mathcal{L})e^{-t\mathcal{L}}f](x)\right|^2 \, \frac{dt}{t}\right)^{1/ 2}. \label{eq:osci2} \end{equation}
Then it follows that 
\begin{align*}
S_{\alpha}^{\rho} f(x)^2 &= \int_0^{{+\infty}} \left[\frac{1}{r^{\alpha}} \rho \text{-}\osc_{B(x,r)}(f) \right]^2\frac{dr}{r}
\\
& \lesssim \int_0^{{+\infty}} \left[\sum_{n\in {\mathbb Z}}\frac{1}{r^{\alpha}} \rho \text{-} \osc_{B(x,r)}(f_n) \right]^2\frac{dr}{r}.
\end{align*}
Using \eqref{eq:osci1} and \eqref{eq:osci2}, one has
\begin{align*}
S_{\alpha}^{\rho} f(x)^2  & \lesssim \int_0^{{+\infty}} \left[\sum_{2^n \leq r^2}\frac{1}{r^{\alpha}} \left(\int_{2^n}^{2^{n+1} } \left|{\mathcal M}_{\rho}[(t\mathcal{L})e^{-t\mathcal{L}}f](x)\right|^2 \, \frac{dt}{t}\right)^{1/ 2}\right]^2 \frac{dr}{r} \\
 & + \int_0^{{+\infty}} \left[\sum_{r^2\leq 2^n }\frac{1}{r^{\alpha}} \left(r 2^{-\frac{n}{2}}\right)^\lambda \left(\int_{2^n}^{2^{n+1} } \left|{\mathcal M}_{\rho}[(t\mathcal{L})e^{-\frac{t}{2}\mathcal{L}}f](x)\right|^2 \, \frac{dt}{t}\right)^{1/ 2}\right]^2 \, \frac{dr}{r}.
\end{align*}
Using Schur's lemma (or see \cite[p. 300]{CRT}),  for $\alpha< \lambda$,
\begin{align*}
S_{\alpha}^{\rho} f(x)^2  & \lesssim \sum_{n\in {\mathbb Z}}  2^{-n\alpha} \int_{2^n}^{2^{n+1} } \left|{\mathcal M}_{\rho}[(t\mathcal{L})e^{-t\mathcal{L}}f](x)\right|^2 \, \frac{dt}{t}  \\
 & +  \sum_{n\in {\mathbb Z}}  2^{-n\alpha} \int_{2^n}^{2^{n+1} } \left|{\mathcal M}_{\rho}[(t\mathcal{L})e^{-\frac{t}{2}\mathcal{L}}f](x)\right|^2 \, \frac{dt}{t},
\end{align*}
which implies
\begin{align*}
S_{\alpha}^{\rho} f(x)^2  & \lesssim \int_{0}^{{+\infty} } \left|{\mathcal M}_{\rho}[(t\mathcal{L})e^{-t\mathcal{L}}f](x)\right|^2 \, \frac{dt}{t^{1+\alpha}} .
\end{align*}
Then by  Proposition \ref{prop:FS} it follows that for every $p>\rho$ (since $2>\rho$) 
\begin{align*}
\| S_{\alpha}^{\rho} f \|_{L^p}  & \lesssim \left\| \left(\int_{0}^{{+\infty} } \left|{\mathcal M}_{\rho}[(t\mathcal{L})e^{-t\mathcal{L}}f]\right|^2 \, \frac{dt}{t^{1+\alpha}}\right)^{1/ 2} \right\|_{p} \\
& \lesssim \left\| \left(\int_{0}^{{+\infty} } \left|(t\mathcal{L})e^{-t\mathcal{L}}f\right|^2 \, \frac{dt}{t^{1+\alpha}}\right)^{1/ 2} \right\|_{p} \\
& \lesssim \|\mathcal{L}^{\alpha / 2} f \|_{p}.
\end{align*}
\end{proof}

The same proof holds when replacing the oscillation by a Poincar\'e inequality:

\begin{proposition}  \label{prop:poinc2}
Assume $(G_{p_0})$ with the Poincar\'e inequality $(P_{p_0})$ for some $p_0\in(2,{+\infty})$. Let $\alpha \in(0,1)$, $\rho\in(1,2)$ and $p\in[2,p_0)$. Then for every $f \in \dot{L}^p_{\alpha}(M,\mathcal{L},\mu)$, we have
$$ \|f\|_{\dot L^p_\alpha} \simeq \|S^{\rho}_\alpha(f)\|_{p}.$$
\end{proposition}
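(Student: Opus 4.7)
The lower inequality $\|f\|_{\dot L^p_\alpha}\lesssim\|S_\alpha^\rho(f)\|_p$ is provided by Proposition~\ref{lemma:salpha} without any extra hypothesis, so my plan is to establish the reverse bound $\|S_\alpha^\rho(f)\|_p\lesssim\|\mathcal{L}^{\alpha/2}f\|_p$ by reproducing the argument of Proposition~\ref{prop:osci} with one key substitution: the oscillation estimate for $(t\mathcal{L})e^{-t\mathcal{L}}f$ at scale $r$ will be obtained from the Poincar\'e inequality $(P_{p_0})$ rather than from $(\overline{H}^\lambda_{\rho,\rho})$.

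Concretely, one would start from the Calder\'on decomposition $f=\sum_{n\in\mathbb{Z}} f_n$, $f_n=\int_{2^n}^{2^{n+1}} (t\mathcal{L})e^{-t\mathcal{L}} f\,\frac{dt}{t}$, and treat the two regimes $r\gtrsim 2^{n/2}$ and $r\lesssim 2^{n/2}$ separately, as in prop:osci. The trivial bound via $\mathcal{M}_\rho$ of the semigroup output handles the regime $r\gtrsim 2^{n/2}$; for $r\lesssim 2^{n/2}$, H\"older (valid since $\rho<2<p_0$) combined with $(P_{p_0})$ applied to $F_t=(t\mathcal{L})e^{-t\mathcal{L}}f=e^{-t\mathcal{L}/2}[(t\mathcal{L})e^{-t\mathcal{L}/2}f]$ yields
\[
\rho\text{-}\osc_{B(x,r)}(F_t)\lesssim r\Bigl(\aver{B(x,r)}|\nabla e^{-t\mathcal{L}/2}[(t\mathcal{L})e^{-t\mathcal{L}/2}f]|^{p_0}\,d\mu\Bigr)^{1/p_0}.
\]
The combination $(G_{p_0})+(P_{p_0})$ implies the Gaussian upper bound $(UE)$ (by Proposition~2.1 of \cite{BCF1}), so interpolation of $(G_{p_0})$ with the $L^2$ Davies-Gaffney estimates of Proposition~\ref{prop:Davies-Gaffney} and with the Gaussian kernel bound produces $L^\rho$-$L^{p_0}$ off-diagonal estimates at scale $\sqrt{t}$ for $\sqrt{t}|\nabla e^{-t\mathcal{L}/2}|$; summing over annuli gives $(\aver{B(x,\sqrt{t})}|\sqrt{t}\nabla F_t|^{p_0})^{1/p_0}\lesssim\mathcal{M}_\rho[(t\mathcal{L})e^{-t\mathcal{L}/2}f](x)$, and a descent from scale $\sqrt{t}$ to $r$ via doubling then produces an oscillation estimate of the form $\rho\text{-}\osc_{B(x,r)}(F_t)\lesssim(r/\sqrt{t})^\lambda\,\mathcal{M}_\rho[(t\mathcal{L})e^{-t\mathcal{L}/2}f](x)$ for some $\lambda>0$.

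From here the argument mirrors prop:osci: Schur's lemma, valid provided $\alpha<\lambda$, reduces the pointwise estimate to $S_\alpha^\rho f(x)^2\lesssim\int_0^\infty t^{-\alpha-1}|\mathcal{M}_\rho[(t\mathcal{L})e^{-t\mathcal{L}/2}f](x)|^2\,dt$; the Fefferman-Stein vector-valued inequality (Proposition~\ref{prop:FS}), applicable because $\rho<\min(p,2)=2$, strips the maximal function; and the horizontal square function bound from Proposition~\ref{prop:square-function}(i) converts the remainder into the target $\|\mathcal{L}^{\alpha/2}f\|_p$. The main challenge I anticipate is calibrating the descent exponent $\lambda$: a na\"ive use of doubling only yields $\lambda=1-\nu/p_0$, which does not suffice to cover every $\alpha\in(0,1)$ when $p_0$ is close to $\nu$. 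This is where the assumption that $\rho$ be close enough to $2$ enters: a Keith-Zhong type self-improvement of $(P_{p_0})$ (available on doubling Dirichlet metric measure spaces under the present hypotheses) allows one to work with a Poincar\'e exponent below $p_0$, and by choosing $\rho$ close enough to $2$ the volume loss in the descent step can be absorbed so that $\lambda$ becomes as close to $1$ as needed.
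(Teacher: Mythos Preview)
Your approach has a real gap at the descent step. After applying $(P_{p_0})$ at scale $r$ you are left with $\bigl(\aver{B(x,r)}|\nabla F_t|^{p_0}\bigr)^{1/p_0}$, and the $L^\rho$--$L^{p_0}$ off-diagonal estimates for $\sqrt{t}\nabla e^{-t\mathcal{L}/2}$ only control this quantity at scale $\sqrt{t}$; passing from the small ball to the large one via doubling costs exactly $(\sqrt{t}/r)^{\nu/p_0}$, giving $\lambda=1-\nu/p_0$. Neither Keith--Zhong (which yields $(P_{p_0-\epsilon})$ and hence the \emph{smaller} exponent $1-\nu/(p_0-\epsilon)$) nor moving $\rho$ closer to $2$ touches this loss: the factor $\nu/p_0$ comes from the $L^{p_0}$ exponent on the Poincar\'e right-hand side, not from $\rho$, and gradient off-diagonal bounds are unavailable above $L^{p_0}$ under $(G_{p_0})$. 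So for $p_0$ close to $\nu$ the Schur step fails for $\alpha$ near $1$.

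The paper takes a different route that avoids the descent altogether. It invokes an \emph{improved} Poincar\'e-type inequality from \cite[Theorem~3.4 and Remark~3.5]{BF2}: for $h=(t\mathcal{L})e^{-t\mathcal{L}}f$ and every $\rho\in(1,2)$,
\[
\Bigl(\aver{B_r}\bigl|h-\aver{B_r}h\,d\mu\bigr|^{p_0}\,d\mu\Bigr)^{1/p_0}\lesssim r\Bigl(\aver{2B_r}|\nabla h|^{\rho}\,d\mu\Bigr)^{1/\rho}+r^2\|\mathcal{L}h\|_{L^\infty(4B_r)}.
\]
Here the gradient already sits in $L^\rho$ with $\rho<2$, so the first term is at once dominated by $(r/\sqrt{t})\,\mathcal{M}_\rho[\sqrt{t}|\nabla h|](x)$ with no volume penalty, and the second by $(r/\sqrt{t})^2\,\mathcal{M}_\rho[(t\mathcal{L})^2e^{-t\mathcal{L}/2}f](x)$ via $L^\rho$--$L^\infty$ off-diagonal bounds. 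This gives $\lambda=1$ directly, covering every $\alpha\in(0,1)$. The price is that the resulting square function involves $\sqrt{t}\nabla(t\mathcal{L})e^{-t\mathcal{L}}f$ rather than only $(t\mathcal{L})e^{-t\mathcal{L}}f$; its $L^p$ boundedness for $p\in[2,p_0)$ is supplied by Proposition~\ref{prop:square-function}(iii), and it is here---not in the oscillation estimate---that the full hypothesis $(G_{p_0})+(P_{p_0})$ is spent.
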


\begin{proof} First, using the combination $(G_{p_0})$ and $(P_{p_0})$ as detailed in the proof of \cite[Theorem 3.4]{BF2} with \cite[Remark 3.5]{BF2}, we know that we have the following inequality: for every $\rho\in(1,2)$, every ball $B_r$ of radius $r>0$ and $h=(t\mathcal{L})e^{-t\mathcal{L}}f$,
$$ \left( \aver{B_r} |h-\aver{B_r} h\, d\mu |^{p_0} \, d\mu \right)^{1/p_0} \lesssim r \left(\aver{2B_r} |\nabla h |^\rho \, d\mu \right)^{1/\rho} + r^2 \left\| \mathcal{L} h \right\|_{L^\infty(4B_r)}.$$
Writing ${\mathcal L}h=t{\mathcal L}^2 e^{-t\mathcal{L}}f = e^{-\frac{t}{2}\mathcal{L}} t{\mathcal L}^2 e^{-\frac{t}{2}\mathcal{L}}f$ with $L^\rho$-$L^\infty$ off-diagonal estimates of $e^{-\frac{t}{2}\mathcal{L}}$, we deduce that
$$ \left\| t \mathcal{L} h \right\|_{L^\infty(4B_r)} \lesssim \inf_{x\in B_r} {\mathcal M}_{\rho}[ (t\mathcal{L})^2e^{-\frac{t}{2}\mathcal{L}}f](x).$$

We then repeat the exact same proof as for Proposition \ref{prop:osci}, with the following estimate on the oscillation, 
$$ \rho \text{-}\osc_{B(x,r)}[(t\mathcal{L})e^{-t\mathcal{L}}f] \lesssim \left(\frac{r}{\sqrt{t}}\right) {\mathcal M}_{\rho}[ \sqrt{t} \nabla (t\mathcal{L})e^{-\frac{t}{2}\mathcal{L}}f](x) + \left(\frac{r}{\sqrt{t}}\right)^2 {\mathcal M}_{\rho}[ (t\mathcal{L})^2e^{-\frac{t}{2}\mathcal{L}}f](x).$$
Hence, we have a pointwise estimate
\begin{align*}
S_{\alpha}^{\rho} f(x)  \lesssim \left(\int_{0}^{{+\infty} } \left|{\mathcal M}_{\rho}[\sqrt{t} \nabla (t\mathcal{L})e^{-t\mathcal{L}}f](x)\right|^2  +  \left|{\mathcal M}_{\rho}[ (t\mathcal{L})^2e^{-t\mathcal{L}}f](x)\right|^2 \, \frac{dt}{t^{1+\alpha}}\right)^{1/ 2}.
\end{align*}
The proof is then completed by taking the $L^p$-norm of both sides of the previous inequality and using  Proposition \ref{prop:FS} as well as the $L^p$-boundedness of the vertical square function (which is a consequence of the combination $(G_{p_0})$ with $(P_{p_0})$, see Proposition \ref{prop:square-function} (iii)) and of the horizontal square function.
\end{proof}

\begin{proposition} \label{prop:nec}
Assume \eqref{dnu}. Let $\rho,p\in(1,{+\infty})$ with $\rho\leq p$, $\nu<p$, and let $\alpha\in(\frac{\nu}{p},1)$. Assume that for every $f\in \dot{L}^p_{\alpha}(M,\mathcal{L},\mu)$, we have
$$ \|f\|_{\dot L^p_\alpha} \simeq \|S^{\rho}_\alpha(f)\|_{p}.$$
Then   $(H^{\alpha-\frac{\nu}{\rho}}_{p,\rho})$ holds, and also $(P_2)$.
\end{proposition}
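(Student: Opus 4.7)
The plan is to test the assumed equivalence $\|f\|_{\dot L^p_\alpha}\simeq\|S^\rho_\alpha(f)\|_p$ against functions of the form $f=e^{-t\mathcal{L}}h$ with $h\in L^p(M,\mu)$. By functional calculus (equivalently, Lemma~\ref{prop:kernel-est} applied to $Q_t^{(\alpha/2)}$), one has $\|\mathcal{L}^{\alpha/2}e^{-t\mathcal{L}}h\|_p\lesssim t^{-\alpha/2}\|h\|_p$. The other side of the equivalence must therefore control the $\rho$-oscillation of $e^{-t\mathcal{L}}h$ at any scale $r_0\le\sqrt{t}$.

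I first establish a pointwise lower bound for $S^\rho_\alpha$. Fix a ball $B_{r_0}$ with centre $x_0$. For every $x\in B_{r_0}$ and $r\in[2r_0,4r_0]$, one has $B_{r_0}\subset B(x,r)$ and $|B(x,r)|\simeq|B_{r_0}|$ by \eqref{dnu}. The triangle inequality (valid since $\rho\ge 1$) gives $\aver{B_{r_0}}|f-c|^\rho\ge 2^{-\rho}[\rho\text{-}\osc_{B_{r_0}}(f)]^\rho$ for any constant $c$; specialising $c=\aver{B(x,r)}f$ and restricting the integral defining $\rho\text{-}\osc_{B(x,r)}(f)$ to the subset $B_{r_0}\subset B(x,r)$ yields $\rho\text{-}\osc_{B(x,r)}(f)\gtrsim\rho\text{-}\osc_{B_{r_0}}(f)$. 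Integrating over $r\in[2r_0,4r_0]$ in the definition of $S^\rho_\alpha$ produces $S^\rho_\alpha(f)(x)\gtrsim r_0^{-\alpha}\rho\text{-}\osc_{B_{r_0}}(f)$ for $x\in B_{r_0}$, and taking $L^p$-norms over $B_{r_0}$ then gives
\[
\|S^\rho_\alpha(f)\|_p\;\gtrsim\;|B_{r_0}|^{1/p}\,r_0^{-\alpha}\,\rho\text{-}\osc_{B_{r_0}}(f).
\]

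Applying the equivalence to $f=e^{-t\mathcal{L}}h$ together with the semigroup bound above, and then inserting $|B_{r_0}|^{-1/p}\le C(\sqrt{t}/r_0)^{\nu/p}|B_{\sqrt{t}}|^{-1/p}$ from \eqref{dnu}, produces
\[
\rho\text{-}\osc_{B_{r_0}}(e^{-t\mathcal{L}}h)\;\lesssim\;\Bigl(\frac{r_0}{\sqrt{t}}\Bigr)^{\alpha-\nu/p}|B_{\sqrt{t}}|^{-1/p}\|h\|_p.
\]
Since $\rho\le p$ yields $\alpha-\nu/p\ge\alpha-\nu/\rho$, and $(H^\eta_{p,\rho})$ becomes stronger as $\eta$ grows, this is \emph{a fortiori} the claimed $(H^{\alpha-\nu/\rho}_{p,\rho})$; positivity of the exponent is guaranteed by the hypothesis $\alpha>\nu/p$.

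For the Poincar\'e inequality $(P_2)$, the strategy is to invoke Proposition~\ref{prop:bcf1}(i) (from \cite[Theorem~3.4]{BCF1}): the existence of some $q\in(1,\infty)$ and $\eta>0$ with the diagonal estimate $(H^\eta_{q,q})$ is equivalent to the lower Gaussian estimate $(LE)$, which under \eqref{d} is in turn equivalent to $(P_2)$. The main obstacle is the upgrade from the off-diagonal estimate $(H^\eta_{p,\rho})$ just derived (with $\rho<p$ in general) to a diagonal estimate $(H^{\eta'}_{q,q})$ for some $q$ and $\eta'>0$. The plan here is to combine the $L^\rho$-oscillation bound with the $L^p$-boundedness and Gaussian-type kernel estimates of the semigroup in a Campanato/Moser-type iteration, trading a loss in the H\"older exponent for an increase in the integrability of the oscillation, until a diagonal estimate is reached and Proposition~\ref{prop:bcf1}(i) delivers $(P_2)$.
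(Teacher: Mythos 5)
Your argument follows the same overall strategy as the paper -- test the assumed equivalence on $f=e^{-t\mathcal L}h$ with $h\in L^p$, use analyticity to bound $\|\mathcal L^{\alpha/2}e^{-t\mathcal L}h\|_p\lesssim t^{-\alpha/2}\|h\|_p$, and convert the resulting bound on $\|S^\rho_\alpha(e^{-t\mathcal L}h)\|_p$ into an oscillation estimate for the semigroup -- but the extraction step is organised differently, and yours is both cleaner and sharper. You establish the pointwise lower bound $S^\rho_\alpha(f)(x)\gtrsim r_0^{-\alpha}\,\rho\text{-}\osc_{B_{r_0}}(f)$ for all $x$ in the fixed ball $B_{r_0}$, then take the $L^p$ norm over $B_{r_0}$ itself, picking up the volume factor $|B_{r_0}|^{-1/p}\lesssim(\sqrt t/r_0)^{\nu/p}|B_{\sqrt t}|^{-1/p}$. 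The paper instead starts from $\rho\text{-}\osc_{B_r(x)}(h)\lesssim r^\alpha S^\rho_\alpha(h)(x)$, averages in $L^\rho$ over the \emph{large} ball $B_{\sqrt t}$, upgrades to $L^p$ by Jensen, and only afterwards restricts the $L^\rho$-average to $B_r$, which costs $(\sqrt t/r)^{\nu/\rho}$. Since $\rho\le p$, your H\"older exponent $\alpha-\nu/p$ is at least the paper's $\alpha-\nu/\rho$, and this is not cosmetic: the hypothesis $\alpha>\nu/p$ guarantees $\alpha-\nu/p>0$ but not $\alpha-\nu/\rho>0$, so the paper's stated conclusion $(H^{\alpha-\nu/\rho}_{p,\rho})$ can be vacuous when $\rho$ is close to $1$, whereas your $(H^{\alpha-\nu/p}_{p,\rho})$ always carries genuine regularity, which is what is actually needed to invoke Proposition~\ref{prop:bcf1} and conclude $(P_2)$. (The paper's computation could of course be tightened to recover your exponent by averaging over $B_r$ from the outset.) Your pointwise approach also avoids the paper's final passage from the $L^\rho$-average of $\rho\text{-}\osc_{B_r(x)}$ over $x\in B_r$ to $\rho\text{-}\osc_{2B_r}$, which as written is a little opaque. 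On the last step you are candid that applying Proposition~\ref{prop:bcf1}(i) requires first upgrading the off-diagonal estimate $(H^\eta_{p,\rho})$ to a diagonal one $(H^{\eta'}_{q,q})$; this Meyers/Campanato-type bootstrap (as in the proof of Proposition~\ref{proposition2}(i) in Appendix~\ref{AppA}) is precisely the implicit bridge the paper relies on when it writes ``Proposition \ref{prop:bcf1} yields $(P_2)$'', so acknowledging it explicitly is a point in favour of your write-up rather than a gap.
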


\begin{proof}
Let $r\leq \sqrt{t}$, and let $B_r$,$B_{\sqrt{t}}$ be two concentric balls of respective radii $r,\sqrt{t}$. For every $x$ and $s>0$, denote the ball $B_s(x)=B(x,s)$. Then for $h=e^{-t\mathcal{L}}f$, we have for $s\in[r,2r]$
\begin{equation} \rho \text{-}\osc_{B_r(x)}(h) \lesssim \rho \text{-}\osc_{B_{s}(x)}(h). \label{eq:oscill} \end{equation}
So
\begin{align*}
 \rho \text{-}\osc_{B_r(x)} (h) \lesssim r^\alpha \left(\int_{r}^{2r} \left[s^{-\alpha} \rho \text{-}\osc_{B_s(x)}(h) \right]^2 \frac{ds}{s}\right)^{1/2} \lesssim r^\alpha S_\alpha^\rho(h) (x).
\end{align*}
Consequently, 
\begin{align*}
\left(\aver{B_{\sqrt{t}}} \rho \text{-}\osc_{B_r(x)} (h)^\rho \, d\mu(x) \right)^{1/\rho} & \lesssim  r^\alpha \left( \aver{B_{\sqrt{t}}}  |S_\alpha^\rho(h) (x)|^\rho \, d\mu(x) \right)^{1/\rho} \\
& \lesssim  r^\alpha \left( \aver{B_{\sqrt{t}}}  |S_\alpha^\rho(h) (x)|^p \, d\mu(x) \right)^{1/p} \\
& \lesssim  r^\alpha |B_{\sqrt{t}}|^{-1/p}  \|S_\alpha^\rho(h) \|_{p}.
\end{align*}
So using the assumption and the analyticity of the semigroup on $L^p$, we get
\begin{align*}
\left(\aver{B_{\sqrt{t}}} \rho \text{-}\osc_{B_r(x)}(e^{-t\mathcal{L}}f) ^\rho \, d\mu(x) \right)^{1/\rho} & \lesssim  r^\alpha |B_{\sqrt{t}}|^{-1/p}  \| \mathcal{L}^{\alpha/2} e^{-t\mathcal{L}}f \|_{p} \\
& \lesssim  \left(\frac{r}{\sqrt{t}}\right)^\alpha |B_{\sqrt{t}}|^{-1/p}  \| f \|_{p},
\end{align*}
which yields in particular (since $B_r \subset B_{\sqrt{t}}$)
\begin{align*}
\left(\aver{B_{r}} \rho \text{-}\osc_{B_r(x)}(e^{-t\mathcal{L}}f) ^\rho \, d\mu(x) \right)^{1/\rho} & \lesssim  \left(\frac{r}{\sqrt{t}}\right)^{\alpha-\frac{\nu}{\rho}} |B_{\sqrt{t}}|^{-1/p}  \| f \|_{p}.
\end{align*}
Since for $x\in B_r$, the two balls $B_r(x)$ and $B_r$ have equivalent measures, we deduce by doubling that
\begin{align*}
\rho \text{-}\osc_{2 B_r}(e^{-t\mathcal{L}}f)& \lesssim  \left(\frac{r}{\sqrt{t}}\right)^{\alpha-\frac{\nu}{\rho}} |B_{\sqrt{t}}|^{-1/p}  \| f \|_{p},
\end{align*}
for every $r<\sqrt{t}$, which is $(H^{\alpha-\frac{\nu}{\rho}}_{p,\rho})$.  Then  Proposition \ref{prop:bcf1}  yields $(P_2)$.
\end{proof}

\section{Chain rule and paralinearisation} \label{sec:chainrule}

This section is devoted to the proof of a chain rule in our abstract setting. That is, we show stability of Sobolev spaces with regard to the composition of functions with a regular map. We follow the same approach as in \cite{cm}, which relies on paraproducts. In the sequel, we establish a paralinearisation result. This is a deeper and more general result than the chain rule, but requires more regularity on the nonlinearity.

\begin{theorem}[Chain rule] \label{thm:chainrule} Let $(M,d,\mu, {\mathcal E})$  be a doubling metric  measure   Dirichlet space with a ``carr\'e du champ'' satisfying \eqref{due}.  Let $F \in C^2({\mathbb R})$ be a nonlinearity with $F(0)=0$.
 Let $\alpha\in(0,1)$ and $p\in(1,{+\infty}]$. For a function $f\in L^\infty(M,\mu) \cap \dot L^p_\alpha(M,\mathcal{L},\mu)$, we have
$$ F(f)\in L^\infty(M,\mu) \cap \dot L^p_\alpha(M,\mathcal{L},\mu)$$ in the following situations:
\begin{enumerate}
\item[i)] if $p\leq 2$ and $\alpha\in(0,1)$;
\item[ii)] if $2<p < p_0$, $\alpha\in(0,1)$ and under $(G_{p_0})$ for some $p_0>2$; 
\item[iii)] if $2<q$, $0<\alpha<1-\kappa$ and under $(G_{q})$ with $({DG}_{2,\kappa})$.
\end{enumerate}
More precisely, we have the following estimate: for every $L>0$ there exists a constant $C:=C(F,L)$ such that for every $f\in  L^\infty(M,\mu) \cap \dot L^p_\alpha(M,\mathcal{L},\mu)$ with $\|f\|_\infty\leq L$, there holds
$$  \| F(f) \|_{\dot L^p_\alpha} \leq C \|f\|_{\dot L^p_\alpha}.$$
\end{theorem}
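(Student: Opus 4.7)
The strategy, inspired by the Coifman--Meyer treatment in \cite{cm}, is to produce a paralinearisation formula that reduces the nonlinear expression $F(f)$ to a paraproduct $\Pi^{(D)}_{F'(f)}(f)$ plus a smoother remainder. Since the paraproducts are already controlled in each of the three regimes (i)--(iii) by Theorems \ref{gaza}, \ref{thm:tent-extrap-small}, \ref{thm:extrap>22}, \ref{thm:extrap>2}, \ref{thm:tent-extrap-large-3} respectively, the whole problem reduces to estimating the remainder.

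\textbf{Step 1 (Paralinearisation).} Using the identity $t\partial_t P_t^{(D)}=-Q_t^{(D)}$ from Remark \ref{Pt-rem}, the convergence statements \eqref{limzero}--\eqref{liminfty} and the fact that $F\in C^2$ with $F(0)=0$, the fundamental theorem of calculus applied to $t\mapsto F(P_t^{(D)}f)$ yields formally
\begin{equation*}
F(f)-F(\mathsf{P}_{N(\mathcal{L})}f)=\int_0^{+\infty}F'(P_t^{(D)}f)\cdot Q_t^{(D)}f\,\frac{dt}{t}.
\end{equation*}
The convergence of this integral in $L^p$ is ensured by the local Lipschitz bound on $F'$ on $[-L,L]$, the uniform bound $\|P_t^{(D)}f\|_\infty \lesssim \|f\|_\infty\leq L$, and the estimates on $Q_t^{(D)}f$ from Lemma \ref{prop:kernel-est} and the horizontal square function bound of Proposition \ref{prop:square-function}.

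\textbf{Step 2 (Extraction of the paraproduct).} Decompose $F'(P_t^{(D)}f)=P_t^{(D)}F'(f)+h_t$ with $h_t:=F'(P_t^{(D)}f)-P_t^{(D)}F'(f)$, which gives
\begin{equation*}
F(f)=F(\mathsf{P}_{N(\mathcal{L})}f)+\Pi^{(D)}_{F'(f)}(f)+R(f),\qquad R(f):=\int_0^{+\infty}h_t\cdot Q_t^{(D)}f\,\frac{dt}{t}.
\end{equation*}
Since $\|F'(f)\|_\infty\leq\|F'\|_{L^\infty([-L,L])}$, applying the paraproduct bound that corresponds to the regime (i), (ii) or (iii) in which we are working (with $D$ chosen large enough) produces
$\|\Pi^{(D)}_{F'(f)}(f)\|_{\dot L^p_\alpha}\lesssim_L\|f\|_{\dot L^p_\alpha}$. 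Moreover $F(\mathsf{P}_{N(\mathcal{L})}f)$ is either zero or a constant by Proposition \ref{prop:kernel}, hence lies in $N(\mathcal{L})$ and has zero $\dot L^p_\alpha$ seminorm.

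\textbf{Step 3 (Remainder estimate).} The main task is to prove $\|R(f)\|_{\dot L^p_\alpha}\lesssim_L \|f\|_{\dot L^p_\alpha}$. The point is that $h_t$ is not merely bounded by $2\|F'(f)\|_\infty$; the commutator structure and the $C^2$ regularity of $F$ yield a quantitative cancellation. Write $h_t=[F'(P_t^{(D)}f)-F'(f)]-[P_t^{(D)}F'(f)-F'(f)]$. For the first bracket, a first-order Taylor expansion gives pointwise $|F'(P_t^{(D)}f)-F'(f)|\leq \|F''\|_{L^\infty([-L,L])}|P_t^{(D)}f-f|$; for the second one writes $F'(f)-P_t^{(D)}F'(f)=\int_0^t Q_s^{(D)}F'(f)\,\frac{ds}{s}$ via Remark \ref{Pt-rem}(ii). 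Substituting these expansions and exchanging the order of integration, $R(f)$ can be reorganised as a bilinear expression in $(f,F'(f))$ of the same structural type as the paraproduct studied in Sections \ref{para}--\ref{sec:osci}, but with an additional scale factor $(s/t)^{1/2}$ coming from the extra $Q_s^{(D)}$ (or equivalently from the $|P_t^{(D)}f-f|$ gain). The same arguments producing the off-diagonal kernel estimates of Theorems \ref{thm:KLpLqII} and \ref{thm:KLpLqIII} then apply, and the extrapolation of Sections \ref{sec:<2}, \ref{sec=gp}, \ref{sec p>2} yields the desired $\dot L^p_\alpha$ bound in the corresponding regime (i)--(iii).

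\textbf{Main obstacle.} The delicate point is Step 3: one must verify that the extra $C^2$-cancellation in $h_t$ really translates into kernel bounds of the same off-diagonal form as those derived in Theorems \ref{thm:KLpLqII}--\ref{thm:KLpLqIII}, with a decay factor $(\min(s,t)/\max(s,t))^\eps$ for some $\eps>0$. Concretely, one has to control, uniformly in the bounded factor $F'(f)$ and in a Lipschitz control on $F''$, a trilinear object of the form $\iint Q_s^{(D)}\bigl[Q_s^{(D)}F'(f)\cdot Q_t^{(D)}f\bigr]\frac{ds\,dt}{st}$ (up to similar pieces), and to check that the extra $Q_s^{(D)}$ on the $F'(f)$ factor produces precisely the gain needed for the extrapolation step. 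Once this is in place, the conclusion in each of the three regimes is immediate from the previously established paraproduct boundedness.
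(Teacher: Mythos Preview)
Your Steps 1 and 2 are fine, but Step 3 takes a detour that is both harder and unnecessary, and the ``main obstacle'' you identify is precisely the place where the argument, as you have set it up, would fail under the present hypotheses.

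The decomposition $F(f)=F(\mathsf{P}_{N(\mathcal{L})}f)+\Pi_{F'(f)}(f)+R(f)$ with the commutator remainder $R(f)=\int_0^{+\infty}h_t\,Q_t f\,\frac{dt}{t}$, $h_t=F'(P_t f)-P_tF'(f)$, is exactly the starting point of the \emph{paralinearisation} Theorem \ref{thm:paralinearisation}. Obtaining the extra decay $(s/t)^{\eps}$ you describe requires (look at the proof of Theorem \ref{thm:paralinearisation}) the Sobolev embedding of Lemma \ref{lem:fin}, hence local Ahlfors regularity and $\alpha p>\nu$, and in fact $F\in C^3$ to control $F''(P_tf)-F''(f)$. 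None of this is assumed in Theorem \ref{thm:chainrule}, so your Step 3 as written does not close.

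The paper avoids this entirely by \emph{not} extracting the paraproduct. One keeps the representation
\[
\bar F(f)=\int_0^{+\infty}Q_tf\cdot F'(P_tf)\,\frac{dt}{t}
\]
and observes that throughout Sections \ref{para}--\ref{sec p>2} the factor $P_tg$ is only used through three properties of $H(t,x):=P_tg(x)$: (a) a uniform $L^\infty$ bound in $t$; (b) $L^2$-$L^2$ (resp.\ $L^{p}$-$L^{p}$) off-diagonal estimates for $\sqrt{t}\,\nabla H(t,\cdot)$ at scale $\sqrt{t}$; (c) the corresponding $L^2(\frac{dt}{t})$ square-function bound. Replacing $H(t,x)$ by $F'(P_tf(x))$ and using the chain rule $\nabla F'(P_tf)=F''(P_tf)\,\nabla P_tf$ together with $\sup_t\|F''(P_tf)\|_\infty\le \|F''\|_{L^\infty([-L,L])}$, one sees immediately that (a)--(c) still hold, so the proofs of Theorems \ref{gaza}, \ref{thm:tent-extrap-small}, \ref{thm:tent-extrap-large} go through verbatim and give $\bar F(f)\in\dot L^p_\alpha$ in regimes (i)--(iii). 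No remainder analysis, no extra scale factor, and only $F\in C^2$ is needed. (Equivalently: your $h_t$ itself satisfies (a)--(c), so if you insist on the splitting you can bound $R(f)$ by the same generalized-paraproduct argument---but then the splitting buys you nothing.)
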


\begin{rem} In Section \ref{sec:osci} and in \cite{CRT},  \cite{BBR}, under certain extra assumptions (in particular a Poincar\'e inequality), Sobolev norms are shown to be equivalent to the $L^p$-norm of some quadratic functional. Then the chain rule is a direct consequence, and holds for every Lipschitz map $F$.

Under the weaker assumptions of Theorem \ref{thm:chainrule} we do not expect to have such a characterisation in general (see also Proposition \ref{prop:nec}), and the paraproduct approach requires more regularity on $F$ in order to obtain the chain rule.
\end{rem}

\begin{proof} Consider first a more regular function $f\in (\calS^p + N(\mathcal{L})) \cap \dot L^p_\alpha \cap L^\infty$.
Fix a large enough integer $\PP$, and consider the approximation operators $P_t,Q_t$ and the paraproduct $\Pi$ associated with this parameter as defined in \eqref{def:paraproduct}. We represent the nonlinearity as
\begin{align*}
F(f) & = \lim_{t\to 0} F(P_t f) - \lim_{t\to {+\infty}} F(P_t f) + F(\mathsf{P}_{N(\mathcal{L})}(f)),
\end{align*}
where the limit is taken in $L^p(M,\mu)$. This is a consequence of Proposition \ref{prop:reproducing} and the fact that $F$ is Lipschitz, since then
\begin{align*}
\| F(f)-F(P_t f) \|_p & \lesssim \| f-P_t f\|_p \to 0, \quad t \to 0^+,
\end{align*}
and similarly
\begin{align*}
\| F(\mathsf{P}_{N(\mathcal{L})}(f)) -F(P_t f) \|_p & \lesssim \| \mathsf{P}_{N(\mathcal{L})}(f)-P_t f\|_p \to 0, \quad t \to +\infty.
\end{align*}
From this decomposition, we deduce 
\begin{align*}
F(f)  & = -\int_0^{+\infty} \frac{d}{dt} F(P_t f) \, dt + F(\mathsf{P}_{N(\mathcal{L})}(f)) \\
 & = -\int_0^{+\infty} Q_t f \cdot F'(P_t f) \, \frac{dt}{t}+ F(\mathsf{P}_{N(\mathcal{L})}(f)).
\end{align*}
According to Proposition \ref{prop:kernel}, $\mathsf{P}_{N(\mathcal{L})}(f)$ is equal to $0$ or to a constant (depending if the ambient space is bounded or not), therefore
$$ F(\mathsf{P}_{N(\mathcal{L})}(f)) \in N(\mathcal{L}).$$
Consequently, in order to estimate $F(f)$ in the homogeneous Sobolev space, we only have to control the first term
\begin{equation} \bar F(f) := \int_0^{+\infty} Q_t f \cdot F'(P_t f) \, \frac{dt}{t}. \label{eq:chain} \end{equation}

The representation \eqref{eq:chain} does not exactly match the definition of a paraproduct. However, in the study of paraproducts in the previous sections, we only used the following three properties of the term $H(t,x) = P_t g (x)$:
\begin{enumerate}
\item Uniform boundedness $\sup_{t>0} \|H(t,\cdot)\|_{\infty} \lesssim \|g\|_{\infty}$;
\item $L^2$-$L^2$ (resp. $L^p$-$L^p$) gradient estimates of $\nabla H(t,\cdot)$ at the scale $\sqrt{t}$ in case i) and iii)(resp. ii));
\item $L^2$-$L^2$  (resp. $L^p$-$L^p$) global estimate for the square function $ \|\nabla H(t,\cdot)\|_{L^2(\frac{dt}{t})}$ in situation i) and iii) (resp. ii)).
\end{enumerate}
We refer the reader to Theorem $\ref{thm:tent-extrap-small}$ (whose proof relies on Theorem $\ref{thm:KLpLqII}$) for case i), to Theorem 
$\ref{thm:extrap>2}$ for case ii) and to Theorem $\ref{thm:tent-extrap-large}$ (whose proof relies on Theorem $\ref{thm:KLpLqIII}$) for case iii).

By  \eqref{eq:chain}, following the same proof as for the paraproduct, we will have shown that $\bar F(f)\in  \dot L^p_\alpha$ (and so $F(f)\in  \dot L^p_\alpha$)  as soon as we will have checked that the quantity $H(t,x):= F'(P_t f(x))$ satisfies properties $(a), (b)$ and $(c)$.
Since $f\in L^\infty(M,\mu)$, $P_t f$ is uniformly bounded, and since $F'$ is continuous, also $ F'(P_t f(x))$ is uniformly bounded, hence property $(a)$.
Due to the chain rule,
$$ \nabla H(t,x) = F''(P_t f(x)) \nabla P_t f,$$
and since also $ F''(P_t f(x)) $ is uniformly bounded, we deduce that $\nabla H(t,\cdot)$ satisfies the same Davies-Gaffney estimates as $\nabla P_t f$, hence property $(b)$ is checked. A similar reasoning holds also for property $(c)$.

In this way, repeating the same proof as for the paraproduct gives that $\bar F(f)\in \dot L^p_\alpha$.

Consequently, we get that for every $f\in (\calS^p + N(\mathcal{L})) \cap \dot L^p_\alpha \cap L^\infty$, one has $F(f)\in \dot L^p_\alpha$ and
\begin{equation} \|F(f)\|_{\dot L^p_\alpha} \lesssim \phi( \|f\|_{\dot L^p_\alpha\cap L^\infty}), \label{eq:ext} \end{equation}
where $\phi$ is some non-decreasing function. We already know that  $(\calS^p + N(\mathcal{L})) \cap \dot L^p_\alpha \cap L^\infty$ is dense in $\dot L^p_\alpha \cap L^\infty$. This allows us to extend the map $f \mapsto F(f)$ on the whole Banach space $\dot L^p_\alpha \cap L^\infty$: indeed for $(f_n)_n$ a Cauchy sequence, we easily check that $\bar F(f_n)$ (and so $(F(f_n))_n$) still is a Cauchy sequence in $\dot L^p_\alpha \cap L^\infty$ , since
$$ \bar F(f_n)- \bar F(f_m) = \int_0^{+\infty} Q_t (f_n-f_m) \cdot F'(P_t f_n) \, \frac{dt}{t} + \int_0^{+\infty} Q_t f_m \cdot [F'(P_t f_n)-F'(P_t f_m)] \, \frac{dt}{t}$$
and the two previous quantities can be bounded by the same reasoning as previously. Using that $F''$ is continuous and so is uniformly continuous on a bounded interval containing all the values of the sequence $(f_n(x))_n$, we let the reader check that the quantity $F'(P_t f_n)-F'(P_t f_m)$ still satisfies properties $(a), (b)$ and $(c)$, involving a control in terms of $\|f_n-f_m\|_{\dot L^p_\alpha \cap L^\infty}$.

In this way, $f \mapsto F(f)$ can be extended on the whole Banach space $\dot L^p_\alpha \cap L^\infty$ and \eqref{eq:ext} remains valid on the whole space.
\end{proof}

\begin{theorem}[Paralinearisation] \label{thm:paralinearisation} Let $(M,d,\mu, {\mathcal E})$  be a doubling metric  measure   Dirichlet space with a ``carr\'e du champ'' satisfying \eqref{due}. Assume uniform volume growth (also called a local Ahlfors regularity): there exist constants $c_1,c_2$ such that for every $x\in M$ and every radius $r\in(0,1]$, one has
\begin{equation} c_1 \leq \frac{|B(x,r)|}{r^\nu} \leq c_2. \label{eq:growth} \end{equation}
Let $F \in C^3(\R)$ be a nonlinearity with $F(0)=0$, and let $\alpha\in(0,1)$, $p\in(1,{+\infty})$ with $\alpha p >\nu$. Let  $f\in L^\infty(M,\mu) \cap \dot L^p_\alpha(M,\mathcal{L},\mu)$.
 Then  there exists $\PP_0:=\PP_0(\nu,p)$ such that for $\PP\geq \PP_0$, we have the paralinearisation
$$ F(f)-\Pi_{F'(f)}(f) \in L^\infty(M,\mu) \cap \dot L^p_\alpha(M,\mathcal{L},\mu) \cap \dot L^p_{\alpha+\rho}(M,\mathcal{L},\mu)$$
in the following situations:
\begin{enumerate}
\item[i)] if $p\leq 2$ (and $\nu<2$), $\alpha\in(0,1)$, $0<\rho< \min\{1-\alpha,\alpha-\frac{\nu}{p}\}$; 
\item[ii)] if $p>\nu$, $0<\alpha<1-\frac{\nu}{p}$, $0<\rho< \min\{1-\frac{\nu}{p}-\alpha,\alpha-\frac{\nu}{p}\}$ and under $(G_p)$.
\end{enumerate}
\end{theorem}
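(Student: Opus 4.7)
The plan is to exploit the Calderón-type identity already appearing in the proof of the chain rule (Theorem \ref{thm:chainrule}): since $P_tf\to f$ as $t\to 0^+$, $P_tf\to \mathsf{P}_{N(\mathcal{L})}(f)$ as $t\to +\infty$ in $L^p$, and $t\partial_t P_t=-Q_t$, one has $F(f)=\int_0^{+\infty}Q_tf\cdot F'(P_tf)\,\frac{dt}{t}+F(\mathsf{P}_{N(\mathcal{L})}(f))$. Subtracting the definition of the paraproduct yields
$$
F(f)-\Pi_{F'(f)}(f)-F(\mathsf{P}_{N(\mathcal{L})}(f))=\int_0^{+\infty} Q_tf\cdot H(t,\cdot)\,\frac{dt}{t}, \quad H(t,\cdot):=F'(P_tf)-P_tF'(f).
$$
The additive constant $F(\mathsf{P}_{N(\mathcal{L})}(f))$ lies in $N(\mathcal{L})$ by Proposition \ref{prop:kernel} and plays no role in the Sobolev norms. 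Membership of $F(f)-\Pi_{F'(f)}(f)$ in $\dot L^p_\alpha$ follows at once from the chain rule for $F(f)$ and from the paraproduct boundedness (Theorems \ref{gaza}, \ref{thm:extrap>22}, \ref{thm:extrap>2}) applied with $\|F'(f)\|_\infty\lesssim C(F,\|f\|_\infty)$. The $L^\infty$ membership will follow for free from the target bound in $\dot L^p_{\alpha+\rho}$, via the Sobolev embedding $\dot L^p_{\alpha+\rho}\hookrightarrow L^\infty$ available under \eqref{eq:growth} since $(\alpha+\rho)p>\nu$.

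The crux is therefore to show that the remainder $R(f):=\int_0^{+\infty}Q_tf\cdot H(t,\cdot)\,\frac{dt}{t}$ lies in $\dot L^p_{\alpha+\rho}$. The extra smoothing will come from the fact that $H(t,\cdot)$ decays in $t$. Indeed, \eqref{eq:growth} combined with $\alpha p>\nu$ yields a Hölder embedding $\dot L^p_\alpha\cap L^\infty\hookrightarrow \dot C^\sigma$ with $\sigma:=\alpha-\nu/p$, and thus a quantitative approximation of the form $\|P_tu-u\|_\infty\lesssim t^{\sigma/2}\|u\|_{\dot L^p_\alpha}$. Applied to $u=f$ and to $u=F'(f)$ (which belongs to $\dot L^p_\alpha$ by Theorem \ref{thm:chainrule} since $F\in C^2$ and $f$ is bounded), together with $|F'(a)-F'(b)|\leq \|F''\|_{L^\infty(K)}|a-b|$ on $K:=[-\|f\|_\infty,\|f\|_\infty]$, the splitting $H(t)=[F'(P_tf)-F'(f)]-[P_tF'(f)-F'(f)]$ gives the pointwise bound
$$
\|H(t,\cdot)\|_\infty\lesssim C(F,\|f\|_\infty)\,t^{\sigma/2}\,\|f\|_{\dot L^p_\alpha}.
$$
A parallel computation, using the pointwise chain rule $\nabla F'(P_tf)=F''(P_tf)\nabla P_tf$ and $P_t\nabla F'(f)=P_t(F''(f)\nabla f)$, produces for $\sqrt{t}|\nabla H(t,\cdot)|$ the same Davies-Gaffney and $L^{p_2}$-gradient bounds as those used in Propositions \ref{prop:Davies-Gaffney}, \ref{prop:square-function} and in the proof of the chain rule, again with an extra factor $t^{\sigma/2}$.

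It then suffices to re-run the paraproduct boundedness proofs at Sobolev level $\alpha+\rho$, substituting the $t$-independent factor $P_tg$ throughout by the $t$-dependent $H(t,\cdot)$. Inspection of the arguments for Theorems \ref{gaza}, \ref{thm:extrap>22}, \ref{thm:extrap>2} reveals that only three features of $P_tg$ are ever used: its uniform $L^\infty$-bound, Davies-Gaffney or $L^{p_2}$-bounds for its gradient, and $L^p$-bounds for the associated square function. Each of these carries over to $H(t,\cdot)$ with an additional scale factor $t^{\sigma/2}$, and tracking this gain through the operator-valued kernel $K_{\alpha,g}(s,t)$ from \eqref{eq:K} produces an additional $(\min(s,t)/\max(s,t))^{\sigma/2}$ in the off-diagonal bounds of Theorems \ref{thm:KLpLqII}, \ref{thm:KLpLqIII}. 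Applying $\mathcal{L}^{\rho/2}$ via Lemma \ref{lem:orthogonality} costs a compensating factor of order $(s/t)^{-\rho/2}$, so the scale integrals remain convergent precisely when $\rho<\sigma=\alpha-\nu/p$; the second constraint $\rho<1-\alpha$ (resp. $\rho<1-\nu/p-\alpha$ in case (ii)) is nothing but the condition that the unperturbed paraproduct bounds are valid at regularity $\alpha+\rho$. The main technical obstacle lies in verifying that this substitution survives the most delicate pieces of the proofs, namely the Carleson-duality estimate of Lemma \ref{lem:nontang-Carleson} and the Auscher-Martell-type extrapolation of Proposition \ref{prop:extra-p>2}, so that the $t^{\sigma/2}$ gain is not spoiled by the non-tangential maximal function or the Carleson functional that intervene there. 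Once this bookkeeping is carried out, the estimate $\|R(f)\|_{\dot L^p_{\alpha+\rho}}\lesssim C(F,\|f\|_\infty)\|f\|_{\dot L^p_\alpha}$ follows and completes the proof.
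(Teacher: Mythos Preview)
Your overall strategy coincides with the paper's: write $F(f)-\Pi_{F'(f)}(f)$ (modulo the harmless constant $F(\mathsf{P}_{N(\mathcal{L})}(f))\in N(\mathcal{L})$) as $\int_0^{+\infty}Q_tf\cdot H(t,\cdot)\,\frac{dt}{t}$ with $H(t,\cdot)=F'(P_tf)-P_tF'(f)$, extract the decay $\|H(t,\cdot)\|_\infty\lesssim t^{\sigma/2}$ with $\sigma=\alpha-\nu/p$ from the Sobolev--H\"older embedding, and re-run the paraproduct machinery at level $\alpha+\rho$ carrying this gain through.

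There is, however, a genuine gap in your treatment of $\nabla H(t,\cdot)$. Writing $\nabla H(t,\cdot)=F''(P_tf)\nabla P_tf-\nabla P_t[F'(f)]$ and recording the pointwise identities $\nabla F'(P_tf)=F''(P_tf)\nabla P_tf$, $\nabla F'(f)=F''(f)\nabla f$ is not enough: since $\nabla$ and $P_t$ do not commute, there is no pointwise comparison between $\nabla P_t[F'(f)]$ and $P_t[F''(f)\nabla f]$ that yields the extra $t^{\sigma/2}$. The paper decomposes
\[
\nabla H(t,\cdot)=\bigl(F''(P_tf)-F''(f)\bigr)\nabla P_tf \;+\; \bigl(F''(f)\nabla P_tf-\nabla P_t[F'(f)]\bigr).
\]
The first piece inherits the $t^{\sigma/2}$ decay from $\|P_tf-f\|_\infty$ together with Lipschitz continuity of $F''$ on $[-\|f\|_\infty,\|f\|_\infty]$; this is precisely where $F\in C^3$ (not just $C^2$) is required. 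The second piece is the delicate one: the paper handles it by first embedding $f,F'(f)\in\dot L^p_\alpha\cap L^\infty\hookrightarrow\dot L^\infty_s$ for $s<\alpha-\nu/p$ (Lemma~\ref{lem:fin}) and then applying dedicated off-diagonal estimates for $\sqrt{t}\,\nabla P_t\mathcal{L}^{-s/2}$ and $\sqrt{t}\,\nabla(P_t-I)\mathcal{L}^{-s/2}$ (Lemmas~\ref{lem:1} and~\ref{lem:2}), which produce the $t^{s/2}$ gain at the level of $L^2$ (resp.\ $L^p$) off-diagonal bounds rather than pointwise. Your ``parallel computation'' skips this step, and without it properties~(b) and~(c) for $H$ with the extra factor do not follow.

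Two smaller points. First, the gain $t^{s/2}$ is only an improvement for $t\le 1$; for large $t$ one falls back on the uniform $L^\infty$ bound on $H$, and the paper makes this cut explicit (this is also where the \emph{local} Ahlfors regularity \eqref{eq:growth} is relevant). Second, the extra factor that appears in the kernel estimates is $t^{s/2}$ in the paraproduct time variable, not the ratio $(\min(s,t)/\max(s,t))^{\sigma/2}$ that you describe.
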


\begin{rem} We let the reader check the following (easy) extension (also valid for Theorem $\ref{thm:chainrule}$): consider a regular function $F:M\times \R \rightarrow \R$ such that both $F(x,\cdot)$ and $\nabla_x F(x,\cdot)$ satisfy the assumptions of Theorem \ref{thm:paralinearisation}. 
Then the result still holds with the following paralinearisation formula:
$$ x\mapsto F(x,f(x))-\Pi_{\partial_t F(x,f(x))}(f)(x) \in L^\infty \cap \dot L^p_\alpha \cap \dot L^p_{\alpha+\rho}.$$
\end{rem}

\begin{proof}
Using \eqref{eq:chain}, one may write
$$ F(f) = \Pi_{F'(f)}(f) + R$$
with the remainder
$$ R:= \int_0^{+\infty} Q_t f \cdot \left[F'(P_t f) - P_t F'(f)\right] \, \frac{dt}{t} + F(\mathsf{P}_{N(\mathcal{L})}(f)).$$
As previously, the second term is bounded and belongs to any Sobolev space (since it is equal to a constant). So we only have to focus on the first part and as previously, we are going to check that the quantity $H(t,x):=F'(P_t f(x)) - P_t [F'(f)] (x)$ satisfies more ``regular'' properties than $(a)$, $(b)$ and $(c)$.
Using the mean value theorem, one obtains
\begin{align*}
|H(t,x)| & \leq \left|F'(P_t f(x)) -F'(f(x))\right| + \left| F'(f(x)) - P_t [F'(f)] (x)\right| \\
 & \leq \|F''\|_{\infty} |(1-P_t)[f] (x)| + |(1-P_t)[F'(f)] (x)|.
\end{align*}
Then for the function $h=f$ or $h=F'(f)$ belonging to $\dot L^p_\alpha$ (due to the previous Theorem applied to $F'$), we have
\begin{align*}
\|(1-P_t)h\|_{\infty } & \leq \int_0^t \|Q_s h\|_{\infty} \frac{ds}{s} \nonumber \\
 & \lesssim \left(\int_0^t s^{\alpha / 2} \|(s\mathcal{L})^{-\alpha/2}Q_s\|_{p \to \infty} \frac{ds}{s}\right)  \| h\|_{\dot L^p_\alpha} \nonumber \\
 & \lesssim \left(\int_0^t s^{\alpha / 2} s^{-\nu/2p} \frac{ds}{s} \right) \| h\|_{\dot L^p_\alpha} \label{eq:uniform} \\
& \lesssim t^{\frac{\alpha}{2} -\frac{\nu}{2p}} \| h\|_{\dot L^p_\alpha}, \nonumber 
\end{align*}
as soon as $\alpha >\frac{\nu}{p}$.
So with implicit constants depending on $f$, we deduce that 
$$ \| H(t,\cdot)\|_{\infty} \lesssim t^{\frac{\alpha}{2} -\frac{\nu}{2p}},$$
instead of  $(a)$, which is better for small $t\lesssim 1$.

Similarly, we have
\begin{align*}
 \nabla H(t,\cdot) & = F''(P_t f) \nabla P_t f - \nabla P_t [F'(f)]   \\
  & = \left(F''(P_t f) \nabla P_t f   - F''(f) \nabla P_t f \right) + \left( F''(f) \nabla P_t f - \nabla P_t [F'(f)]  \right).
  \end{align*}
 As previously, the first term satisfies properties $(b)$ and $(c)$ with the extra coefficient $t^{\frac{\alpha}{2} -\frac{\nu}{2p}}$. The second term is more difficult: 
we aim to take advantage of the fact that $f,F'(f)\in L^\infty \cap \dot L^p_\alpha \subset \dot L^\infty_s$, with any exponent $0<s<\alpha-\frac{\nu}{p}$ (see Lemma \ref{lem:fin}). Let us write
$$ II_t(\phi_1,\phi_2):=\left( F''(f) \nabla P_t \phi_1  - \nabla P_t [\phi_2]  \right).$$
\begin{itemize}
\item For the diagonal part, we use the global $L^2$-boundedness, shown in Lemma \ref{lem:1} below,
$$ \| \sqrt{t} \nabla P_t \mathcal{L}^{-s/2} \|_{2 \to 2} \lesssim t^{s/2}.$$
Therefore, we have for every ball $B$ of radius $\sqrt{t}$
\begin{align*} 
 |B|^{-{1/2}} \| II_t( \Eins_B f, \Eins_B F'(f))\|_{L^2(B)} & \lesssim t^{s/2}(\|\mathcal{L}^{s/ 2}f\|_{\infty} + \|\mathcal{L}^{s/ 2}F'(f)\|_{\infty}) \\
  & \lesssim t^{\frac{s-1}{2}}(\|f\|_{\dot L^p_\alpha } + \|F'(f)\|_{\dot L^p_\alpha}).
 \end{align*}
\item For the off-diagonal part, we use Lemma \ref{lem:2} below to obtain $L^2$-$L^2$ off-diagonal estimates: for every ball $B,B_1$ of radius $\sqrt{t}$ with $\sqrt{t} \leq d(B,B_1)$ 
\begin{align*}
 \|II_t( \mathcal{L}^{-s/2}  \Eins_{B_1} \mathcal{L}^{s/2} f, \mathcal{L}^{-s/2} \Eins_{B_1} \mathcal{L}^{s/2} F'(f))\|_{L^2(B)} & \\
 & \hspace{-6cm} = \| F''(f) \nabla (P_t -1)\mathcal{L}^{-s/2}  (\Eins_{B_1} \mathcal{L}^{s/2} f) + \nabla (P_t-1)\mathcal{L}^{-s/2}  [\Eins_{B_1} \mathcal{L}^{s/2} F'(f)] \|_{L^2(B)} \\
  & \hspace{-6cm} \lesssim t^{\frac{s-1}{2}} \left(1+\frac{d(B,B_1)^2}{t} \right)^{-M} \left[ \|\mathcal{L}^{s/2} f\|_{L^2(B_1)} +\|\mathcal{L}^{s/2}  F'(f)\|_{L^2(B_1)}  \right],
\end{align*}
where $M$ can be chosen arbitrarily large.
\end{itemize}
This proves that $H(t,\cdot)$ satisfies  $(b)$ with an extra factor $t^{s/2}$.
By the same reasoning we obtain that $H(t,\cdot)$ satisfies  $(c)$ with an extra factor $t^{s/2}$, which yields the $L^2-L^2$ global estimate for the square function $ \| t^{-s/2} \nabla H(t,\cdot)\|_{L^2((0,1],\frac{dt}{t})}$ in situations i) and ii).

So finally, for $t\leq 1$ (which corresponds to the situation where the previous inequalities are improvements), we obtain that the quantity $H(t,\cdot)$ satisfies Properties $(a)$, $(b)$ and $(c)$ with an extra factor $t^{s/2}$, with $s<\alpha-\frac{\nu}{p}$.

Then coming back to the proof of boundedness of the paraproduct, this gain allows to prove that the remainder term
$$ R \in L^\infty \cap \dot L^p_\alpha \cap \dot L^p_{\alpha+s},$$
as soon as  $s>0$ and $\alpha+s$ in the range allowed by the proof ($\alpha+s<1$ in case i) and $\alpha+s<1-\frac{\nu}{p}$ in case ii)).
\end{proof}

\begin{lemma}[Sobolev embedding] \label{lem:fin}  Let $(M,d,\mu, {\mathcal E})$  be a doubling metric  measure   Dirichlet space with a ``carr\'e du champ'' satisfying \eqref{due} and the uniform volume growth \eqref{eq:growth}. Then for $\alpha>0$, $p>1$ with $\alpha p>\nu$, we have
$$  L^\infty(M,\mu) \cap \dot L^p_\alpha(M,\mathcal{L},\mu) \subset \dot L^\infty_s(M,\mathcal{L},\mu),$$
for any exponent $0<s<\alpha-\frac{\nu}{p}$.
\end{lemma}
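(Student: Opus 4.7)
The plan is to exploit the subordination formula for negative fractional powers. Since $s<\alpha$, the exponent $\sigma:=(\alpha-s)/2$ is strictly positive, and for a test function $f\in \calS$ one can write, by functional calculus,
\begin{equation*}
	\mathcal{L}^{s/2} f \;=\; \mathcal{L}^{-\sigma}\bigl(\mathcal{L}^{\alpha/2} f\bigr) \;=\; \frac{1}{\Gamma(\sigma)}\int_0^{+\infty} t^{\sigma}\, e^{-t\mathcal{L}}\bigl(\mathcal{L}^{\alpha/2}f\bigr)\,\frac{dt}{t}.
\end{equation*}
I would split the integral at $t=1$ and estimate each piece separately, using on the one hand that $\mathcal{L}^{\alpha/2}f\in L^p$ and on the other hand that $f\in L^\infty$.

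For the short-time piece $t\in(0,1]$, I would use that under \eqref{due} combined with the uniform Ahlfors regularity \eqref{eq:growth} (so that $V(x,\sqrt{t})\simeq t^{\nu/2}$ for $\sqrt{t}\le 1$), the Gaussian upper bound \eqref{UE} improves to the global $L^p$--$L^\infty$ mapping property $\|e^{-t\mathcal{L}}\|_{L^p\to L^\infty}\lesssim t^{-\nu/(2p)}$. Hence
\begin{equation*}
	\Bigl\|\int_0^1 t^{\sigma}\,e^{-t\mathcal{L}}\mathcal{L}^{\alpha/2}f\,\tfrac{dt}{t}\Bigr\|_\infty \;\lesssim\; \|\mathcal{L}^{\alpha/2}f\|_p \int_0^1 t^{\sigma-\nu/(2p)}\,\tfrac{dt}{t},
\end{equation*}
and the integral converges exactly under the assumption $\sigma>\nu/(2p)$, i.e.\ $s<\alpha-\nu/p$.

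For the large-time piece $t\in[1,+\infty)$, the bound $\|\mathcal{L}^{\alpha/2}f\|_p$ is useless (there is no decay of $e^{-t\mathcal{L}}$ on $L^p$ in this generality); instead I would commute via functional calculus to write $e^{-t\mathcal{L}}\mathcal{L}^{\alpha/2}=t^{-\alpha/2}(t\mathcal{L})^{\alpha/2}e^{-t\mathcal{L}}$ and use that, by Lemma~\ref{prop:kernel-est} applied to $(t\mathcal{L})^{\alpha/2}e^{-t\mathcal{L}}=c\,Q_t^{(\alpha/2)}$, this operator has a kernel of Gaussian type and is therefore bounded on $L^\infty$ uniformly in $t>0$. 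This gives
\begin{equation*}
	\Bigl\|\int_1^{+\infty} t^{\sigma}\,e^{-t\mathcal{L}}\mathcal{L}^{\alpha/2}f\,\tfrac{dt}{t}\Bigr\|_\infty \;\lesssim\; \|f\|_\infty \int_1^{+\infty} t^{\sigma-\alpha/2}\,\tfrac{dt}{t} \;=\; \|f\|_\infty \int_1^{+\infty} t^{-s/2-1}\,dt,
\end{equation*}
which converges because $s>0$. Combining the two estimates yields $\|\mathcal{L}^{s/2}f\|_\infty\lesssim \|\mathcal{L}^{\alpha/2}f\|_p+\|f\|_\infty$ on the dense subspace $\calS\cap L^\infty$, and the inequality extends to all of $L^\infty\cap\dot L^p_\alpha$ by density (modulo $N(\mathcal{L})$, which is annihilated by $\mathcal{L}^{s/2}$).

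The main point (and only subtle feature) is the splitting at the unit scale: the short-time estimate requires precisely the Lebesgue gain coming from the uniform lower volume growth \eqref{eq:growth}, which forces the threshold $s<\alpha-\nu/p$, while the long-time estimate crucially uses $f\in L^\infty$ rather than $\mathcal{L}^{\alpha/2}f\in L^p$ and relies on the uniform $L^\infty$-boundedness of $(t\mathcal{L})^{\alpha/2}e^{-t\mathcal{L}}$ furnished by the Gaussian kernel bounds. Everything else is routine fractional-calculus bookkeeping.
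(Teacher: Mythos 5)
Your proof is correct and follows essentially the same strategy as the paper: split at the unit time scale, use the $L^p\to L^\infty$ kernel bound $t^{-\nu/(2p)}$ from Lemma \ref{prop:kernel-est} combined with the Ahlfors regularity \eqref{eq:growth} for $t\leq 1$ (which produces the threshold $s<\alpha-\nu/p$), and use uniform $L^\infty$-boundedness of the Gaussian kernel operators together with $f\in L^\infty$ for $t\geq 1$. The only cosmetic difference is that you phrase the decomposition via the subordination formula for $\mathcal{L}^{-\sigma}$ applied to $\mathcal{L}^{\alpha/2}f$, whereas the paper writes $f=\int_0^1(t\mathcal{L})e^{-t\mathcal{L}}f\,\tfrac{dt}{t}+e^{-\mathcal{L}}f$ and then applies $\mathcal{L}^{s/2}$; the two formulations lead to identical estimates.
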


\begin{proof} Let $f\in L^\infty \cap \dot L^p_\alpha$. Then
$$ \mathcal{L}^{s\over 2} f = \int_0^1 \mathcal{L}^s (t\mathcal{L}) e^{-t\mathcal{L}}f \, \frac{dt}{t} + \mathcal{L}^{s\over 2} e^{-\mathcal{L}} f .$$
For the second term, using the $L^\infty$-boundedness of $\mathcal{L}^{s\over 2} e^{-\mathcal{L}} f$ (due to the decay of its kernel, see Lemma \ref{prop:kernel-est}) we have
\begin{align*}
\| \mathcal{L}^{s\over 2} e^{-\mathcal{L}} f \|_\infty \lesssim \|f\|_\infty.
\end{align*}  
For the second term, we use that
\begin{align*}
\| \mathcal{L}^s (t\mathcal{L}) e^{-t\mathcal{L}}f\|_\infty & \lesssim \|t\mathcal{L}^{1+\frac{s-\alpha}{2}}e^{-t\mathcal{L}}\|_{p \to \infty} \|f\|_{\dot L^p_\alpha} \\
 & \lesssim t^{\frac{\alpha-s}{2}} t^{-\frac{\nu}{2p}}  \|f\|_{\dot L^p_\alpha},
\end{align*}
where we used the pointwise estimate of the kernel of $(t\mathcal{L})^{1+\frac{s-\alpha}{2}}e^{-t\mathcal{L}}$ (due to Lemma  \ref{prop:kernel-est}) with the uniform control of the volume \eqref{eq:growth}. We then conclude by integrating this estimate.
\end{proof}

\begin{lemma} \label{lem:1} Let $\epsilon\in(0,1)$. Under $(G_p)$ for $p\geq 2$  we have
$$ \| \sqrt{t} \nabla \mathcal{L}^{-\frac{\epsilon}{2}} P_t \|_{p \to p} \lesssim t^{\epsilon/2}.$$
\end{lemma}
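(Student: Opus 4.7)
The plan is to exploit the integral representation
\[
P_t = \mathsf{P}_{N(\mathcal{L})} + \int_t^{+\infty} Q_s^{(\PP)}\,\frac{ds}{s} \qquad \text{on }L^p(M,\mu),
\]
which is obtained by integrating $-s\partial_s P_s = Q_s^{(\PP)}$ from $t$ to $+\infty$ and using Proposition \ref{prop:reproducing}. Since $\mathsf{P}_{N(\mathcal{L})}f$ lies in $N(\mathcal{L})$ (hence is a constant, by Proposition \ref{prop:kernel}), its contribution to $\sqrt{t}\,\nabla \mathcal{L}^{-\epsilon/2}P_tf$ vanishes, so everything may be reduced to $\overline{\Ran_p(\mathcal{L})}$, where $\mathcal{L}^{-\epsilon/2}$ is unambiguously defined. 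Using $\mathcal{L}^{-\epsilon/2}(s\mathcal{L})^\PP = s^{\epsilon/2}(s\mathcal{L})^{\PP-\epsilon/2}$, this yields
\[
\sqrt{t}\,\nabla \mathcal{L}^{-\epsilon/2} P_t = c_\PP^{-1} \int_t^{+\infty} s^{\epsilon/2}\,\sqrt{t}\,\nabla (s\mathcal{L})^{\PP-\epsilon/2} e^{-s\mathcal{L}}\,\frac{ds}{s}.
\]
The crucial feature is the lower limit $s\ge t$, which provides the decay needed to absorb the growth factor $s^{\epsilon/2}$.

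The next step is to bound $\|\sqrt{t}\,\nabla (s\mathcal{L})^{\PP-\epsilon/2}e^{-s\mathcal{L}}\|_{p\to p}$ for $s\ge t$. Splitting $e^{-s\mathcal{L}}=e^{-s\mathcal{L}/2}\circ e^{-s\mathcal{L}/2}$ and using that the functional calculus of $\mathcal{L}$ commutes with itself,
\[
\sqrt{t}\,\nabla (s\mathcal{L})^{\PP-\epsilon/2}e^{-s\mathcal{L}}
= \bigl[\sqrt{t}\,\nabla e^{-s\mathcal{L}/2}\bigr]\circ\bigl[(s\mathcal{L})^{\PP-\epsilon/2}e^{-s\mathcal{L}/2}\bigr].
\]
The second factor coincides, up to a universal constant, with $Q_{s/2}^{(\PP-\epsilon/2)}$ and is therefore uniformly bounded on $L^p$ by Remark \ref{Pt-rem}(i). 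For the first factor, $(G_p)$ gives
\[
\|\sqrt{t}\,\nabla e^{-s\mathcal{L}/2}\|_{p\to p}
= \sqrt{2t/s}\,\|\sqrt{s/2}\,\nabla e^{-s\mathcal{L}/2}\|_{p\to p}
\lesssim \sqrt{t/s}.
\]
Composing the two bounds yields
\[
\|\sqrt{t}\,\nabla(s\mathcal{L})^{\PP-\epsilon/2}e^{-s\mathcal{L}}\|_{p\to p}\lesssim \sqrt{t/s}.
\]

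Inserting this estimate into the integral representation then produces
\[
\|\sqrt{t}\,\nabla \mathcal{L}^{-\epsilon/2}P_t\|_{p\to p}
\lesssim \int_t^{+\infty} s^{\epsilon/2}\sqrt{t/s}\,\frac{ds}{s}
= \sqrt{t}\int_t^{+\infty} s^{(\epsilon-3)/2}\,ds
= \frac{2}{1-\epsilon}\,t^{\epsilon/2},
\]
where the hypothesis $\epsilon<1$ is used exactly to guarantee integrability of the tail at $+\infty$. I do not anticipate a genuine obstacle; the only minor technicality is the bookkeeping needed to interpret $\mathcal{L}^{-\epsilon/2}P_t$ correctly via the decomposition $L^p = \overline{\Ran_p(\mathcal{L})}\oplus N_p(\mathcal{L})$, which as observed above trivialises thanks to the gradient annihilating the null-space component.
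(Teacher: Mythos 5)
Your proof is correct, and it takes a genuinely different route from the paper. The paper's proof uses the subordination formula $\mathcal{L}^{-\epsilon/2}=c\int_0^{+\infty}e^{-s\mathcal{L}}\,\frac{ds}{s^{1-\epsilon/2}}$ to write $\sqrt{t}\,\nabla\mathcal{L}^{-\epsilon/2}P_t=\int_0^{+\infty}\sqrt{t}\,\nabla e^{-s\mathcal{L}}P_t\,\frac{ds}{s^{1-\epsilon/2}}$ and then splits the integral at $s=t$: for $s\leq t$ one commutes $e^{-s\mathcal{L}}$ past $P_t$ and bounds $\|\sqrt{t}\,\nabla P_t\|_{p\to p}\lesssim 1$, while for $s\geq t$ one bounds $\|\sqrt{t}\,\nabla e^{-s\mathcal{L}}\|_{p\to p}\lesssim(t/s)^{1/2}$. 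Both tail integrals are $\simeq t^{\epsilon/2}$ and the constraint $\epsilon<1$ enters in the $s\geq t$ piece exactly as in your argument.

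What you do instead is start from the Calder\'on reproducing identity $P_t^{(\PP)}=\mathsf{P}_{N(\mathcal{L})}+\int_t^{+\infty}Q_s^{(\PP)}\,\frac{ds}{s}$, absorb $\mathcal{L}^{-\epsilon/2}$ into $Q_s^{(\PP)}$ to pick up the factor $s^{\epsilon/2}$, and then push $(G_p)$ through $e^{-s\mathcal{L}/2}$ with the uniformly bounded remainder $Q_{s/2}^{(\PP-\epsilon/2)}$. The advantage of your route is that the integration is automatically restricted to $s\geq t$, so there is no case split; the mild cost is that the argument leans specifically on the structure of $P_t^{(\PP)}$ (namely $-t\partial_t P_t^{(\PP)}=Q_t^{(\PP)}$ and $\lim_{t\to\infty}P_t^{(\PP)}=\mathsf{P}_{N(\mathcal{L})}$), whereas the paper's argument treats $P_t$ essentially as a black box satisfying $\|\sqrt{t}\,\nabla P_t\|_{p\to p}\lesssim 1$ and $\|P_t\|_{p\to p}\lesssim 1$. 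Both approaches share the same interpretive caveat that the operator $\mathcal{L}^{-\epsilon/2}P_t$ is only defined modulo $N(\mathcal{L})$, which you correctly dispose of by noting that the gradient annihilates the null-space component.
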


\begin{proof} We decompose
$$ \sqrt{t} \nabla \mathcal{L}^{-\frac{\epsilon}{2}} P_t = \int_0^{+\infty} \sqrt{t} \nabla e^{-s\mathcal{L} } P_t \frac{ds}{s^{1-\frac{\epsilon}{2}}}.$$
Then we use that for $s\leq t$, by $(G_p)$  we have
$$ \| \sqrt{t} \nabla e^{-s\mathcal{L} }P_t \|_{p \to p} =\| \sqrt{t} \nabla P_t e^{-s\mathcal{L} }\|_{p \to p} \leq \| \sqrt{t} \nabla P_t \|_{p \to p} \| e^{-s\mathcal{L} }\|_{p \to p} \lesssim 1.$$
For $s\geq t$, $(G_p)$ yields
$$ \| \sqrt{t} \nabla e^{-s\mathcal{L} }P_t \|_{p \to p} \leq \| \sqrt{t} \nabla e^{-s\mathcal{L} } \|_{p \to p} \| P_t \|_{p \to p}  \lesssim \left(\frac{t}{s}\right)^{1/ 2}.$$
We conclude the proof by integrating these inequalities.
\end{proof}

\begin{lemma} \label{lem:2} Let $\epsilon\in(0,1)$. Under $(G_p)$ for $p\geq 2$, we have for all balls $B_1,B_2$ of radius $\sqrt{t}$ with $d(B_1,B_2)\geq \sqrt{t}$
$$ \| \sqrt{t} \nabla \mathcal{L}^{-\frac{\epsilon}{2}} (P_t-I) \|_{L^p(B_1) \to L^p(B_2)} \lesssim t^{\frac{\epsilon}{2}} \left(1+\frac{d^2(B_1,B_2)}{t}\right)^{-M},$$
where $M$ can be chosen arbitrarily large (depending on $P_t$).
\end{lemma}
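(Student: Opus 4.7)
The plan is to combine the representation
\begin{equation*}
P_t - I = -\int_0^t Q_s\,\frac{ds}{s}
\end{equation*}
from Remark \ref{Pt-rem}(ii) with scale-$\sqrt{s}$ off-diagonal bounds for $\sqrt{s}\nabla e^{-s\mathcal{L}/2}$ available under $(G_p)$. Since $Q_s = c_\PP^{-1}(s\mathcal{L})^\PP e^{-s\mathcal{L}}$, functional calculus gives $\mathcal{L}^{-\epsilon/2}Q_s = c\,s^{\epsilon/2}Q_s^{(\PP-\epsilon/2)}$, so
\begin{equation*}
\sqrt{t}\nabla\mathcal{L}^{-\epsilon/2}(I-P_t) = c\int_0^t s^{\epsilon/2}\,\sqrt{t}\nabla Q_s^{(\PP-\epsilon/2)}\,\frac{ds}{s},
\end{equation*}
and the problem reduces to an off-diagonal estimate on each integrand.

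I would factor the integrand as
\begin{equation*}
\sqrt{t}\nabla Q_s^{(\PP-\epsilon/2)} \;=\; \sqrt{t/s}\,\bigl(\sqrt{s}\nabla e^{-s\mathcal{L}/2}\bigr)\circ\bigl((s\mathcal{L})^{\PP-\epsilon/2}e^{-s\mathcal{L}/2}\bigr)
\end{equation*}
and treat each factor at its natural scale $\sqrt{s}$. Under $(G_p)$, interpolating with the $L^2$ Davies--Gaffney bound of Proposition \ref{prop:Davies-Gaffney} (as in the proof of Proposition \ref{prop:K}) yields $L^p$--$L^p$ off-diagonal estimates at scale $\sqrt{s}$ of arbitrary polynomial order for $\sqrt{s}\nabla e^{-s\mathcal{L}/2}$; Lemma \ref{prop:kernel-est} provides analogous estimates of order $\PP-\epsilon/2$ for the second factor. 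Lemma \ref{lem:comp-OD} then composes these into $L^p$--$L^p$ off-diagonal bounds at scale $\sqrt{s}$ of any prescribed order $M$, provided $\PP$ has been fixed large enough, so that for balls $B_1',B_2'$ of radius $\sqrt{s}$,
\begin{equation*}
\bigl\|\sqrt{t}\nabla Q_s^{(\PP-\epsilon/2)}\bigr\|_{L^p(B_1')\to L^p(B_2')}\;\lesssim\;\sqrt{t/s}\,\bigl(1+d^2(B_1',B_2')/s\bigr)^{-M}.
\end{equation*}

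Next I would transfer this to the coarser scale $\sqrt{t}$ by covering $B_1,B_2$ with boundedly overlapping $\sqrt{s}$-balls, whose cardinality is $\lesssim(t/s)^{\nu/2}$ by \eqref{dnu}; summing produces
\begin{equation*}
\bigl\|\sqrt{t}\nabla Q_s^{(\PP-\epsilon/2)}\bigr\|_{L^p(B_1)\to L^p(B_2)}\;\lesssim\;(t/s)^{1/2+\nu/(2p)}\bigl(1+d^2(B_1,B_2)/s\bigr)^{-M},
\end{equation*}
the extra $(t/s)^{\nu/(2p)}$ arising from the $L^p$-aggregation on the target side. Since $d(B_1,B_2)\ge\sqrt{t}\ge\sqrt{s}$, we have both $d^2/s\ge t/s$ and $d^2/s\ge d^2/t$, so for any splitting $M=M_1+M_2$,
\begin{equation*}
\bigl(1+d^2/s\bigr)^{-M}\;\le\;(s/t)^{M_2}\bigl(1+d^2/t\bigr)^{-M_1}.
\end{equation*}
Choosing $M_2$ large enough that $M_2-1/2-\nu/(2p)+\epsilon/2>0$, the $s$-integral reduces to
\begin{equation*}
\int_0^t s^{\epsilon/2}(s/t)^{M_2-1/2-\nu/(2p)}\,\frac{ds}{s}\;\simeq\;t^{\epsilon/2},
\end{equation*}
and the desired bound follows with $M_1$ free to be as large as we wish, matching the statement.

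The main obstacle is the passage from the natural scale $\sqrt{s}$ of the integrand to the target scale $\sqrt{t}$: the $\sqrt{t/s}$ coming from rewriting $\sqrt{t}\nabla=\sqrt{t/s}\cdot\sqrt{s}\nabla$ combined with the covering loss is a genuine growth in $t/s$ that must be absorbed, and the trick is to spend part of the off-diagonal decay (which is \emph{stronger} at scale $\sqrt{s}$ than at scale $\sqrt{t}$ because $d^2/s\ge d^2/t$) as a gain $(s/t)^{M_2}$ via the split above. Everything else is bookkeeping, powered by the standard interplay between $(G_p)$, Davies--Gaffney, and Lemma \ref{lem:comp-OD} that is used throughout the paper.
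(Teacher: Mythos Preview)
The paper's own proof is merely a pointer to \cite[Lemma~3.1]{ACDH}, so a line-by-line comparison is not possible; your reconstruction via $I-P_t=\int_0^t Q_s\,\frac{ds}{s}$ together with $\mathcal{L}^{-\epsilon/2}Q_s=c\,s^{\epsilon/2}Q_s^{(\PP-\epsilon/2)}$ is exactly the standard route, and the mechanism you describe for passing from scale $\sqrt{s}$ to scale $\sqrt{t}$---spending part of the stronger decay $(1+d^2/s)^{-M}\le (s/t)^{M_2}(1+d^2/t)^{-M_1}$ to absorb the losses $\sqrt{t/s}$ and the covering factors---is correct and is the heart of the argument.

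There is, however, a genuine gap at $p>2$. You assert that interpolating $(G_p)$ with the $L^2$ Davies--Gaffney estimates yields $L^p$--$L^p$ off-diagonal bounds for $\sqrt{s}\nabla e^{-s\mathcal{L}/2}$, but interpolation between an $L^p$ \emph{on-diagonal} bound and an $L^2$ off-diagonal bound only produces $L^q$ off-diagonal for $q$ \emph{strictly} between $2$ and $p$; the proof of Proposition~\ref{prop:K} is explicit about this (``for $p\in(2,p_0)$''). Under $(G_p)$ alone you do not obtain $L^p$ off-diagonal decay for the gradient, and without it Lemma~\ref{lem:comp-OD} cannot be invoked at the exponent $p$. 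Your argument is perfectly valid at $p=2$ (where Davies--Gaffney gives the $L^2$ off-diagonal outright, and this is in fact the case actually used in the proof of Theorem~\ref{thm:paralinearisation}), and for $p>2$ it goes through verbatim if one assumes $(G_q)$ for some $q>p$; but as written the interpolation step fails at the endpoint $p$.
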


\begin{proof} For $\epsilon=1$, this corresponds to off-diagonal estimates for the Riesz transform, see \cite[Lemma 3.1]{ACDH}. The exact same proof still holds for $\epsilon\in(0,1]$.
\end{proof}

\appendix

\section{About the $p$-independence of $(H^\eta_{p,p})$}\label{AppA}

In this appendix, we study in more detail the $p$-independence of the property $(H^\eta_{p,p})$ for $p \in [1,+\infty]$ and $\eta\in(0,1]$ and prove the two last statements of Proposition \ref{prop:bcf1}.

All of this appendix is valid in a more general setting than the one presented in the introduction. It is enough to consider a metric measure space $(M,d,\mu)$ satisfying $(VD)$, endowed with a  semigroup  $(e^{-tL})_{t>0}$ acting on $L^p(M,\mu)$, $1\le p\le +\infty$.
For $1\le p\le +\infty$, let us write the $L^p$-oscillation for $u\in L^p_{loc}(M,\mu)$ and a ball $B$ a ball by
$$ p\text{-}\osc_B(f) := \left(\aver{B} |f-\aver{B} f \,d\mu|^p \,d\mu\right)^{1/p}$$
if $p<+\infty$, and
$$ \infty\text{-}\osc_B(f) := \esssup_{B}| f - \aver{B}f\,d\mu |.$$
Recall that we denote by $\calM$ the Hardy-Littlewood maximal operator, 
and by $\calM_p$ the operator defined by $\calM_p(f):=[\calM(|f|^p)]^{1/p}$, $f \in L^1_{\loc}(M,\mu)$, $p \in [1,+\infty)$. We set $\calM_{\infty}(f):=\|f\|_{\infty}$, $f \in L^\infty(M,\mu)$. \\

In \cite{Du}, gradient estimates for the heat semigroup are studied in the Riemannian setting, but the proofs rely only on the finite propagation speed  property, therefore extend to the setting of a metric measure space with a ``carr\'e du champ". More precisely, it is proved that, under \eqref{d} and \eqref{UE}, the condition
\begin{equation} \sup_{t>0} \ \sup_{x\in M } \  |B(x,\sqrt{t})|^{1-\frac{1}{q}} \|\sqrt{t} |\nabla p_t(x,\cdot)|\|_{{q}} < +\infty \label{eq:ap0}\end{equation}
is independent of $q\in[1,+\infty]$ and is in particular equivalent to Gaussian pointwise estimates for the gradient of the heat kernel.
Since for $q=p'$
$$\sup_{x\in M } \  \|\sqrt{t} |\nabla p_t(x,\cdot)|\|_{{q}} = \|\sqrt{t} |\nabla e^{-tL}|\|_{p\to \infty},$$
this property can be thought of, at least in the polynomial volume growth situation  $V(x,r)\simeq r^\nu$, as follows: the quantity $\|\sqrt{t} |\nabla e^{-tL}|\|_{p\to \infty}$ does not depend on the exponent $p\in[1,+\infty]$.

Even if the full version of this result in \cite{Du} is really non-trivial, it appears that a localised counterpart is indeed very easy: more precisely, the property
\begin{equation} \sup_{t>0} \   \sqrt{t} |\nabla e^{-tL} f (x)| \lesssim \calM_p(f)(x)  \label{eq:ap1} 
\end{equation}
 is $p$-independent.
This fact directly follows by writing $\nabla e^{-tL} = \left(\nabla e^{-\frac{t}{2}L}\right) e^{-\frac{t}{2}L}$ with a semigroup $e^{-\frac{t}{2}L}$ satisfying all $L^{p}$-$L^{q}$ off-diagonal estimates (since the heat kernel satisfies pointwise Gaussian estimates), so that for every $p,q\in[1,+\infty]$ with $p< q$, we have
$$ \calM_{q}(e^{-tL}f)(x) \lesssim \calM_{p}(f)(x).$$
The estimate for $p\geq q$ follows from H\"older's inequality. 
In other words, the localised property \eqref{eq:ap1} is much easier to  prove than the full ``global'' version \eqref{eq:ap0}.

The inequality $(H^\eta_{p,p})$ is the H\"older counterpart of the   $L^p$ - $L^\infty$ Lipschitz  regularity property  of the semigroup \eqref{eq:ap0}. Following the previous observation (and the results of \cite{Du}, which can be extended to the situation of H\"older regularity instead of gradient estimates), it is natural to study the $p$-independence of $(H^\eta_{p,p})$ and to do so, we recall the localised versions of $(H^\eta_{p,p})$ (already introduced in the introduction).

\begin{definition} 
Let $(M,d,\mu, L)$ as above satisfying $(VD)$ and \eqref{UE}.
Let $p,q\in[1,+\infty]$ and $\eta \in (0,1]$.  
We shall say that \eqref{gplocal} is satisfied, if 
for all $0<r\leq \sqrt{t}$, every ball $B_r$ of radius, and every function $f\in L^{p}_{\textrm{loc}}(M,\mu)$, 
\begin{equation} \label{gplocal-bis}
q\text{-}\osc_{B_r}(e^{-tL} f) \lesssim \left( \frac{r}{\sqrt{t}} \right)^\eta  \inf_{z\in B_{\sqrt{t}}} \calM_p(f)(z).
 \tag{$\overline{H}_{p,q}^\eta$}
\end{equation}
\end{definition}

Note that $(\overline{H}_{\infty,\infty}^\eta)=(H_{\infty,\infty}^\eta)$.\\

With the help of this definition, we can prove the following ``almost" $p$-independence of $(H^\eta_{p,p})$. 

\begin{theorem}
 Let $(M,d,\mu, L)$ be as above and satisfying \eqref{d} and \eqref{UE}.
Let $\eta\in(0,1]$. The property $(\overline{H}_{p,p}^\eta)$ is independent of $p\in[1,+\infty]$. The property ``$(H_{p,p}^\lambda)$ for every $\lambda<\eta$'' is independent of $p\in[1,+\infty]$.
\end{theorem}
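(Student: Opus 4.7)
The plan is to prove $(\overline{H}_{p_0,p_0}^\eta) \Rightarrow (\overline{H}_{q,q}^\eta)$ for every $q\in[1,\infty]$ by deriving a pointwise H\"older regularity estimate for the heat kernel $p_t$ from the hypothesis. The second statement (on $(H_{p,p}^\lambda)$ for every $\lambda<\eta$) will follow from the first via the near-equivalence of the ``barred'' and ``unbarred'' families, up to an arbitrarily small loss in $\lambda$.

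First, under $(\overline{H}_{p_0,p_0}^\eta)$, I would set $h := e^{-tL}f$ and $M := \inf_{z\in B_{\sqrt t}}\calM_{p_0}(f)(z)$. The hypothesis then reads $p_0\text{-}\osc_{B'}(h) \lesssim (r(B')/\sqrt t)^\eta M$ uniformly over sub-balls $B'\subset B_{\sqrt t}$ with $r(B')\leq \sqrt t$, which is a Campanato growth condition on $B_{\sqrt t}$. A standard Campanato--Morrey telescoping over a dyadic sequence of concentric balls, using the doubling property to bound the differences of averages $|\aver{B_1}h-\aver{B_2}h|$ by the $p_0$-oscillation on the larger ball, then yields an $\eta$-H\"older representative of $h$ on $B_{\sqrt t/2}$ with H\"older constant $\lesssim t^{-\eta/2}M$.

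Next, specialising this to the test function $f := p_{t/2}(\cdot, y_0)$, which lies in $L^{p_0}$ and whose $\calM_{p_0}$-norm at $z$ is pointwise bounded by $V(y_0, \sqrt t)^{-1}(1 + d^2(y_0, z)/t)^{-N}$ for any $N$ (both consequences of \eqref{UE} and doubling), and noting that $e^{-tL}p_{t/2}(\cdot, y_0) = p_{3t/2}(\cdot, y_0)$, would produce the pointwise H\"older regularity of the heat kernel: for a.e.\ $x, x' \in B_{\sqrt t/2}$ and every $y_0\in M$,
\[
 |p_{3t/2}(x, y_0) - p_{3t/2}(x', y_0)| \lesssim \left(\frac{d(x,x')}{\sqrt t}\right)^\eta \frac{1}{V(y_0, \sqrt t)}\left(1 + \frac{d^2(y_0, B_{\sqrt t})}{t}\right)^{-N}.
\]
A time renormalisation extends this to all $p_t$. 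Convolving against an arbitrary $f$ and summing over a dyadic annular partition of $M$ around $B_{\sqrt t}$ then gives $|e^{-tL}f(x) - e^{-tL}f(x')| \lesssim (r/\sqrt t)^\eta \calM_1(f)(z)$ for $x,x' \in B_r\subset B_{\sqrt t}$ and any $z\in B_{\sqrt t}$. Taking the infimum over $z$ and using $\calM_1 \leq \calM_q$ (H\"older, $q\geq 1$) yields the stronger property $(\overline{H}_{1,\infty}^\eta)$, which trivially implies $(\overline{H}_{q,q}^\eta)$ for every $q \in [1,\infty]$.

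Finally, for the second statement: the trivial direction $(\overline{H}_{p,p}^\eta) \Rightarrow (H_{p,p}^\eta)$ follows by testing on $f$ supported in $2 B_{\sqrt t}$ (so that $\inf_z \calM_p(f)(z) \simeq |B_{\sqrt t}|^{-1/p}\|f\|_p$), whereas $(H_{p,p}^\eta) \Rightarrow (\overline{H}_{p,p}^\lambda)$ for every $\lambda < \eta$ is obtained by decomposing $f = \sum_{j\geq 0}f\Eins_{A_j}$ into dyadic annuli $A_j$ around $B_{\sqrt t}$, applying the hypothesis piecewise and summing using the Gaussian decay of $p_t$ on the far annuli (the volume growth factor $2^{j\nu/p}$ is absorbed by the Gaussian decay for any $\lambda<\eta$). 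Thus the families ``$(H_{p,p}^\lambda)$ for every $\lambda<\eta$'' and ``$(\overline{H}_{p,p}^\lambda)$ for every $\lambda<\eta$'' coincide, and the $p$-independence of the latter --- from the first statement --- transfers to the former. The main technical obstacle is the conversion of the function-level Campanato H\"older bound into the pointwise kernel estimate above: one must carefully track how the $\calM_{p_0}$-norm of the test function $p_{t/2}(\cdot, y_0)$ depends on $y_0$ via \eqref{UE} and doubling, in order to propagate the correct Gaussian-type decay into the final kernel regularity.
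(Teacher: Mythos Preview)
Your approach is correct but takes a genuinely different route from the paper for the first statement. The paper never extracts pointwise H\"older regularity of the heat kernel; it argues at the operator level throughout. First, a Meyers-type reverse-H\"older self-improvement (quoted from \cite{BCF1}) upgrades $(\overline{H}_{p,p}^\eta)$ to $(\overline{H}_{p,\infty}^\eta)$. Then the factorisation $e^{-tL}=e^{-sL}e^{-sL}$ with $s=t/2$, combined with the elementary bound $\calM_p(e^{-sL}f)(z)\lesssim \calM_1(f)(z)$ that follows directly from \eqref{UE}, collapses $\calM_p$ to $\calM_1$ and gives $(\overline{H}_{1,\infty}^\eta)$ in one stroke. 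Your Campanato telescoping followed by specialisation to $f=p_{t/2}(\cdot,y_0)$ reaches the same endpoint, but at the cost of tracking the $y_0$-dependence of $\calM_{p_0}(p_{t/2}(\cdot,y_0))$ through \eqref{UE} and doubling, then integrating back. The payoff of your route is that the explicit pointwise H\"older estimate for $p_t$ drops out as a byproduct; the paper's route is shorter and avoids kernel manipulations entirely.

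For the second statement your argument for (iii) matches the paper's. Your statement of (ii) is slightly loose: testing on $f$ supported in $2B_{\sqrt t}$ does not by itself give $(H_{p,p}^\eta)$ for general $f\in L^p$; one must also handle the tail $f\Eins_{(2B_{\sqrt t})^c}$. This is easy --- for $z\in B_{\sqrt t}$ only balls of radius $\geq\sqrt t$ see the tail, so $\calM_p(f\Eins_{(2B_{\sqrt t})^c})(z)\lesssim|B_{\sqrt t}|^{-1/p}\|f\|_p$ and $(\overline{H}_{p,p}^\eta)$ applies --- or, more in the spirit of your first part, $(H_{p,p}^\eta)$ for every $p$ follows directly from the pointwise kernel H\"older estimate via H\"older's inequality. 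The paper instead reuses the semigroup trick from (i), bounding $\calM_p(e^{-sL}f)\lesssim|B_{\sqrt t}|^{-1/p}\|f\|_p$ directly.
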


The above theorem will be a direct consequence of self-improvement properties of $(H_{p,p}^\eta)$ and $(\overline{H}_{p,p}^\eta)$, which read as follows.

\begin{proposition} \label{proposition2} Let $(M,d,\mu, L)$ be as above and satisfying \eqref{d} and \eqref{UE}. Let $p,q\in[1,+\infty]$ and $\eta \in (0,1]$. 
Then 
\begin{itemize}
\item[(i)]  $(\overline{H}_{p,p}^\eta)$ $\Longrightarrow$ $(\overline{H}_{1,\infty}^\eta)$  $\Longrightarrow$ $(\overline{H}_{q,q}^\eta)$;
\item[(ii)] $(\overline{H}_{p,p}^\eta)$  $\Longrightarrow$ $({H}_{p,p}^\eta)$;
\item[(iii)] For every $\lambda\in[0,\eta)$, $(H_{p,p}^\eta)$  $\Longrightarrow$ $(\overline{H}_{p,p}^\lambda)$.
\end{itemize}
\end{proposition}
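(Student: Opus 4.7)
The plan is to handle the three implications in increasing order of difficulty.

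For (ii), the argument is short. Since the Hardy--Littlewood maximal operator $\calM_p$ is bounded on $L^p$ for $p>1$, Chebyshev's inequality on $B_{\sqrt{t}}$ gives
\[
	\inf_{z\in B_{\sqrt{t}}}\calM_p(f)(z)^p \,\le\, \aver{B_{\sqrt{t}}}\calM_p(f)^p\,d\mu \,\lesssim\, |B_{\sqrt{t}}|^{-1}\|f\|_p^p,
\]
and substituting this upper bound into the right-hand side of $(\overline{H}_{p,p}^\eta)$ yields $(H_{p,p}^\eta)$ directly.

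For (i), the second implication $(\overline{H}_{1,\infty}^\eta)\Rightarrow(\overline{H}_{q,q}^\eta)$ is straightforward because $q\text{-}\osc\le \infty\text{-}\osc$ and $\calM_1\le\calM_q$ by H\"older. For the substantial implication $(\overline{H}_{p,p}^\eta)\Rightarrow(\overline{H}_{1,\infty}^\eta)$, the plan is to factor $e^{-tL}=e^{-(t/2)L}\circ e^{-(t/2)L}$ and apply the hypothesis to the outer semigroup acting on $g:=e^{-(t/2)L}f$. One then estimates $L^p$-averages of $g$ on dyadic dilates by Gaussian-weighted sums of $L^1$-averages of $f$, using the $L^1$-$L^p$ off-diagonal estimates for $e^{-(t/2)L}$ deduced from \eqref{UE}. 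Absorbing the resulting double dyadic sums into a single geometrically decreasing weight yields
\[
	p\text{-}\osc_{B_r}(e^{-tL}f) \,\lesssim\, (r/\sqrt{t})^\eta \sum_{\ell\ge 0}\gamma_\ell \aver{2^\ell B_{\sqrt{t}}}|f|\,d\mu \,\lesssim\, (r/\sqrt{t})^\eta \inf_{z\in B_{\sqrt{t}}}\calM_1(f)(z).
\]
Since the same estimate applies at every concentric sub-scale $r'\le \sqrt{t}$, a standard Campanato-type embedding in the doubling metric setting upgrades this $L^p$-oscillation control to the required $L^\infty$-oscillation bound with the same exponent $\eta$.

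For (iii), the hardest part, we decompose $f=f_0+\sum_{\ell\ge 2}f_\ell$ with $f_0:=f\Eins_{4B_{\sqrt{t}}}$ and $f_\ell:=f\Eins_{2^{\ell+1}B_{\sqrt{t}}\setminus 2^\ell B_{\sqrt{t}}}$. For the central piece, the estimate $\|f_0\|_p\lesssim|B_{\sqrt{t}}|^{1/p}\calM_p(f)(z)$ for any $z\in B_{\sqrt{t}}$, combined with $(H_{p,p}^\eta)$, yields $p\text{-}\osc_{B_r}(e^{-tL}f_0)\lesssim (r/\sqrt{t})^\eta \calM_p(f)(z)$. For each far annulus, one needs pointwise H\"older regularity of the heat kernel in its first variable of the form
\[
	|p_t(x,y)-p_t(x',y)| \,\lesssim\, \left(\frac{d(x,x')}{\sqrt{t}}\right)^\lambda \frac{1}{V(x,\sqrt{t})}\,e^{-c d(x,y)^2/t}, \qquad d(x,x')\le\sqrt{t},
\]
valid for any $\lambda<\eta$; this regularity can be extracted from $(H_{p,p}^\eta)$ by testing the oscillation estimate against normalized indicators of small balls and passing to the Campanato limit, combined with the Gaussian bound \eqref{UE}. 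Integrating this pointwise estimate against $f_\ell$ produces a summable tail $2^{\ell\nu}e^{-c4^\ell}$, whose total contribution is at most $(r/\sqrt{t})^\lambda \calM_1(f)(z)\le (r/\sqrt{t})^\lambda \calM_p(f)(z)$. The main obstacle is precisely this extraction step: transferring the mean $L^p$-oscillation control encoded in $(H_{p,p}^\eta)$ into pointwise H\"older regularity of the heat kernel, and it is here that the strict loss $\lambda<\eta$ enters, since the Campanato iteration across dyadic scales inevitably absorbs an infinitesimal portion of the H\"older exponent.
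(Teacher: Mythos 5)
There is a genuine gap in your treatment of (iii). You propose to extract a pointwise H\"older estimate on the heat kernel $p_t(\cdot,y_0)$ from $(H_{p,p}^\eta)$ by testing against normalized indicators of small balls. This fails when $p>1$: inserting $f=\Eins_{B(y_0,\delta)}/|B(y_0,\delta)|^{1/p}$ into $(H_{p,p}^\eta)$ gives a right-hand side of order $(r/\sqrt{t})^\eta|B_{\sqrt{t}}|^{-1/p}$, whereas the left-hand side tends to $|B(y_0,\delta)|^{1-1/p}\cdot p\text{-}\osc_{B_r}(p_t(\cdot,y_0))$ as $\delta\to0$, and the vanishing prefactor $|B(y_0,\delta)|^{1-1/p}$ washes out all information about the kernel. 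Moreover, a pointwise H\"older kernel bound is essentially the content of $(H_{1,\infty}^\lambda)$, which is one of the very instances of $p$-independence the proposition is trying to establish, so the step would in any case be circular. The paper's proof of (iii) uses the same annular decomposition but no kernel regularity: for each $\ell\ge 2$ it bounds $I(\ell):=p\text{-}\osc_{B_r}\bigl(e^{-tL}(f\Eins_{S_\ell(B_{\sqrt{t}})})\bigr)$ in two independent ways, once by applying $(H_{p,p}^\eta)$ to the localised datum (costing a factor $2^{\ell\nu/p}$) and once by the crude Gaussian decay $e^{-c4^\ell}$ from \eqref{UE}, and takes a geometric mean $I(\ell)\le I(\ell)^\theta I(\ell)^{1-\theta}$ with $\theta<1$; the Gaussian factor then dominates the polynomial loss and produces a summable tail, while $(r/\sqrt{t})^\eta$ degrades to $(r/\sqrt{t})^{\theta\eta}$. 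That, and not a Campanato iteration, is where the strict loss $\lambda<\eta$ enters.

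Your (ii) has an incorrect justification, though the conclusion is true and the route is actually shorter than the paper's. The displayed middle inequality
\[
\aver{B_{\sqrt{t}}}\calM_p(f)^p\,d\mu\,=\,\aver{B_{\sqrt{t}}}\calM(|f|^p)\,d\mu\,\lesssim\,|B_{\sqrt{t}}|^{-1}\|f\|_p^p
\]
would be the $L^1$-boundedness of $\calM$, which is false: take $|f|^p$ to approximate a point mass inside $B_{\sqrt{t}}$ and the integral of $\calM(|f|^p)$ over $B_{\sqrt{t}}$ diverges logarithmically while $\|f\|_p^p$ stays constant. (The phrase ``$\calM_p$ is bounded on $L^p$'' conflates $\calM$ with $\calM_p$.) Nevertheless, the estimate you actually need, $\inf_{z\in B_{\sqrt{t}}}\calM_p(f)(z)\lesssim|B_{\sqrt{t}}|^{-1/p}\|f\|_p$, does hold, but via the weak-$(1,1)$ bound for $\calM$: once $\lambda\gtrsim|B_{\sqrt{t}}|^{-1}\|f\|_p^p$, the set $\{\calM(|f|^p)>\lambda\}$ has measure less than $|B_{\sqrt{t}}|$, so some $z\in B_{\sqrt{t}}$ avoids it. With this fix, your (ii) is correct and bypasses the paper's more involved argument, which factorises $e^{-tL}=e^{-(t/2)L}e^{-(t/2)L}$ and exploits the $L^p\to L^\infty$ smoothing of $e^{-(t/2)L}$ coming from \eqref{UE} in order to bound $\inf_{z\in B_{\sqrt{s}}}\calM_p(e^{-sL}f)(z)$.

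Your (i) is plausible and reverses the paper's order of operations: the paper first invokes a Meyers-type self-improvement (\cite[Proposition 3.1]{BCF1}) to upgrade $(\overline{H}_{p,p}^\eta)$ to $(\overline{H}_{p,\infty}^\eta)$ and then factorises the semigroup to replace $\calM_p$ by $\calM_1$; you factorise first and then propose a Campanato upgrade of $p$-oscillation to $\infty$-oscillation. Either order should work; in the Campanato iteration one should phrase the right-hand side via the Gaussian-weighted quantity $\sum_{\ell\ge0}\gamma_\ell\aver{2^\ell B_{\sqrt{t}}}|f|\,d\mu$, which is stable under $O(\sqrt{t})$-shifts of the centre of the small balls, rather than directly as $\inf_{B_{\sqrt{t}}}\calM_1(f)$.
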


\begin{rem}
As a consequence of Proposition \ref{proposition2}, the property: ``there exists $\eta>0$ such that $({H}_{p,p}^\eta)$ holds" is independent of $p\in [1,+\infty]$.
\end{rem}

\begin{rem} \label{rem:sub2} All results of Appendix \ref{AppA} remain true in the context of sub-Gaussian estimates.
\end{rem}

\begin{proof}[Proof of Proposition $\ref{proposition2}$]
Let us start with (i).
First, we follow \cite[Proposition 3.1]{BCF1} (which relies on a Meyers argument to improve oscillations estimates), and the same proof allows us to improve $(\overline{H}_{p,p}^\eta)$ into $(\overline{H}_{p,\infty}^\eta)$.
Then, if $q\geq p$, we obtain from Jensen's inequality
$$ \inf_{z\in B_{\sqrt{t}}} \calM_p(f)(z)  \leq \inf_{z\in B_{\sqrt{t}}} \calM_q(f)(z), $$
therefore
$$(\overline{H}_{p,\infty}^\eta) \Longrightarrow (\overline{H}_{q,\infty}^\eta) \Longrightarrow (\overline{H}_{q,q}^\eta).$$
Now let us focus on the case $q<p$.
Consider $t>0$ and set $s=\frac{t}{2}$. Let $B_r$ be a ball of radius $r<\sqrt{t}$
 and $B_{\sqrt{t}}=\frac{\sqrt{t}}{r}B_r$ the dilated ball of radius $\sqrt{t}$.
If $r<\sqrt{s}$, we apply $(\overline{H}_{p,\infty}^\eta)$ to $e^{-sL}f$, which yields
\begin{equation} 
\esssup_{x,y \in B_r} \left| e^{-2sL}f(x) - e^{-2sL}f(y)\right| \lesssim \left( \frac{r}{\sqrt{s}} \right)^\eta \inf_{z\in B_{\sqrt{s}}} \calM_p(e^{-sL}f)(z). \label{eq:mod1} 
\end{equation}
Using \eqref{UE} together with $t=2s$, we then obtain 
$$ \esssup_{x,y \in B_r} \left| e^{-tL}f(x) - e^{-tL}f(y)\right| \lesssim \left( \frac{r}{\sqrt{t}} \right)^\eta \inf_{z\in B_{\sqrt{t}}} \calM(f)(z)  ,$$
which is $(\overline{H}_{1,\infty}^\eta)$.
The case $\sqrt{s}\leq r \leq \sqrt{t}$ is a direct consequence of \eqref{UE}, since we have $r\simeq \sqrt{t}$ and so
\begin{align*} 
\esssup_{x,y \in B_r} \left| e^{-tL}f(x) - e^{-tL}f(y)\right| & \leq 2 \|e^{-tL} f \|_{L^\infty(B_r)} \lesssim \|e^{-tL} f \|_{L^\infty(B_{\sqrt{t}})} 
 \lesssim \inf_{z\in B_{\sqrt{t}}} \calM(f)(z),
\end{align*}
which yields $(\overline{H}_{1,\infty}^\eta)$.

\noindent
Now for (ii).
Assume $(\overline{H}_{p,p}^\eta)$ for some $p\in[1,+\infty]$. First, note that for $t=2s$
\begin{align*}
 \inf_{z\in B_{\sqrt{s}}} \calM_p(e^{-sL}f)(z) & \leq  |B_{\sqrt{s}}|^{-1/p} \|e^{-sL} f\|_p + \sup_{x\in B_{\sqrt{s}}} |e^{-sL}f(x)| 
    \lesssim |B_{\sqrt{t}}|^{-1/p} \|f\|_p ,
\end{align*}
where we used \eqref{UE}. By applying the above estimate to  \eqref{eq:mod1}, we can obtain $(H_{p,\infty}^\eta)$ from  $(\overline{H}_{p,p}^\eta)$ with the same reasoning as in the proof of part (i). $(H_{p,p}^\eta)$ then easily follows. 

\noindent
Let us finally prove (iii).
Assume $(H_{p,p}^\eta)$ for some $\eta\in (0,1]$ and $p\in[1,+\infty]$. Let $B_r$, $B_{\sqrt{t}}$ be a pair of concentric balls with respective radii $r$ and $\sqrt{t}$, where $0<r\leq \sqrt{t}$.
Then we know that
$$ p\text{-}\osc_{B_r}(e^{-tL} f)  \lesssim \left( \frac{r}{\sqrt{t}} \right)^\eta  |B_{\sqrt{t}}|^{-1/p}\|f\|_p.$$
Let us split
$ \displaystyle f = \sum_{\ell \geq 0} f \Eins_{S_\ell (B_{\sqrt{t}})}$,
and define for $\ell\geq 0$
$$ I(\ell):=   p\text{-}\osc_{B_r}\left[e^{-tL}(f \Eins_{S_\ell (B_{\sqrt{t}})})\right],$$ 
where $S_\ell (B_{\sqrt{t}})$ stands for the dyadic annuli 
$$ S_\ell (B_{\sqrt{t}}):= 2^{\ell+1}B_{\sqrt{t}} \setminus 2^\ell B_{\sqrt{t}}.$$
We have  $(H_{p,p}^\lambda)$ for every $\lambda\in[0,\eta]$, therefore, for $\ell\leq 1$, 
\begin{align*}
 I(\ell)  \lesssim \left( \frac{r}{\sqrt{t}} \right)^\lambda \left( \aver{4 B_{\sqrt{t}}} |f|^p d\mu\right)^{1/p} 
   \lesssim \left( \frac{r}{\sqrt{t}} \right)^\lambda \inf_{z\in B_{\sqrt{t}}}\calM_p(f)(z).
\end{align*}
For $\ell\geq 2$, we similarly have
\begin{equation}
 I(\ell)  \lesssim \left( \frac{r}{\sqrt{t}} \right)^\eta  2^{\ell \frac{\nu}{p}} \left( \aver{2^\ell B_{\sqrt{t}}} |f|^p d\mu\right)^{1/p}. \label{eq:inter1}
\end{equation}
Moreover, using again \eqref{UE}, we have
\begin{align}
 I(\ell)  \leq 2 \left(\aver{B_r} \left| e^{-tL}(f \Eins_{S_\ell (B_{\sqrt{t}})})  d\mu \right|^{p} d\mu \right)^{1/p} 
  \lesssim  e^{-c 4^\ell} \left( \aver{2^\ell B_{\sqrt{t}}} |f|^p d\mu\right)^{1/p}, \label{eq:inter2}
\end{align} 
 which yields
\begin{align*}
\left(\aver{B_r} \left| e^{-tL}(f \Eins_{S_\ell (B_{\sqrt{t}})})  d\mu \right|^{p} d\mu \right)^{1/p} & \leq \|  e^{-tL}(f \Eins_{S_\ell (B_{\sqrt{t}})}) \|_{L^\infty(B_r)} \\
 & \leq \|  e^{-tL}(f \Eins_{S_\ell (B_{\sqrt{t}})}) \|_{L^\infty(B_{\sqrt{t}})}  \lesssim  e^{-c 4^\ell} \left( \aver{2^\ell B_{\sqrt{t}}} |f|^p d\mu\right)^{1/p}.
 \end{align*} 
 By interpolating between \eqref{eq:inter1} and \eqref{eq:inter2}, we get for every $\lambda\in[0,\eta)$, with $c_\lambda$ a constant depending on $\lambda$,
$$
 I(\ell)  \lesssim \left( \frac{r}{\sqrt{t}} \right)^\lambda e^{-c_\lambda 4^\ell} \left( \aver{2^\ell B_{\sqrt{t}}} |f|^p d\mu\right)^{1/p}.
$$
By summing over $\ell \geq 0$, we obtain
\begin{align*}
\left(\aver{B_r} \left| e^{-tL}f - \aver{B_r}e^{-tL}f\, d\mu \right|^{p} d\mu \right)^{1/p}   \leq \sum_{\ell \geq 0} I(\ell)  \lesssim \left( \frac{r}{\sqrt{t}} \right)^\lambda \inf_{z\in B_{\sqrt{t}}} \calM_p(f)(z),
\end{align*}
which is $(\overline{H}_{p,p}^\lambda)$.
\end{proof}

\end{document}